\documentclass[11pt]{article}

\usepackage{fullpage}
\usepackage{authblk}
\usepackage{natbib}
\usepackage{algorithm}
\usepackage{algpseudocode}
\usepackage{amsmath,amsthm,amsfonts}
\usepackage{amsmath}
\usepackage[subnum]{cases}
\usepackage[most]{tcolorbox}
\usepackage{mathrsfs}
\usepackage{amssymb}
\usepackage{color}
\usepackage{mathrsfs}
\usepackage{empheq}
\usepackage{enumitem}
\usepackage{bm}
\usepackage{multirow}
\usepackage{booktabs}
\usepackage{makecell}
\usepackage{graphicx}
\usepackage{subcaption}
\usepackage{comment}
\usepackage{cases}
\usepackage{appendix}
\usepackage{tikz}
\usetikzlibrary{arrows,shapes}
\usepackage[colorlinks= true, linkcolor=brown, citecolor=blue, urlcolor=black]{hyperref}
\usepackage{cleveref}
\usepackage{marginnote}
\usepackage{diagbox} 
\usepackage{bbm}

\numberwithin{equation}{section}

\theoremstyle{definition}

\newtheorem{theorem}{Theorem}[section]
\newtheorem{lemma}[theorem]{Lemma}

\newtheorem{remark}[theorem]{Remark}

\newtheorem{condition}[theorem]{Condition}
\newtheorem{prop}[theorem]{Proposition}
\newtheorem{defn}[theorem]{Definition}

\newcommand{\defi}{\overset{\triangle}{=}}

\usepackage{booktabs}
\usepackage{pgfplots}
\usetikzlibrary{calc,intersections}
\usepackage{tikz}
\usetikzlibrary{patterns}

\usetikzlibrary{shapes.geometric, arrows, positioning}

\tikzset{
	mybox/.style  = {draw, rectangle, minimum width=2.0cm, minimum height=0.8cm, text centered, text width=2.0cm,   
		font=\normalsize,  align=left},
	box/.style  = {draw, rectangle, minimum width=2.0cm, minimum height=0.6cm, text centered, text width=7.4cm,   
		font=\normalsize,  align=left},		
	box1/.style  = {draw, rectangle, minimum width=2.0cm, minimum height=0.6cm, text centered, text width=5.6cm,   
		font=\normalsize},	
	box2/.style  = {draw, rectangle, minimum width=2.0cm, minimum height=0.6cm, text centered, text width=7.4cm,   
		font=\normalsize,  align=left},	
	myarrow/.style = {line width=0.2pt, draw=black, -triangle 60, postaction={draw, line width=0.2pt, shorten >=10pt,-}}	
}

\tikzstyle{arrow} = [->, >=stealth, -triangle 60]

\allowdisplaybreaks

\makeatletter
\newcommand{\leqnomode}{\tagsleft@true}
\newcommand{\reqnomode}{\tagsleft@false}
\makeatother

\begin{document}

\title{Numerical Construction of Quasi-Periodic Solutions for Nonlinear PDEs:  I. Bounded Perturbation}
\author[1,2]{Mingwei Fu}
\author[1,2]{Bin Shi\thanks{Corresponding author: \url{binshi@fudan.edu.cn} } }
\affil[1]{Center for Mathematics and Interdisciplinary Sciences, Fudan University, Shanghai 200433, China}
\affil[2]{Shanghai Institute for Mathematics and Interdisciplinary Sciences, Shanghai 200433, China}

\date\today

\maketitle

\begin{abstract}
In this paper, we propose an alternating numerical scheme for constructing 
quasi-periodic solutions to a class of nonlinear partial differential 
equations (PDEs) with bounded perturbations, including the one-dimensional 
nonlinear Schr\"odinger (NLS) equations and nonlinear wave (NLW) equations 
under periodic and Dirichlet boundary conditions.  A key observation concerning the resonant $Q$-equations allows us to establish directly a local diffeomorphism between the amplitudes and the drifted frequencies. This renders the proof considerably cleaner, since it avoids the Birkhoff normal form and the associated coordinate transformations required in classical KAM theory for PDEs.
Unlike in the finite-dimensional setting, the lattice boxes must also include 
the spatial index, so that the multiscale induction has to be carried out in 
one additional dimension. 
This introduces two difficulties. 
First, the measure estimate for the excluded set of ``bad'' frequencies must 
account for the spatial index. 
For both the NLS and NLW equations, the linear frequencies and their 
differences are integers or close to integers; this allows the spatial 
dependence to be controlled through the modulus of the temporal index, so 
that the excluded measure remains under control. 
Second, a singular set may contain sites with the same temporal index but 
widely separated spatial indices, which obstructs the 
``inversion implies localization'' process. 
For the one-dimensional NLS and NLW equations, however, such sites occur 
only in pairs with opposite spatial indices. 
They can therefore be separated by small boxes, after which the global 
inverse is obtained from the local inverses via the resolvent identity. 
Numerical experiments illustrate the scheme for nonlinear traveling and 
standing waves, exhibiting both the computed quasi-periodic solutions and the 
convergence of the iteration.
\end{abstract}

%

\section{Introduction}
\label{sec: intro}

Nearly integrable Hamilton systems constitute a central class in modern 
dynamical systems. 
They arise naturally in a broad range of physical models; a prominent example 
is the dynamics of the Solar system, often investigated through simplified 
models such as the three-body problem. 
Although small perturbations may profoundly affect the long-term behavior of 
an integrable system, much of its regular structure typically persists. 
Under suitable nondegeneracy conditions, a large-measure set of invariant tori 
survives the perturbation and continues to support quasi-periodic motions, 
thereby providing a fundamental mechanism for long-term dynamical stability. 
This persistence phenomenon lies at the heart of Kolmogorov-Arnold-Moser (KAM) 
theory, which originated in the seminal works 
of~\citet{kolmogorov1954preservation, arnold1963proof} 
and~\citet{moser1962invariant}. 
The classical theory was subsequently extended to lower-dimensional invariant 
tori; in particular,~\citet{eliasson1988perturbations} 
and~\citet{poschel1989elliptic} established fundamental persistence results 
for elliptic lower-dimensional tori.

The extension of KAM theory to Hamiltonian partial differential equations (PDEs) 
was initiated by~\citet{kuksin1993nearly}. 
Unlike their finite-dimensional counterparts, Hamiltonian PDEs give rise to 
infinite-dimensional dynamical systems whose spectral properties are governed 
by dispersion relations linking spatial and temporal scales. 
This infinite-dimensional setting introduces substantial analytical 
challenges, most notably the requirement to control infinitely many small 
divisors. 
A major breakthrough was achieved by~\citet{kuksin1996invariant}, who 
established the existence of small-amplitude quasi-periodic solutions to the 
one-dimensional nonlinear Schr\"odinger (NLS) equation subject to 
Dirichlet boundary conditions.~\citet{poschel1996quasi} subsequently extended 
this approach to the one-dimensional nonlinear wave (NLW) equation, likewise 
under Dirichlet boundary conditions. 
Complementing these existence results,~\citet{cong2016stability} established 
stability over polynomially long time scales for the one-dimensional NLS 
equation under sufficiently small perturbations of the initial data.

In parallel,~\citet{craig1993newton,craig1994periodic} developed an alternative 
approach based on space-time Fourier expansions, which transform a nonlinear 
PDE into an infinite system of nonlinear lattice equations for its Fourier 
coefficients. 
The resulting system is solved by coupling a Lyapunov-Schmidt decomposition 
with a Newton-type iteration on an expanding sequence of 
finite-dimensional subspaces. 
Using this framework, they constructed time-periodic solutions, characterized 
by a single temporal frequency, to the one-dimensional NLW and NLS equations 
under both periodic and Dirichlet boundary conditions.~\citet{bourgain1994construction} 
subsequently incorporated multiscale analysis into this scheme and established 
the existence of quasi-periodic solutions associated with low spatial 
frequencies. 
This methodology is now widely known as the Craig-Wayne-Bourgain (CWB) scheme. 
A major advantage of the CWB scheme is its ability to handle the growing 
multiplicities of spatial eigenvalues and the resulting clusters of small 
divisors that arise in higher spatial dimensions. 
Although nearly resonant sites may form clusters, the underlying geometry of 
the resonant sets ensures that these clusters remain of controlled size and are 
well separated. 
This separation allows local inverse estimates on individual clusters to be 
combined within the multiscale analysis, thereby enabling control of the large 
matrices generated by multiple spatial eigenvalues. 
By exploiting this geometric structure,~\citet{bourgain1995construction} 
constructed time-periodic solutions to higher-dimensional NLW equations, and 
later established the existence of quasi-periodic solutions to two-dimensional NLS 
equations under periodic boundary conditions~\citep{bourgain1998quasi}.

Motivated by the constructive nature of the CWB scheme, we recently developed 
an executable numerical framework for finite-dimensional nonlinear dynamical 
systems. 
The method combines a dimension-enlarged Newton iteration with alternating 
updates of the Fourier coefficients and frequencies. 
It was first applied to the construction of full-dimensional quasi-periodic 
solutions~\citep{Fu2026numerical-a}, and subsequently extended to elliptic 
lower-dimensional quasi-periodic solutions~\citep{Fu2026numerical-b}. 
In this paper, we further extend this framework to nonlinear PDEs, addressing 
the core computational and analytical challenges posed by their 
infinite-dimensional spatial dynamics.

\subsection{Fourier expansion for the NLS equation}
\label{subsec: fourier-nls}

We consider the one-dimensional NLS equation on the interval $[0, 2\pi]$ with 
periodic boundary conditions,
\begin{subequations}
    \label{eqn: nls-periodic}
    \begin{empheq}[left=\empheqlbrace]{align} 
       &iu_t - u_{xx} + \mu u + \varepsilon |u|^2 u = 0,  \label{eqn: nls}  \\
       &u(0, t) = u(2\pi, t).                             \label{eqn: per-bdy} 
    \end{empheq}        
\end{subequations} 
which is a Hamiltonian PDE. 
The associated Hamiltonian is $H = H_0 + \varepsilon H_1$, where
\[
    H_0 = \frac12 \int_{0}^{2\pi} |u_x|^2 + \mu |u|^2dx \quad \mathrm{and} \quad H_1 = \frac14 \int_{0}^{2\pi} |u|^4dx.
\]
Throughout the paper, we assume that $\mu \in \mathbb{R}$ and that the 
perturbation parameter satisfies $0 < \varepsilon \ll 1$. 
Since $u = u(x, t)$ is $2\pi$-periodic in $x$, we expand it in a spatial 
Fourier series:
\begin{equation}
    \label{eqn: fourier-expansion}
    u(x,t) = \sum_{n\in\mathbb Z} \hat{u}_n(t)e^{inx}, 
\end{equation}
where the Fourier coefficients are given by
\begin{equation}
    \label{eqn: fourier-coefficient}
    \hat{u}_n(t) = \frac{1}{2\pi} \int_{0}^{2\pi} u(x, t) e^{-inx}dx. 
\end{equation}
Hereafter, we suppress the explicit $t$-dependence of the coefficients for 
notational simplicity. 
It follows directly from~\eqref{eqn: fourier-coefficient} that 
$\hat{\overline{u}}_n = \overline{\hat{u}_{-n}}$. 
Substituting~\eqref{eqn: fourier-expansion} into~\eqref{eqn: nls}, we obtain 
an infinite system of coupled ordinary differential equations (ODEs) for each 
Fourier coefficient $\hat{u}_n$, $n \in \mathbb{Z}$: 
\begin{equation}
    \label{eqn: nls-fourier-ode}
    i \dot{\hat u}_n + (n^2 + \mu) \hat{u}_n + \varepsilon \sum_{\substack{n_1 + n_2 + n_3 = n  \\  n_1, n_2, n_3 \in \mathbb{Z}}} \hat{u}_{n_1} \hat{\overline{u}}_{n_2} \hat{u}_{n_3} = 0,  
\end{equation}
where the middle factor $\hat{\overline{u}}_{n_2}$ is the $n_2$-th Fourier 
coefficient of $\overline{u}$.

The associated linear equation, obtained by setting $\varepsilon = 0$, is 
given by:
\begin{equation}
    \label{eqn: nls_linear}   
    iu_t - u_{xx} + \mu u = 0,
\end{equation}
whose dispersion relation is therefore
\begin{equation}
    \label{eqn: nls-linear-frequency}
    \varpi_n = n^2 + \mu, 
\end{equation}
for any $n \in \mathbb{Z}$. 
We select a finite set of $m$ distinct wavenumbers in the low-frequency range, 
referred to as the tangential modes,
\begin{equation}
    \label{eqn: tangent-spatial-mode}
    \mathcal{N} = \left\{ n_1 < n_2 < \cdots < n_m \right\} \subset \mathbb{Z}, 
\end{equation}
with $\max\{ |n_1|, |n_{m}| \} \leq M_{0}^{\frac13}/2 - 1$, where 
$M_0=M_0(\varepsilon)$ is a truncation parameter depending on the perturbation 
parameter $\varepsilon$, to be specified in~\eqref{eqn: M0}. 
By~\eqref{eqn: nls-linear-frequency}, the corresponding frequencies are 
$\omega_{j} := \varpi_{n_j}$ for $j=1, \ldots, m$, and we collect them in the 
frequency vector $\omega = (\omega_{1}, \ldots, \omega_{m}) \in \mathbb{R}^{m}$. 
A solution of~\eqref{eqn: nls_linear} supported on these modes is the linear 
superposition
\begin{equation}
    \label{eqn: lin-soln}
    u(x, t) = \sum_{j=1}^{m} \sqrt{I_{0,j}} e^{i \left(\omega_{j} t + \theta_{0,j} \right)} e^{in_jx},
\end{equation}
where $I_0 = (I_{0,1}, \ldots, I_{0,m}) \in \mathbb{R}_{+}^{m}$ and 
$\theta_0 = \left(\theta_{0, 1}, \ldots, \theta_{0, m} \right) \in \mathbb{T}^m$ 
are the initial action and angle variables, respectively. 
For convenience, we set $a_j = \sqrt{I_{0,j}} $ for $j=1, \ldots, m$, and assume 
that the amplitudes of the selected tangential modes are bounded away from zero, 
with $\min_{1 \leq j \leq m} a_j \geq \delta_0 > 0$. 
Denoting by $e_j$ the $j$-th standard basis vector of $\mathbb{R}^m$ for 
$j=1, \ldots, m$, we may then rewrite~\eqref{eqn: lin-soln} as
\begin{equation}
    \label{eqn: lin-soln-new}
    u(x, t) = \sum_{j=1}^{m} a_j e^{i \left\langle e_j, \; \omega t + \theta_0 \right\rangle} e^{in_jx}. 
\end{equation}
Accordingly, we define the resonant set
\begin{equation}
    \label{eqn: resonant-index}
    \mathcal{S} := \left\{ (n_j, e_j) \in \mathbb{Z}^{1+m} ~\big|~ j = 1, \ldots, m \right\}. 
\end{equation}
Expanding $u=u(x, t)$ over the space-time Fourier lattice indexed by the 
multi-index $(n,k) \in \mathbb{Z}^{1+m}$, the linear 
solution~\eqref{eqn: lin-soln-new} is represented by the coefficients 
$\hat{u}(n_j, e_j) = a_j$ for $j=1, \ldots, m$ and $\hat{u}(n, k) = 0$ for 
each non-resonant index $(n, k) \notin \mathcal{S}$. 
For the nonlinear equation~\eqref{eqn: nls-periodic}, we seek a quasi-periodic 
solution of the form
\begin{equation}
    \label{eqn: nonlin-soln}
    u(x, t) = \sum_{(n, k) \in \mathbb{Z}^{1+m}} \hat{u}(n, k) e^{i\left(\langle k,\; \omega' t + \theta_0 \rangle + n x\right)},
\end{equation}
where the coefficients of the resonant modes are fixed by 
$\hat{u}(n_j, e_j) = a_j$ for $j = 1, \ldots, m$, while the frequency vector 
$\omega' \in \mathbb{R}^m$, a small drift of the linear frequency vector 
$\omega$, is to be determined together with the non-resonant coefficients 
$\hat{u}(n, k) $ for any $(n, k) \notin \mathcal{S}$.

\subsection{Numerical scheme}
\label{subsec: numerical-scheme}

In this section, we propose a numerical scheme that iteratively deforms the 
linear solution~\eqref{eqn: lin-soln-new} into the nonlinear quasi-periodic 
solution~\eqref{eqn: nonlin-soln}. 
Substituting the ansatz~\eqref{eqn: nonlin-soln} into the one-dimensional NLS 
equation~\eqref{eqn: nls-periodic} and matching the coefficients of 
$e^{i \left( \langle k, \; \omega' t + \theta_0 \rangle + nx \right)}$, we 
obtain the lattice equation
\begin{equation}
    \label{eqn: nls-lattice}
    \left( - \langle k, \omega' \rangle + \varpi_{n} \right) \hat{u}(n, k) + \varepsilon \widehat{|u|^2u}(n,k) = 0,
\end{equation}
for each multi-index $(n, k) \in \mathbb{Z}^{1+m}$, where the nonlinear term 
is given by the convolution 
\begin{equation}
    \label{eqn: nonlinear-convolution}
    \widehat{|u|^2u}(n, k) = \sum_{n_1 + n_2 + n_3 = n} \sum_{k_1 + k_2 + k_3 = k} \hat{u}(n_1, k_1) \hat{\overline{u}}(n_2, k_2) \hat{u}(n_3, k_3).  
\end{equation}
Collecting the coefficients $\hat{u}(n, k)$ into a vector $\hat{u}$, we may 
write~\eqref{eqn: nls-lattice} in vector form as 
\begin{equation}
    \label{eqn: nls-lattice-vector}
    -\langle k, \omega' \rangle \hat{u} + \varpi \odot \hat{u} + \varepsilon \hat{X}(\hat{u}) = 0,
\end{equation}
where $\hat{X}(\hat{u})$ denotes the nonlinear vector field and the symbol 
$\odot$ denotes componentwise multiplication with respect to the spatial index 
$n$. 
According to the resonant set $\mathcal{S}$ defined 
in~\eqref{eqn: resonant-index}, we decompose the Fourier coefficients into 
resonant and non-resonant components, $\hat{u} = (\hat{u}_q, \hat{u}_p)^\top$ 
and $\hat{X} = (\hat{X}_q, \hat{X}_p)^\top$, where $\hat{u}_q$ and $\hat{X}_q$ 
denote the components indexed by $\mathcal{S}$, whereas $\hat{u}_p$ and 
$\hat{X}_p$ collect those indexed by $(\mathbb{Z}^{1+m}) \setminus \mathcal{S}$. 
Recall from~\Cref{subsec: fourier-nls} that the resonant coefficients are 
prescribed by $\hat{u}(n_j, e_j) = a_j$ for $j=1, \ldots, m$, so that 
$\hat{u}_q = a := (a_1, \ldots, a_m)^{\top}$ is known. 
Restricting~\eqref{eqn: nls-lattice-vector} to the resonant modes, we obtain 
the $Q$-equations as 
\begin{equation}
    \label{eqn: q-eqn}
    (- \omega' + \omega) \odot a + \varepsilon \hat{X}_q = 0,
\end{equation}
which serve to determine the frequency vector $\omega'$. 
The remaining equations, referred to as the $P$-equations, are
\begin{equation}
    \label{eqn: p-eqn}
    - \langle k, \omega' \rangle \hat{u}_p + \varpi \odot \hat{u}_p + \varepsilon \hat{X}_p = 0,
\end{equation}
which determine the unknown non-resonant coefficients $\hat{u}_p$. 
Since $\hat{X}_q$ and $\hat{X}_p$ both depend on the full vector 
$\hat{u} = (a, \hat{u}_p)^{\top}$, the two systems are coupled and must be 
solved simultaneously for the unknowns $(\omega', \hat{u}_p)$. 
This $Q/P$ splitting forms the basis of the alternating numerical scheme 
described below.

Since the $P$-equations~\eqref{eqn: p-eqn} constitute an infinite-dimensional 
nonlinear algebraic system, we solve them numerically by a dimension-enlarged 
Newton scheme. 
To this end, we define the nonlinear operator $F$, acting on the non-resonant 
Fourier coefficients $\hat{u}_p$, as
\begin{equation}
    \label{eqn: f-operator}
    F(\omega', \hat{u}_p; a) = - \langle k, \omega' \rangle \hat{u}_p + \varpi \odot \hat{u}_p + \varepsilon \hat{X}_p(\hat{u}_p, a).   
\end{equation}
The associated linearized, or tangent, operator is $T = D + \varepsilon S$, 
where $D$ is diagonal over the non-resonant set 
$\mathbb{Z}^{1+m} \setminus \mathcal{S}$, with entries
\begin{equation}
    \label{eqn: D-linear}
    D_{n, k} = - \langle k, \omega' \rangle + n^2 + \mu, 
\end{equation}
and $S$ is the Jacobian of the nonlinear perturbation with respect to 
$\hat{u}_p$: 
\begin{equation}
    \label{eqn: S-non-linear}
    S = \frac{\partial \hat{X}_p(\hat{u}_p, a)}{\partial \hat{u}_p}.
\end{equation}
Let $M =M(\varepsilon)$, to be specified in~\eqref{eqn: M}, and set the 
truncation scale $N_r := M^{r+1}$. 
Based on the $Q$-equations~\eqref{eqn: q-eqn} and the 
$P$-equations~\eqref{eqn: p-eqn}, we propose the following alternating 
numerical scheme: at each iteration, we first update the frequency vector and 
then the non-resonant coefficients,
\begin{subequations}
    \label{eqn: alternating-numerical}
    \begin{empheq}[left=\empheqlbrace]{align} 
       &\omega^{(r+1)} = \omega + \varepsilon \hat{X}_q(\hat{u}_p^{(r)}; a) \odot a^{-1},                                                                                                                     \label{eqn: alternating-frequency}  \\
       &\hat{u}_p^{(r+1)} = \hat{u}_p^{(r)} - \left[ (T + \varepsilon B)_{N_{r+1}}^{-1}\left( \hat{u}_p^{(r)}, \omega^{(r+1)}; a \right) \right] F\left( \hat{u}_p^{(r)}, \omega^{(r+1)}; a \right).  \label{eqn: alternating-coefficient} 
    \end{empheq}        
\end{subequations} 
where $a^{-1} = (a_1^{-1}, \ldots, a_{m}^{-1})^\top$ denotes the componentwise 
reciprocal of $a$. 
The correction operator $B$ is introduced to ensure the convergence of the 
iteration and is given by
\begin{equation}
    \label{eqn: operator-b}
    B = - \frac{\partial \langle k, \hat{X}_q \odot a^{-1} \rangle}{\partial \hat{u}_p} \hat{u}_p^{\top}.
\end{equation}
The restricted operator is defined as 
$(T + \varepsilon B)_N = P_N (T + \varepsilon B) P_N$, where $P_N$ is the 
projection onto the lattice box $\Lambda_N^{1+m}$, given by
\[
(P_N \hat{u})(n, k) := 
\left\{ 
    \begin{aligned}
        & \hat{u}(n, k), && \mathrm{if}\; (n, k) \in \Lambda^{1+m}_{N}, \\
        & 0,             && \mathrm{if}\; (n, k) \notin \Lambda^{1+m}_{N},   
    \end{aligned} 
\right.
\]
with $\Lambda_N := \left\{ n \in \mathbb{Z} ~\big|~ |n| \leq N \right\}$. 
Throughout this paper, unless otherwise specified, $|\cdot|$ and 
$|\cdot|_{\infty}$ denote the $\ell_1$-norm and the $\ell_{\infty}$-norm on 
$\mathbb{Z}^m$ or $\mathbb{R}^m$, respectively.

\subsection{Numerical demonstration}
\label{subsec: numerical-perform}

In this section, we demonstrate the performance of the alternating numerical 
scheme~\eqref{eqn: alternating-numerical} for the NLS equation with periodic 
boundary conditions~\eqref{eqn: nls-periodic}.

\paragraph{Periodic solutions ($m = 1$)}
As a benchmark case, we first consider periodic solutions, corresponding to 
$m=1$. 
The linear solution~\eqref{eqn: lin-soln} then reduces to 
\begin{equation}
    \label{eqn: lin-1}
    u(x, t) = a_1 e^{i \left( n_1 x + \omega_{1}t + \theta_{0, 1} \right)}.
\end{equation}
The associated resonant set consists of a single mode, 
$\mathcal{S} = \left\{ (n_1, 1) \right\}$, with Fourier coefficients 
$\hat{u}(n_1, 1) = a_1$ and $\hat{u}(n, k) = 0$ for all 
$(n, k) \notin \mathcal{S}$. 
Substituting~\eqref{eqn: lin-1} into the nonlinear term yields
\[
    |u|^2u = a_1^3 e^{i \left( n_1 x + \omega_{1}t + \theta_{0, 1} \right)}, 
\]
which has the same Fourier support as the linear solution. 
Thus, the nonlinearity generates no additional modes, and the non-resonant 
equation $F=0$ is satisfied with $\hat{u}_p = 0$. 
The frequency update in the alternating numerical 
scheme~\eqref{eqn: alternating-numerical} gives
\[
    \omega' = \omega_{1} + \varepsilon a_1^2 = n_1^2 + \mu + \varepsilon a_1^2. 
\]
Consequently, the nonlinear periodic solution is given by
\begin{equation}
    \label{eqn: non-lin-1}
    u(x, t) = a_1 e^{i \left[ n_1 x + \left(\omega_{1} + \varepsilon a_1^2 \right) t + \theta_{0, 1} \right]}.
\end{equation}
Comparing~\eqref{eqn: lin-1} and~\eqref{eqn: non-lin-1}, we see that the nonlinearity 
introduces only a frequency shift of $\varepsilon a_1^2$, leaving the amplitude 
and spatial profile unchanged.

\paragraph{Quasi-periodic solutions ($m = 2$)}
We now consider $m=2$, for which the linear quasi-periodic 
solution~\eqref{eqn: lin-soln} takes this form 
\begin{equation}
    \label{eqn: lin-2}
    u(x, t) = a_1 e^{i \left( n_1 x + \omega_{1}t + \theta_{0, 1} \right)} + a_2 e^{i \left( n_2 x + \omega_{2}t + \theta_{0, 2} \right)}.
\end{equation}
For simplicity, we set the tangential mode indices $n_1=1$ and $n_2=2$, the 
amplitude $(a_1, a_2)=(1, 1)$, the initial phase 
$\theta_0 = (\theta_{0, 1}, \theta_{0, 2}) = (0, 0)$, and the parameter 
$\mu=\sqrt{2}$. 
The corresponding frequency vector is 
$\omega = (\omega_1, \omega_2) = \left( 1 + \sqrt{2}, \; 4 + \sqrt{2} \right)$, 
so~\eqref{eqn: lin-2} reduces to 
\begin{equation}
    \label{eqn: lin-2-simple}
    u(x, t) = e^{i \omega_1 t} e^{ix} + e^{i \omega_2 t} e^{2ix} = \hat{u}_1(t) e^{ix} + \hat{u}_2(t) e^{2ix}, 
\end{equation}
where $\hat{u}_j(t) = e^{i \omega_j t }$ for $j=1, 2$. 
The nonlinear quasi-periodic solution~\eqref{eqn: nonlin-soln} can be expressed as
\begin{equation}
    \label{eqn: non-lin-2}
    u(x, t) = \sum_{n \in \mathbb{Z}} \hat{u}_n(t) e^{inx} = \sum_{n \in \mathbb{Z}} \left( \sum_{k \in \mathbb{Z}^2} \hat{u}(n, k) e^{i \langle k, \; \omega'\rangle t} \right) e^{inx},
\end{equation}
where $\omega' = (\omega_1', \omega_2')$ is the updated frequency vector. 
Since the resonant Fourier coefficients are fixed at 
$\hat{u}(1, (1, 0)) = \hat{u}(2, (0, 1)) = 1$, the spatial Fourier coefficients 
satisfy
\begin{subequations}
    \label{eqn: nls-quasi-periodic-2}
    \begin{empheq}[left=\empheqlbrace]{align} 
        & \hat{u}_1(t) = e^{i \omega_1' t} + \sum_{k \in \mathbb{Z}^2 \setminus \{(1, 0)\}} \hat{u}(1, k) e^{i \langle k, \; \omega'\rangle t},   \label{eqn: nls-quasi-periodic-2-1}  \\
        & \hat{u}_2(t) = e^{i \omega_2' t} + \sum_{k \in \mathbb{Z}^2 \setminus \{(0, 1)\}} \hat{u}(2, k) e^{i \langle k, \; \omega'\rangle t},   \label{eqn: nls-quasi-periodic-2-2}  \\
        & \hat{u}_n(t) = \sum_{k \in \mathbb{Z}^2} \hat{u}(n, k) e^{i \langle k, \; \omega'\rangle t }, \qquad \qquad \qquad \qquad n \in \mathbb{Z} \setminus \{ 1, 2 \}.                                                                                                                                 \label{eqn: nls-quasi-periodic-2-3} 
    \end{empheq}        
\end{subequations} 
where both $\hat{u}_1(t)$ and $\hat{u}_2(t)$ correspond to the tangential modes, 
while $\hat{u}_n(t)$, $n \in \mathbb{Z} \setminus \{1, 2\}$, correspond to the 
normal modes. 
In the numerical computation, we set $\varepsilon = 0.2$. 
As shown in~\Cref{fig: nls-p-2tori-traj}, the tangential modes maintain amplitudes 
close to unity but exhibit noticeable frequency shifts relative to the linear 
solution. 
In contrast, the representative normal mode $\hat{u}_3(t)$, which is absent 
from the linear solution, develops small-amplitude oscillations due to 
nonlinear interactions.
\begin{figure}[htb!]
    \centering
    \begin{subfigure}[t]{0.3\linewidth}
    \centering
    \includegraphics[scale=0.18]{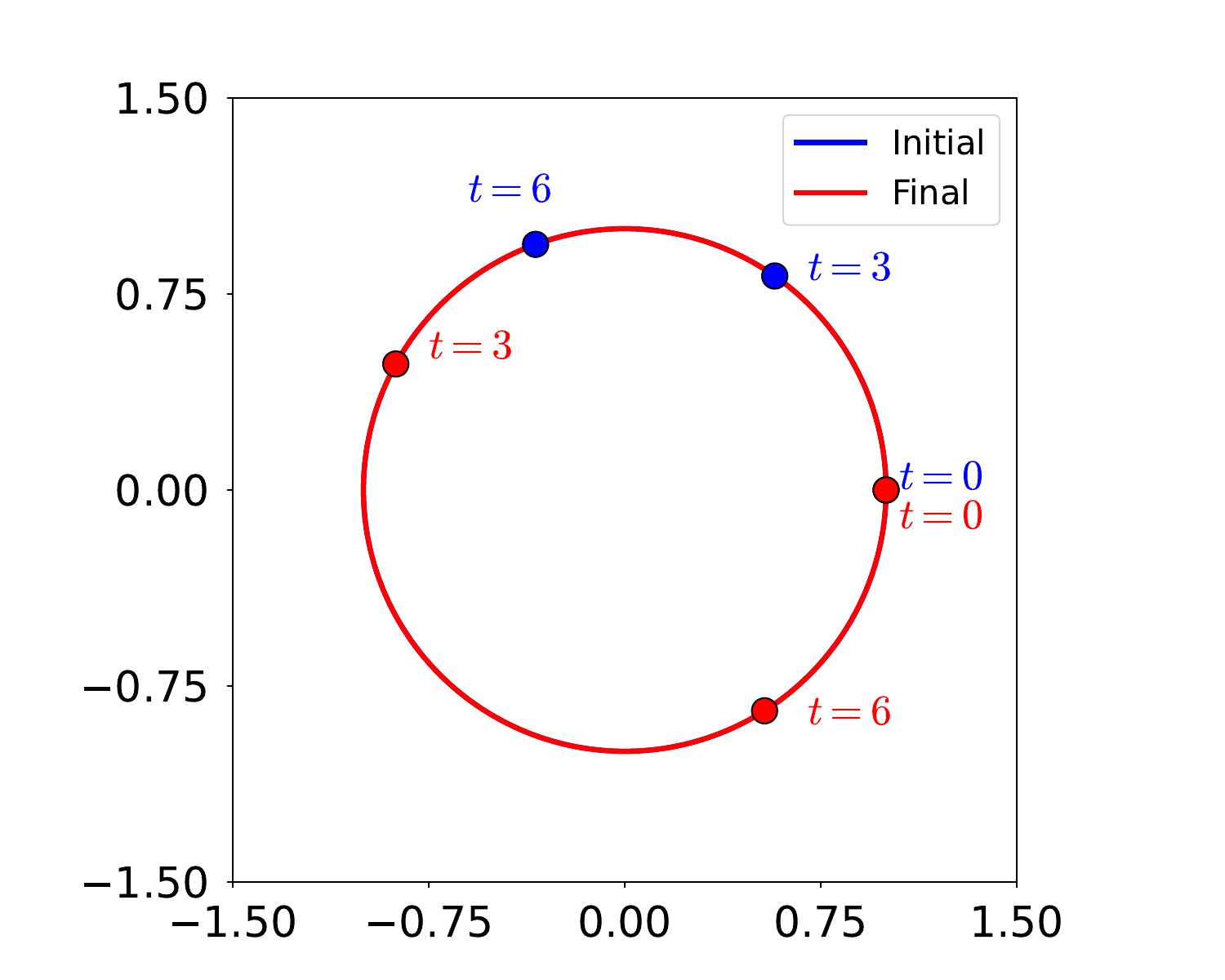}
    \caption{Tangential mode: $\hat{u}_1(t)$}
    \end{subfigure}
    \begin{subfigure}[t]{0.3\linewidth}
    \centering
    \includegraphics[scale=0.18]{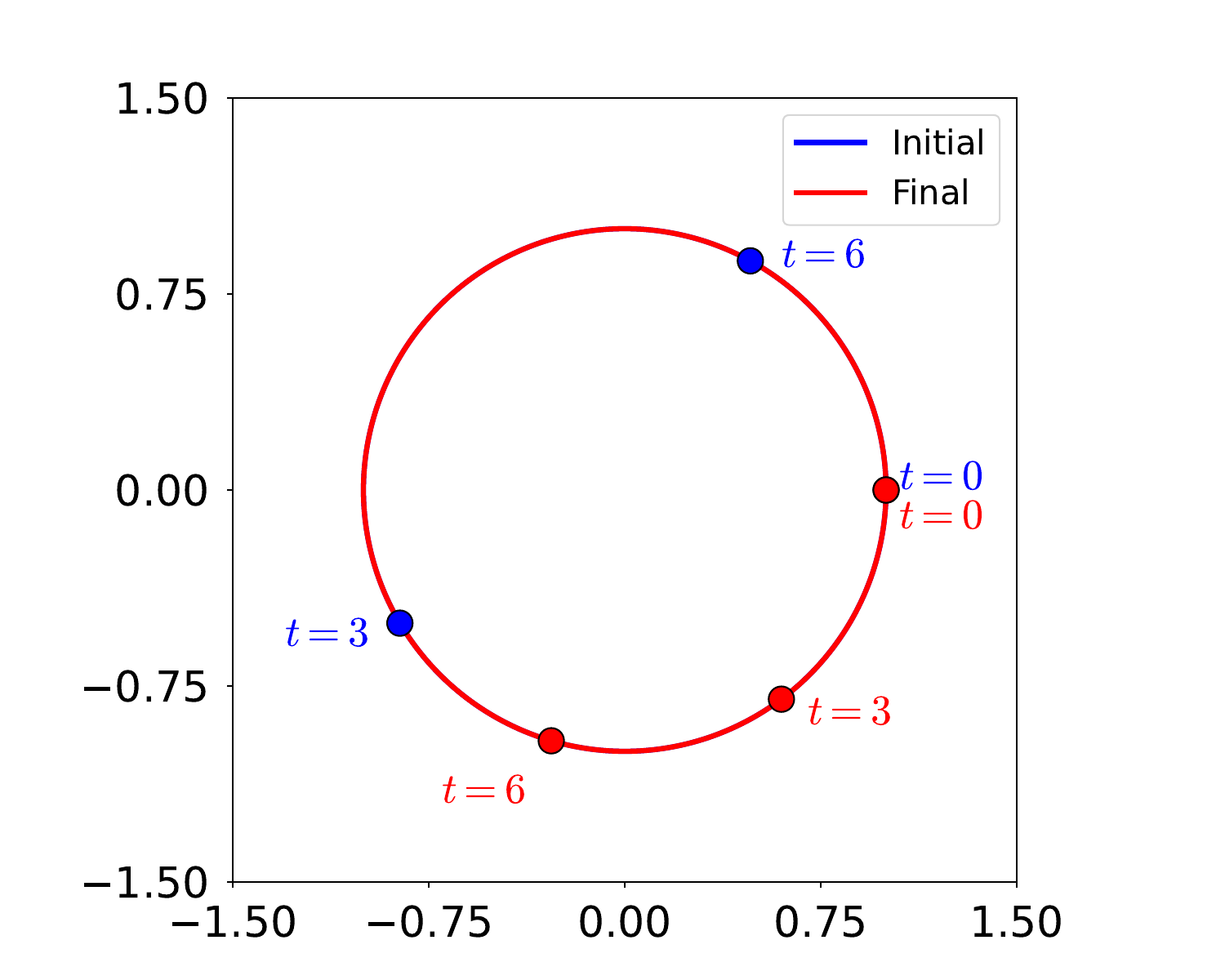}
    \caption{Tangential mode: $\hat{u}_2(t)$}
    \end{subfigure}
    \begin{subfigure}[t]{0.3\linewidth}
    \centering
    \includegraphics[scale=0.18]{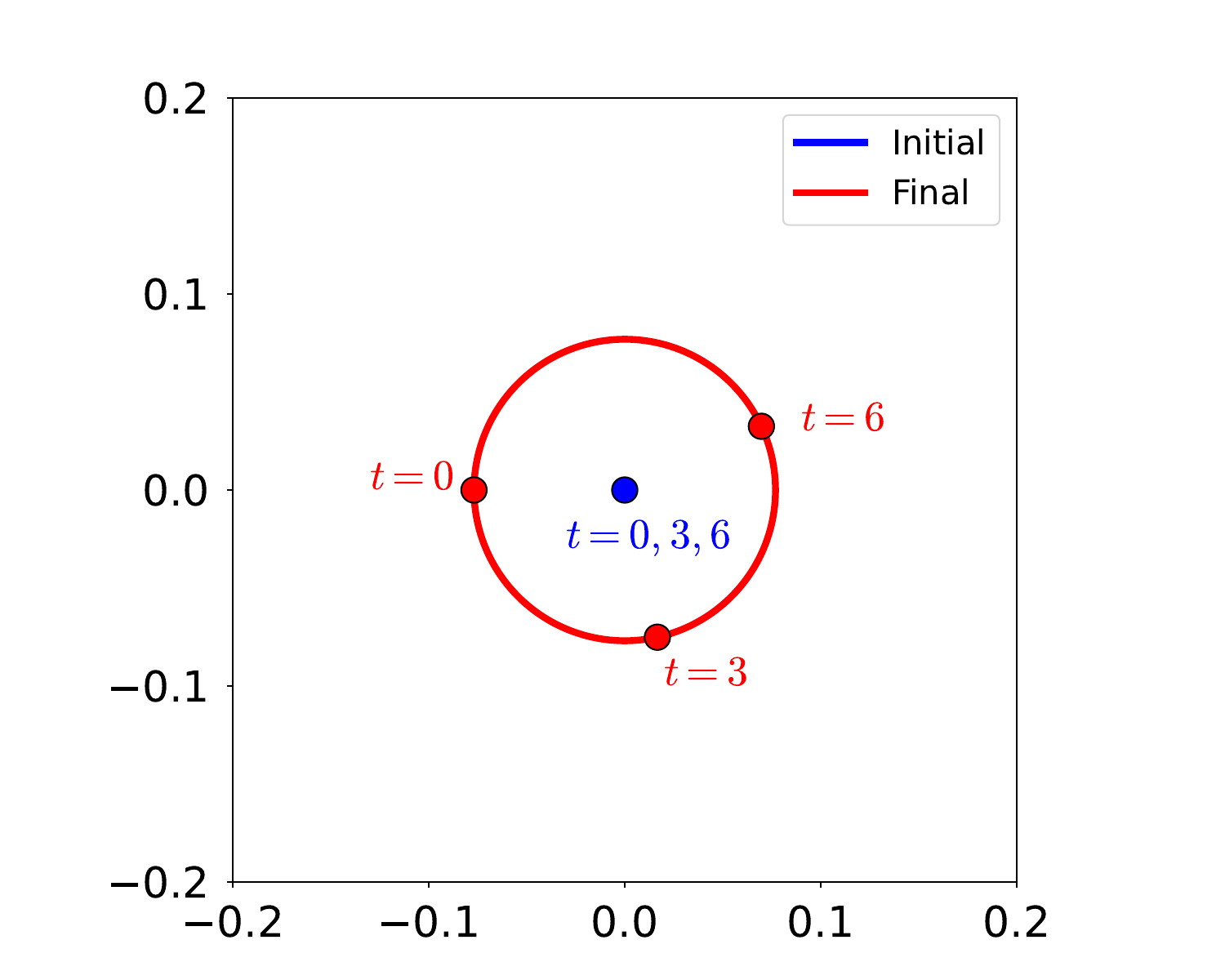}
    \caption{Normal mode: $\hat{u}_3(t)$}
    \end{subfigure}
    \caption{Phase-space trajectories of the NLS equation with periodic 
             boundary conditions~\eqref{eqn: nls-periodic}. 
             ``Initial'' and ``Final'' denote the initial linear solution and 
             the computed nonlinear solution, respectively. 
             Markers indicate specific time points at $t = 0$, $3$, and $6$.} 
    \label{fig: nls-p-2tori-traj}
\end{figure}

We next examine the spatiotemporal distributions of $|u(x,t)|^2$. 
Taking the squared modulus of the linear solution~\eqref{eqn: lin-2-simple} 
yields
\begin{equation}
    \label{eqn: lin-square}
    |u|^2 = 2 + 2 \cos\left( x + 3t \right),
\end{equation}
which depends only on the combined phase $x+3t$ and thus exhibits a 
traveling-wave structure with propagation velocity, 
as shown in~\Cref{subfig: nls-p-2tori-heat-slice-initial}.~\Cref{subfig: nls-p-2tori-heat-slice-final} 
displays the corresponding distribution for the nonlinear solution obtained 
using the alternating numerical scheme~\eqref{eqn: alternating-numerical}. 
Compared with the linear case, the nonlinear wave pattern has lower, flatter 
crests and narrower troughs. 
The propagation bands are also more nearly horizontal in the $(x, t)$-plane, 
indicating a slightly higher propagation speed in the negative $x$-direction.
\begin{figure}[htb!]
    \centering
    \begin{subfigure}[t]{0.42\linewidth}
    \centering
    \includegraphics[scale=0.24]{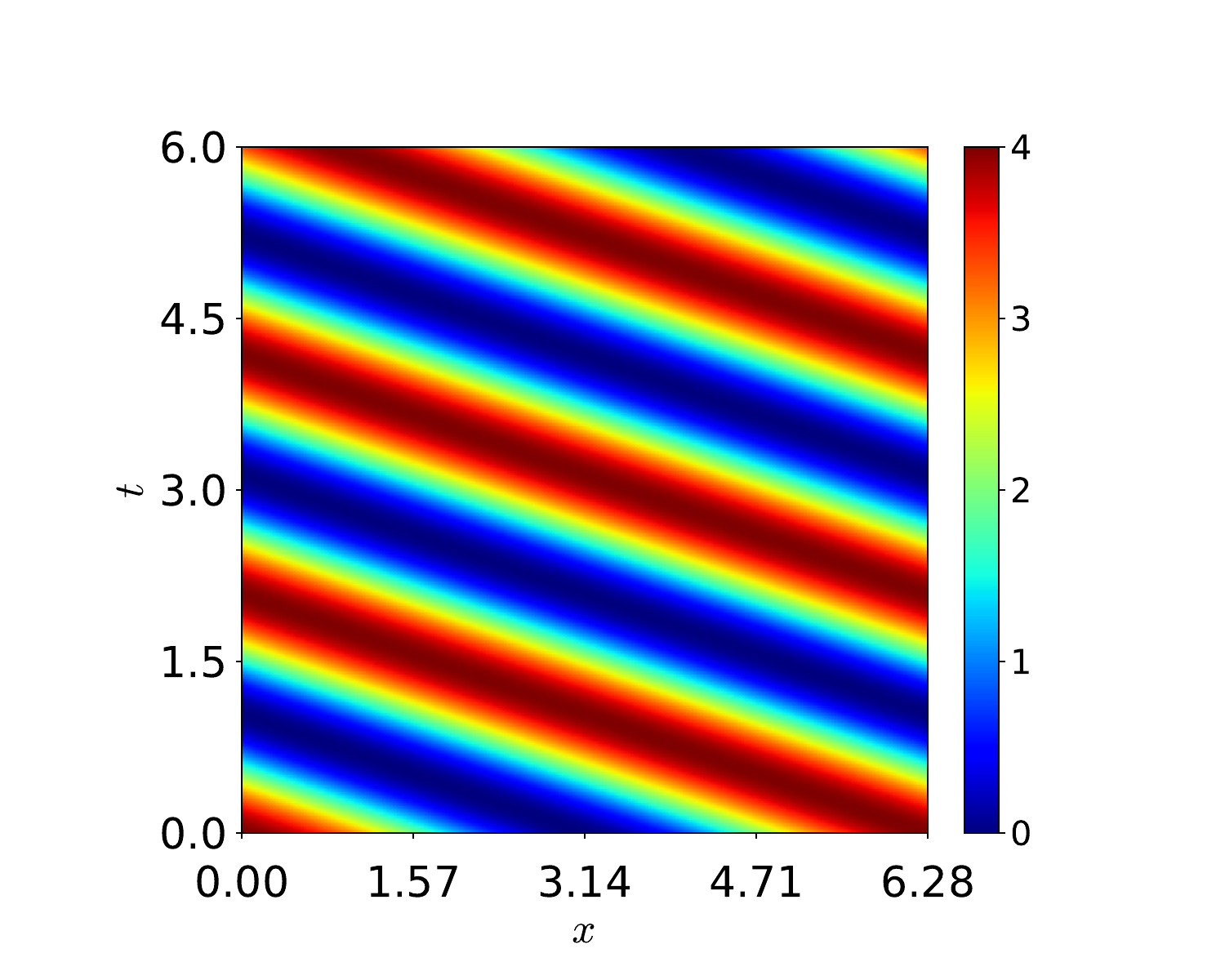}
    \caption{Linear solution}
    \label{subfig: nls-p-2tori-heat-slice-initial}
    \end{subfigure}
    \begin{subfigure}[t]{0.42\linewidth}
    \centering
    \includegraphics[scale=0.24]{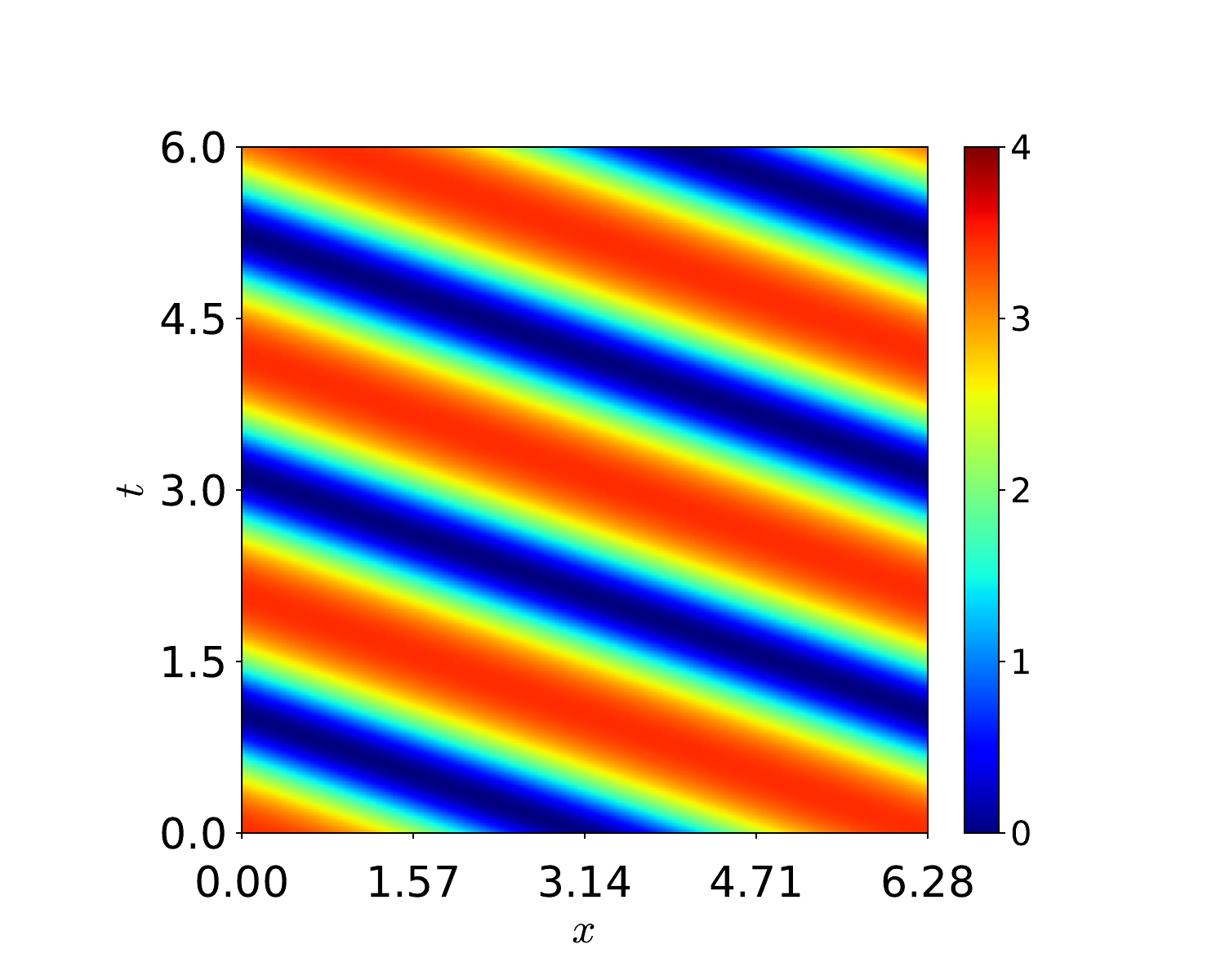}
    \caption{Nonlinear solution}
    \label{subfig: nls-p-2tori-heat-slice-final}
    \end{subfigure} 
    \caption{Spatiotemporal distributions of $|u(x,t)|^2$ for the NLS equation 
             with periodic boundary conditions~\eqref{eqn: nls-periodic}. 
             The linear solution is~\eqref{eqn: lin-square}, and the nonlinear 
             solution is obtained via the alternating numerical 
             scheme~\eqref{eqn: alternating-numerical}.}
    \label{fig: nls-p-2tori-heat-slice}
\end{figure}

\subsection{The main theorem: a priori bound}
\label{subsec: priori-estimate}

In this section, we establish convergence and a priori bounds for the 
alternating numerical procedure~\eqref{eqn: alternating-numerical}. 
For a Fourier vector $\hat{u} = \left\{ \hat{u}(n,k) \right\}_{(n,k) \in \mathbb{Z}^{1+m}}$, 
the standard $\ell_2$-norm is defined by  
\begin{equation}
    \label{eqn: l2-norm}
    \|\hat{u}\|_2 = \left( \sum_{(n,k) \in \mathbb{Z}^{1+m}} |\hat{u}(n,k) |^2 \right)^{\frac12}. 
\end{equation}
Since the Fourier coefficients are continuously differentiable with 
respect to the drifted frequency vector $\omega'$, we introduce a norm that incorporates 
both the coefficients and their frequency derivatives:
\begin{equation}
    \label{eqn: new-norm}
    \|\hat{u}\| = \left( \|\hat{u}\|_2^2 + \sum_{\ell=1}^{m} \| \partial_{\omega'_{\ell}} \hat{u} \|_2^2  \right)^{\frac12}. 
\end{equation}
Equivalently, this norm can be expressed componentwise as  
\[
    \|\hat{u}\|= \left[ \sum_{n \in \mathbb{Z}} \sum_{k \in \mathbb{Z}^m} \left( |\hat{u}(n, k)|^2 + \sum_{\ell = 1}^{m} |\partial_{\omega'_{\ell}}\hat{u}(n, k)|^{2} \right) \right]^{1/2}. 
\]
For each $(n, k) \in \mathbb{Z}^{1+m}$, we also define the coefficient-wise norm  
\begin{equation}
    \label{eqn: new-norm-entries}
    \| \hat{u}(n, k) \| = \left( |\hat{u}(n, k)|^2 + \sum_{\ell = 1}^{m} |\partial_{\omega'_{\ell}}\hat{u}(n, k)|^{2} \right)^{\frac12},  
\end{equation}
which, unlike the absolute value $|\hat{u}(n, k)|$, also accounts for the 
frequency derivatives. 
For such frequency-dependent Fourier vectors, we introduce the normed vector 
space
\begin{equation}
    \label{eqn: l2-space}
    \mathscr{H}(\mathbb{Z}^{1+m}) =
    \left\{
        \hat{u} = 
        \left\{ \hat{u}(n,k) \right\}_{(n,k) \in \mathbb{Z} \times \mathbb{Z}^m} ~\bigg|~
        \hat{u}(n,k) \in \mathbb{R}, \; \|\hat{u}\| < \infty
    \right\}. 
\end{equation}
To characterize the regularity of the solutions, we define the Gevrey decay set
\begin{equation}
    \label{eqn: gevrey-decay-set}
    \mathcal{K}(s) =
    \left\{ \hat{u} \in \mathscr{H}(\mathbb{Z}^{1+m}) ~\bigg|~ \sup_{(n,k) \in \mathbb{Z} \times \mathbb{Z}^m} \left( \| \hat{u}(n,k) \| \exp \left\{ (|n|+|k|)^s \right\} \right) \leq 1 \right\},
\end{equation}
where $0<s<1$. 
This condition imposes subexponential decay on both the Fourier coefficients and 
their frequency derivatives, providing the regularity control needed throughout 
the iteration. 
We now state the main theorem, which establishes convergence of the alternating 
numerical procedure~\eqref{eqn: alternating-numerical} and provides a priori 
error bounds for its iterates.

\begin{theorem}[Convergence and a priori bound]
    \label[theorem]{thm: main}
    Fix $\delta_0 \in (0, 1)$ and $\tau > m$, and let 
    $\mathscr{A} = [\delta_0, 1]^{m}$. 
    There exists a threshold $\varepsilon_0 > 0$ such that, for any 
    $0 < \varepsilon \leq \varepsilon_0$ the following statements hold: 
    \begin{itemize}
        \item[1.] \textbf{Measure of resonance}: 
                  There exist constants $\kappa = \kappa(m, \mu, \tau) >0$ and 
                  $\delta = \delta(\varepsilon) > 0$, with $\delta \rightarrow 0$ 
                  as $\varepsilon \rightarrow 0$, such that the excluded set 
                  $\mathscr{A}^{\star} \subset \mathscr{A}$ of initial amplitudes 
                  satisfies $\mathrm{mes}(\mathscr{A}^{\star}) \leq \kappa + \delta$. 

        \item[2.] \textbf{Iterative sequence}: 
                  For any $ a \in \mathscr{A} \setminus \mathscr{A}^{\star}$, the 
                  alternating numerical scheme~\eqref{eqn: alternating-numerical}, 
                  initialized at $(\omega^{(0)}, \hat{u}_p^{(0)}) = (\omega, 0)$, 
                  generates a well-defined sequence of iterates 
                  $\{ (\omega^{(r)}, \hat{u}_p^{(r)}) \}_{r=0}^{\infty}$. 
                  Moreover, for every $r\geq0$, the updated frequency map
                  $a\mapsto\omega^{(r+1)}(a)$ is a local diffeomorphism.

        \item[3.] \textbf{Convergence}: 
                  There exists a Gevrey exponent $s = s(\varepsilon) \in (0, 1)$ 
                  such that this sequence converges to an exact solution pair 
                  $(\omega^\star, \hat{u}_p^\star)$. 
                  The limiting frequency map $a\mapsto\omega^\star(a)$ is also 
                  a local diffeomorphism, and $\hat{u}_p^\star\in\mathcal{K}(s)$.

        \item[4.] \textbf{A priori error bounds}: 
                  For any $r \geq 0$, the iterative sequence 
                  $\{ \omega^{(r)}, \hat{u}_p^{(r)} \}_{r=0}^{\infty}$ satisfies 
                  the following error bounds:
                  \begin{subequations}
                      \label{eqn: convergence-vector-frequency}
                      \begin{empheq}[left=\empheqlbrace]{align} 
                          \|\hat{u}^{(r)}-\hat{u}^{\star}\| &\leq \exp\left\{ -\frac{3}{2}(M^s)^{r+1} \right\},  \label{eqn: u-hat-converge}  \\
                          |\omega^{(r)}-\omega^{\star}| &\leq \exp\left\{ -\frac{3}{2}(M^s)^{r+1} \right\}.      \label{eqn: frequency-converge}
                      \end{empheq}
                  \end{subequations}
    \end{itemize}
    Furthermore, define the $r$-th approximate solution by 
    \begin{equation}
        \label{eqn: approximate-pde-solution}
        u^{(r)}(x,t) = \sum_{ (n, k) \in \mathbb{Z}^{1+m} } \hat{u}^{(r)}(n, k) e^{i \langle k, \; \omega^{(r)} t + \theta_0 \rangle } e^{inx}. 
    \end{equation}
    Then, for any  $t \in [0, M^r]$, the following time-domain error bound holds:
    \begin{equation}
        \label{eqn: pde-solution-error}
        \|u^{(r)}(x,t)-u^{\star}(x,t)\|_{L^2} \leq \exp \left\{ -(M^s)^{r+1} \right\},
    \end{equation}
    where $u^{\star}(x, t)$ is the exact quasi-periodic solution of the NLS 
    equation with periodic boundary conditions~\eqref{eqn: nls-periodic} 
    corresponding to $(\omega^{\star}, \hat{u}_p^{\star})$. 
    Consequently, for any $t\geq0$, $u^{(r)}(x,t)$ converges in $L^2$ to 
    $u^\star(x,t)$ as $r \rightarrow \infty$.
\end{theorem}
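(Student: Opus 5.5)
We prove \Cref{thm: main} by induction on the iteration index $r$, in the style of the Craig--Wayne--Bourgain scheme. The induction is carried out on the enlarged space-time lattice $\mathbb{Z}^{1+m}$ with scales $N_r = M^{r+1}$. The inductive hypotheses at step $r$ are the following.
\begin{enumerate}[label=(H\arabic*)]
\item \emph{Support and Gevrey decay.} $\hat{u}_p^{(r)}$ is supported in $\Lambda^{1+m}_{N_r}$, and $\hat{u}_p^{(r)} \in \mathcal{K}(s)$ with $s = s(\varepsilon)$ small.
\item \emph{Smallness of increments.} $\|\hat{u}_p^{(r)} - \hat{u}_p^{(r-1)}\| \le \exp\{-\tfrac{3}{2}(M^s)^{r}\}$ and $|\omega^{(r)} - \omega^{(r-1)}| \le \exp\{-\tfrac{3}{2}(M^s)^{r}\}$.
\item \emph{Green's function estimates.} On the set of good amplitudes $a$, the restricted operator $(T+\varepsilon B)_{N_{r+1}}$ is invertible with
\begin{equation*}
\|(T+\varepsilon B)_{N_{r+1}}^{-1}\| \le e^{N_{r+1}^{\sigma}}
\end{equation*}
for some $\sigma < s$, and its off-diagonal entries decay subexponentially, of size $e^{-|x-y|^{s'}}$.
\item \emph{Residual bound.} $\|F(\hat{u}_p^{(r)}, \omega^{(r+1)}; a)\| \le \exp\{-2(M^s)^{r+1}\}$.
\end{enumerate}

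\emph{Newton step and convergence.} Given (H1)--(H4), the Newton update~\eqref{eqn: alternating-coefficient} yields a quadratic residual: the new residual is bounded by the truncation tail $P_{N_{r+1}}^{\perp}F$ plus quadratic terms. Gevrey decay controls the tail by $e^{-N_{r+1}^s}$, and the quadratic part is bounded by $\|T_{N_{r+1}}^{-1}\|^2\|F\|^2$, which is super-exponentially small. Together these close (H2) and (H4) at step $r+1$.

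The role of the correction $B$ in~\eqref{eqn: operator-b} is to make the Newton step exact to first order for the coupled system. Since $\omega^{(r+1)}$ depends on $\hat{u}_p^{(r)}$ through~\eqref{eqn: alternating-frequency}, the true Jacobian of $\hat{u}_p \mapsto F(\hat{u}_p, \omega(\hat{u}_p))$ equals $T + \varepsilon B$.

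Summing the increments gives Cauchy sequences, and hence the limit $(\omega^\star, \hat{u}_p^\star)$ and the bounds~\eqref{eqn: convergence-vector-frequency}. The time-domain bound~\eqref{eqn: pde-solution-error} follows from Parseval and the estimate $|e^{i\langle k,\omega^{(r)}t\rangle} - e^{i\langle k,\omega^\star t\rangle}| \le |k|\,t\,|\omega^{(r)} - \omega^\star|$. For $t \le M^r$ and $|k| \le N_r$, the polynomial factors are absorbed by weakening $\tfrac{3}{2}$ to $1$ in the exponent.

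\emph{Frequency map.} For item 2, note that the $Q$-equation at the lowest order is explicit. For the cubic NLS, $\hat{X}_q(0;a)_j \, a_j^{-1} = 2\sum_i a_i^2 - a_j^2$. Hence $a \mapsto \omega^{(1)}$ has Jacobian $\varepsilon(4 \mathbf{1}\mathbf{1}^\top - 2I)\,\mathrm{diag}(a)$ acting on $a$, which is invertible because $\det(4\mathbf{1}\mathbf{1}^\top - 2I) = (-2)^{m-1}(4m-2) \neq 0$ and $a_j \ge \delta_0$.

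Later corrections contribute $O(\varepsilon^2)$ to the derivative, since the $\partial_{\omega'}$ part of the norm~\eqref{eqn: new-norm} controls $\partial_a \hat{u}_p^{(r)}$ through the chain rule. Therefore each map $a \mapsto \omega^{(r+1)}$, and the limit map $a \mapsto \omega^\star$, is a local diffeomorphism with Jacobian bounded below by $c\,\varepsilon\,\delta_0$. This is the ``key observation'' that lets us bypass the Birkhoff normal form.

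\emph{Measure estimate (item 1).} We transfer small-divisor conditions on $\omega'$ to conditions on $a$ via this diffeomorphism, at the cost of a Jacobian factor of order $(\varepsilon\delta_0)^{-m}$. At the initial scale $N_0$, we exclude the amplitudes $a$ for which $|-\langle k,\omega'\rangle + n^2 + \mu| < \gamma/|k|^\tau$. Because $\omega_j = n_j^2 + \mu$ is an integer plus $\mu$, each divisor is close to an integer whenever $\omega' - \omega = O(\varepsilon)$. For fixed $k$, only $O(1)$ values of $n$ with $n^2 \approx \langle k,\omega\rangle - \mu$ give near-vanishing divisors, so $|n| \lesssim |k|^{1/2}$ and the $n$-sum is absorbed. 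With $\tau > m$ the excluded measure is therefore bounded by $\kappa(m,\mu,\tau)$ plus an $\varepsilon$-dependent $\delta$. At later scales, the excluded sets come from large-deviation estimates and a semialgebraic-set count, which gives measure $e^{-N_r^{c}}$; these are summable and contribute to $\delta$.

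\emph{Main obstacle.} We expect the hardest step to be (H3) at scales $r \ge 1$, namely the ``inversion implies localization'' argument. It requires showing that singular sites in $\Lambda^{1+m}_{N_{r+1}}$ are sparse and well separated. For fixed $k$, the divisor $D_{n,k}$ is small only for the two spatial indices $\pm n$ with $n^2 \approx \langle k,\omega'\rangle - \mu$, and these may lie far apart.

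Our plan is to enclose each site of such a pair in its own small box $\Lambda$ of size $N_{r}^{c}$. We invert each small box using the inductive hypothesis at the earlier scale, and invert the pair block directly via a $2\times 2$ Schur complement. Then we glue the local inverses into the global inverse using the resolvent identity, exploiting the off-diagonal Gevrey decay of $S$ and $B$, which are perturbations of size $\varepsilon$. The loss in the decay exponent $s$ at each gluing step must be kept summable; this determines the choice of $s(\varepsilon)$ and of $M(\varepsilon)$.
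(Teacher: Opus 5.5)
Your overall architecture is the paper's: truncated Newton steps on growing boxes with the $B$-corrected Jacobian, splitting far-apart singular pairs $(\pm n_\star,k_\star)$ into small boxes glued by the resolvent identity, Cauchy summation, and the $I_1+I_2$ time-domain split. Your twist matrix $\varepsilon(4\mathbf{1}\mathbf{1}^\top-2I)\mathrm{diag}(a)$ justifies item~2 better than Proposition~\ref{prop: frequency-drift}, which only checks that $\partial Q/\partial\omega'$ is invertible; the nondegeneracy actually needed is that of $\partial Q/\partial a$, which is what you compute.

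The genuine gap is in item~1 together with the base case of (H3). Since $\omega'-\omega=O(\varepsilon)$ for every $a$, each divisor equals its unperturbed value $-\langle k,\omega\rangle+n^2+\mu$ up to $O(\varepsilon|k|)$, and some unperturbed values vanish. For example, at the non-resonant site $(-n_j,e_j)$ with $-n_j\notin\mathcal N$, the divisor is $\omega_j-\omega_j'=O(\varepsilon)$ for all $a\in\mathscr A$. This leaves two options, both of which fail.
\begin{enumerate}
\item You exclude this site. With $\gamma$ independent of $\varepsilon$, it then removes all of $\mathscr A$, and counting admissible $n$ does not help.
\item You keep it, as happens for thresholds $\gamma\lesssim\varepsilon$. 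Then the very first box contains a divisor of the same order as the entries of $\varepsilon(S+B)$, so the Neumann series that should start (H3) is unavailable.
\end{enumerate}
The uncompensated factor $(\varepsilon\delta_0)^{-m}$ in your pull-back is a symptom of the same problem: an $\varepsilon$-independent $\kappa(m,\mu,\tau)$ forces thresholds that shrink with $\varepsilon$. What is missing is a mechanism that inverts $T$ on such $O(\varepsilon)$ blocks. This would be a normal-form-type analysis, possibly after using momentum conservation to discard sites like $(-n_j,e_j)$, which are decoupled from the support of the iterates. You cannot borrow this step from the paper. Its measure bounds (Lemma~\ref{lem: near-measure-initial}, $G_1$, $G_2$) are stated in $\Omega$ without quantifying the pull-back to $\mathscr A$. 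Its base case, Theorem~\ref{thm: small-size}, rests on $|D|\ge(1+mN)^{-\tau}$ throughout $\Gamma_{n_0,N}\setminus\mathcal S$.

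Second, the Newton bookkeeping does not close as written. Because $\mathrm{supp}\,F(\hat{u}_p^{(r)})\subseteq\Lambda^{1+m}_{3N_r}\subset\Lambda^{1+m}_{N_{r+1}}$, the tail $P^{\perp}_{N_{r+1}}F$ is zero. The linear error is instead the boundary coupling $P^{\perp}_{N_{r+1}}(T+\varepsilon B)P_{N_{r+1}}\Delta^{(r)}$. A bound $e^{-N_{r+1}^s}$ on it would not reproduce (H4), which at step $r+1$ demands $e^{-2N_{r+1}^s}$. As in the paper's splitting~\eqref{eqn: decomposition-t-tr}, this term must be handled by combining the off-diagonal decay of $(T+\varepsilon B)^{-1}_{N_{r+1}}$ with $\|F\|\le e^{-2N_r^s}$. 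That step, and already your quadratic term $\|T^{-1}_{N_{r+1}}\|^2\|F\|^2$, forces $M^s<2$ (the paper even needs $95M^s\le 96$). But then your growth bound $e^{N_{r+1}^{\sigma}}$ with $\sigma<s$ is below $e^{2^{r+2}}$, i.e.\ $O(1)$ at the first scales. By contrast, exclusions of the form $|D|\ge\gamma(1+|k|)^{-\tau}$ only give $\|D_N^{-1}\|\lesssim N^{\tau}/\gamma$ with $N\ge M^2$ huge. So (H3) as formulated is not something the small-divisor analysis can deliver. The early-scale losses must be paid for by the smallness of $\varepsilon$, for instance by carrying explicit powers of $\varepsilon$ in (H2) and (H4), rather than by Gevrey factors alone.
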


The proof of~\Cref{thm: main} is completed in~\Cref{sec: iterative-scheme} by 
combining the implementation framework developed in~\Cref{sec: implement} with 
the multiscale analysis presented in~\Cref{sec: multi-scale}.

\subsection{Organization of the paper}
\label{subsec: organization}

The remainder of this paper is organized as follows.~\Cref{sec: basic-setting} 
establishes uniform properties of the associated vectors and operators within 
the Gevrey decay class, which serve as the foundation for the subsequent 
analysis.~\Cref{sec: implement} formulates the implementation conditions 
corresponding to these uniform properties and verifies them by inductive 
arguments.~\Cref{sec: multi-scale} derives the inversion and localization 
conditions for the tangent operator restricted to outer boxes via a multiscale 
analysis, built upon inductive bases for two types of small-scale 
cases.~\Cref{sec: iterative-scheme} establishes the iterative lemma, proves the 
main convergence results, and presents numerical experiments for the NLS 
equation~\eqref{eqn: nls} under Dirichlet boundary conditions.~\Cref{sec: nlw} 
generalizes the above results to the NLW equation~\eqref{eqn: nlw} under both 
periodic and Dirichlet boundary conditions, together with the corresponding 
numerical demonstrations. Finally,~\Cref{sec: conclu} concludes the paper and 
outlines directions for future research.

\section{Gevrey decay properties}
\label{sec: basic-setting}

In this section, we establish uniform Gevrey decay estimates and norm bounds 
for the nonlinear vector field and its associated operators on $\mathcal{K}(s)$. 
Let $\mathbb{T}^{1+m} = [0, 2\pi]^{1 + m}$ denote the extended 
$(1+m)$-dimensional space-time torus, with coordinates $(x, \theta)$. 
The alternating numerical scheme~\eqref{eqn: alternating-numerical} preserves 
the reality of the frequency vectors and Fourier coefficients. 
Since the initial data $(\omega^{(0)}, \hat{u}_p^{(0)}) = (\omega, 0)$ and the 
amplitude $a$ are real, all iterates $(\omega^{(r)}, \widehat{u}_p^{(r)})$ 
remain real. 
Accordingly, we restrict our analysis to Fourier coefficient vector satisfying 
$\hat{u}(n, k) \in \mathbb{R}$ for all $(n, k) \in \mathbb{Z}^{1+m}$. 
The components of the nonlinear vector field $\hat{X}$ are given by 
\begin{equation}
    \label{eqn: vector-field-def}
    \hat{X}(n, k) = \frac{1}{(2\pi)^{1+m}} \iint_{\mathbb{T}^{1+m}} |u|^2 u e^{ - i \left( \langle k, \theta \rangle + nx \right) } d\theta dx. 
\end{equation}
Since the Fourier coefficients of $|u|^2 u$ are convolution sums of products of 
the real coefficients $\hat{u}(n, k)$, $\hat{X}(n, k)$ is real as well. 
Taking the complex conjugate of~\eqref{eqn: vector-field-def}, we therefore 
obtain 
\begin{equation}
    \label{eqn: symmetric-relation}
    \frac{1}{(2\pi)^{1+m}} \iint_{\mathbb{T}^{1+m}} |u|^2 u e^{ - i \left( \langle k, \theta \rangle + nx \right) } d\theta dx = \frac{1}{(2\pi)^{1+m}} \iint_{\mathbb{T}^{1+m}} |u|^2 \overline{u} e^{ i \left( \langle k, \theta \rangle + nx \right) } d\theta dx. 
\end{equation}
The following proposition, whose proof is given in~\Cref{subsec: vector-field}, 
provides uniform decay estimates and norm bounds for the vector field 
$\hat{X} = (\hat{X}_q, \hat{X}_p)^{\top}$.

\begin{prop}
    \label[proposition]{prop: vector-field}
    Suppose that $\hat{u} \in \mathcal{K}(s)$. 
    Then there exist constants $\gamma_1, \gamma_2 > 0$, depending on the 
    number $m$ of resonant modes, the Gevrey exponent $s$, the initial 
    amplitude vector $a$ and the non-resonant vector $\hat{u}_p$, such that 
    \begin{subequations}
        \label{eqn: vector-field}
        \begin{empheq}[left=\empheqlbrace]{align} 
            & \sup_{(n,k) \in \mathbb{Z}^{1+m}} \left( \|\hat{X}(n,k)\| e^{ (|n| + |k|)^s } \right) \leq \gamma_1,  \label{eqn: vector-field-comp}  \\
            & \max \big \{ \|\hat{X}_q\|, \|\hat{X}_p\| \big \} \leq \|\hat{X}\| \leq \gamma_2.                     \label{eqn: vector-field-bound}
        \end{empheq}  
    \end{subequations} 
\end{prop}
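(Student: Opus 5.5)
We write $\hat{X}$ as the triple convolution \eqref{eqn: nonlinear-convolution} over the space-time lattice,
\[
\hat{X}(n,k)=\sum_{\substack{n_1+n_2+n_3=n\\ k_1+k_2+k_3=k}}\hat{u}(n_1,k_1)\,\hat{\overline{u}}(n_2,k_2)\,\hat{u}(n_3,k_3),
\]
and note that $\hat{\overline{u}}(n,k)=\overline{\hat{u}(-n,-k)}=\hat{u}(-n,-k)$ for real coefficients. Hence $\hat{\overline{u}}$ satisfies the same Gevrey bound as $\hat{u}$, since $|n|+|k|$ is invariant under $(n,k)\mapsto(-n,-k)$.

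\emph{Derivatives.} By the product rule,
\[
\partial_{\omega'_\ell}\hat{X}(n,k)=\sum\Big(\partial_{\omega'_\ell}\hat{u}\,\hat{\overline{u}}\,\hat{u}+\hat{u}\,\partial_{\omega'_\ell}\hat{\overline{u}}\,\hat{u}+\hat{u}\,\hat{\overline{u}}\,\partial_{\omega'_\ell}\hat{u}\Big).
\]
Each factor, or its derivative, is bounded in absolute value by the coefficient-wise norm \eqref{eqn: new-norm-entries}. Therefore
\[
\|\hat{X}(n,k)\|\le \sqrt{1+3m}\sum\|\hat{u}(n_1,k_1)\|\,\|\hat{u}(-n_2,-k_2)\|\,\|\hat{u}(n_3,k_3)\|.
\]
It thus suffices to bound the scalar triple convolution $\sum\prod_i e^{-(|n_i|+|k_i|)^s}$ using $\hat{u}\in\mathcal{K}(s)$.

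\emph{Gevrey subadditivity (main step).} Write $x_i=|n_i|+|k_i|$. The triangle inequality gives $|n|+|k|\le x_1+x_2+x_3$. Since $0<s<1$, the map $t\mapsto t^s$ is concave with value $0$ at $0$. The sharper inequality needed is
\[
x_1^s+x_2^s+x_3^s\ \ge\ (x_1+x_2+x_3)^s+c_s\big(x_1^s+x_2^s+x_3^s-\max_i x_i^s\big),
\]
for some $c_s\in(0,1)$; one can take $c_s=1-2^{s-1}$. This follows from the two-term inequality $a^s+b^s\ge (a+b)^s+(1-2^{s-1})\min(a,b)^s$, proved by normalizing $a+b=1$ and checking the resulting one-variable function, applied twice. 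Then
\[
e^{(|n|+|k|)^s}\prod_i e^{-x_i^s}\le \exp\Big\{-c_s\sum_{i\ne i^*}x_i^s\Big\},
\]
where $i^*$ indexes the largest $x_i$.

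\emph{Summation.} The two non-maximal indices $(n_i,k_i)$ are free in $\mathbb{Z}^{1+m}$, and the maximal one is then determined by $(n,k)$. Up to the factor $3$ for the choice of $i^*$, the sum is therefore bounded by
\[
\Big(\sum_{(n',k')\in\mathbb{Z}^{1+m}}e^{-c_s(|n'|+|k'|)^s}\Big)^2=:C(m,s)^2<\infty,
\]
which proves \eqref{eqn: vector-field-comp} with $\gamma_1=3\sqrt{1+3m}\,C(m,s)^2$.

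\emph{Sign convention.} The statement lets the constants depend on $a$ and $\hat{u}_p$. Indeed, the resonant entries $a_j\le1$ together with $\hat{u}_p$ form $\hat{u}$, and membership in $\mathcal{K}(s)$ is all that is used. In particular, no separate treatment of the $q$-part is needed beyond noting $\hat{u}_q=a$, $\partial_{\omega'}a=0$. If one prefers to track the actual sizes, the bound can be refined to $\gamma_1\lesssim\|\hat{u}\|$-dependent constants.

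\emph{Norm bound.} For \eqref{eqn: vector-field-bound}, square and sum the pointwise bound:
\[
\|\hat{X}\|^2\le\gamma_1^2\sum_{(n,k)}e^{-2(|n|+|k|)^s}=:\gamma_2^2<\infty.
\]
The inequality $\max\{\|\hat{X}_q\|,\|\hat{X}_p\|\}\le\|\hat{X}\|$ is immediate, because $\hat{X}_q$ and $\hat{X}_p$ are restrictions of $\hat{X}$ to complementary index sets.

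The main obstacle is the subadditivity inequality with a positive gain $c_s$. Without that gain the triple sum would not converge, since only $e^{(|n|+|k|)^s}$ is cancelled. The constant $c_s$ degenerates as $s\to1$ or $s\to0$, which is why $\gamma_1$ depends on $s$.
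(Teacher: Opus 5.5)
Your proof is correct and follows essentially the paper's route. You write $\hat{X}$ as the cubic convolution and treat $\partial_{\omega'_\ell}\hat{X}$ by the Leibniz rule. You then use the concavity gain $a^s+b^s-(a+b)^s\ge c_s\min\{a,b\}^s$ (the paper's Lemma~\ref{lem: app-1}) to cancel $e^{(|n|+|k|)^s}$, sum the leftover Gevrey series, and square-sum to get the norm bound. The only difference is cosmetic: you handle the triple convolution in one step with a three-term inequality and the non-sharp constant $1-2^{s-1}$, whereas the paper proves the two-fold estimate (Lemma~\ref{lem: app-2}) with the sharp constant $2-2^s$ and iterates it.
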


We next consider the derivative of $\hat{X}$ with respect to the non-resonant 
Fourier coefficients, 
$\partial \hat{X} / \partial \hat{u}_p = ( \partial \hat{X}_q / \partial \hat{u}_p, \partial \hat{X}_p / \partial \hat{u}_p )^{\top}$. 
Recall from~\eqref{eqn: S-non-linear} that the tangent operator is 
$S = \partial \hat{X}_p / \partial \hat{u}_p$. 
For non-resonant indices $(n,k), (n',k') \notin \mathcal{S}$, we 
utilize~\eqref{eqn: symmetric-relation} to express its entries as
\begin{align*}
    S\left( (n, k), (n',k') \right) & = S\left( n - n', k - k' \right) \\
                                    & = \frac{4}{(2\pi)^{1+m}} \iint_{\mathbb{T}^{1+m}} |u|^{2} \cos\left[ \langle k - k', \theta \rangle + (n- n')x \right] d\theta dx.
\end{align*}
The following proposition establishes the symmetry, off-diagonal decay, and 
boundedness of $S$, with its formal proof given in~\Cref{subsec: convolution}.

\begin{prop}
    \label[proposition]{prop: convolution}
    Suppose that $\hat{u} \in \mathcal{K}(s)$. 
    Then there exist constants $\gamma_3, \gamma_4 > 0$, depending on the 
    number $m$ of resonant modes, the Gevrey exponent $s$, the initial 
    amplitude vector $a$ and the non-resonant vector $\hat{u}_p$, such that $S$ 
    is symmetric and satisfies:
    \begin{subequations}
        \label{eqn: tangent}
        \begin{empheq}[left=\empheqlbrace]{align} 
            & \sup_{ (n, k), (n', k') \notin \mathcal{S}} \left(\|S\left( (n, k), (n', k') \right)\| e^{ (|n - n'| + |k - k'|)^s } \right) \leq \gamma_3,                                   \label{eqn: tangent-comp} \\
            & \max \left\{ \left\| \frac{\partial \hat{X}_q}{\partial \hat{u}_p} \right\|, \|S\| \right\} \leq \left\| \frac{\partial \hat{X}}{\partial \hat{u}_p} \right\| \leq \gamma_4.  \label{eqn: tangent-bound}
        \end{empheq}        
    \end{subequations}
\end{prop}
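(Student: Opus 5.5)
The plan is to compute the entries of $S$ explicitly as a bilinear convolution of $\hat{u}$ with itself, and then transfer the Gevrey decay of $\hat{u}\in\mathcal{K}(s)$ to the Toeplitz kernel $S(n-n',k-k')$ by a subadditivity argument for the weight $e^{(|n|+|k|)^s}$.

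\textbf{Symmetry.} Differentiate the cubic convolution \eqref{eqn: nonlinear-convolution} in $\hat{u}(n',k')$. Since all coefficients are real, the relation \eqref{eqn: symmetric-relation} combines the $\hat{u}$ and $\hat{\overline{u}}$ contributions. The entries then depend only on the differences $(n-n',k-k')$ and are given by the cosine integral displayed before the proposition. In convolution form this reads
\[
S(n-n',k-k') = c\sum_{n_1+n_2=n-n'}\ \sum_{k_1+k_2=k-k'} \hat{u}(n_1,k_1)\,\hat{u}(-n_2,-k_2)\ \text{(plus symmetric terms)},
\]
with an absolute constant $c$. Because cosine is even, $S(n-n',k-k')=S(n'-n,k'-k)$, which gives symmetry at once.

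\textbf{Decay estimate \eqref{eqn: tangent-comp}.} Apply $\|\cdot\|$ entrywise. By the product rule, $\partial_{\omega'_\ell}$ of each bilinear term splits into two terms, each containing one differentiated and one undifferentiated factor. Hence
\[
\|S(\nu)\|\le C\sum_{\nu_1+\nu_2=\nu}\|\hat{u}(\nu_1)\|\,\|\hat{u}(\nu_2)\|, \qquad \nu=(n,k)\in\mathbb{Z}^{1+m}.
\]
Writing $|\nu|:=|n|+|k|$ and using the definition of $\mathcal{K}(s)$, each summand is bounded by $e^{-|\nu_1|^s-|\nu_2|^s}$. The key inequality is that $x\mapsto x^s$ is subadditive for $0<s<1$, with a quantitative gain:
\[
|\nu_1|^s+|\nu_2|^s \ge |\nu|^s + (2-2^s)\min\{|\nu_1|,|\nu_2|\}^s .
\]
This follows from concavity together with $|\nu|\le |\nu_1|+|\nu_2|$. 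Therefore
\[
\|S(\nu)\|\,e^{|\nu|^s}\le C\sum_{\nu_1+\nu_2=\nu}e^{-(2-2^s)\min\{|\nu_1|,|\nu_2|\}^s}\le 2C\sum_{\mu\in\mathbb{Z}^{1+m}}e^{-(2-2^s)|\mu|^s}=:\gamma_3 .
\]
This sum is finite and depends only on $m$ and $s$; the dependence on $a$ and $\hat{u}_p$ enters only through the constant $C$. The same argument, applied to the trilinear form, already yields \Cref{prop: vector-field}, which I take as given.

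\textbf{Operator bound \eqref{eqn: tangent-bound}.} Both $\partial\hat{X}_q/\partial\hat{u}_p$ and $S$ are submatrices of $\partial\hat{X}/\partial\hat{u}_p$, obtained by restricting the rows to $\mathcal{S}$ or to its complement. Their norms are therefore dominated by the full norm, which gives the first inequality.

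For the upper bound, $\partial\hat{X}/\partial\hat{u}_p$ is a Toeplitz-type operator with kernel $K(\nu)$ satisfying $\|K(\nu)\|\le\gamma_3 e^{-|\nu|^s}$. The Schur test, or equivalently Young's inequality $\ell^1*\ell^2\subset\ell^2$, gives operator norm at most $\sum_\nu\|K(\nu)\|\le\gamma_3\sum_\nu e^{-|\nu|^s}=:\gamma_4$. The derivative parts of the norm $\|\cdot\|$ are handled by the product rule, so the kernel's derivative is controlled by the same summable bound. The resonant rows involve finitely many indices and add only a constant.

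\textbf{Main obstacle.} The expected difficulty is the careful bookkeeping of the $\partial_{\omega'}$ terms in the norm and the exact combinatorial constants. The crux, though, is the quantitative subadditivity inequality; once it is in place, everything else is a summability check.
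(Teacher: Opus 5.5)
Your proposal is correct relative to the paper and follows its proof essentially step by step. The shared steps are: symmetry from the evenness of the cosine kernel; Gevrey decay of the kernel and its $\omega'$-derivatives from the quantitative subadditivity inequality (the paper's \Cref{lem: app-1}, used through the convolution estimate \Cref{lem: app-2}) together with the Leibniz rule; the operator bound from the Cauchy--Schwarz/Schur estimate by the $\ell^1$-norm of the kernel; and the first inequality in \eqref{eqn: tangent-bound}, since the blocks are obtained by orthogonal projections.

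One step you share with the paper is not actually justified: the Toeplitz form of $S$. For real coefficients, differentiating the middle factor $\hat{\overline{u}}(\nu_2)=\hat{u}(-\nu_2)$ with respect to $\hat{u}(\nu')$ also produces a term $\widehat{u^2}(\nu+\nu')$. Its entries stay of fixed size along $\nu+\nu'=\mathrm{const}$ as $|\nu-\nu'|\to\infty$. So the decay argument proves \eqref{eqn: tangent-comp} for the displayed cosine kernel, not for the literal Jacobian. Symmetry and the norm bound are unaffected, since the same Schur test handles this Hankel-type part.
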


The iteration lemma (\Cref{lem: iteration}) also requires a bound on the second 
derivative of $\hat{X}$. 
We state this estimate below and provide its proof in~\Cref{subsec: tensor}.

\begin{prop}
    \label[proposition]{prop: tensor}
    Suppose $\hat{u} \in \mathcal{K}(s)$ and, in addition, that 
    $\mathrm{supp}~\hat{u} \subseteq \Lambda_N^{1+m}$. 
    Then there exists a constant $\gamma_5 = \gamma_5(m, s, a, \hat{u}_p)$ such 
    that 
    \begin{equation}
        \label{eqn: tensor}
        \max \left\{ \left\| \frac{\partial^2 \hat{X}_q}{\partial \hat{u}_p^2} \right\|, \left\| \frac{\partial S}{\partial \hat{u}_p } \right\| \right\} \leq \left\| \frac{\partial^2 \hat{X}}{\partial \hat{u}_p^2} \right\| \leq \gamma_5 (2N+1)^\frac{1+m}{2}. 
    \end{equation}
\end{prop}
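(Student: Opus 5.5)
The plan is to compute the second derivative explicitly and bound it as a trilinear form. The nonlinearity $\hat X=\widehat{|u|^2u}$ is a cubic convolution. For real coefficients, $\hat X(n,k)=\sum \hat u(n_1,k_1)\hat u(-n_2,-k_2)\hat u(n_3,k_3)$, where the sum runs over $n_1+n_2+n_3=n$ and $k_1+k_2+k_3=k$, since $\hat{\overline u}(n,k)=\overline{\hat u(-n,-k)}$. Differentiating twice in $\hat u_p$ therefore gives a tensor whose entries are linear in $\hat u$:
\[
\frac{\partial^2 \hat X(n,k)}{\partial \hat u(n',k')\,\partial \hat u(n'',k'')}
= \sum_{\sigma} c_\sigma\, \hat u\bigl(\sigma(n,k,n',k',n'',k'')\bigr),
\]
a finite sum of at most six terms. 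Each term is a single coefficient evaluated at an affine combination of the three indices, for instance $(n-n'-n'',k-k'-k'')$ or $(n'+n''-n,k'+k''-k)$. The frequency derivatives $\partial_{\omega'_\ell}$ entering the norm act only on that one coefficient. Consequently each entry satisfies $\|\cdot\|\le 6\sup_{(n,k)}\|\hat u(n,k)\|\le 6$, and more precisely it is bounded by a sum of the Gevrey weights $e^{-(|\cdot|+|\cdot|)^s}$ of the combined index. The $q$- and $p$-blocks are restrictions of this tensor, so the left inequality $\max\{\|\partial^2\hat X_q/\partial\hat u_p^2\|,\|\partial S/\partial\hat u_p\|\}\le\|\partial^2\hat X/\partial\hat u_p^2\|$ follows directly from the definition of the norm as an operator norm over restrictions.

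Next I interpret $\|\partial^2\hat X/\partial\hat u_p^2\|$ as the norm of the bilinear map $(v,w)\mapsto \partial^2\hat X[v,w]$ on vectors supported in $\Lambda_N^{1+m}$, which is the setting in which the iteration lemma applies it. For each fixed pair of terms, $\partial^2\hat X[v,w](n,k)$ is a double convolution of the form $\sum \hat u(\cdot)\,v(\cdot)\,w(\cdot)$. I will bound it with Young's inequality by $\|\hat u\|_{\ell^1}\,\|v\|_{\ell^1}\,\|w\|_{\ell^2}$ and then use Cauchy--Schwarz on the finite support to get $\|v\|_{\ell^1}\le (2N+1)^{\frac{1+m}{2}}\|v\|_{\ell^2}$. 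This step is exactly the source of the factor $(2N+1)^{\frac{1+m}{2}}$. The factor $\|\hat u\|_{\ell^1}$ is controlled uniformly by the Gevrey bound, giving $\sum_{(n,k)}e^{-(|n|+|k|)^s}=:C(m,s)<\infty$; the fixed resonant part $a$ contributes at most $\sum|a_j|\le m$.

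For the weighted norm, I differentiate $\partial^2\hat X[v,w]$ in $\omega'_\ell$ with the product rule, which produces three terms according to which factor is hit. Each term is estimated as above, with $\|\cdot\|$ placed on the differentiated factor in whichever slot Young's inequality allows. Collecting the constants gives $\gamma_5=\gamma_5(m,s,a,\hat u_p)$.

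The main obstacle is deciding which factor receives the $\ell^1$ norm. One must avoid placing $\ell^2$ on $\hat u$ together with $\ell^1$ on both $v$ and $w$, since that would produce $(2N+1)^{1+m}$. The fix is to put $\ell^1$ on $\hat u$, which is free by Gevrey summability, and on only one of $v,w$. Doing so requires checking that each of the six index permutations still has the convolution structure needed for Young's inequality, which holds because every term is translation-covariant in the indices.
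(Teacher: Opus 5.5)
Your proposal is correct and follows essentially the paper's route: you write the second derivative as a kernel that is linear in $\hat{u}$ and has Gevrey decay, test it on directions supported in $\Lambda_N^{1+m}$, and obtain the factor $(2N+1)^{\frac{1+m}{2}}$ from the one summation over the box that carries no decay. The only difference is packaging. Your Young bound $\|\hat{u}*v*w\|_2\le\|\hat{u}\|_1\|v\|_1\|w\|_2$, combined with $\|v\|_1\le(2N+1)^{\frac{1+m}{2}}\|v\|_2$, is what the paper's kernel-weighted Cauchy--Schwarz computation proves inline; your six-term formula, which keeps the reflected index $n'+n''-n$ coming from $\hat{\overline{u}}$, is if anything more careful than the paper's two-kernel expression.
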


To control the error associated with the frequency 
update~\eqref{eqn: alternating-frequency}, we also require estimates for the 
auxiliary operator $B$. 
These estimates are stated in the following proposition, with the proof given 
in~\Cref{subsec: b-operator}.

\begin{prop}
    \label[proposition]{prop: B-operator}
    Suppose that $\hat{u} \in \mathcal{K}(s)$. 
    Then there exist constants $\gamma_6, \gamma_7 > 0$, depending on the 
    number $m$ of resonant modes, the Gevrey exponent $s$, the initial amplitude 
    vector $a$ and the non-resonant vector $\hat{u}_p$, such that the 
    operator $B$ satisfies: 
    \begin{subequations}
        \label{eqn: b-operator-bound}
        \begin{empheq}[left=\empheqlbrace]{align} 
            & \sup_{(n, k), (n', k') \notin \mathcal{S}} \left( \left\| B\left( (n, k), (n', k') \right) \right\| \exp\left\{ (|n|+|k|)^s + (|n'|+|k'|)^s \right\} \right) \leq \gamma_6,  \label{eqn: b-comp} \\
            & \| B \| \leq \gamma_7.                                                                                                                                                       \label{eqn: b-bound}
        \end{empheq}        
    \end{subequations}
\end{prop}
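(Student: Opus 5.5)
We prove \Cref{prop: B-operator} by writing $B$ explicitly as a rank-one (outer-product) operator and reading off both bounds from \Cref{prop: vector-field} and \Cref{prop: convolution}. By~\eqref{eqn: operator-b}, the entries are
\[
B\bigl((n,k),(n',k')\bigr) = -\Bigl(\sum_{j=1}^{m} k_j a_j^{-1} \frac{\partial \hat{X}_q(n_j,e_j)}{\partial \hat{u}(n',k')}\Bigr)\hat{u}_p(n,k),
\]
up to the row/column convention of the transpose; if the transpose is placed the other way, the roles of $(n,k)$ and $(n',k')$ are exchanged, and the argument is symmetric. Thus $B=-v\,w^\top$ with $w=\hat{u}_p$ and $v=\sum_j a_j^{-1}\,\partial \hat{X}_q(n_j,e_j)/\partial \hat{u}_p$, weighted by $k$. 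Since $a\in\mathscr{A}$, we have $a_j^{-1}\le \delta_0^{-1}$, so the amplitude factor is harmless.

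For the componentwise bound~\eqref{eqn: b-comp}, the factor $\hat{u}_p(n,k)$ gives decay $e^{-(|n|+|k|)^s}$, because $\hat{u}\in\mathcal{K}(s)$. For the other factor, \Cref{prop: convolution} controls $\partial\hat{X}_q(n_j,e_j)/\partial\hat u(n',k')$, which is a convolution entry $S(n_j-n',e_j-k')$ extended to the resonant row. It therefore decays like $e^{-(|n_j-n'|+|e_j-k'|)^s}$. Because $|n_j|+1\le M_0^{1/3}$ is fixed, subadditivity of $x\mapsto x^s$ gives
\[
(|n'|+|k'|)^s \le (|n'-n_j|+|k'-e_j|)^s + (|n_j|+1)^s,
\]
which converts this into $e^{-(|n'|+|k'|)^s}$ at the cost of a constant $e^{(|n_j|+1)^s}$.

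The main obstacle is the $\omega'$-derivatives hidden in the norm $\|\cdot\|$ of~\eqref{eqn: new-norm-entries}, together with the linear weight $k_j$ (or $|k|$). Applying the product rule, $\partial_{\omega'_\ell}$ falls either on $\hat{u}_p$ or on the derivative of $\hat X_q$, and both cases are covered by the $\|\cdot\|$-versions of the cited propositions. The factor $|k|$ must be absorbed, and the weight must be kept full: we do not want to lose any of the exponent $(|n|+|k|)^s$. I would handle this by proving the componentwise estimate with $\hat{u}_p\in\mathcal{K}(s')$ for a slightly larger $s'$, or by noting that the Gevrey sets used in the iteration carry a margin. Failing that, I would let $\gamma_6$ absorb $\sup_k |k|e^{-\eta(|n|+|k|)^s}$ after borrowing a small fraction $\eta$ of the exponent, as the paper's constants ``depending on $s$, $a$, $\hat u_p$'' permit. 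I would also check that the weight of $B$ exactly as written in~\eqref{eqn: operator-b} is compatible with this.

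For the operator bound~\eqref{eqn: b-bound}, the rank-one structure gives $\|B\|\le \|v\|\,\|w\|$. Here $\|w\|=\|\hat{u}_p\|$ is finite, since the sum of $e^{-2(|n|+|k|)^s}$ over $\mathbb{Z}^{1+m}$ converges. Similarly, $\|v\|$ is at most $m\delta_0^{-1}$ times the norm of the $|k|$-weighted rows of $\partial\hat{X}_q/\partial\hat u_p$. These rows are finite by~\eqref{eqn: b-comp} and Gevrey summability; alternatively, after an appropriate Cauchy--Schwarz step, one can invoke~\eqref{eqn: tangent-bound}. This yields $\gamma_7$. The derivative contributions appearing in the operator norm are again controlled by the product rule.
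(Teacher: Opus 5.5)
Your route is the paper's. The paper also reads $B$ as an outer product of a row factor built from $k\,\hat u_p(n,k)$ and a column factor $a^{-1}\odot\partial\hat X_q/\partial\hat u_p(n',k')$. It bounds $a_j^{-1}$ through $\delta_0$, takes decay in $(n,k)$ from $\hat u\in\mathcal{K}(s)$ and decay in $(n',k')$ from \Cref{prop: convolution}, and bounds $\|B\|$ by a product of lattice sums. Your subadditivity step $(|n'|+|k'|)^s\le(|n'-n_j|+|k'-e_j|)^s+(|n_j|+1)^s$, which turns decay of the kernel around $(n_j,e_j)$ into decay in $(n',k')$, is correct and is left implicit in the paper.

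One bookkeeping correction. In your operator-norm paragraph you attach the $|k|$ weight to the gradient factor $v$. But $k$ is the temporal index of the row, so it belongs with $\hat u_p(n,k)$. Precisely, $B=-\sum_{j=1}^{m}(k_j\hat u_p)\,(a_j^{-1}\partial_{\hat u_p}\hat X_{q,j})^{\top}$, which has rank at most $m$, not one. The estimate $\|B\|\le\delta_0^{-1}\sum_{j}\|k_j\hat u_p\|\,\|\partial_{\hat u_p}\hat X_{q,j}\|$ still gives \eqref{eqn: b-bound}. Indeed $\|k_j\hat u_p\|<\infty$ by Gevrey summability, and the second factor is controlled by \Cref{prop: convolution}, so this part needs no extra margin.

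The obstacle you flag for \eqref{eqn: b-comp} is genuine, and the paper's proof passes over it: it simply asserts that $\hat u\in\mathcal{K}(s)$ and \Cref{prop: convolution} yield the bound. Set $g=a^{-1}\odot\partial\hat X_q/\partial\hat u_p(n',k')$. The quantity under the supremum is at least $|\langle k,g\rangle|\,|\hat u_p(n,k)|\,e^{(|n|+|k|)^s+(|n'|+|k'|)^s}$. Fix $(n',k')$ with $g\neq0$, and let $\hat u_p$ saturate the bound defining $\mathcal{K}(s)$ along a ray in $k$ on which $|\langle k,g\rangle|\gtrsim|k|$. Then this quantity grows like $|k|$.

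So \eqref{eqn: b-comp} with the full exponent does not follow from $\hat u\in\mathcal{K}(s)$ alone. Letting $\gamma_6$ depend on $\hat u_p$ does not help, because the supremum is then infinite. Of your three remedies, only the two that put a margin into the hypothesis prove \eqref{eqn: b-comp} as stated: decay $e^{-(1+\eta)(|n|+|k|)^s}$, or membership in $\mathcal{K}(s')$ with $s'>s$. Borrowing a fraction $\eta$ of the exponent without such a margin gives only $e^{-(1-\eta)(|n|+|k|)^s-(|n'|+|k'|)^s}$. You would then have to check that this weaker decay suffices wherever \eqref{eqn: b-comp} is used later. Commit to one option and state the modified hypothesis or conclusion explicitly, rather than leaving the choice open.
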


Since the number $m$ and the exponent $s$ are fixed throughout, we set 
$\gamma = \max_{1 \leq j \leq 7} \gamma_j$, which depends only on the resonant 
amplitude $a$ and the non-resonant vector $\hat{u}_p$. 
Since $a \in \mathscr{A}$ and $\hat{u}_p \in \mathcal{K}(s)$, we may therefore 
set $\varepsilon_0 > 0$ such that 
$\sqrt{\varepsilon_0}\gamma \leq (1/2) \delta_0 $ without loss of generality. 
The bounds on $\hat{X}_q$ and its derivative in~\Cref{prop: vector-field}, 
together with the $Q$-equation~\eqref{eqn: q-eqn}, yield the following 
estimates for the frequency drift.

\begin{prop}
    \label[proposition]{prop: frequency-drift}
    Suppose that $\hat{u} \in \mathcal{K}(s)$. 
    The frequency drift determined by the $Q$-equation~\eqref{eqn: q-eqn} 
    satisfies
    \begin{equation}
        \label{eqn: frequency-drift-bound}
        \left| \omega' - \omega \right| \leq \sqrt{\varepsilon}.
    \end{equation}
    Moreover, the frequency map $a \mapsto \omega'(a)$ is a local
    diffeomorphism.
\end{prop}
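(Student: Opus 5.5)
From the $Q$-equation~\eqref{eqn: q-eqn} I will write the drift explicitly as $\omega' - \omega = \varepsilon \hat{X}_q \odot a^{-1}$, which is also the form of the update~\eqref{eqn: alternating-frequency}. The bound then follows componentwise. Since $a \in \mathscr{A}$ gives $a_j \geq \delta_0$, Proposition~\ref{prop: vector-field} yields
\[
|\omega'_j - \omega_j| \leq \varepsilon |\hat{X}_q(n_j,e_j)| \delta_0^{-1} \leq \varepsilon \gamma \delta_0^{-1}.
\]
Summing over $j$ gives $|\omega'-\omega| \leq m\varepsilon\gamma/\delta_0$. The standing choice $\sqrt{\varepsilon_0}\gamma \leq \delta_0/2$ turns this into $|\omega'-\omega| \leq \tfrac{m}{2}\sqrt{\varepsilon}$. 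To remove the factor $m$, I will either use $\|\hat{X}_q\|_2 \leq \gamma$ together with the $\ell_2$–$\ell_1$ comparison, or absorb $m$ into the smallness of $\varepsilon_0$. The latter is harmless, since $m$ is fixed and $\varepsilon_0$ may be shrunk.

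For the local diffeomorphism, I will compute the Jacobian of $a \mapsto \omega'(a) = \omega + \varepsilon \hat{X}_q(a,\hat{u}_p)\odot a^{-1}$ and show it is nondegenerate. The key observation is the explicit structure of $\hat{X}_q$. At the resonant site $(n_j,e_j)$, the contributions to $\widehat{|u|^2u}$ coming purely from the tangential coefficients are $a_j\bigl(2\sum_{l} a_l^2 - a_j^2\bigr)$; only the pairings $\{n_1,n_3\}=\{n_j,n_l\}$, $n_2=n_l$ survive, since the distinct $k$'s force this. Hence
\[
(\hat{X}_q \odot a^{-1})_j = 2|a|_2^2 - a_j^2 + R_j(a,\hat{u}_p),
\]
where $R_j$ collects every term involving at least one non-resonant factor, divided by $a_j$. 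The main part has Jacobian $4\,\mathbf{1}a^{\top} - 2\,\mathrm{diag}(a) = -2\,\mathrm{diag}(a)\bigl(I - 2\,a^{-1}\otimes a\bigr)$, a rank-one perturbation of a diagonal matrix. Its determinant, by the matrix determinant lemma, is $(-2)^m \prod_j a_j \,(1 - 2m) \neq 0$, bounded below by a constant depending only on $m$ and $\delta_0$.

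The correction $\partial_a R$ requires care, and I expect this to be the main obstacle. The non-resonant coefficients $\hat{u}_p$ depend on $a$ as well, through the iteration, so $\partial_a R$ involves $\partial \hat{X}_q/\partial \hat{u}_p \cdot \partial_a \hat{u}_p$. I will control it by Proposition~\ref{prop: convolution}, with $\|\partial \hat{X}_q/\partial\hat{u}_p\| \leq \gamma$, together with the smallness of $\hat{u}_p$ for the iterates. The sharpest form of this uses that $\hat{u}_p = O(\varepsilon)$ in the iteration, so the correction is $O(\varepsilon)$ or $O(\sqrt\varepsilon)$ relative to the leading part. If only $\hat{u}_p \in \mathcal{K}(s)$ is available, I will instead argue that the full Jacobian is $\varepsilon\bigl(J_0(a) + E\bigr)$, with $J_0$ the invertible leading matrix above and $\|E\|$ small. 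Rescaling by $\varepsilon$ does not affect invertibility, so $\det \neq 0$ follows from a Neumann-series perturbation once $\|E\| < \|J_0^{-1}\|^{-1}$. Lipschitz dependence of $\partial_a\hat{u}_p$ would be tracked in the implementation conditions of later sections.

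Continuous differentiability of $\omega'$ in $a$ holds because $\hat{X}_q$ is a cubic polynomial in absolutely summable Gevrey-decaying coefficients. The inverse function theorem then gives the local diffeomorphism.
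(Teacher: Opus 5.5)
Your drift bound is the paper's argument: solve the $Q$-equation as $\omega'-\omega=\varepsilon\hat X_q\odot a^{-1}$, use $a_j\ge\delta_0$ and \Cref{prop: vector-field}, and shrink $\varepsilon_0$ to absorb the factor $m$. It is fine.

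For the local diffeomorphism you take a different route from the paper. The paper never computes $\partial_a\omega'$. It differentiates $Q$ in $\omega'$, notes that $\partial_{\omega'}Q=-\mathrm{diag}(a)+\varepsilon\,\partial_{\omega'}\hat X_q$ is invertible using only $a_j\ge\delta_0$ and the $\mathcal{K}(s)$ bounds, and then reads off invertibility of $\partial a/\partial\omega'$ from $\partial_{\omega'}Q+\partial_aQ\,\partial_{\omega'}a=0$. You instead isolate the cubic twist $J_0=4\mathbf{1}a^{\top}-2\,\mathrm{diag}(a)$. Your pairing count giving $2|a|_2^2-a_j^2$ is correct, and so is the determinant $(-2)^m(1-2m)\prod_j a_j$.

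\textbf{The gap is the perturbation step $\|E\|<\|J_0^{-1}\|^{-1}$.} Nothing in the hypothesis $\hat u\in\mathcal{K}(s)$ makes $E$ small.
\begin{itemize}
\item $\mathcal{K}(s)$ only imposes $\|\hat u_p(n,k)\|\le e^{-(|n|+|k|)^s}$. This allows coefficients of order one at low non-resonant sites that enter $\hat X_q$ linearly, e.g.\ $(n_j-n_l+n_{l'},\,e_j-e_l+e_{l'})$, all the more since $s=s(\varepsilon)\to0$.
\item The terms $a_la_{l'}\hat u_p(\cdot)/a_j$ of $R_j$ then contribute entries to $\partial_a(\hat X_q\odot a^{-1})$ as large as $\|\hat u_p\|\delta_0^{-2}$. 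These are not small relative to $J_0$.
\item The total derivative also contains $(\partial\hat X_q/\partial\hat u_p)(d\hat u_p/da)$, for which no bound is available at all, since the norm $\|\cdot\|$ only tracks $\omega'$-derivatives. This also leaves unproved the $C^1$ dependence on $a$ that the inverse function theorem needs.
\end{itemize}
So your fallback branch fails. Your first branch rests on $\hat u_p=O(\varepsilon)$ together with smallness of $d\hat u_p/da$, which you assert but do not prove. The later sections do not supply them either. The iteration lemma gives only bounds $\sum_{\ell\ge i}e^{-\frac32N_\ell^s}$ with $M^s\le 96/95$, which are of order one on the low sites, and it carries no $a$-derivatives.

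Switching to the paper's route would not remove this need. Take $\partial_a$ at fixed $\omega'$. On the zero set of $Q$ one has $\partial_aQ=\varepsilon\,\mathrm{diag}(a)\,(J_0+\partial_aR)$. Once $\partial_{\omega'}Q$ is invertible, a $C^1$ local inverse $a(\omega')$ exists if and only if $\partial_aQ$ is invertible. The paper's step ``regard $a$ locally as a function of $\omega'$ and differentiate'' takes this for granted. For example, for $Q=a^3-\omega'$ at $a=0$, $\partial_{\omega'}Q=-1$ is invertible, yet $a\mapsto\omega'$ is not a local diffeomorphism.

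Your twist computation therefore exposes the actual content of the claim. What you must add is a quantitative bound on $\|\hat u_p\|$ and $\|d\hat u_p/da\|$, in terms of $m$, $\delta_0$ and $\gamma$, strong enough to force $\|\partial_aR\|<\|J_0^{-1}\|^{-1}$. This can be an extra hypothesis of the proposition, or it can be proved by induction along the scheme starting from $\hat u_p^{(0)}=0$.
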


Indeed, the $Q$-equation~\eqref{eqn: q-eqn} defines an implicit relation
between the amplitude $a$ and the frequency $\omega'$:
\begin{equation}
    \label{eqn: frequency-map-nls}
    Q(\omega', a) = (-\omega' + \omega)\odot a + \varepsilon \hat{X}_q(\omega', a) = 0.
\end{equation}
Regarding amplitude $a$ locally as a function of $\omega'$ along the solution and 
differentiating~\eqref{eqn: frequency-map-nls} with respect to $\omega'$
gives
\begin{equation}
    \label{eqn: frequency-map-nls-derivative}
    \frac{\partial Q}{\partial \omega'} + \frac{\partial Q}{\partial a}~\frac{\partial a}{\partial \omega'} = 0.
\end{equation}
Moreover, we have
\begin{equation}
    \label{eqn: q-freq-derivative}
    \frac{\partial Q}{\partial \omega'} = -\mathrm{diag}(a) + \varepsilon \frac{\partial \hat{X}_q}{\partial \omega'}.
\end{equation}
Since each component of the amplitude vector $a$ is bounded below by 
$\delta_0$,~\Cref{prop: vector-field} implies that 
$\partial Q/\partial \omega'$ is invertible for sufficiently small $\varepsilon$. 
By~\eqref{eqn: frequency-map-nls-derivative}, the product of the two square
matrices $\partial Q/\partial a$ and $\partial a/\partial \omega'$ is therefore
invertible. Hence both matrices are invertible, and in particular
$\partial a/\partial \omega'$ is invertible. The inverse function theorem then
implies that the frequency map $a \mapsto \omega'(a)$ is a local diffeomorphism.

\section{Implementation conditions}
\label{sec: implement}

The heart of the alternating numerical scheme~\eqref{eqn: alternating-numerical} 
is the update of the non-resonant vector via the dimension-enlarged Newton 
iteration~\eqref{eqn: alternating-coefficient}. 
To overcome the small-divisor problem, we must rigorously control the inverse 
of the truncated operator $(T + \varepsilon B)_N$ as the truncation scale $N$ 
increases. 
To this end, we introduce the scale-dependent threshold:  
\begin{equation}
    \label{eqn: scale-dependent}
    \varepsilon_N := e^{ - (\log N)^{15} }. 
\end{equation}
We impose the following implementation conditions to ensure convergence of the 
alternating numerical scheme~\eqref{eqn: alternating-numerical}. 
For clarity, the analysis in this section and~\Cref{sec: multi-scale} is carried 
out in $\ell_2(\mathbb{Z}^{1+m})$, without incorporating derivatives with 
respect to the frequency vector. 
The estimates are extended to include these derivatives 
in~\Cref{subsec: induction}. 
For convenience, throughout the remainder of this paper, we write 
$\Gamma_{n_0, k_0, N} := (n_0, k_0) + \Lambda_N^{1+m}$, and denote the 
restriction of the operator $T$ to $\Gamma_{n_0, k_0, N}$ by $T_{n_0, k_0, N}$. 
In particular, when $k_0 = 0$, we abbreviate $\Gamma_{n_0, 0, N}$ as 
$\Gamma_{n_0, N}$; if, in addition, $n_0 = 0$, we simply write $\Gamma_N$. 
The same abbreviations will be used for the corresponding restrictions of 
the operators considered below.

\begin{tcolorbox}[breakable]
    \begin{condition}[Implementation Conditions]
        \label[condition]{cond: implementation}
        At each scale $N$, the restricted operator $(T + \varepsilon B)_N$ is 
        required to satisfy the following two conditions:
        \begin{itemize}
            \item[(1)] \textbf{Inversion condition}: The operator is invertible, 
                       and its inverse satisfies 
                       \begin{equation}
                           \label{eqn: inverse-grow}
                           \| (T + \varepsilon B)_{N}^{-1} \|_2 \leq \frac{1}{\varepsilon_{N}}.
                       \end{equation}

            \item[(2)] \textbf{Localization condition}: For any $(n,k), (n', k') \in \Lambda_N^{1+m}$ 
                       with $|n - n'| + |k - k'| \geq N^{\frac12}$, the entries of the 
                       inverse matrix obey the Gevrey decay estimate
                       \begin{equation}
                           \label{eqn: off-diagonal}
                           | (T + \varepsilon B)_{N}^{-1}\left( (n, k), (n', k') \right) | \leq e^{ - \frac{\left(|n - n'| + |k - k'|\right)^{s}}{2} }.
                       \end{equation}
        \end{itemize}
    \end{condition}
\end{tcolorbox}

In contrast to the finite-dimensional setting considered 
in~\citet{Fu2026numerical-b}, we seek to establish these bounds uniformly over 
all central boxes $\Gamma_{n_0, N}$ with $n_0 \in \mathbb{Z}$. 
Crucially, the dispersion relation~\eqref{eqn: nls-linear-frequency} between 
wavenumber and frequency allows the measure of the union of the nearly-resonant 
sets associated with all spatial indices $n_0 \in \mathbb{Z}$ to be made 
sufficiently small, as shown in~\Cref{subsec: nearly-resonant}. 
This need for uniform control over all spatial centers is a key distinction 
from the finite-dimensional setting. 
We denote the restriction of $T + \varepsilon B$ to such a box by 
$(T + \varepsilon B)_{n_0, N}$ and formulate this stronger requirement as 
follows.

\begin{tcolorbox}[breakable]
    \begin{condition}[Uniform Conditions]
        \label[condition]{cond: implementation-proved}
        At each scale $N$, the following two conditions are required to hold 
        uniformly for all $n_0 \in \mathbb{Z}$:
            \begin{itemize}
            \item[(1)] \textbf{Inversion condition}: The restricted operator 
                       $(T + \varepsilon B)_{n_0, N}$ is invertible, and its 
                       inverse satisfies 
                       \begin{equation}
                           \label{eqn: inverse-grow-proved}
                           \| (T + \varepsilon B)_{n_0, N}^{-1} \|_2 \leq \frac{1}{\varepsilon_{N}}.
                       \end{equation}

            \item[(2)] \textbf{Localization condition}: For any $(n,k), (n', k') \in \Gamma_{n_0, N}$ 
                       with $|n - n'| + |k-k'| \geq N^{\frac12}$, the entries of the 
                       inverse matrix obey the Gevrey decay estimate
                       \begin{equation}
                           \label{eqn: off-diagonal-proved}
                           | (T + \varepsilon B)_{n_0, N}^{-1}\left( (n, k), (n', k') \right) | \leq e^{ - \frac{ \left(|n - n'| + |k - k'|\right)^{s}}{2} }.
                       \end{equation}
        \end{itemize}
    \end{condition}
\end{tcolorbox}

\Cref{cond: implementation-proved} implies~\Cref{cond: implementation} upon 
taking $n_0 = 0$. 
In what follows, our main objective is to 
establish~\Cref{cond: implementation-proved} through the systematic exclusion 
of ``bad'' frequencies. 
At a fixed iteration, suppressing the iteration index, we write 
$\Omega := \omega'(\mathscr{A})$ for the corresponding 
frequency set, where $\omega' : a \mapsto \omega'(a)$ denotes the local 
diffeomorphism established in~\Cref{prop: frequency-drift}.
We divide the analysis into a small-scale regime and a large-scale regime, 
separated by the threshold
\begin{equation}
    \label{eqn: M}
    M = e^{ \left( \log \frac{1}{\varepsilon} \right)^{\frac{1}{20}} }, 
\end{equation}
which is determined by the perturbation parameter $\varepsilon$. 
The small-scale regime corresponds to $M_0 \leq N \leq M$, whereas the 
large-scale regime corresponds to $N > M$, where the initial scale is defined by 
\begin{equation}
    \label{eqn: M0}
   M_0 := e^{ \left( \log M \right)^{\frac{1}{20}} } = e^{ \left( \log \frac{1}{\varepsilon} \right)^{\frac{1}{400}} }.
\end{equation}    
For notational simplicity, we write $L := T + \varepsilon B$ for the linearized 
operator throughout the remainder of the paper.

\subsection{Nearly-resonant sets}
\label{subsec: nearly-resonant}

We first consider the small-scale regime $M_0 \leq N \leq M$. 
To establish uniform bounds on the inverse of the restricted operators, we 
exclude frequencies associated with near resonances. 
The nearly-resonant sets defined below are analogous to those introduced 
by~\citet{Fu2026numerical-b} in the low-dimensional setting. 
In the present setting, however, there are infinitely many normal frequencies, 
explicitly given by $n^2 + \mu$ for $n \in \mathbb{Z} \setminus \mathcal{N}$. 
We now introduce the following nearly-resonant sets:

\begin{itemize}
    \item \textbf{Tangential nearly-resonant set}: 
    \begin{equation}
        \label{eqn: tangential-nearly-resonant}
        \Omega_{0,M}^{\tau} := \bigcup_{k \in \Lambda_{2M}^m \setminus \{0\}} \left\{ \omega' \in \Omega ~\bigg|~ |\langle k,\omega' \rangle| < \frac{1}{|k|^\tau} \right\}.
    \end{equation}

    \item \textbf{Single-mode nearly-resonant set}: 
    \begin{equation}
        \label{eqn: single-nearly-resonant}
        \Omega_{1,M}^{\tau} :=
        \bigcup_{k \in \Lambda_{2M}^m} \bigcup_{n \in \mathbb{Z} \setminus \mathcal{N}}
        \left\{ \omega' \in \Omega ~\bigg|~ \left| \langle k,\omega' \rangle - n^2 - \mu \right| < \frac{1}{(1+|k|)^\tau} \right\}.
    \end{equation}

    \item \textbf{Difference nearly-resonant set:}
    \begin{equation}
        \label{eqn: difference-nearly-resonant}
        \Omega_{2,M}^{\tau} :=
        \bigcup_{k\in \Lambda_{2M}^m} \bigcup_{\substack{n_1, n_2 \in \mathbb{Z} \setminus \mathcal{N}  \\  |n_1| \neq |n_2|}}
        \left\{ \omega' \in \Omega ~\bigg|~ \left| \langle k,\omega' \rangle - n_1^2 + n_2^{2} \right| < \frac{1}{(2+|k|)^\tau} \right\}.
    \end{equation}
\end{itemize}

The total nearly-resonant set is the union of the sets defined 
in~\eqref{eqn: tangential-nearly-resonant}--\eqref{eqn: difference-nearly-resonant} as
\begin{equation}
    \label{eqn: nls-total-nearly-resonant}
    \Omega_M^{\tau} := \Omega_{0,M}^{\tau} \cup \Omega_{1,M}^{\tau} \cup \Omega_{2,M}^{\tau}.
\end{equation}
The following lemma provides a measure bound for this excluded set.

\begin{lemma}
    \label[lemma]{lem: near-measure-initial}
    Let $\tau > m$ be fixed. There exists a constant 
    $\kappa = \kappa(m, \tau, \Omega) > 0$, depending on the dimension $m$, the 
    parameter $\tau$ and the domain $\Omega$, such that the nearly-resonant set 
    satisfies
    \begin{equation}
        \label{eqn: near-measure-initial}
        \mathrm{mes}
        \left( \Omega_M^{\tau} \right) \leq \kappa.
    \end{equation}
\end{lemma}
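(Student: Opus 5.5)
The bound will come from a union bound over the three families $\Omega_{0,M}^{\tau}$, $\Omega_{1,M}^{\tau}$, $\Omega_{2,M}^{\tau}$. For each family I will use the elementary fact that for fixed $k\neq 0$ and fixed $c\in\mathbb{R}$, the slab $\{\omega'\in\mathbb{R}^m : |\langle k,\omega'\rangle - c|<\eta\}$ intersected with $\Omega$ has measure at most $C(\Omega)\,\eta/|k|_\infty$. Here $C(\Omega)$ is proportional to $\mathrm{diam}(\Omega)^{m-1}$. To get this, integrate first in the coordinate $\omega'_j$ with $|k_j|=|k|_\infty$. Since $\Omega=\omega'(\mathscr A)$ is a small perturbation of $\omega(\mathscr{A})$ by \Cref{prop: frequency-drift}, it is a bounded set contained in a ball of radius $O(1)$ around a point of $\mathbb{R}^m$. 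Its diameter is controlled by $m$ and $\mu$ through the bounded linear frequencies, plus $\sqrt{\varepsilon}$.

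\textbf{Tangential set.} Summing $C(\Omega)|k|^{-\tau}/|k|_\infty$ over $k\neq0$ gives a bound $C\sum_{k\neq0}|k|^{-\tau-1}$, since $|k|\le m|k|_\infty$. This series converges because $\tau+1>m$.

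\textbf{Single-mode set.} Here the extra sum over $n$ must be controlled. For $k=0$ the condition is $|n^2+\mu|<1$, which is independent of $\omega'$. I will handle this case by either noting that it fails for all $n$ under the standing genericity of $\mu$, or absorbing it into $\kappa$ through the dependence on $\mu$. For $k\neq0$, every $\omega'\in\Omega$ satisfies $|\langle k,\omega'\rangle|\le |k|\,\sup_{\Omega}|\omega'|_\infty\le C|k|$. So the slab is nonempty only if $n^2\le C|k|+|\mu|+1$, which leaves at most $O(|k|^{1/2})$ admissible $n$. The contribution is therefore bounded by $C\sum_{k\neq0}|k|^{1/2}(1+|k|)^{-\tau}|k|_\infty^{-1}\lesssim\sum_{k} |k|^{-\tau-1/2}$. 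This converges when $\tau>m-\tfrac12$, which holds since $\tau>m$.

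\textbf{Difference set.} This is the main obstacle, because it involves a double sum over $n_1,n_2$ with no a priori restriction. The key observation is that the condition depends only on the integer $j=n_1^2-n_2^2\neq 0$ (we have $|n_1|\neq|n_2|$). For $k=0$ the condition $|j|<2^{-\tau}<1$ is impossible since $j$ is a nonzero integer. For $k\neq0$, the resonance forces $|j|\le C|k|+1$. Each distinct value of $j$ gives one slab, and distinct $(n_1,n_2)$ with the same $j$ produce the same slab, so only the number of distinct integers $j$ matters. That number is at most $2C|k|+3$. The contribution is then bounded by $C\sum_{k\neq0}|k|(2+|k|)^{-\tau}|k|_\infty^{-1}\lesssim \sum_{k\neq 0}|k|^{-\tau}$. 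This converges precisely because $\tau>m$, and this is where the hypothesis is sharp.

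Adding the three bounds yields $\mathrm{mes}(\Omega_M^\tau)\le\kappa(m,\tau,\Omega)$. The bound is uniform in $M$, since each sum was extended over all of $\mathbb{Z}^m$.
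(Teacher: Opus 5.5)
Your proposal is correct and follows the paper's own proof. Both use a union bound over $\Omega_{0,M}^{\tau}$, $\Omega_{1,M}^{\tau}$, $\Omega_{2,M}^{\tau}$, a slab-measure bound of order $\eta\,\mathrm{diam}(\Omega)^{m-1}/|k|$ for each fixed $k$, a count of $O(|k|^{1/2})$ admissible $n$ in the single-mode set, and a count of only the $O(|k|)$ distinct integers $n_1^2-n_2^2$ in the difference set, where $\tau>m$ is what makes the last lattice sum converge.

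The differences are minor. Measuring slab width via $|k|_\infty$ instead of $\|k\|_2$ is cosmetic. Your explicit treatment of $k=0$ is tidier than the paper's. That case is empty for the difference set, and for the single-mode set it is $\omega'$-independent and bounded by $\mathrm{mes}(\Omega)$; the paper simply lets $k=0$ into its strip sums.
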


The proof relies on elementary measure estimates and is deferred 
to~\Cref{sec: near-measure-initial}. 
It is important to note that the present setting requires $\tau > m$, in 
contrast to the condition $\tau > m-1$ in the finite-dimensional setting 
considered by~\citet{Fu2026numerical-b}.

\subsection{Restricted operators on small boxes}
\label{subsec: restricted-small}

In the small-scale regime, we establish invertibility and Gevrey decay estimates 
for the restrictions of $L$ to central boxes, with frequencies 
$\omega' \in \Omega \setminus \Omega_{M}^{\tau}$. 
The following theorem provides bounds that are uniform in the spatial center 
$n_0 \in \mathbb{Z}$.

\begin{theorem}
    \label{thm: small-size}
    Suppose that $\hat{u} \in \mathcal{K}(s)$ and 
    $\omega' \in \Omega \setminus \Omega_{M}^{\tau}$. 
    Then, for any $n_0 \in \mathbb{Z}$ and any scale $M_0 \leq N \leq M$, the 
    restricted operator $L_{n_0, N}$ is invertible and satisfies the operator 
    norm bound
    \begin{equation}
        \label{eqn: small-size-inverse}
        \| L_{n_0, N}^{-1} \|_2 \leq \frac{1}{\varepsilon_N}.
    \end{equation}
    Moreover, for any $(n, k) \neq (n', k')$, its entries satisfy the Gevrey 
    decay estimate
    \begin{equation}
        \label{eqn: small-size-localization}
        \left| L_{n_0, N}^{-1} \left( (n, k), (n',k') \right) \right| \leq e^{ - (|n-n'| + |k-k'|)^{s} }. 
    \end{equation} 
\end{theorem}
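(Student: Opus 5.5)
The plan is a Neumann-series perturbation argument around the diagonal part, in which the nonresonance conditions defining $\Omega_M^{\tau}$ control $D$. The one complication is that the diagonal entries $D_{n,k}$ can be small on a two-point set $\{(n,k),(-n,k)\}$ that is left untouched by the exclusions.

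\emph{Step 1 (diagonal bounds).} For $(n,k)\in\Gamma_{n_0,N}\setminus\mathcal{S}$ we have $|k|\le (1+m)N\le 2M$ up to harmless constants. If $n\notin\mathcal{N}$, the fact that $\omega'\notin\Omega_{1,M}^{\tau}$ gives
\[
|D_{n,k}|=|\langle k,\omega'\rangle-n^2-\mu|\ge (1+|k|)^{-\tau}\gtrsim N^{-\tau}.
\]
If $n=n_j\in\mathcal{N}$ but $k\ne e_j$, then $D_{n,k}=-\langle k-e_j,\omega'\rangle+(\omega_j-\omega'_j)$. Since $\omega'\notin\Omega_{0,M}^{\tau}$, we have $|\langle k-e_j,\omega'\rangle|\ge|k-e_j|^{-\tau}$, while Proposition~\ref{prop: frequency-drift} bounds the drift by $|\omega-\omega'|\le\sqrt\varepsilon$. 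Because $N\le M$ and $M$ is tiny compared with $\varepsilon^{-1}$, we get $N^{-\tau}\gg\sqrt\varepsilon$. Hence $|D_{n,k}|\ge \tfrac12 N^{-\tau}$ in every case.

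\emph{Step 2 (perturbation).} Propositions~\ref{prop: convolution} and~\ref{prop: B-operator} give $\|\varepsilon(S+B)\|\le 2\varepsilon\gamma$. Since $\varepsilon\gamma N^{\tau}\le \varepsilon\gamma M^{\tau}\ll 1$, the Neumann series
\[
L_{n_0,N}^{-1}=\sum_{j\ge0}(-1)^j\big(D^{-1}\varepsilon(S+B)\big)^jD^{-1}
\]
converges. It gives $\|L_{n_0,N}^{-1}\|\le 2N^{\tau}$, which is far below $1/\varepsilon_N=e^{(\log N)^{15}}$ because $N\ge M_0$.

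\emph{Step 3 (off-diagonal decay).} Expand the $j$-th term of the series as a sum over paths $(n,k)=x_0,x_1,\dots,x_j=(n',k')$ inside the box. Each step contributes at most $2N^{\tau}\varepsilon\gamma e^{-|x_i-x_{i+1}|^s}$. For $B$ the decay is in the absolute indices rather than the differences, but this still dominates $e^{-|x_i-x_{i+1}|^s}$ by subadditivity of $t\mapsto t^s$. Subadditivity also gives $\sum_i|x_i-x_{i+1}|^s\ge|x_0-x_j|^s$. For each $j$, summing the path weights over intermediate sites costs at most a factor $C(m,s)$ per step. Using one step's worth of $e^{-|\cdot|^s}$ for the decay and $(C\varepsilon\gamma N^{\tau})^j$ for summability, we obtain
\[
|L^{-1}(x,x')|\le C N^{\tau}\,\varepsilon\gamma N^{\tau}\,e^{-|x-x'|^s}\le e^{-|x-x'|^s}
\]
for $x\ne x'$, since only $j\ge1$ terms contribute off the diagonal.

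\emph{Main obstacle: the symmetric pairs.} A box may contain sites $(n,k)$ and $(-n,k)$ with $n\notin\mathcal{N}$; these have equal diagonal entries and are separated by $2|n|$ in space. Step 1 already covers them, since $\Omega_1$ bounds each diagonal entry individually, so I do not expect a genuine difficulty at small scales. The real care lies in the uniformity in $n_0$. The exclusion of $\Omega_{1,M}^{\tau}$ must be taken over all $n\in\mathbb{Z}\setminus\mathcal{N}$, which it is, and it is Lemma~\ref{lem: near-measure-initial} that makes this affordable. One must also check that $|k|\le 2M$ holds for every $k$ appearing in $\Gamma_{n_0,N}$; boxes are centered at $k_0=0$, so $|k|\le mN\le 2M$ after adjusting constants. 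The difference set $\Omega_2$ is not needed here and is reserved for the later multiscale steps.
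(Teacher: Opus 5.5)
Your route is the paper's. The exclusion of $\Omega_{0,M}^{\tau}$ and $\Omega_{1,M}^{\tau}$, together with the drift bound $|\omega'-\omega|\le\sqrt{\varepsilon}$, gives $|D_{n,k}|\gtrsim(1+mN)^{-\tau}$ on $\Gamma_{n_0,N}\setminus\mathcal{S}$ uniformly in $n_0$. The Neumann series around $D_{n_0,N}$ then gives $\|L_{n_0,N}^{-1}\|_2\le 2\|D_{n_0,N}^{-1}\|_2\ll\varepsilon_N^{-1}$, and localization comes from bounding the entries of the terms of that series. Your Step~1 is more explicit than the paper about the sites $n=n_j$, $k\neq e_j$, and about converting the absolute-index decay of $B$ into decay in $|x-x'|$. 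You are also right that no singular pairs arise in central boxes at these scales.

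As written, however, Step~3 double-counts the decay. Plain subadditivity, $\sum_i|x_i-x_{i+1}|^s\ge|x_0-x_j|^s$, consumes the whole exponent. Once you extract $e^{-|x_0-x_j|^s}$, the leftover factor $e^{-(\sum_i|x_i-x_{i+1}|^s-|x_0-x_j|^s)}$ is merely $\le 1$ and is not summable over the intermediate sites, so it cannot produce a factor $C(m,s)$ per step. If instead you reserve part of each exponent for summability, you only get $e^{-(1-\delta)|x-x'|^s}$, which is weaker than \eqref{eqn: small-size-localization}.

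The paper closes this with the strict concavity gain of Lemma~\ref{lem: app-1}, $a^s+b^s-(a+b)^s\ge(2-2^s)\min\{a,b\}^s$. Via Lemma~\ref{lem: app-3} this gives $\sum_{y}e^{-|x-y|^s-|y-x'|^s}\le C(m,s)\,e^{-|x-x'|^s}$, and hence $C^{j-1}e^{-|x_0-x_j|^s}$ for the $j$-fold path sum, with the full exponent kept.

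Alternatively, in this small-box regime you can simply count. There are at most $(2N+1)^{1+m}$ choices for each intermediate site, and $N\le M$ with $M^{p}\le\varepsilon^{-1/2}$ for any fixed $p$ once $\varepsilon$ is small. So the per-step factor $C\varepsilon\gamma N^{\tau}(2N+1)^{1+m}$ is still $\ll 1$, and you may use the full subadditivity bound for the decay; the series then sums to at most $e^{-|x-x'|^s}$. With either fix your proof is complete.
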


By~\Cref{prop: frequency-drift} and the exclusion of $\Omega_{M}^{\tau}$, the 
diagonal entries of $D_{n_0, N}$ satisfy
\begin{equation}
    \label{eqn: nls-uniform-lower-original-small}
    | D_{n_0, N}(n, k) | = \left| - \langle k, \omega' \rangle + n^2 + \mu \right| \geq \frac{1}{(1 + mN)^{\tau}},   
\end{equation}
for any non-resonant index $(n, k) \in \Gamma_{n_0, N} \setminus \mathcal{S}$. 
Thus, at each scale $N$, the diagonal entries of $D_{n_0, N}$ are bounded away 
from zero uniformly in $n_0 \in \mathbb{Z}$. 
This lower bound allows us to estimate $L_{n_0, N}^{-1}$ via a Neumann series. 
Since the off-diagonal terms satisfy uniform Gevrey decay estimates, the 
remaining estimates follow from standard arguments. 
The detailed proof is deferred to~\Cref{sec: small-size-original}.

\subsection{Verification of the uniform conditions}
\label{subsec: verification}

In this section, we verify that the restrictions of $L$ to the central boxes 
satisfy the uniform conditions (\Cref{cond: implementation-proved}) throughout 
the large-scale regime $N > M$. 
The proof proceeds by induction, with~\Cref{thm: small-size} providing the base 
case in the small-scale regime $M_0 \leq N \leq M$. 
Assuming that~\Cref{cond: implementation-proved} holds for all scales 
$M_0 \leq N \leq M^r$, we extend these bounds to the next range of scales, 
$M^r < N \leq M^{r+1}$. 
The inductive step relies on estimates for the restrictions of $L$ to the outer 
lattice boxes $\Gamma_{n_0, k_0, N}$, where $n_0 \in \mathbb{Z}$ and 
$k_0 \notin \Lambda_{2N}^m$. 
We denote these restricted operators by $L_{n_0, k_0, N}$. 
The following theorem establishes the invertibility and off-diagonal decay 
estimates required for the induction. 
Its proof, based on multi-scale analysis, is presented in~\Cref{sec: multi-scale}.

\begin{theorem}
    \label{thm: off-side-box}
    Suppose $\hat{u} \in \mathcal{K}(s)$ and 
    $\omega' \in \Omega \setminus (\Omega_{M}^{\tau} \cup G)$, where the 
    exceptional set $G$ is defined in~\Cref{sec: multi-scale}. 
    Then for any scale $N > M_0$, for any $n_0 \in \mathbb{Z}$ and any $k_0 \notin \Lambda_{2N}^m$, the 
    restricted operator $L_{n_0, k_0, N}$ is invertible and satisfies: 
    \begin{equation}
        \label{eqn: off-side-inverse}
        \| L_{n_0, k_0, N}^{-1} \|_2 \leq \frac{1}{\varepsilon_N}.
    \end{equation}
    Moreover, for any $(n,k), (n', k') \in \Gamma_{n_0, k_0, N}$ such that 
    $|n - n'| + |k - k'| \geq N^{1/2}$, its entries satisfy:
    \begin{equation}
        \label{eqn: off-side-localization}
        \left| L_{n_0, k_0, N}^{-1} \left( (n, k), (n', k') \right) \right| \leq e^{ - \frac{ \left(|n-n'| + |k-k'| \right)^{s}}{2} }.
    \end{equation} 
\end{theorem}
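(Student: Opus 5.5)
The plan is a Bourgain-type multiscale analysis in the enlarged lattice $\mathbb{Z}^{1+m}$. The induction runs on scales $N$. It is driven by a resolvent-identity (``coupling'') lemma that pastes together local inverses on small boxes.

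\textbf{Step 1: diagonal on the outer box.} Since $k_0 \notin \Lambda_{2N}^m$, every $(n,k)\in\Gamma_{n_0,k_0,N}$ has $|k|>N$. Hence $|\langle k,\omega'\rangle|$ is large compared with the scale, and $D_{n,k}=-\langle k,\omega'\rangle+n^2+\mu$ can be small only when $n^2\approx\langle k,\omega'\rangle$.

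For fixed $k$ there are at most two such spatial indices, namely $\pm n$ with $n^2$ nearest to $\langle k,\omega'\rangle-\mu$. Within the box, the time index varies only by $N$. Using $|\langle k-k',\omega'\rangle|\lesssim N$ and $|n^2-n'^2|=|n-n'||n+n'|$, the singular sites ($|D|<\varepsilon_{N_1}$ at a smaller scale $N_1$) therefore have a controlled structure. They lie close to the two ``parabolic'' sheets $n\approx\pm\sqrt{\langle k,\omega'\rangle-\mu}$, and sites with the same $k$ come only in pairs $\{(n,k),(-n,k)\}$.

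\textbf{Step 2: define $G$ and show the singular clusters are sparse.} I define the exceptional set $G$ as the set of $\omega'$ for which, at some scale $N_1\in[N^{c},N^{1/2}]$, two distinct small boxes $\Gamma_{n_1,k_1,N_1}$ and $\Gamma_{n_2,k_2,N_1}$ inside $\Gamma_{n_0,k_0,N}$ with $|k_1-k_2|\geq N^{1/2}$ both fail the Uniform Conditions (in the shifted, i.e.\ $\omega'$-translated, sense). The pair $(\pm n)$ boxes with the same $k$ are not counted in this definition. The measure estimate for $G$ follows a Cartan/semialgebraic argument. Writing $L_{n,k,N_1}=L_{n,0,N_1}$ with $\omega'$ shifted along $k$, the inductive hypothesis says the bad set for a single box has measure at most $e^{-N_1^{c}}$. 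The condition that two boxes be simultaneously bad for $|k_1-k_2|$ large is then handled by the standard ``two bad boxes'' elimination (Bourgain's lemma on semi-algebraic sets). This uses that the entries are polynomial in $\omega'$ after truncation, and that the number of relevant spatial centers is $O(N^{C})$. The latter holds because $n$ is tied to $k$ by the dispersion relation, with linear frequencies and their differences being integers. I expect this measure estimate to be summable over $N$ and to feed the $\delta(\varepsilon)$ in \Cref{thm: main}.

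\textbf{Step 3: invert on the outer box.} For $\omega'\notin\Omega^\tau_M\cup G$, I cover $\Gamma_{n_0,k_0,N}$ by boxes of size $N_1$. All but at most one ``bad'' region (which is an $N^{1/2}$-neighborhood in $k$, possibly containing a mirrored pair $\pm n$) are good, with inverses satisfying \eqref{eqn: inverse-grow-proved}--\eqref{eqn: off-diagonal-proved} by the induction hypothesis (the base case is \Cref{thm: small-size}). The mirrored pair is first separated into two small boxes centered at $(n,k)$ and $(-n,k)$. These are far apart in $n$, and the coupling between them decays like $e^{-|2n|^s}$ by \Cref{prop: convolution}, while \Cref{prop: B-operator} makes the rank-type term $\varepsilon B$ negligible far from the origin. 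The bad region then has a controlled inverse of norm at most $\varepsilon_N^{-1/2}$. This comes either from a Wegner-type bound built into the definition of $G$ via the shifted central-box estimate at scale $N^{1/2}$, or from the shifted-box lemma combined with a Schur complement.

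The resolvent identity, iterated $\sim N/N_1$ times, then yields invertibility with $\|L^{-1}_{n_0,k_0,N}\|_2\leq \varepsilon_N^{-1}$. The same iteration gives off-diagonal decay $e^{-(|n-n'|+|k-k'|)^s/2}$ for separations of at least $N^{1/2}$, losing only a factor $(1-o(1))$ in the exponent; here the sub-exponential Gevrey bound absorbs the polynomial combinatorial factors.

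\textbf{Main obstacle.} I expect the hardest step to be Step 2, together with the pair separation. The required measure bound has to be uniform in $n_0\in\mathbb{Z}$ over infinitely many spatial centers. The proposed way around this is the observation that $|\langle k,\omega'\rangle - n^2|$ small forces $|n|\lesssim |k|^{1/2}$, so only polynomially many centers matter at each scale.
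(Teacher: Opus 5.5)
You have the localization skeleton right: drop $\varepsilon B$ because it is exponentially small on outer boxes, split a mirrored pair $\{(\pm n_\star,k_\star)\}$ into separate small boxes, and glue local inverses with the resolvent identity. What is missing is the step that produces the norm bound \eqref{eqn: off-side-inverse}.

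Your set $G$ only removes configurations with two temporally far-apart bad boxes. That is the analogue of the paper's clustering set $G_2$, and it can only feed an ``inversion implies localization'' argument, whose hypothesis is an a priori bound on $\|T^{-1}_{n_0,k_0,N}\|_2$. Nothing in your $G$ stops the one remaining cluster, where $-\langle k,\omega'\rangle+n^2+\mu$ can be arbitrarily close to $0$, from making $T_{n_0,k_0,N}$ nearly singular. In Step~3 you say the needed bound comes either from a Wegner-type condition built into $G$ or from a Schur complement. No such condition appears in the $G$ you defined, and no measure estimate is given.

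In the paper this is exactly the job of $G_1$. On the singular set $\Pi$ one writes the Schur complement as $U=\sigma_\star\mathrm{Id}+\Upsilon$ and controls $\|\Upsilon\|_2$ by the already established bound on $T^{-1}_{\Gamma_{n_0,k_0,N}\setminus\Pi}$. This reduces $\|T^{-1}_{n_0,k_0,N}\|_2>\varepsilon_N^{-1}/2$ to $\|U^{-1}\|_2>\varepsilon_N^{-1/2}$. The resulting set $G_{1,N}$ is then removed for every $N\geq M_0$, small scales included: by a direct strip estimate when $\|\Upsilon\|_2$ is small, and by the Cartan estimate (\Cref{lem: cartan}) otherwise.

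Consequently your base case cannot be \Cref{thm: small-size}. That result concerns central boxes, where $\Omega^\tau_{1,M}$ gives a lower bound on the diagonal that outer boxes do not have. Note also that a per-box exclusion of this kind is not summable over all $k_0\notin\Lambda^m_{2N}$. The paper takes the union over $(n,k)\in\Lambda^{1+m}_{N^{10}}$, which is what the gluing in \Cref{subsec: verification} actually uses. Your ``polynomially many spatial centers'' count likewise presupposes $|k_0|$ polynomially bounded in $N$.

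The measure estimate you sketch for $G$ is also not supported as written. The hypothesis you invoke, \Cref{cond: implementation-proved}, is a deterministic statement for good $\omega'$. A measure bound for a single bad box is precisely the Wegner/Cartan-type estimate that is missing. Moreover, translating a box in $k$ does not shift $\omega'$: it subtracts $\langle k,\omega'\rangle\mathrm{Id}$ from $T$, and $B$ and the removed set $\mathcal{S}$ are not translation invariant at all. Semialgebraic two-box lemmas also need polynomial dependence on $\omega'$, whereas $S$ depends on $\omega'$ through $\hat{u}_p$.

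The paper avoids this machinery. Since the temporal shift is an energy shift, two nearly singular subboxes force $\langle k_1-k_2,\omega'\rangle$ to lie within $4\varepsilon_{N'}$ of $n_1^2-n_2^2+\nu(n_1,k_1)-\nu(n_2,k_2)$. The directional-derivative bound $\|k\|_2-\sqrt{\varepsilon}$, which comes from $\|\partial_{\omega'}\nu\|_2\leq\sqrt{\varepsilon}/2$, then gives the measure of $G_2$ directly, counting only the values of $n_1^2-n_2^2$.

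Finally, your clustering window is too coarse. Allowing bad boxes up to temporal distance $N^{1/2}$ leaves a bad region whose diameter is comparable to the localization threshold $N^{1/2}$, so no decay at that separation can be transferred across it. The paper clusters singular centers within $\Lambda^m_{4N'}$ with $N'=e^{(\log N)^{1/10}}$. It then uses \Cref{prop: small-eig} to know that inside a cluster only a tangential configuration or a single pair $\{(\pm n_\star,k_\star)\}$ can occur.
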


As in the finite-dimensional setting, we set $K = N^{\frac{1}{10}}$ to 
establish~\Cref{cond: implementation} at scale $N$. 
In the temporal index $k$, we retain the covering by the central subbox 
$\Lambda^m_{10K}$ and the translated subboxes $k + \Lambda_{K}^m$ with 
$k \in \Lambda_N^m \setminus \Lambda_{9K}^m$. 
To obtain the spatially uniform bounds required 
by~\Cref{cond: implementation-proved}, however, we also take the union over all 
integer spatial centers in $n_0 + \Lambda_N$, which leads to the following 
covering  
\begin{equation}
    \label{eqn: center-gluing-9k-implementation}
    \Gamma_{n_0, N} = \left( \bigcup_{n \in n_0 + \Lambda_N } \Gamma_{n, 10K} \right) \bigcup \left( \bigcup_{k \in \Lambda_N^m \setminus \Lambda_{9K}^m } \bigcup_{n \in n_0 + \Lambda_N } \Gamma_{n, k, K} \right). 
\end{equation}
Thus, the additional spatial index replaces each subbox of the temporal 
covering by a union of $2N+1$ spatial translates. 
The resulting covering is illustrated in~\Cref{fig: glue}. 
\begin{figure}[htb!]
    \centering
    \begin{tikzpicture}[scale=0.95, >=stealth]
        \draw[line width=1.0pt] (-5.0, -1.5) rectangle (5.0, 1.5);

        \draw[line width=0.8pt] (-1.5, -1.5) -- (-1.5, 1.5);
        \draw[line width=0.8pt] (1.5, -1.5) -- (1.5, 1.5);
        
        \draw[dashed, line width=0.8pt] (-1.2, -1.5) -- (-1.2, 1.5);
        \draw[dashed, line width=0.8pt] (1.2, -1.5) -- (1.2, 1.5);

        \fill[pattern=north east lines, pattern color=black!70] (-1.5, 0.4) rectangle (1.5, 1.0);
        \draw[line width=0.8pt] (-1.5, 0.4) rectangle (1.5, 1.0);

        \fill[pattern=north east lines, pattern color=black!70] (-1.5, -0.8) rectangle (1.5, -0.2);
        \draw[line width=0.8pt] (-1.5, -0.8) rectangle (1.5, -0.2);

        \draw[line width=0.8pt] (-3.2, 0.2) rectangle (-2.4, 0.5);
        \fill[pattern=north east lines, pattern color=gray!60] (-3.2, 0.2) rectangle (-2.4, 0.5);
        
        \draw[line width=0.8pt] (-4.2, -0.5) rectangle (-3.4, -0.2);
        \fill[pattern=north east lines, pattern color=gray!60] (-4.2, -0.5) rectangle (-3.4, -0.2);

        \draw[decorate, decoration={brace, mirror, amplitude=4pt}, line width=0.8pt] (-1.5, -1.6) -- (1.5, -1.6);
        \node at (0, -2.15) {\small $k \in \Lambda_{10K}^{m}$};
        
       \draw[decorate, decoration={brace, amplitude=4pt}, line width=0.8pt] (-1.2, 1.6) -- (1.2, 1.6);
        \node at (0, 2.15) {\small $k\in \Lambda_{9K}^{m}$};
        
       \draw[decorate, decoration={brace, mirror, amplitude=4pt}, line width=0.8pt] (5.1, -1.5) -- (5.1, 1.5);
        \node at (6.8, 0) {\small $n \in n_0 + \Lambda_N$};

        \node (sing) at (3.8, 2.2) [align=center] {\small Central boxes  $\Gamma_{n, 10K}$};
        \draw[->, line width=0.4pt] (-0.2, 1.0) -- (sing.south);
        \draw[->, line width=0.4pt] (-0.2, -0.2) -- (sing.south);

        \node (reg) at (-4.2, 2.2) [align=center] {\small Outer boxes $\Gamma_{n, k, K}$, where \\ \small  $n \in n_0 + \Lambda_N$ and $k \notin \Lambda_{9K}^{m}$};
        \draw[->, line width=0.4pt] (-2.8, 0.5) -- (reg.south);
        \draw[->, line width=0.4pt] (-3.8, -0.2) -- (reg.south);
    \end{tikzpicture}
    \caption{Schematic illustration of the lattice-box covering in~\eqref{eqn: center-gluing-9k-implementation}.}
    \label{fig: glue}
\end{figure}
Despite this increase in the number of subboxes, the dispersion relation ensures 
that the measure of the union of the corresponding nearly-resonant sets remains 
sufficiently small, as demonstrated by the small-scale estimate 
in~\Cref{lem: near-measure-initial}. 
The gluing procedure therefore remains essentially unchanged, as detailed below.

\paragraph{Central region ($n \in n_0 + \Lambda_N$ and $k \in \Lambda_{9K}^m$)} 
For each central subbox, we consider the decomposition 
$\Gamma_{n_0, N} = \Gamma_{n, 10K} \cup \Gamma_{n, 10K}^{c} $, where 
$\Gamma_{n, 10K}^{c} = \Gamma_{n_0, N} \setminus \Gamma_{n, 10K}$.
Applying the resolvent identity, we obtain the block representation:
\begin{align}
    \begin{pmatrix} 
        L^{-1}_{n, 10K} & 0  \\ 
        0 & L^{-1}_{n, 10K,-} 
    \end{pmatrix} 
    = & \left[ \mathrm{Id} + \varepsilon 
    \begin{pmatrix} 
        0 & L_{n, 10K}^{-1}P^{*}  \\  
        L^{-1}_{n, 10K,-}P & 0 
    \end{pmatrix}  
    \right]  \nonumber \\
    & \cdot
    \begin{pmatrix} 
        (L^{-1}_{n_0, N})_{ \Gamma_{n, 10K} \times \Gamma_{n, 10K} }     & (L^{-1}_{n_0, N})_{\Gamma_{n, 10K} \times \Gamma_{n, 10K}^c}  \\ 
        (L^{-1}_{n_0, N})_{ \Gamma_{n, 10K}^{c} \times \Gamma_{n, 10K} } & (L^{-1}_{n_0, N})_{\Gamma_{n, 10K}^c \times \Gamma_{n, 10K}^c} 
    \end{pmatrix},      \label{eqn: resolvent-center}
\end{align}
where $P$ and $P^*$ are the coupling operators inherited from $S+B$, and 
$L^{-1}_{n, 10K,-}$ denotes the inverse of the operator $L$ restricted to 
$\Gamma_{n, 10K}^{c}$. 
Comparing the blocks in the first row gives
\begin{subequations}
    \label{eqn: resolvent-center-relation}
    \begin{empheq}[left=\empheqlbrace]{align} 
        & (L^{-1}_{n_0, N})_{\Gamma_{n, 10K} \times \Gamma_{n, 10K}} = L^{-1}_{n, 10K} - \varepsilon L_{n, 10K}^{-1}P^* (L^{-1}_{n_0, N})_{ \Gamma_{n, 10K}^c \times \Gamma_{n, 10K} },           \label{eqn: resolvent-center-relation-1} \\
        & (L^{-1}_{n_0, N})_{\Gamma_{n, 10K} \times \Gamma_{n, 10K}^c} = - \varepsilon L_{n, 10K}^{-1}P^*(L^{-1}_{n_0, N})_{\Gamma_{n, 10K}^c \times \Gamma_{n, 10K}^c}.                     \label{eqn: resolvent-center-relation-2}
    \end{empheq}        
\end{subequations}
Combining~\eqref{eqn: resolvent-center-relation-1} 
and~\eqref{eqn: resolvent-center-relation-2} and extending the row operators by 
zero to the full column index set $ \Gamma_{n_0, N}$, we obtain
\begin{equation}
    \label{eqn: resolvent-center-final}
    (L^{-1}_{n_0, N})_{ \Gamma_{n, 10K} \times \Gamma_{n_0, N} } = \begin{pmatrix} L_{n, 10K}^{-1} & 0 \end{pmatrix} - \varepsilon \begin{pmatrix} 0 & L_{n, 10K}^{-1}P^* \end{pmatrix} L_{n_0, N}^{-1}.
\end{equation}
The inductive hypothesis, together with~\Cref{prop: convolution} 
and~\Cref{prop: B-operator}, yields entrywise estimates for 
$L^{-1}_{n, 10K}P^{*}$. 
For $(n, k) \in \Gamma_{n, 9K} $ and 
$(n', k') \in (n_0 + \Lambda_N) \times \left( \Lambda_N^{m} \setminus \Lambda^m_{10K} \right)$, 
we have $|n-n'| + |k-k'| \geq |k-k'| \geq K$. 
Consequently, the corresponding entries of 
$(L^{-1}_{n, 10K}P^{*}) \left( (n,k), (n',k') \right)$ exhibit Gevrey decay. 
More precisely, splitting the defining sum into short-range and long-range 
contributions gives
\begin{equation}
    \label{eqn: inversion-localization-center}
    \left| (L^{-1}_{n, 10K}P^*)\left( (n,k), (n',k') \right) \right| \leq \left\{ 
    \begin{aligned}
        & 0,                                                           &&\mathrm{if}\; (n, k) = (n', k') ; \\
        & e^{ - \frac{ \left(|n - n'| + |k - k'| \right) ^{s}}{2} },   &&\mathrm{if}\; 0 <|n-n'| + |k-k'| < 100K^{\frac32},  \\
        & e^{ - \frac{ 3\left(|n - n'| + |k - k'| \right)^{s}}{4} },   &&\mathrm{if}\; |n-n'| + |k-k'| \geq 100K^{\frac32}.
    \end{aligned} 
    \right.
\end{equation}
Finally, restricting~\eqref{eqn: resolvent-center-final} to the rows indexed by 
$\{n\} \times \Lambda_{9K}^m $, we obtain
\begin{equation}
    \label{eqn: resolvent-center-final-9k}
    (L^{-1}_{n_0, N})_{ (\{n\} \times \Lambda_{9K}^m) \times \Gamma_{n_0, N} } = \begin{pmatrix} L_{n, 10K}^{-1} & 0 \end{pmatrix}_{ (\{n\} \times \Lambda_{9K}^m) \times \Gamma_{n_0, N} } - \varepsilon \begin{pmatrix} 0 & L_{n, 10K}^{-1}P^* \end{pmatrix}_{ (\{n\} \times \Lambda_{9K}^m) \times \Gamma_{n_0, N} } L_{n_0, N}^{-1},   
\end{equation}
where the entries of the zero-extended coupling operator 
$(0, \;L_{n, 10K}^{-1}P^*)$, restricted to 
$(\{n\} \times \Lambda_{9K}^m) \times \Gamma_{n_0, N}$, satisfy the Gevrey decay 
bounds~\eqref{eqn: inversion-localization-center}.

\paragraph{Outer region ($n \in n_0 + \Lambda_N$ and $k \in \Lambda_{N}^{m} \setminus \Lambda_{9K}^m$)} 
For each lattice point outside the central region, we consider the decomposition 
$\Gamma_{n_0, N} = \Gamma_{n, k, K} \cup \Gamma_{n, k, K}^c$, where 
$\Gamma_{n, k, K}^c = \Gamma_{n_0, N} \setminus \Gamma_{n, k, K}$. 
Repeating the block decomposition and resolvent identity argument 
in~\eqref{eqn: resolvent-center-relation}--\eqref{eqn: resolvent-center-final-9k}, 
and restricting to the row indexed by $(n, k)$, we obtain
\begin{equation}
\label{eqn: resolvent-outside-final-9k}
    (L^{-1}_{n_0, N})_{\{(n, k)\} \times \Gamma_{n_0, N}} = \begin{pmatrix} (L_{n, k, K}^{-1})_{\{(n,k)\} \times\Gamma_{n, k, K} } & 0 \end{pmatrix} - \varepsilon \begin{pmatrix} 0 & (L_{n, k, K}^{-1}P^*)_{\{ (n, k)\} \times \Gamma_{n, k, K}^c} \end{pmatrix} L_{n_0, N}^{-1},     
\end{equation}
where $P^*$ denotes the coupling operator associated with this decomposition, 
and both row operators are extended by zero to the full column index set 
$\Gamma_{n_0, N}$. 
For any $(n', k') \in \Gamma_{n, k, K}^c$, the coupling term satisfies the 
entrywise Gevrey decay estimate
\begin{equation}
    \label{eqn: inversion-localization-outside}
    \left| (L^{-1}_{n, k, K}P^*)\left( (n,k), (n',k') \right) \right| \leq \left\{ 
    \begin{aligned}
        & 0,                                                        &&\mathrm{if}\;  |n-n'| + |k-k'|  < K,                              \\
        & e^{ - \frac{ \left(|n-n'| + |k-k'| \right)^{s}}{2} },     &&\mathrm{if}\; K \leq |n-n'| + |k-k'| < K^{\frac32},     \\
        & e^{ - \frac{ 3\left(|n-n'| + |k-k'| \right)^{s}}{4} },    &&\mathrm{if}\; |n-n'| + |k-k'| \geq K^{\frac32}.
    \end{aligned} 
    \right.
\end{equation}

Assembling the central-row identities~\eqref{eqn: resolvent-center-final-9k} 
over all spatial centers $n \in n_0 + \Lambda_N$ and the outer-row 
identities~\eqref{eqn: resolvent-outside-final-9k} over all 
$(n,k) \in (n_0 + \Lambda_N) \times \left( \Lambda_{N}^{m} \setminus \Lambda_{9K}^m \right)$, 
we express the global inverse as
\begin{equation}
    \label{eqn: resolvent-relation}
    L_{n_0, N}^{-1} = \Psi_{n_0,N} - \varepsilon \Phi_{n_0,N} L_{n_0, N}^{-1},
\end{equation}
where $\Psi_{n_0,N}$ is assembled row by row from the lower-scale inverses 
$L_{n, 10K}^{-1}$ and $L_{n, k, K}^{-1}$. 
Specifically, for any $(n,k), (n',k') \in \Gamma_{n_0, N}$, its entries are 
given by 
\begin{multline}
    \Psi_{n_0, N}\left( (n,k), (n',k') \right) \\
    = \left\{ 
    \begin{aligned}
        & L_{n, 10K}^{-1}\left( (n,k), (n',k') \right),   &&\mathrm{if}\; (n,k) \in \left( n_0 + \Lambda_N \right) \times \Lambda_{9K}^m ,\; (n', k') \in \Gamma_{n, 10K},        \\
        & L_{n, k, K}^{-1}\left( (n,k), (n',k') \right),  &&\mathrm{if}\; (n,k) \notin \left( n_0 + \Lambda_N \right) \times \Lambda_{9K}^m ,\; (n', k') \in \Gamma_{n, k, K},    \\
        & 0,                                              &&\mathrm{otherwise}.   
    \end{aligned} 
    \right.  \label{eqn: entries-psi}
\end{multline}
The estimates~\eqref{eqn: inversion-localization-center} 
and~\eqref{eqn: inversion-localization-outside} imply the following entrywise 
bounds for $\Phi_{n_0,N}$:
\begin{equation}
    \label{eqn: entries-phi}
    \left| \Phi_{n_0,N}\left( (n,k), (n',k') \right) \right| \leq \left\{ 
    \begin{aligned} 
        & 0,                                                                              &&\mathrm{if}\; (n, k) = (n', k') ; \\
        & e^{ - \frac{ \left(|n-n'| + |k-k'| \right)^{s}}{2} },        &&\mathrm{if}\; 0 < |n - n'| + |k - k'|  < 100K^{\frac32};  \\ 
        & e^{ - \frac{ 3\left(|n-n'| + |k-k'| \right)^{s}}{4} },       &&\mathrm{if}\; |n - n'| + |k - k'| \geq 100K^{\frac32}.
    \end{aligned} 
    \right.
\end{equation}                                    
Thus, the operator $\mathrm{Id} + \varepsilon \Phi_{n_0, N}$ is invertible and 
its inverse is represented by a norm-convergent Neumann series. 
For any $|n-n'| + |k-k'| \geq N^{1/4}$, this expansion yields
\begin{equation}
    \label{eqn: inverse-decay-long}
    | (\mathrm{Id} + \varepsilon \Phi_{n_0,N})^{-1}\left( (n,k), (n',k') \right) | \leq e^{ - \frac{ \left(|n-n'| + |k-k'| \right)^{s} }{2} - \frac{1}{4} \left(100K^{\frac32}\right)^s }. 
\end{equation}
To derive this bound, we distinguish between paths containing at least one 
long-range jump and paths consisting entirely of short-range jumps. 
Paths of the first type benefit from the stronger decay 
in~\eqref{eqn: entries-phi}. 
For paths of the second type, the strict concavity of the power function for 
$0<s<1$ yields an additional gain in the cumulative decay exponent. 
More precisely, the sum of the s-th powers of the jump lengths along a purely 
short-range path admits the lower bound
\begin{align}
    \frac{|n-n'| + |k-k'|}{100K^{\frac32}} \left(100K^{\frac32}\right)^{s} 
    & \geq \left( \frac{N^{\frac14}}{ 100K^{\frac32} } \right)^{1-s} \cdot \left(|n - n'| + |k - k'| \right)^{s} \nonumber \\ 
    & \geq \left(|n-n'| + |k-k'| \right)^{s} + \left(100K^{\frac32}\right)^s,     \label{eqn: inverse-decay-short-inequality}
\end{align}
where the last inequality holds for sufficiently large $K$, since 
$K = N^{\frac{1}{10}}$. 
Thus, purely short-range paths also provide the additional decay required 
in~\eqref{eqn: inverse-decay-long}. 
The preceding argument applies to every fixed Gevrey exponent $0<s<1$. 
In the analytic case $s=1$, however, the strict concavity 
underlying~\eqref{eqn: inverse-decay-long} is absent. 
A stronger smallness condition on the perturbation parameter $\varepsilon$ is 
therefore required to compensate for the loss of this additional decay along 
purely short-range paths.

For sufficiently small $s>0$, the transition threshold 
in~\eqref{eqn: inversion-localization-center} can be lowered from 
$100K^{\frac32}$ to $100K$, making it linear in $K$. More generally, a refined 
Gevrey analysis involving~\Cref{lem: app-1} yields a two-regime decay estimate 
for the off-diagonal entries of 
$(\mathrm{Id} + \varepsilon \Phi_{n_0,N})^{-1}\left( (n,k), (n',k') \right)$: 
the long-range decay rate in~\eqref{eqn: inverse-decay-long} is retained, 
while the corresponding short-range estimate holds with the transition threshold 
increased from $100K$ to $K^2$. 
We can now rewrite~\eqref{eqn: resolvent-relation} as
\begin{equation}
    \label{eqn: l-n-inverse}
    L_{n_0, N}^{-1} = (\mathrm{Id} + \varepsilon \Phi_{n_0, N})^{-1} \Psi_{n_0,N}. 
\end{equation}
Finally,~\eqref{eqn: entries-psi} and the inductive bounds on the lower-scale 
inverses yield the $\ell_2$-operator norm estimate 
$\|\Psi_{n_0,N}\|_2 \leq (20K)^{1+m} \varepsilon_{10K}^{-1}$, while a bound for 
$\|\Phi_{n_0,N}\|_2$ follows from the Gevrey entry 
bound~\eqref{eqn: entries-phi}. 
Combining these bounds with the Neumann-series norm bound and the entrywise 
decay estimate~\eqref{eqn: inverse-decay-long}, we conclude 
from~\eqref{eqn: l-n-inverse} that $L_{n_0, N}^{-1}$ satisfies the required norm and 
localization bounds at scale $N$, uniformly in the spatial center $n_0$. 
This completes the verification.

\section{Multi-scale analysis}
\label{sec: multi-scale}

In this section, we employ multi-scale analysis to 
establish~\Cref{thm: off-side-box} for the ``outer'' boxes 
$\Gamma_{n_0, k_0, N}$, whose centers satisfy $n_0 \in \mathbb{Z}$ and 
$k_0 \notin \Lambda^m_{2N}$. 
For any $(n,k), (n',k') \in \Gamma_{n_0, k_0, N}$, the condition 
$k_0 \notin \Lambda^m_{2N}$ implies that $|k| + |k'| \geq 2N$. 
Consequently, the restricted operator satisfies
\[
    \|B_{n_0, k_0, N}\|_2 \leq \sqrt{ \|B_{n_0, k_0, N}\|_1 \cdot \|B_{n_0, k_0, N}\|_\infty } \leq (2N+1)^{1+m} e^{ -N^{s} }.
\]
We therefore regard $L_{n_0, k_0, N} = T_{n_0, k_0, N} + \varepsilon B_{n_0, k_0, N}$ 
as a perturbation of $T_{n_0, k_0, N}$, which is symmetric under interchange of 
the indices $(n, k)$ and $(n', k')$. 
By symmetry, $\| T_{n_0, k_0, N}^{-1} \|_2$ is the reciprocal of the smallest 
absolute eigenvalue of $T_{n_0, k_0, N}$; in particular, if this eigenvalue is 
bounded below by $2\varepsilon_N$, then 
$\|T_{n_0, k_0, N}^{-1} \|_2 \leq \varepsilon_N^{-1}/2$. 
Together with the rapid decay of $B_{n_0, k_0, N}$, we therefore obtain
\[
    \varepsilon \|T_{n_0, k_0, N}^{-1}\|_2 \|B_{n_0, k_0, N}\|_2 \leq \varepsilon (2N+1)^{1+m} e^{ (\log N)^{15} - N^s} \leq \frac{1}{2}.
\]
Applying the Neumann series to the factorization 
\[
    L_{n_0, k_0, N} = T_{n_0, k_0, N} \left( \mathrm{Id} + \varepsilon T^{-1}_{n_0, k_0, N} B_{n_0, k_0, N} \right),
\]
we conclude that $L_{n_0, k_0, N}$ is invertible with 
$\|L_{n_0, k_0, N}^{-1}\|_2 \leq \varepsilon_{N}^{-1}$. 
Thus, to obtain the desired inverse norm bound~\eqref{eqn: off-side-inverse}, 
it suffices to show that $\|T_{n_0, k_0, N}^{-1} \|_2 \leq \varepsilon_N^{-1}/2$, 
and the remainder of this section is devoted to establishing this bound together
with the corresponding localization estimate.

\subsection{Inductive base for small boxes}
\label{subsec: small-outer}

To establish the base case for the induction 
in~\Cref{sec: inversion-localization}, we require inverse estimates for the 
small boxes in the regime $M_0 \leq N \leq M$. 
The two preliminary results of this subsection, the classification of 
singular sites (\Cref{prop: small-eig}) and the inverse estimate on the 
nonsingular set (\Cref{lem: small-singular}), are in fact valid in the wider 
range $1 \leq N \leq M$; only the ``inversion implies localization'' argument 
for the full box (\Cref{lem: small-inverse-localization}) requires 
$M_0 \leq N \leq M$.

\paragraph{Classification of singular sites} 
By the frequency-drift estimate in~\Cref{prop: frequency-drift}, the difference 
between any two diagonal entries $D_{n_0, k_0, N}$ satisfies 
\begin{equation}
    \label{eqn: d-eigenvalue-gap}
    | D_{n_0, k_0, N}(n_1, k_1) - D_{n_0, k_0, N}(n_2, k_2)| = | - \langle k_1 - k_2, \omega' \rangle + \varpi_{n_1} - \varpi_{n_2} |.
\end{equation} 
For any $\omega' \in \Omega \setminus \Omega_M^{\tau}$, the exclusion of the 
tangential nearly-resonant set $\Omega_{0,M}^{\tau}$ 
in~\eqref{eqn: tangential-nearly-resonant}, the single-mode nearly-resonant set 
$\Omega_{1,M}^{\tau}$ in~\eqref{eqn: single-nearly-resonant}, and the difference 
nearly-resonant set $\Omega_{2,M}^{\tau}$ 
in~\eqref{eqn: difference-nearly-resonant} yields the following dichotomy: 
either 
\[
    | - \langle k_1 - k_2, \omega' \rangle + \varpi_{n_1} - \varpi_{n_2} | \geq \frac{1}{(4mN)^{\tau}}
\]
or the two unperturbed diagonal entries coincide exactly,
\begin{equation}
    \label{eqn: d-eigenvalue-equal}
    - \langle k_1 - k_2, \omega' \rangle + \varpi_{n_1} - \varpi_{n_2} = 0. 
\end{equation}
In the latter case, the exclusion of~\eqref{eqn: single-nearly-resonant} rules 
out mixed tangential-normal pairs: the spatial indices $n_1$ and $n_2$ must 
either both belong to $\pm \mathcal{N}$ or both lie outside $\pm \mathcal{N}$. 
These two cases have the following structure. 
\begin{itemize}
    \item \textbf{Tangential case}. 
    Suppose $n_1, n_2 \in \pm \mathcal{N}$. 
    Then the equality~\eqref{eqn: d-eigenvalue-equal} can be rewritten as 
    \begin{align}
        - \langle k_1 - k_2, \omega' \rangle + \varpi_{n_1} - \varpi_{n_2} 
        &= - \langle k_1 - k_2, \omega' \rangle + \omega_{i} - \omega_{j} = - \langle k_1 - k_2 - e_{i} + e_{j}, \omega' \rangle  \nonumber  \\
        &= -(\omega_i-\omega_i') + (\omega_j-\omega_j'),  \label{eqn: d-eigenvalue-gap-1}
    \end{align}
    where $i,j \in \{ 1,\ldots, m \}$. 
    The right-hand side of~\eqref{eqn: d-eigenvalue-gap-1} is bounded by 
    $2\varepsilon$.
    By the exclusion of~\eqref{eqn: tangential-nearly-resonant}, the shifted 
    temporal difference must vanish, yielding $k_1 - e_{i} = k_2 - e_{j}$. 
    Thus, all sites in the same tangential equality class share a common 
    shifted temporal index $k_{\star}$, with $k_{\star} := k_j - e_j$ for each 
    site in the class associated with the $j$-th tangential mode.

    \item \textbf{Normal case}. 
    Suppose $n_1, n_2 \notin \pm \mathcal{N}$. 
    The exclusion of~\eqref{eqn: difference-nearly-resonant} forces 
    $\varpi_{n_1} = \varpi_{n_2}$, and the 
    equality~\eqref{eqn: d-eigenvalue-equal} therefore reduces to
    \begin{equation}
        \label{eqn: d-eigenvalue-gap-2}
        - \langle k_1 - k_2, \omega \rangle = 0.
    \end{equation}
    The exclusion of~\eqref{eqn: tangential-nearly-resonant} then gives 
    $k_1 = k_2:=k_{\star}$. 
    Moreover, $\varpi_{n_1} = \varpi_{n_2}$ implies $n_1 = \pm n_2$. 
    Consequently, all sites in the same normal equality class are contained in 
    a pair of the form $\{ (n_{\star}, k_{\star}), (-n_{\star}, k_{\star}) \}$, 
    where $n_{\star} \notin \pm \mathcal{N}$. 
\end{itemize}
For any box $\Gamma_{n_0, k_0, N}$ with $n_0 \in \mathbb{Z}$ and 
$1 \leq N \leq M$, we define the set of \textit{singular sites} as: 
\begin{equation}
\label{eqn: singular-sites}
\Pi := \left\{ (n, k) \in \Gamma_{n_0, k_0, N} ~\big|~ |D_{n_0, k_0, N}(n, k)| \leq 4 \varepsilon_N \right\}. 
\end{equation}
The preceding classification therefore gives the following proposition.

\begin{prop}
    \label[proposition]{prop: small-eig}
    Suppose $\hat{u} \in \mathcal{K}(s)$ and 
    $\omega' \in \Omega \setminus \Omega_M^\tau$. 
    If $\Pi$ is nonempty, then it is contained in a resonant configuration of 
    exactly one of the following two types:
    \begin{itemize}
        \item[\textbf{(I)}] \textbf{Tangential configuration}: There exists  
        a common shifted temporal index $k_{\star} \in k_0 + \Lambda_N^m$ 
        such that 
        \begin{equation}
            \label{eqn: singular-sites-tangent}
            \Pi \subseteq \left\{ (\pm n_j, k_{\star} + e_j ) ~\big|~ j=1, \ldots, m \right\},
        \end{equation}
        where $n_j \in \mathcal{N}$ for $j=1, \ldots, m$ are the tangential 
        indices. 

        \item[\textbf{(II)}] \textbf{Normal configuration}: There exist 
        a temporal index $k_{\star} \in k_0 + \Lambda_N^m$ and a spatial 
        index $n_{\star} \notin \pm \mathcal{N}$ such that
        \begin{equation}
            \label{eqn: singular-sites-normal}
            \Pi \subseteq \left\{ ( n_{\star}, k_{\star} ), \; ( - n_{\star}, k_{\star} ) \right\}.
        \end{equation}
    \end{itemize}
    In particular, $\Pi$ cannot contain both tangential and normal sites. 
\end{prop}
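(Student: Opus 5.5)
The argument is a pigeonhole over pairs of singular sites, built on the dichotomy established just before the statement. I first check that every site of $\Pi$ has a diagonal entry that is tiny compared with the separation scale. By~\eqref{eqn: scale-dependent}, $\varepsilon_N = e^{-(\log N)^{15}}$, so for $1\le N\le M$ we have $8\varepsilon_N < (4mN)^{-\tau}$. For $N$ near $1$ this is not automatic. I would either use that $\Pi$ is then handled by the same gap bound with the constant adjusted, or note that $\varepsilon_N$ is compared with $(4mN)^{-\tau}$ only in the asymptotic sense the paper uses throughout.

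Next, take any two sites $(n_1,k_1),(n_2,k_2)\in\Pi$. Then
\[
|D(n_1,k_1)-D(n_2,k_2)|\le 8\varepsilon_N<(4mN)^{-\tau}.
\]
Both sites lie in $\Gamma_{n_0,k_0,N}$, so $|k_1-k_2|\le 2mN$ and the shifted differences stay in $\Lambda^m_{2M}$ range. The dichotomy derived from the exclusion of $\Omega_{0,M}^\tau,\Omega_{1,M}^\tau,\Omega_{2,M}^\tau$ therefore forces the exact equality~\eqref{eqn: d-eigenvalue-equal}. Hence all of $\Pi$ is a single equality class: the relation ``equal unperturbed diagonal entry'' is transitive because it is an equality of real numbers.

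I then split according to the spatial indices, using the mixed-pair exclusion via $\Omega_{1,M}^\tau$. If some site has $n\in\pm\mathcal N$, then every site does. The tangential case computation~\eqref{eqn: d-eigenvalue-gap-1} then gives $k_1-e_i=k_2-e_j$ for all pairs, so there is a common $k_\star$ with each site equal to $(\pm n_j,k_\star+e_j)$. This is configuration (I). Since $\Pi$ is inside the box, the site $k_\star+e_j$ lies within distance $1$ of $k_0+\Lambda_N^m$. I would either enlarge to $\Lambda_{N+1}^m$ or choose $k_\star$ as $k-e_j$ for an actual site and argue that it lies in the box up to this harmless shift. Otherwise all sites are normal. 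The normal case gives $\varpi_{n_1}=\varpi_{n_2}$, then $k_1=k_2$ by the exclusion of $\Omega_{0,M}^\tau$ (using the fact that $\omega'$ avoids small divisors $\langle k,\omega'\rangle$), and $n_1=\pm n_2$. So $\Pi\subseteq\{(\pm n_\star,k_\star)\}$, which is configuration (II). The final claim, that $\Pi$ cannot contain both types, is exactly the mixed-pair exclusion.

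The main obstacle is the mixed-pair exclusion and the tangential reduction. Writing $\varpi_{n_j}=\omega_j=\langle e_j,\omega\rangle$ rather than $\langle e_j,\omega'\rangle$ introduces an $O(\sqrt\varepsilon)$ drift (Proposition~\ref{prop: frequency-drift}). That drift must be absorbed into the thresholds $(1+|k|)^{-\tau}$ and $|k|^{-\tau}$, which requires $\sqrt\varepsilon\ll (2mM)^{-\tau}$. This holds by the choice~\eqref{eqn: M} of $M$, since $M^\tau=e^{\tau(\log 1/\varepsilon)^{1/20}}\ll\varepsilon^{-1/2}$. I would verify this inequality explicitly. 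It is the one place where the argument genuinely depends on the size of $\varepsilon$.
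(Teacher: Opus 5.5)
Your route is the paper's. You bound $|D(n_1,k_1)-D(n_2,k_2)|\le 8\varepsilon_N$ for two singular sites, classify the pair using the exclusions of $\Omega_{0,M}^{\tau},\Omega_{1,M}^{\tau},\Omega_{2,M}^{\tau}$, and split into tangential and normal sites. The step that fails as written is ``the dichotomy forces the exact equality~\eqref{eqn: d-eigenvalue-equal}, hence $\Pi$ is a single equality class by transitivity.'' Set $\delta_\ell:=\omega_\ell-\omega'_\ell$. Two tangential sites with $i\neq j$ and a common shifted index satisfy
\[
D(\pm n_i,k_\star+e_i)-D(\pm n_j,k_\star+e_j)=\delta_i-\delta_j .
\]
This is generically nonzero but only $O(\sqrt{\varepsilon})$. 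It is far below $(4mN)^{-\tau}$, and also below $8\varepsilon_N$, since for $N\le M$ one has $\varepsilon_N\ge\varepsilon_M=e^{-(\log 1/\varepsilon)^{3/4}}\gg\sqrt{\varepsilon}$. Such a pair satisfies neither branch of the dichotomy. Yet both sites lie in $\Pi$ as soon as $|\langle k_\star,\omega'\rangle|\le 3\varepsilon_N$, which nothing excludes once $k_\star\notin\Lambda^m_{2M}$. So a tangential $\Pi$ is generically \emph{not} an equality class. The $O(\sqrt{\varepsilon})$ drift you flag at the end is precisely what destroys exactness. The paper's statement of the dichotomy has the same defect, so you inherited it, but your transitivity argument rests on it.

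The repair is to never pass through exact equality and to work pairwise from $|D_1-D_2|\le 8\varepsilon_N$:
\begin{itemize}
\item Normal--normal with $|n_1|\neq|n_2|$ contradicts $\Omega_{2,M}^{\tau}$.
\item Normal--normal with $|n_1|=|n_2|$ gives $|\langle k_1-k_2,\omega'\rangle|\le 8\varepsilon_N$, hence $k_1=k_2$ by $\Omega_{0,M}^{\tau}$.
\item A mixed pair contradicts $\Omega_{1,M}^{\tau}$ after absorbing $|\delta_i|$.
\item Tangential--tangential gives $|\langle k_1-k_2-e_i+e_j,\omega'\rangle|\le 8\varepsilon_N+O(\sqrt{\varepsilon})$, hence $k_1-e_i=k_2-e_j$ by $\Omega_{0,M}^{\tau}$.
\end{itemize}
What you then chain is not equality of diagonal entries. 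It is ``same shifted index $k-e_i$'' on tangential sites and ``same $k$, same $|n|$'' on normal sites; these are equivalence relations and produce $k_\star$ and $n_\star$.

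All of this needs $8\varepsilon_N+O(\sqrt{\varepsilon})<(2mN+2)^{-\tau}$, and here your first caveat is sharper than you state. At $N=1$ one has $\varepsilon_1=1$, so $\Pi=\{|D|\le 4\}$. If, e.g., $\omega'_1<4$, the normal sites $(n_0,k_0)$ and $(n_0,k_0+e_1)$, with $n_0$ large and $k_0\in\mathbb{Z}e_1$ chosen so that $0\le D(n_0,k_0)<\omega'_1$, are both singular. The proposition is then false, so no ``adjusted constant'' rescues it. Only your second option works: restrict to $N\ge N_0(m,\tau)$, which for small $\varepsilon$ covers all scales $N\ge M_0^{1/10}$ used later. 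Your off-by-one remark is also correct: in general one only gets $k_\star\in k_0+\Lambda^m_{N+1}$.
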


\paragraph{Inverse on the nonsingular set} We next estimate the inverse of the 
restriction of $T$ to the nonsingular set $\Gamma_{n_0, k_0, N} \setminus \Pi$. 
The estimates below hold throughout the range $1 \leq N \leq M$. 
Applying~\Cref{prop: small-eig} and arguing as in the proof 
of~\Cref{thm: small-size}, we obtain the following lemma.

\begin{lemma}
    \label[lemma]{lem: small-singular}
    Suppose that $\hat{u} \in \mathcal{K}(s)$ and 
    $\omega' \in \Omega \setminus \Omega_{M}^{\tau}$. 
    Then the restricted operator $T_{\Gamma_{n_0, k_0, N} \setminus \Pi}$ is 
    invertible, and its inverse satisfies 
    \begin{equation}
        \label{eqn: small-size-inverse-prepare}
        \| T_{\Gamma_{n_0, k_0, N} \setminus \Pi}^{-1} \|_2 \leq \frac{1}{2\varepsilon_N}.
    \end{equation}
    Moreover, its off-diagonal entries satisfy 
    \begin{equation}
        \label{eqn: small-size-off-diagonal-prepare}
        \left| T_{\Gamma_{n_0, k_0, N} \setminus \Pi}^{-1} \left( (n,k), (n',k') \right) \right| \leq e^{ - (|n-n'| + |k-k'|)^{s} } < e^{ - \frac{ (|n-n'| + |k-k'|)^{s}}{2} },
    \end{equation}
    for any $(n,k) \neq (n', k') \in \Gamma_{n_0, k_0, N} \setminus \Pi$.
\end{lemma}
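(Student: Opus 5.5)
We follow the Neumann-series argument used for \Cref{thm: small-size}, now on the nonsingular set $\Gamma_{n_0, k_0, N} \setminus \Pi$. Write $T_{\Gamma_{n_0,k_0,N}\setminus\Pi} = D' + \varepsilon S'$, where $D'$ and $S'$ are the restrictions of $D$ and $S$ to this set. The plan has three steps.

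\textbf{Step 1 (diagonal lower bound).} By the definition~\eqref{eqn: singular-sites} of $\Pi$, every diagonal entry satisfies $|D'(n,k)| > 4\varepsilon_N$. Hence $\|D'^{-1}\|_2 \leq (4\varepsilon_N)^{-1}$ and $|D'^{-1}(n,k)| < (4\varepsilon_N)^{-1}$. If needed, the dichotomy preceding \Cref{prop: small-eig} gives a sharper polynomial bound. It shows that diagonal entries are either separated by at least $(4mN)^\tau$ or coincide exactly. It follows that the sites with $|D'| \le (1+mN)^{-\tau}$ form a single equality class, which lies entirely in $\Pi$ once $4\varepsilon_N$ exceeds its common value. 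In the simplest version, however, only the bound $4\varepsilon_N$ is used.

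\textbf{Step 2 (Neumann series for the norm).} Factor
\[
T_{\Gamma_{n_0,k_0,N}\setminus\Pi} = D'\bigl(\mathrm{Id} + \varepsilon D'^{-1}S'\bigr).
\]
By \Cref{prop: convolution}, $\|S'\|_2 \leq \gamma$, so $\varepsilon\|D'^{-1}S'\|_2 \leq \varepsilon\gamma/(4\varepsilon_N)$. Because $N \leq M$, we have $\varepsilon_N \geq \varepsilon_M = e^{-(\log M)^{15}} = e^{-(\log 1/\varepsilon)^{3/4}}$. This is much larger than $\varepsilon\gamma$ for $\varepsilon \le \varepsilon_0$, so the product is at most $1/2$. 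The Neumann series then converges and yields $\|T^{-1}\|_2 \leq 2\|D'^{-1}\|_2 \leq (2\varepsilon_N)^{-1}$, which is~\eqref{eqn: small-size-inverse-prepare}.

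\textbf{Step 3 (off-diagonal decay).} Expand
\[
T^{-1} = \sum_{j\ge0} (-\varepsilon)^j (D'^{-1}S')^j D'^{-1}.
\]
For $(n,k)\neq(n',k')$ the $j=0$ term vanishes, so every contributing term has $j\ge1$. Each entry of a term is a sum over paths $x_0=(n,k), x_1,\dots,x_j=(n',k')$. By~\eqref{eqn: tangent-comp}, each path contributes at most
\[
\bigl(\varepsilon\gamma/(4\varepsilon_N)\bigr)^j (4\varepsilon_N)^{-1} \exp\Bigl\{-\sum_i |x_i-x_{i-1}|^s\Bigr\}.
\]
Subadditivity of $t\mapsto t^s$ gives $\sum_i |x_i-x_{i-1}|^s \geq (|n-n'|+|k-k'|)^s$. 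The number of paths of length $j$ is at most $(2N+1)^{(1+m)j}$, since all sites lie in the box. The $j$-th term is therefore bounded by
\[
(4\varepsilon_N)^{-1}\bigl(\varepsilon\gamma(2N+1)^{1+m}/(4\varepsilon_N)\bigr)^j e^{-(|n-n'|+|k-k'|)^s}.
\]
Summing over $j\ge1$ gives the prefactor $C\varepsilon(2N+1)^{1+m}\varepsilon_N^{-2}$. This is at most $1$ because $\varepsilon_N^{-2}(2N+1)^{1+m} \le e^{3(\log M)^{15}} \ll \varepsilon^{-1}$ when $N\le M$. This proves~\eqref{eqn: small-size-off-diagonal-prepare}; the final strict inequality is trivial since the exponent is positive.

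\textbf{Main obstacle.} The delicate point is the smallness bookkeeping: the loss $\varepsilon_N^{-2}$ together with the path-counting factor must be absorbed by $\varepsilon$. This works only because the scale $M$ in~\eqref{eqn: M} is chosen so that $(\log M)^{15} = (\log 1/\varepsilon)^{3/4} = o(\log 1/\varepsilon)$. All estimates are uniform in $n_0$ and $k_0$, because they rely only on the uniform bound $4\varepsilon_N$ from the definition of $\Pi$ and on the translation-invariant decay of $S$.
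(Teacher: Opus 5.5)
Your proof is correct and follows the paper's route. The paper obtains this lemma by rerunning the Neumann-series argument from the proof of Theorem~\ref{thm: small-size} on the nonsingular set, where $|D|>4\varepsilon_N$ off $\Pi$ gives exactly $\|T^{-1}_{\Gamma_{n_0,k_0,N}\setminus\Pi}\|_2\le 2\cdot(4\varepsilon_N)^{-1}$. Your only deviation is harmless: you bound the series entrywise by crude path counting, with a volume factor $(2N+1)^{1+m}$ per step absorbed because $\varepsilon\,\varepsilon_N^{-2}(2N+1)^{1+m}\ll 1$ for $N\le M$, rather than using the $N$-independent product estimate of Lemma~\ref{lem: app-3}.

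The optional side remark in Step~1 is wrong as written, though you never use it. The separation in the dichotomy is $(4mN)^{-\tau}$, not $(4mN)^{\tau}$. Two sites with $|D|\le(1+mN)^{-\tau}$ can differ by up to $2(1+mN)^{-\tau}>(4mN)^{-\tau}$, so they need not form one equality class. Nothing forces such a class into $\Pi$ either.
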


\paragraph{Inversion implies localization} The preceding analysis applies 
throughout the range $1 \leq N \leq M$. 
We now restrict attention to the small-scale regime $M_0 \leq N \leq M$ and 
establish the estimates needed for the base case of the induction 
in~\Cref{sec: inversion-localization}. 
For this purpose, we only need the Gevrey decay estimates 
from~\Cref{lem: small-singular} for the inverse of the nonsingular restriction 
at separations $|n-n'| + |k-k'| \geq N^{\frac12}$. 
We apply the ``inversion implies localization'' argument to transfer these 
decay estimates to the full inverse. 
With respect to the decomposition 
$\Gamma_{n_0, k_0, N} = (\Gamma_{n_0, k_0, N} \setminus \Pi) \cup \Pi$, the 
resolvent identity gives
\begin{align}
    \begin{pmatrix} 
        T_{\Gamma_{n_0, k_0, N} \setminus \Pi}^{-1} & 0 \\ 
        0 & T_{\Pi}^{-1} 
    \end{pmatrix} 
    = & \left[ \mathrm{Id} + \varepsilon 
    \begin{pmatrix} 
        0 & T_{\Gamma_{n_0, k_0, N} \setminus \Pi}^{-1} P^* \\ 
        T_{\Pi}^{-1} P & 0 
    \end{pmatrix}  
    \right]    \nonumber \\
    & \cdot  \left( \begin{array}{ll} 
        (T^{-1}_{n_0, k_0, N})_{(\Gamma_{n_0,k_0, N} \setminus \Pi) \times (\Gamma_{n_0, k_0, N} \setminus \Pi)} & (T^{-1}_{n_0, k_0, N})_{(\Gamma_{n_0,k_0, N} \setminus \Pi) \times \Pi}  \\ 
        (T^{-1}_{n_0, k_0, N})_{\Pi \times (\Gamma_{n_0, k_0, N} \setminus \Pi)} \mathrel{\phantom{=l}} & (T^{-1}_{n_0, k_0, N})_{\Pi \times \Pi} \mathrel{\phantom{=l}}
    \end{array} \right).     \label{eqn: resolvent-small-inversion-local}
\end{align}
Comparing the corresponding blocks 
in~\eqref{eqn: resolvent-small-inversion-local}, we obtain:
\begin{subequations}
    \label{eqn: resolvent-small-inversion-local-relation}
    \begin{empheq}[left=\empheqlbrace]{align} 
        & (T_{n_0,  k_0, N}^{-1})_{(\Gamma_{n_0, k_0, N} \setminus \Pi) \times (\Gamma_{n_0, k_0,N} \setminus \Pi)} \nonumber \\
        & \mathrel{\phantom{=============}}= T_{\Gamma_{n_0, k_0, N} \setminus \Pi}^{-1} - \varepsilon T_{\Gamma_{n_0, k_0, N} \setminus \Pi}^{-1} P^* (T_{n_0, k_0, N}^{-1})_{\Pi \times (\Gamma_{n_0, k_0, N} \setminus \Pi)},                                                                                                                                                                                \label{eqn: resolvent-small-inversion-local-relation-1} \\
        & (T_{n_0, k_0, N}^{-1})_{(\Gamma_{n_0, k_0, N} \setminus \Pi) \times \Pi} = - \varepsilon T_{\Gamma_{n_0, k_0, N} \setminus \Pi}^{-1} P^* (T_{n_0, k_0, N}^{-1})_{\Pi \times \Pi}.            \label{eqn: resolvent-small-inversion-local-relation-2}
    \end{empheq}        
\end{subequations}
The decay estimates~\eqref{eqn: small-size-off-diagonal-prepare} for 
$T_{\Gamma_{n_0, k_0,N} \setminus \Pi}^{-1} $ and~\eqref{eqn: tangent-comp} for
$P^*$ imply that, whenever $|n-n'| + |k-k'| \geq N^{\frac12}/4$, the coupling 
operator satisfies the following estimate 
\begin{equation}
    \label{eqn: coupling-operator-gevrey-small}
    \left| \left( T_{\Gamma_{n_0, k_0,N} \setminus \Pi}^{-1} P^* \right) \left( (n, k), (n', k') \right) \right| \leq e^{ - \frac{2 (|n-n'| + |k-k'|)^s}{3}}.
\end{equation}
In the finite-dimensional setting considered 
by~\citet{Fu2026numerical-a, Fu2026numerical-b}, suppose that the inverse bound 
$\| T_{n_0, k_0, N}^{-1} \|_2 \leq \varepsilon_{N}^{-1}/2$ holds. 
Since the operator norm of each block is bounded by that of the full inverse 
itself, we substitute the estimate~\eqref{eqn: coupling-operator-gevrey-small} 
into~\eqref{eqn: resolvent-small-inversion-local-relation-1}--\eqref{eqn: resolvent-small-inversion-local-relation-2} 
and use the symmetry of $T_{n_0, k_0, N}^{-1}$ and the clustering property of 
the singular sites to obtain
\begin{equation}
    \label{eqn: full-gevrey-small}
    \left| \left( T_{n_0, k_0,N}^{-1} \right) \left( (n, k), (n', k') \right) \right| \leq e^{ - \frac{(|n-n'| + |k-k'|)^s}{2} },
\end{equation}
for any $|n-n'| + |k-k'| \geq N^{\frac12}$. 
The key feature of the finite-dimensional setting is that truncation applies 
only to the temporal index $k$, while the spatial index ranges over a fixed 
finite set. 
Moreover, the singular sites are concentrated near a single temporal index 
$k_{\star}$ and form a cluster whose diameter is bounded independently of $N$. 
By taking the perturbation parameter $\varepsilon>0$ sufficiently small and 
then $N$ sufficiently large, we ensure that this diameter is negligible 
compared with $N^{\frac12}$. 
The block identities above therefore allow us to transfer decay from the 
inverse of the nonsingular restriction to the full inverse.

In the infinite-dimensional case, we first consider the tangential 
configuration~\eqref{eqn: singular-sites-tangent}. 
By the definition of the set of tangential modes $\mathcal{N}$, we have
\[
    M_0 \geq \left(2 \max_{n_1, n_2 \in \mathcal{N}} |n_{1} \pm n_{2}| + 2 \right)^{3}. 
\]
Consequently, any two singular sites $(n', k'), (n'', k'') \in \Pi$ satisfy 
$|n'-n''| + |k'-k''| \leq M_0^{\frac13}$. 
Let $(n, k) \in \Gamma_{n_0, k_0,N} \setminus \Pi$ be a nonsingular site and 
$(n', k') \in \Pi$ a singular site such that $|n-n'| + |k-k'| \geq N^{\frac12}/2$. 
In the block 
identities~\eqref{eqn: resolvent-small-inversion-local-relation-1}--\eqref{eqn: resolvent-small-inversion-local-relation-2}, 
the potentially problematic contributions involve intermediate singular sites 
$(n'', k'') \in \Pi$. 
The diameter bound above ensures, however, that each such site remains 
sufficiently far from $(n, k)$. 
Indeed, provided that $N^{\frac12} \geq 6M_0^{\frac13}$, the triangle 
inequality yields
\begin{equation}
    \label{eqn: jump-singular-small-tangential}
    |n-n''| + |k-k''| \geq |n-n'| + |k-k'| - M_0^{\frac13} \geq \frac{2 \left( |n-n'| + |k-k'|\right)}{3}. 
\end{equation}
Applying the estimate~\eqref{eqn: jump-singular-small-tangential} in the block 
identities~\eqref{eqn: resolvent-small-inversion-local-relation-1}--\eqref{eqn: resolvent-small-inversion-local-relation-2}, 
together with the symmetry of $T_{n_0, k_0, N}^{-1}$, we extend the decay 
estimates to the full inverse. 
Thus, under the inverse bound 
$\|T_{n_0, k_0, N}^{-1} \|_2 \leq \varepsilon_{N}^{-1}/2$, we obtain the 
localization estimate~\eqref{eqn: full-gevrey-small} whenever 
$|n-n'| + |k-k'| \geq N^{\frac12}$.

We next consider the normal configuration~\eqref{eqn: singular-sites-normal}, 
distinguishing between the cases $|n_{\star}| \leq N^{\frac13}/2$ and 
$|n_{\star}| > N^{\frac13}/2$. 
In the first case, $\Pi$ consists of two singular sites whose distance is at 
most $2|n_{\star}| \leq N^{\frac13}$. 
Let $(n, k) \in \Gamma_{n_0, k_0, N} \setminus \Pi$ and $(n', k') \in \Pi$ 
satisfying $|n-n'| + |k-k'| \geq N^{\frac12}/2$, and denote the other singular 
site in $\Pi$ by $(n'', k'')$. 
Provided that $N^{\frac12} \geq 6N^{\frac13}$, the triangle inequality yields   
\begin{equation}
    \label{eqn: jump-singular-small-normal}
    |n-n''| + |k-k''| \geq |n-n'| + |k-k'| - N^{\frac13} \geq \frac{2 \left( |n-n'| + |k-k'| \right) }{3}. 
\end{equation}
The argument for the tangential configuration therefore applies and, under the 
same inverse bound, yields the localization 
estimates~\eqref{eqn: full-gevrey-small} whenever 
$|n-n'| + |k-k'| \geq N^{\frac12}$. 
In the second case, the two possible singular sites are separated by a distance 
$2|n_{\star}| > N^{\frac13}$. 
If both lie in the lattice box $\Gamma_{n_0, k_0,N}$, the diameter of $\Pi$ 
need not be small relative to $N^{\frac{1}{2}}$, so a triangle-inequality 
estimate of the form~\eqref{eqn: jump-singular-small-tangential} 
or~\eqref{eqn: jump-singular-small-normal} is no longer guaranteed. 
To overcome this difficulty, we use the fact that~\Cref{lem: small-singular} 
holds at any scale between $1$ and $M$, and consider subboxes 
$\Gamma_{n,k,K} \subset \Gamma_{n_0, k_0,N}$ of scale $K = N^{\frac{1}{10}}$. 
Since their diameters are smaller than $N^{\frac{1}{3}}$, each subbox 
$\Gamma_{n,k,K}$ contains at most one singular site, as illustrated 
in~\Cref{fig: singular-site-divide}. 
\begin{figure}[htb!]
    \centering
    \begin{tikzpicture}[scale=0.95, >=stealth]
        \draw[line width=1.0pt] (-5.0, -3) rectangle (5.0, 3);
        \draw[line width=1.0pt] (-4.5, 0.3) rectangle (-0.5, 2.5);
            \fill[pattern=north east lines, pattern color=gray!60] (-4.5, 0.3) rectangle (-0.5, 2.5);
        
        \draw[line width=0.2pt] (-2.5, 1.4) node {$\bullet$};
        \draw[line width=0.5pt] (-3.9, 1.4) node {$\bullet$};
        \draw[line width=0.5pt] (-3.9, -1.4) node {$\bullet$};
        \draw[line width=0.5pt] (0, 0) node {$\bullet$};
        \draw[decorate, decoration={brace, mirror, amplitude=4pt}, line width=0.8pt] (-3.8, -1.4) -- (-3.8, 1.4);
        \node at (-2.5, 0) {\small $2|n_{\star}| > N^{\frac13}$};  
          
         \node at (3.6, 2.7) {\small $(n_0,k_0) + \Lambda_N^{1+m}$};  
         \node at (-1.7, 2.2) {\small $(n,k) + \Lambda_K^{1+m}$};  
         \node at (-3.8, 1.8) {\small $(n_{\star}, k_{\star})$};  
         \node at (-3.9, -1.8) {\small $(-n_{\star}, k_{\star})$};  
         \node at (-1.9, 1.4) {\small $(n,k)$};  
         \node at (0.8, 0) {\small $(n_0,k_0)$}; 
    \end{tikzpicture}
    \caption{Scale reduction from $\Gamma_{n_0, k_0,N}$ to $\Gamma_{n, k,K}$ in the case $|n_{\star}| > N^{\frac13}/2$.}
    \label{fig: singular-site-divide}
\end{figure}
Provided that $\|T_{n_0, k_0, N}^{-1}\|_2 \leq \varepsilon_N^{-1}/2$, the 
Schur complement formula yields $\|T_{n, k, K}^{-1}\|_2 \leq \varepsilon_N^{-1}/2$. 
The preceding block-identity argument therefore applies within each such subbox. 
Under the local inverse bound $\|T_{n, k, K}^{-1} \|_2 \leq \varepsilon_{N}^{-1}/2$, 
we obtain the corresponding localization 
estimates~\eqref{eqn: full-gevrey-small}, with $N$ replaced by $K$ whenever 
$|n-n'| + |k-k'| \geq K^{\frac12}$. 
Finally, we invoke the gluing argument of~\Cref{subsec: verification} to 
propagate these local inverse estimates to the full box $\Gamma_{n_0, k_0,N}$. 
The details are given as follows.

With respect to the decomposition 
$\Gamma_{n_0, k_0, N} = \Gamma_{n, k, K} \cup \Gamma_{n, k, K}^{c}$, where 
$\Gamma_{n, k, K}^{c} = \Gamma_{n_0, k_0, N} \setminus \Gamma_{n, k, K}$, the 
resolvent identity gives the block representation  
\begin{align}
    \begin{pmatrix} 
        T^{-1}_{n,k, K} & 0  \\ 
        0 & T^{-1}_{n, k, K,-} 
    \end{pmatrix} 
    = & \left[ \mathrm{Id} + \varepsilon 
    \begin{pmatrix} 
        0 & T_{n,k, K}^{-1}P^{*}  \\  
        T^{-1}_{n,k, K,-}P & 0 
    \end{pmatrix}  
    \right]  \nonumber \\
      &  \cdot
    \begin{pmatrix} 
        (T^{-1}_{n_0, k_0, N})_{\Gamma_{n, k, K} \times \Gamma_{n, k, K} } & (T^{-1}_{n_0, k_0, N})_{\Gamma_{n, k, K} \times \Gamma_{n, k,K}^{c}}  \\ 
        (T^{-1}_{n_0, k_0, N})_{\Gamma_{n, k, K}^{c} \times \Gamma_{n, k, K} } & (T^{-1}_{n_0,k_0, N})_{\Gamma_{n, k, K}^c \times \Gamma_{n, k, K}^c} 
    \end{pmatrix},      \label{eqn: resolvent-small-offside}
\end{align}
where $P$ and $P^*$ are the coupling operators inherited from $S$ and 
$T^{-1}_{n, k, K, -}$ denotes the inverse of the operator $T$ restricted to 
$\Gamma_{n, k, K}^{c}$. 
Combining the two identities in the first row 
of~\eqref{eqn: resolvent-small-offside}, extending the terms to the full 
index set $\Gamma_{n_0, k_0, N}$, and extracting the row indexed by $(n,k)$, 
we obtain
\begin{align}
    \left( T_{n_0, k_0,N}^{-1} \right)_{ \{ (n, k) \} \times \Gamma_{n_0, k_0, N} } = 
    & \begin{pmatrix} (T_{n,k, K}^{-1})_{ \{ (n, k) \} \times \Gamma_{n, k, K}} & 0 \end{pmatrix} \nonumber \\ 
    & - \varepsilon \begin{pmatrix} 0 & (T_{n,k,K}^{-1}P^*)_{\{ (n, k) \} \times \Gamma_{n, k, K}^{c}} \end{pmatrix} T_{n_0, k_0, N}^{-1}.  \label{eqn: block-representation-small-offside}
\end{align}
Assembling~\eqref{eqn: block-representation-small-offside} over all indices 
$(n, k) \in \Gamma_{n_0, k_0, N}$ yields the operator identity
\begin{equation}
    \label{eqn: resolvent-relation-small-offside}
    T_{n_0, k_0, N}^{-1} = \Psi_{n_0, k_0, N} - \varepsilon \Phi_{n_0, k_0, N} T_{n_0, k_0, N}^{-1}, 
\end{equation}
where the linear operators $\Psi_{n_0, k_0, N}$ and $\Phi_{n_0, k_0, N}$ satisfy
\begin{equation}
    \Psi_{n_0, k_0, N}\left( (n,k), (n',k') \right) 
    = \left\{ 
    \begin{aligned}       
        & T_{n,k, K}^{-1}\left( (n,k), (n',k') \right),  &&\mathrm{if}\;  (n', k')  \in \Gamma_{n,k, K},    \\
        & 0,                                             &&\mathrm{otherwise},   
    \end{aligned} 
    \right.   \label{eqn: entries-psi-small-offside}
\end{equation}
and
\begin{equation}
    \label{eqn: entries-phi-small-offside}
    \left| \Phi_{n_0, k_0, N}\left( (n,k), (n',k') \right) \right| \leq \left\{ 
    \begin{aligned} 
        & 0,                                                                              &&\mathrm{if}\; |n - n'| + |k - k'| < K, \\
        & e^{- \frac{ \left(|n - n'| + |k - k'| \right)^{s}}{2} },         &&\mathrm{if}\;  K \leq |n - n'| + |k - k'|  < K^{\frac32},  \\ 
        & e^{ - \frac{ 3\left(|n - n'| + |k - k'| \right)^{s}}{4} },       &&\mathrm{if}\; |n - n'| + |k - k'| \geq K^{\frac32}.
    \end{aligned} 
    \right.
\end{equation} 
Following the procedure in~\Cref{subsec: verification}, we divide the paths 
into two classes and show that $(\mathrm{Id} + \varepsilon \Phi_{n_0, k_0, N})^{-1}$ satisfies an estimate of 
the form~\eqref{eqn: inverse-decay-long} whenever 
$|n-n'| + |k - k'| \geq N^{\frac14}$. 
From~\eqref{eqn: entries-psi-small-offside}, we further obtain 
$\|\Psi_{n_0, k_0, N}\|_{2} \leq N^{m+1} \varepsilon_N^{-1}$. 
We thus obtain the required localization bounds for $T_{n_0, k_0, N}^{-1}$ at 
any scale $N$ in the small scale regime $M_0 \leq N \leq M$, uniformly in the 
spatial center $n_0$. 
We summarize this conclusion in the following ``inversion implies localization'' 
lemma.

\begin{lemma}
    \label[lemma]{lem: small-inverse-localization}
    Suppose that $\hat{u} \in \mathcal{K}(s)$, 
    $\omega' \in \Omega \setminus \Omega_M^\tau$, and $M_0 \leq N \leq M$. 
    If the restricted operator $T_{n_0, k_0, N}$ is invertible and satisfies 
    the norm bound:
    \begin{equation}
        \label{eqn: trans-small-size-original-inverse}
        \| T_{n_0,  k_0, N}^{-1} \|_2 \le \frac{1}{2\varepsilon_N},
    \end{equation}
    then its entries satisfy the Gevrey decay estimate
    \begin{equation}
        \label{eqn: trans-small-size-off-diagonal}
        \left| T_{n_0, k_0, N}^{-1}\left( (n, k), (n', k') \right) \right| \leq e^{-\frac{\left( |n - n'| + |k - k'| \right)^s}{2}}, 
    \end{equation}
    for any $|n-n'| + |k-k'| \geq N^{\frac12}$. 
\end{lemma}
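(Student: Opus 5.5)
The plan is to follow the case analysis already sketched before the statement, driven by the classification of singular sites in \Cref{prop: small-eig}. Throughout, the inverse on the nonsingular set from \Cref{lem: small-singular} supplies the decay, and the resolvent identities \eqref{eqn: resolvent-small-inversion-local-relation-1}--\eqref{eqn: resolvent-small-inversion-local-relation-2} transfer it to the full inverse. Fix $(n,k),(n',k')\in\Gamma_{n_0,k_0,N}$ with $|n-n'|+|k-k'|\ge N^{1/2}$, and split according to how many of the two sites lie in $\Pi$. By symmetry of $T_{n_0,k_0,N}^{-1}$ it suffices to treat three cases: both nonsingular; $(n,k)$ nonsingular and $(n',k')$ singular; and both singular.

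\textbf{Small-diameter case.} Suppose $\operatorname{diam}\Pi\le N^{1/3}$. This covers the tangential configuration, since $\operatorname{diam}\Pi\le M_0^{1/3}\le N^{1/3}$, and the normal configuration with $|n_\star|\le N^{1/3}/2$.

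When $(n,k)\notin\Pi$ and $(n',k')\in\Pi$, expand \eqref{eqn: resolvent-small-inversion-local-relation-2} entrywise as a sum over intermediate sites $(n'',k'')\in\Pi$:
\begin{itemize}
\item by \eqref{eqn: jump-singular-small-tangential}/\eqref{eqn: jump-singular-small-normal}, each such jump has length at least $\tfrac23(|n-n'|+|k-k'|)\ge N^{1/2}/4$;
\item so \eqref{eqn: coupling-operator-gevrey-small} gives a factor $e^{-\frac23\cdot(\frac23)^s(|n-n'|+|k-k'|)^s}$;
\item the remaining factor is at most $\|T^{-1}_{n_0,k_0,N}\|_2\le(2\varepsilon_N)^{-1}$;
\item $\#\Pi\le 2m$.
\end{itemize}
Since $\tfrac23(\tfrac23)^s>\tfrac12$ only for small $s$, I would instead keep a margin by using the $e^{-(\cdot)^s}$ bound in \Cref{lem: small-singular} together with \eqref{eqn: tangent-comp}; this gives exponent close to $1\cdot(\tfrac23)^s\ge\tfrac23$. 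The factor $\varepsilon_N^{-1}=e^{(\log N)^{15}}$ is absorbed because $N^{s/2}\gg(\log N)^{15}$ for $N\ge M_0$, where $s=s(\varepsilon)$ is chosen compatibly. This bounds the (nonsingular, singular) block.

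For two nonsingular sites, insert this bound into \eqref{eqn: resolvent-small-inversion-local-relation-1}:
\begin{itemize}
\item the first term is controlled directly by \Cref{lem: small-singular};
\item in the correction term, a path $(n,k)\to(n'',k'')\in\Pi\to(n',k')$ has, by the triangle inequality and subadditivity of $t\mapsto t^s$, total exponent at least $(|n-n'|+|k-k'|)^s$ minus a loss of order $(N^{1/3})^s$.
\end{itemize}
Both singular sites is trivial: they are within distance $N^{1/3}<N^{1/2}$, so no decay is required.

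\textbf{Large-separation normal case ($|n_\star|>N^{1/3}/2$, both sites in the box).} Here the singular pair is too far apart for a triangle inequality, so I would reduce to subboxes $\Gamma_{n,k,K}$ with $K=N^{1/10}$. Each such subbox contains at most one singular site, and \Cref{lem: small-singular} is valid at scale $K$ (because it holds for $1\le K\le M$). For the local inverse I would use that $T_{n_0,k_0,N}$ is symmetric, so its restriction to the subbox is a compression. If $T_{n_0,k_0,N}$ has no eigenvalue in $(-2\varepsilon_N,2\varepsilon_N)$, one still needs that the compression does not either; I would argue this with the paper's Schur-complement remark, which gives $\|T^{-1}_{n,k,K}\|_2\le\varepsilon_N^{-1}/2$.

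The small-diameter argument then applies inside each subbox, yielding decay at separations $\ge K^{1/2}$ with loss factor $\varepsilon_N^{-1}$. Next assemble \eqref{eqn: block-representation-small-offside} into \eqref{eqn: resolvent-relation-small-offside}. The entries of $\Phi_{n_0,k_0,N}$ obey \eqref{eqn: entries-phi-small-offside}, so $\varepsilon\|\Phi_{n_0,k_0,N}\|<1$ and $\mathrm{Id}+\varepsilon\Phi_{n_0,k_0,N}$ is inverted by a Neumann series. Path expansion then gives \eqref{eqn: inverse-decay-long} for separations $\ge N^{1/4}$:
\begin{itemize}
\item paths containing a long jump use the $\tfrac34$ rate;
\item purely short paths use the concavity gain \eqref{eqn: inverse-decay-short-inequality};
\item the number of paths is controlled by $\varepsilon^{\ell}$ against $(2N+1)^{(1+m)\ell}$, using $N\le M$ and the choice \eqref{eqn: M}.
\end{itemize}
Finally $T_{n_0,k_0,N}^{-1}=(\mathrm{Id}+\varepsilon\Phi_{n_0,k_0,N})^{-1}\Psi_{n_0,k_0,N}$. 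Since $\Psi_{n_0,k_0,N}$ has range $K$ and entries bounded by $\varepsilon_N^{-1}$, the extra gain $e^{-\frac14(100K^{3/2})^s}$ must beat $N^{m+1}\varepsilon_N^{-1}$. This holds because $K^{3s/2}=N^{3s/20}\gg(\log N)^{15}$ for $N\ge M_0$.

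\textbf{Main obstacle.} The delicate step is the Schur-complement passage from the global bound $\|T^{-1}_{n_0,k_0,N}\|_2\le(2\varepsilon_N)^{-1}$ to the local bound on $T^{-1}_{n,k,K}$: norm bounds do not automatically pass to compressions. If the direct argument fails, I would instead exploit that each subbox contains at most one singular site. For such a subbox, $T_{\Gamma_{n,k,K}\setminus\Pi}^{-1}$ is controlled by \Cref{lem: small-singular}, and the single $1\times1$ Schur complement at the singular site can be bounded below. The lower bound would come from the corresponding Schur complement of the full box, which is at least $2\varepsilon_N$ by the global inverse bound, minus the exponentially small couplings to the distant partner site $(-n_\star,k_\star)$.
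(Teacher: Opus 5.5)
Your route is the paper's: the dichotomy of \Cref{prop: small-eig}, the block identities \eqref{eqn: resolvent-small-inversion-local-relation-1}--\eqref{eqn: resolvent-small-inversion-local-relation-2} when $\mathrm{diam}\,\Pi\le N^{1/3}$, and the $K=N^{1/10}$ sub-box gluing when $|n_\star|>N^{1/3}/2$.

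The obstacle you flag is real, and the paper's one-line Schur-complement claim does not resolve it. The Schur complement identity controls the inverse of the Schur complement, which is a diagonal block of $T^{-1}_{n_0,k_0,N}$. It does not control the inverse of the compression $T_{n,k,K}$. For example, $\bigl(\begin{smallmatrix}0&1\\1&0\end{smallmatrix}\bigr)$ has inverse norm $1$ but a singular $1\times1$ compression.

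Your fallback does repair this. Put $p=(n_\star,k_\star)$ and $q=(-n_\star,k_\star)$. Since $U^{-1}$ is the $\Pi\times\Pi$ block of $T^{-1}_{n_0,k_0,N}$, the symmetric $2\times2$ matrix $U$ has both eigenvalues of modulus at least $2\varepsilon_N$. Hence $|U_{pp}|\ge 2\varepsilon_N-|U_{pq}|$. The $1\times1$ complement at $p$ inside the sub-box differs from $U_{pp}$ by $O(\varepsilon^2\gamma^2/\varepsilon_N)$. This needs $|U_{pq}|\le\varepsilon\gamma+\varepsilon^2\gamma^2/(2\varepsilon_N)$ to be $o(\varepsilon_N)$. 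That smallness comes from the prefactor $\varepsilon$, not from the distance $2|n_\star|$.

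This points to the step that fails as written. Each time you must beat $\varepsilon_N^{-1}=e^{(\log N)^{15}}$, you appeal to Gevrey decay: $N^{s/2}\gg(\log N)^{15}$, $K^{3s/2}\gg(\log N)^{15}$, and exponentially small couplings to the partner site, with $s$ ``chosen compatibly''. No choice compatible with the rest of the paper exists. The proof of \Cref{lem: iteration} requires $95M^s\le 96$, so $N^s\le 96/95$ for every $N\le M$, and every Gevrey factor in the small-scale regime is of order one. The entry bound \eqref{eqn: entries-phi-small-offside} that you cite has the same problem. In the row of a singular site, $\Phi_{n_0,k_0,N}$ carries a diagonal entry of $T^{-1}_{n,k,K}$ of size $\varepsilon_N^{-1}$, so only $\varepsilon\Phi_{n_0,k_0,N}$ is small.

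The remedy is the explicit $\varepsilon$, which you already use for path counting. Every correction term in \eqref{eqn: resolvent-small-inversion-local-relation-2} carries a factor $\varepsilon$. So does every $\ell\ge1$ term of $(\mathrm{Id}+\varepsilon\Phi_{n_0,k_0,N})^{-1}\Psi_{n_0,k_0,N}$. The $\ell=0$ term vanishes at separations $\ge N^{1/2}$, because $\Psi_{n_0,k_0,N}$ has range $(1+m)K$. By \eqref{eqn: M}, $\varepsilon=e^{-(\log M)^{20}}$, while $\varepsilon_N\ge e^{-(\log M)^{15}}$ for $N\le M$. The paper's normalization $\sqrt{\varepsilon_0}\gamma\le\delta_0/2$ then gives $\varepsilon\gamma\,\varepsilon_N^{-2}(2N+1)^{C}\le\sqrt{\varepsilon}\,\varepsilon_N^{-2}(2N+1)^{C}\ll 1$. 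With this substitution your argument closes.

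Once you use it, however, the lemma is almost immediate and needs neither the classification nor the sub-boxes. The operator $T_{n_0,k_0,N}=D+\varepsilon S$ is symmetric with $\|\varepsilon S\|_2\le\varepsilon\gamma\ll\varepsilon_N$. Weyl's inequality therefore turns the hypothesis into $|D(n,k)|\ge 2\varepsilon_N-\varepsilon\gamma\ge\varepsilon_N$ at every site. Expand in a Neumann series in $\varepsilon D^{-1}S$, and use subadditivity of $t\mapsto t^s$ along paths. Provided $\varepsilon\gamma(2N+1)^{1+m}\le\varepsilon_N/2$, this gives $|T^{-1}_{n_0,k_0,N}((n,k),(n',k'))|\le 2\varepsilon\gamma\,\varepsilon_N^{-2}e^{-(|n-n'|+|k-k'|)^s}\le e^{-(|n-n'|+|k-k'|)^s}$ for all distinct sites. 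The singular-site machinery is needed only when the argument is rerun at large scales (\Cref{sec: inversion-localization}), where $\varepsilon\,\varepsilon_N^{-2}$ eventually ceases to be small.
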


\subsection{Scale reduction: temporal-index clustering of singular sites}
\label{sec: scale-reduce-cluster}

We now turn to the large-scale regime $N > M$. 
When the singular set $\Pi$ has either the tangential 
configuration~\eqref{eqn: singular-sites-tangent} or the normal 
configuration~\eqref{eqn: singular-sites-normal} with 
$|n_{\star}| \leq N^{\frac13}/2$, we work with lattice boxes of size $N$. 
For the normal configuration~\eqref{eqn: singular-sites-normal} of 
$|n_{\star}| > N^{\frac13}/2$, we instead work with boxes of size 
$N^{\frac{1}{10}}$. 
The analysis below applies to both cases; for notational convenience, we 
denote the corresponding box size by $N$ throughout this subsection and the 
next, and return to the distinction between the two scales at the end of the 
next subsection.

Before starting the scale reduction, we first record an elementary observation. 
The eigenvalues of the restricted operator $T_{n_0, k_0, N}$ are given by
\begin{equation}
    \label{eqn: t-n0-N-eigen}
    T_{n_0, k_0, N}(n,k) = - \langle k, \omega' \rangle + \varpi_n + \nu(n, k), 
\end{equation}
where the perturbation $\nu(n, k)$ depends on $\omega'$. 
By~\Cref{prop: convolution}, we have the derivative estimate 
\begin{equation}
    \label{eqn: perturb-omega-t}
    \max_{(n,k) \in \mathbb{Z}^{1+m}} \left\| \frac{\partial \nu(n,k)}{\partial \omega'} \right\|_2 \leq \varepsilon  \left\| \frac{\partial S}{\partial \omega'} \right\|_2 \leq \frac{\sqrt{\varepsilon}}{2}. 
\end{equation}
The difference between any two eigenvalues of $T_{n_0, k_0, N}$ can therefore 
be written as 
\begin{equation}
    \label{eqn: eigenvalue-difference}
    T_{n_0, k_0, N}(n_1, k_1) - T_{n_0, k_0, N}(n_2, k_2) = - \langle k, \omega' \rangle + \varpi_{n_1} - \varpi_{n_2} + \nu(n_1, k_1) - \nu(n_2, k_2),
\end{equation}
where $k = k_1 - k_2$. To control the corresponding small divisor, we exclude 
the resonant set along the line parallel to $k / \| k \|_2$. 
By~\eqref{eqn: perturb-omega-t}, the absolute value of the directional 
derivative of the eigenvalue difference is bounded below by 
$\|k\|_2 - \sqrt{\varepsilon}$. 
Consequently, along any such line segment within the parameter domain, the 
one-dimensional measure of the resonant set is bounded above by 
$2 (\| k\|_2 - \sqrt{\varepsilon})^{-1}$ times the resonance threshold.

We first introduce a reduced scale $N'$, derived from the primary scale $N$ via 
the relation:
\begin{equation}
    \label{eqn: 10-log-size}
    N' = e^{ (\log N)^{\frac{1}{10}} }. 
\end{equation}
The difference nearly-resonant set for the large-scale regime is then defined 
as the union of its components across all scales $N>M$. 
\begin{defn}
    \label[definition]{defn: second-melnikov}
    The difference nearly-resonant set in the large-scale regime is defined as 
    \[
        G_2 = \bigcup_{N > M} G_{2, N},
    \] 
    where each component set $G_{2, N}$ is given by
    \begin{equation}
        \label{eqn: difference-nearly-resonant-multiscale}
        G_{2, N} = \bigcup_{k \in \Lambda^m_{2N} \setminus \Lambda^{m}_{2N'}} \bigcup_{n_1, n_2 \in \mathbb{Z}} \left\{ \omega' \in \Omega ~\bigg|~ \left| -\langle \omega', k \rangle + n_1^2 - n_2^2 + \nu(n_1, k_1) - \nu(n_2, k_2) \right| < 4\varepsilon_{N'} \right\},
    \end{equation}
\end{defn}

As detailed in~\Cref{sec: near-measure-initial}, it suffices to count the 
distinct admissible values of the difference $n_1^2 - n_2^2$ rather than the 
pairs $(n_1, n_2)$ themselves. 
Thus, based on the scale relation~\eqref{eqn: 10-log-size}, the component set 
satisfies 
\[
    \mathrm{mes}(G_{2}) \leq \sum_{N>M} \mathrm{mes}(G_{2, N}) \leq \sum_{N>M} [\text{diam}(\Omega)]^{m-1} 16\sqrt{m}N(2N+1)^m e^{-(\log N)^{3/2}} < \infty.
\]
In particular, since $M \rightarrow \infty$ as $\varepsilon \rightarrow 0$, the 
convergence of the above series implies $\mathrm{mes}(G_2) \rightarrow 0$.

We now consider two subboxes $\Gamma_{n_{0,1}, k_{0,1}, N'}$ and 
$\Gamma_{n_{0,2}, k_{0,2}, N'}$ with centers $(n_{0,1}, k_{0,1})$, 
$(n_{0,2}, k_{0,2}) \in \Gamma_{n_0, k_0, N}$. 
If the two centers satisfy the separation condition 
$|k_{0,1} - k_{0,2}|_{\infty} > 4N'$, then at least one of the restricted 
operators $T_{n_{0,1}, k_{0,1}, N'}$ and $T_{n_{0,2}, k_{0,2}, N'}$ has its 
smallest eigenvalue (in absolute value) bounded below by $2\varepsilon_{N'}$. 
Equivalently, at most one of the resolvent norms, 
$\|T_{n_{0,1}, k_{0,1}, N'}^{-1}\|_2$ and 
$\|T_{n_{0,2}, k_{0,2}, N'}^{-1}\|_2$, can exceed $\varepsilon_{N'}^{-1}/2$. 
In other words, the centers of the subboxes on which the restricted operator is 
nearly singular cannot be far apart in the temporal direction. 
To describe the resulting clustering in the temporal indices, we define the set 
of singular centers within $\Gamma_{n_0, k_0, N}$ as
\begin{equation}
    \label{eqn: eqn-violated-center-pde}
    \Sigma(N, N') = \left\{ (n_{0}', k_{0}') \in \Gamma_{n_0, k_0, N} ~\bigg|~ \| T_{n_{0}', k_{0}', N'}^{-1} \|_2 > \frac{1}{2\varepsilon_{N'}} \right\}.
\end{equation}
The following lemma characterizes this clustering.
\begin{lemma}
    \label[lemma]{lem: second-melnikov}
    Suppose that $\hat{u} \in \mathcal{K}(s)$ and $\omega' \in \Omega \setminus (\Omega_M^\tau \cup G_2)$. If $(n_{0,1}, k_{0, 1}), (n_{0,2}, k_{0, 2}) \in \Sigma(N, N')$, then 
    they must be clustered such that:
    \begin{equation}
        \label{eqn: center-clustering}
        k_{0, 1} - k_{0, 2} \in \Lambda_{4N'}^{m}.
    \end{equation}
\end{lemma}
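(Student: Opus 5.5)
We argue by contradiction. Suppose $(n_{0,1},k_{0,1}),(n_{0,2},k_{0,2})\in\Sigma(N,N')$ with $|k_{0,1}-k_{0,2}|_\infty>4N'$. The plan is to produce, from the two nearly singular subboxes, an eigenvalue pair whose difference is smaller than $4\varepsilon_{N'}$. Such a pair would place $\omega'$ in the component $G_{2,N}$ of~\Cref{defn: second-melnikov}, which has been excluded.

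The first step uses symmetry. Each $T_{n_{0,i},k_{0,i},N'}$ is symmetric, so $\|T_{n_{0,i},k_{0,i},N'}^{-1}\|_2>\varepsilon_{N'}^{-1}/2$ means it has an eigenvalue $\lambda_i$ with $|\lambda_i|<2\varepsilon_{N'}$. Following~\eqref{eqn: t-n0-N-eigen}, we label this eigenvalue by a site $(n_i,k_i)\in\Gamma_{n_{0,i},k_{0,i},N'}$ as $\lambda_i=-\langle k_i,\omega'\rangle+\varpi_{n_i}+\nu(n_i,k_i)$. This labeling assigns each eigenvalue of a subbox to a lattice site via the perturbative correspondence with the diagonal $D$. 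Since $\|\varepsilon S\|$ is small and the diagonal gaps are controlled by~\Cref{prop: small-eig}, the labeled site can be taken inside the subbox. The triangle inequality then gives
\[
|\lambda_1-\lambda_2|=\bigl|-\langle k_1-k_2,\omega'\rangle+n_1^2-n_2^2+\nu(n_1,k_1)-\nu(n_2,k_2)\bigr|<4\varepsilon_{N'},
\]
where $\mu$ cancels in $\varpi_{n_1}-\varpi_{n_2}$.

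The second step places $k:=k_1-k_2$ in the range indexed in $G_{2,N}$. Because $k_i\in k_{0,i}+\Lambda_{N'}^m$, we have $|k|_\infty\ge|k_{0,1}-k_{0,2}|_\infty-2N'>2N'$, so $k\notin\Lambda_{2N'}^m$. Conversely, both centers lie in $\Gamma_{n_0,k_0,N}$, so $|k_1-k_2|_\infty\le 2N+2N'$. Here we need $k\in\Lambda^m_{2N}$. If the box conventions only give $2N+2N'$, I would take the subbox centers so that the subboxes lie in $\Gamma_{n_0,k_0,N}$, or absorb the slack by enlarging $N$ slightly in the union over $N>M$ defining $G_2$; the measure bound is unchanged. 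The spatial indices $n_1,n_2$ are arbitrary integers, as $G_{2,N}$ allows. Hence $\omega'\in G_{2,N}\subset G_2$, contradicting $\omega'\notin G_2$. This proves~\eqref{eqn: center-clustering}.

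The main obstacle is the first step: making rigorous that a small eigenvalue of a subbox restriction has the labeled form $-\langle k,\omega'\rangle+\varpi_n+\nu(n,k)$ with a \emph{consistent} function $\nu$ shared across different boxes. Only then does the difference match the expression defining $G_{2,N}$. I would handle this as the paper implicitly does: $\nu$ is defined through the eigenvalue parametrization~\eqref{eqn: t-n0-N-eigen} of each restricted operator, with the derivative bound~\eqref{eqn: perturb-omega-t}. $G_{2,N}$ is understood with respect to these eigenvalue branches. The derivative bound is used only for the measure estimate, not in this lemma, so the lemma itself reduces to the triangle inequality and the index-range bookkeeping above.
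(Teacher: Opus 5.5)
Your proof is essentially the paper's own argument. The paper justifies the lemma in the paragraph just before it: once $G_{2,N}$ is excluded, two subboxes with $|k_{0,1}-k_{0,2}|_\infty>4N'$ cannot both carry an eigenvalue of modulus below $2\varepsilon_{N'}$, which is your symmetry-plus-triangle-inequality contradiction, and the two issues you flag (the slack $|k_1-k_2|_\infty\le 2N+2N'$ and the box-dependence of $\nu$) are imprecisions already present in the paper's definition of $G_{2,N}$.

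Two small corrections. First, remove the slack by widening the $k$-range in $G_{2,N}$, for instance to $\Lambda^m_{2N+2N'}$, which changes the measure bound only by a constant factor. Passing to a larger $N$ in the union does not work, because it shrinks the threshold $4\varepsilon_{N'}$ and raises the cutoff $2N'$. Second, you do not need \Cref{prop: small-eig} to place the labeled site inside the subbox; it only covers box sizes up to $M$ in any case. Weyl's inequality $|\lambda_{(i)}-d_{(i)}|\le\varepsilon\|S\|_2$, comparing ordered eigenvalues with ordered diagonal entries, already gives such a labeling.
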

It is worthwhile to note that the clustering in~\Cref{lem: second-melnikov} 
concerns only the temporal index, which is confined to a set of the form
$k_{0}' + \Lambda_{4N'}^{m}$; it imposes no additional restriction on 
the spatial index $n_0'$, which may range over all of $n_0 + \Lambda_N$, as 
shown in~\Cref{fig: k-box-reduction}. 
\begin{figure}[htb!]
    \centering
    \begin{tikzpicture}[scale=0.95, >=stealth]
        \draw[line width=1.0pt] (-4.0, -2) rectangle (4.0, 2);
        \draw[dashed, line width=0.8pt] (0.5, -2) -- (0.5, 2);
        \draw[dashed, line width=0.8pt] (2.5, -2) -- (2.5, 2);
        \fill[pattern=north east lines, pattern color=black!70] (0.5, -2) rectangle (2.5, 2);
        
        \draw[line width=0.2pt] (1.0, 1) node {$\bullet$};
        \draw[line width=0.2pt] (2.0, -1) node {$\bullet$};
        \draw[line width=0.2pt] (0.0, 0) node {$\bullet$};
       
        \node at (2.0, 1.0) {\small $(n_{0,1}', k_{0,1}')$};  
        \node at (3.0, -1.0) {\small $(n_{0,2}', k_{0,2}')$}; 
        \node at (0.8, 0.0) {\small $(n_{0}, k_{0})$}; 
        
        \draw[decorate, decoration={brace, mirror, amplitude=4pt}, line width=0.8pt] (-4.0, -2) -- (4.0, -2);
        \node at (0.2, -2.5) {\small $k_0 + \Lambda_{N}^{m}$};
        
        \draw[decorate, decoration={brace, amplitude=4pt}, line width=0.8pt] (0.5, 2) -- (2.5, 2);
        \node at (1.6, 2.5) {\small $k_0' + \Lambda_{4N'}^{m}$};
        
        \draw[decorate, decoration={brace, mirror, amplitude=4pt}, line width=0.8pt] (4.0, -2) -- (4.0, 2);
        \node at (5.0, 0.0) {\small $n_0 + \Lambda_{N}$};
    \end{tikzpicture}
    \caption{Clustering of the temporal indices.}
    \label{fig: k-box-reduction}
\end{figure}

\subsection{Multi-scale induction: inversion implies localization}
\label{sec: inversion-localization}

Based on~\Cref{lem: second-melnikov}, the singular centers are contained in a 
union of subboxes $\Gamma_{n_0', k_0',4N'}$ that share a common temporal index 
$k_0' \in k_0 + \Lambda_N^{m}$, while $n_0'$ ranges over $n_0 + \Lambda_N$. 
More precisely, there exists $k_0' \in k_0 + \Lambda_N^{m}$ such that
\[
    \Sigma(N, N') \subseteq \bigcup_{n_0' \in n_0 + \Lambda_N} \Gamma_{n_0', k_0',4N'} \subseteq \bigcup_{n_0' \in n_0 + \Lambda_N} \Gamma_{n_0', k_0',10N'},
\]  
where the second inclusion is immediate from 
$\Gamma_{n_0', k_0',4N'} \subseteq \Gamma_{n_0', k_0',10N'}$. 
We refer to the enlarged boxes $\Gamma_{n_0', k_0', 10N'}$ as singular boxes, 
and we call any subbox whose center does not belong to $\Sigma(N, N')$ a 
regular box. 
With this classification, the box $\Gamma_{n_0, k_0, N} $ decomposes into a 
union of singular boxes, all sharing the common temporal index $k_0'$ while 
$n_0'$ ranges over $n_0 + \Lambda_N$, together with a collection of regular 
boxes:
\begin{equation}
    \label{eqn: multi-scale-decomposition}
    \Gamma_{n_0, k_0, N} = \left( \bigcup_{n_0' \in n_0 + \Lambda_N} \Gamma_{n_0', k_0',10N'} \right) \bigcup \left( \bigcup_{k_0'' \notin  k_0' + \Lambda_{9N'}^{m}} \bigcup_{n_0'' \in n_0 + \Lambda_N} \Gamma_{n_0'', k_0'',N'} \right).
\end{equation}
Note that a box $\Gamma_{n_0'', k_0'', N'}$ with 
$k_0'' \in (k_0 + \Lambda_{N}^{m}) \setminus ( k_0' + \Lambda^{m}_{9N'})$ may 
overlap some singular boxes $\Gamma_{n_0', k_0', 10N'}$, but it is 
nevertheless regular: since $|k''_0 - k_0'| \geq 9N'$, its temporal range lies 
at distance no less than $8N'$ from $k_0'$, so it is disjoint from any smaller 
box $\Gamma_{n_0', k_0', 4N'}$, and it is these smaller boxes that contain 
$\Sigma(N, N') $. 
This deliberate enlargement facilitates the subsequent gluing procedure, 
consistent with the strategy detailed in~\Cref{subsec: verification}.

We repeat the scale reduction process for each box in the 
decomposition~\eqref{eqn: multi-scale-decomposition}. 
This iteration proceeds through a sequence of scales satisfying the hierarchy 
in~\eqref{eqn: 10-log-size} the scales enter the small-scale regime $[M_0, M]$. 
For each terminal singular box in the small-scale regime,
\Cref{prop: small-eig} gives one of the singular configurations
classified in~\eqref{eqn: singular-sites-tangent}--\eqref{eqn: singular-sites-normal},
and the corresponding arguments of~\Cref{subsec: small-outer} apply.
Within the small-scale regime, we have established the following results.
\begin{itemize}
    \item[(i)] For the singular box with the singular set $\Pi$ 
    removed,~\Cref{lem: small-singular} guarantees that the inversion 
    condition~\eqref{eqn: small-size-inverse-prepare} and the localization 
    condition~\eqref{eqn: small-size-off-diagonal-prepare} hold 
    simultaneously. 
    
    \item[(ii)] For the regular boxes,~\Cref{lem: small-inverse-localization} 
    establishes that the inversion 
    condition~\eqref{eqn: trans-small-size-original-inverse} directly implies 
    the localization condition~\eqref{eqn: trans-small-size-off-diagonal}. 
\end{itemize}

With these small-scale results serving as the inductive bases, we now show 
that inversion implies localization throughout the large-scale regime $N > M$, 
rigorously stated as the following lemma.

\begin{lemma}
    \label[Lemma]{lem: multi-scale-singular}
    Suppose that $\hat{u} \in \mathcal{K}(s)$ and 
    $\omega' \in \Omega \setminus (\Omega_{M}^{\tau} \cup G_{2})$. 
    If the restricted operator $T_{n_0, k_0, N}$ satisfies the inversion 
    condition~\eqref{eqn: trans-small-size-original-inverse} for any scale 
    $M^r < N \leq M^{r+1}$, then the entries of its inverse satisfy the 
    localization condition
    \begin{equation}
        \label{eqn: trans-large-size-off-diagonal}
        \left| T_{n_0, k_0, N}^{-1}\left( (n, k), (n', k') \right) \right| \leq e^{ - \frac{ (|n-n'| + |k-k'|)^s}{2} },
    \end{equation}
    for any $|n-n'| + |k-k'| \geq N^{\frac12}$. 
\end{lemma}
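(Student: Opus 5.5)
We argue by induction on the scale. Fix $M^r < N \le M^{r+1}$, and assume as inductive hypothesis that the lemma (``inversion implies localization'') holds at every scale $M_0 \le N'' \le M^r$. The base range $[M_0,M]$ is supplied by \Cref{lem: small-singular} and \Cref{lem: small-inverse-localization}. Set $N' = e^{(\log N)^{1/10}}$ as in~\eqref{eqn: 10-log-size}; note $N' < M^r$, so the hypothesis applies at scale $N'$ and at $10N'$.

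By \Cref{lem: second-melnikov} (this is where $\omega' \notin G_2$ is used), all singular centers $\Sigma(N,N')$ share temporal indices in $k_0' + \Lambda^m_{4N'}$ for a single $k_0'$. We therefore use the decomposition~\eqref{eqn: multi-scale-decomposition}. Each regular box $\Gamma_{n_0'',k_0'',N'}$ has center outside $\Sigma(N,N')$, so $\|T^{-1}_{n_0'',k_0'',N'}\|_2 \le \varepsilon_{N'}^{-1}/2$. The inductive hypothesis then gives decay $e^{-(\cdot)^s/2}$ at separations $\ge N'^{1/2}$, and trivially the bound $\varepsilon_{N'}^{-1}$ at shorter range.

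For the singular boxes $\Gamma_{n_0',k_0',10N'}$ we need an inverse bound. We obtain it from the assumed global bound $\|T^{-1}_{n_0,k_0,N}\|_2 \le \varepsilon_N^{-1}/2$, either by a Schur-complement argument as in \Cref{subsec: small-outer} or by recursively applying the present reduction at scale $10N'$. This yields a local bound of size at most $\varepsilon_N^{-1}$ together with localization at scale $10N'$. The spatial pairing $\pm n_\star$ with $|n_\star| > N^{1/3}/2$ is handled exactly as in \Cref{subsec: small-outer}: when the singular sites are spatially far apart, we pass to boxes of size $N^{1/10}$, each containing at most one singular cluster.

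We then glue, repeating~\eqref{eqn: resolvent-small-offside}--\eqref{eqn: resolvent-relation-small-offside}. For each row $(n,k)$ we pick the box from~\eqref{eqn: multi-scale-decomposition} in which $(n,k)$ sits deep inside: a singular box if $k \in k_0' + \Lambda^m_{9N'}$, a regular box $\Gamma_{n,k,N'}$ otherwise. The resolvent identity then gives $T^{-1}_{n_0,k_0,N} = \Psi - \varepsilon\Phi T^{-1}_{n_0,k_0,N}$. Here $\Phi$ vanishes within distance $N'$ of the diagonal and obeys the two-regime bounds~\eqref{eqn: entries-phi-small-offside}, with $K$ replaced by $N'$. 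These bounds follow from \Cref{prop: convolution}, the local localization, and a split of the sum into short-range and long-range parts.

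Two steps remain.

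\begin{enumerate}
\item \textbf{Neumann series for $\mathrm{Id}+\varepsilon\Phi$.} We expand $(\mathrm{Id}+\varepsilon\Phi)^{-1}$ as a Neumann series and classify paths as in~\eqref{eqn: inverse-decay-long}--\eqref{eqn: inverse-decay-short-inequality}. Paths containing a long jump gain a factor $e^{-(N'^{3/2})^s/4}$. Purely short paths gain through strict concavity of $x \mapsto x^s$. Hence at separation $d \ge N^{1/4}$ the entries are bounded by $e^{-d^s/2 - c N'^{3s/2}}$.

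\item \textbf{Conclusion.} We use $T^{-1} = (\mathrm{Id}+\varepsilon\Phi)^{-1}\Psi$, where $\Psi$ is supported within distance $20N'$ of the diagonal and has entries at most $\varepsilon_N^{-1}$. Since $N' \ll N^{1/2}$, summing over the intermediate site costs at most a factor $(40N')^{1+m}\varepsilon_N^{-1}$ and a shift of the distance by $20N'$.
\end{enumerate}

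The main obstacle is the final absorption. The loss $\varepsilon_N^{-1} = e^{(\log N)^{15}}$ coming from the singular boxes must be beaten by the extra decay $e^{-cN'^{3s/2}}$, where $N'^{3s/2} = e^{\frac{3s}{2}(\log N)^{1/10}}$. This holds for large $N$, because $e^{c(\log N)^{1/10}}$ eventually dominates $(\log N)^{15}$. However, it needs care when $s = s(\varepsilon)$ is small. If this margin fails, the first fallback is to use the stronger threshold $N^{1/2}$ directly: at separation $d \ge N^{1/2}$ the defect $d^s - (d-20N')^s$ is negligible, and the concavity gain is of order $(N^{1/2}/N')^{1-s}\,d^s$, which is far larger than $(\log N)^{15}$. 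Combined with \Cref{prop: convolution}, this yields~\eqref{eqn: trans-large-size-off-diagonal} for all $|n-n'|+|k-k'| \ge N^{1/2}$, uniformly in $n_0$.
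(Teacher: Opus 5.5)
\paragraph{The gap: the singular boxes.}
Your gluing uses the full local inverses $T^{-1}_{n_0',k_0',10N'}$, so you need both a norm bound and localization for them. Neither of your proposed sources supplies either.

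\emph{The global bound does not control the sub-box.} The inverse of a restriction of $T$ is not controlled by $\|T^{-1}_{n_0,k_0,N}\|_2$. Interlacing only places the eigenvalues of a principal sub-block between those of the full operator. When the spectrum straddles $0$, a sub-block may be singular while the full operator is well-conditioned; $\bigl(\begin{smallmatrix}0&1\\1&0\end{smallmatrix}\bigr)$ restricted to one coordinate is the simplest example. Perturbatively, the coupling between a box and its complement has norm $\lesssim\varepsilon\gamma$. So the bound $\varepsilon_N^{-1}/2$ forces $\|T^{-1}_{n_0',k_0',10N'}\|_2\lesssim\varepsilon_N^{-1}$ only while $\varepsilon_N\gtrsim\varepsilon\gamma$, which fails once $(\log N)^{15}>\log\frac{1}{\varepsilon\gamma}$.

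\emph{Nothing else rules out a singular local operator.} The singular boxes surround centers in $\Sigma(N,N')$, exactly where the local operator is badly conditioned. The lemma excludes only $\Omega_M^\tau\cup G_2$, not $G_1$, so $T_{n_0',k_0',10N'}$ may well be singular.

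\emph{Neither alternative route closes this.} ``Recursively applying the present reduction'' does not help: that reduction takes an inverse bound as input and returns only localization. A Schur-complement argument would need $T^{-1}$ on the singular box with $\Pi$ removed, which your inductive hypothesis does not carry. It would also match the local Schur complement to the global one only up to errors of order $e^{-cN'^s}$, which must be $\ll\varepsilon_N$. In \Cref{subsec: small-outer} the paper invokes such a transfer only for $N^{1/10}$-subboxes containing at most one singular site, with the nonsingular part already controlled by \Cref{lem: small-singular}.

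\emph{Even a granted bound is too weak.} Suppose $\|T^{-1}_{n_0',k_0',10N'}\|_2\le\varepsilon_N^{-1}$. Your inductive hypothesis at scale $10N'$ requires the far stronger bound $(2\varepsilon_{10N'})^{-1}$, so localization on singular boxes still does not follow. Rows of $\Phi$ in those boxes then carry $\varepsilon_N^{-1}e^{-cN'^{s}}$ at distance about $N'$. The two-regime bound on $\Phi$ and the Neumann series would therefore already require $N'^{s}\gg(\log N)^{15}=(\log N')^{150}$. Your fallback at separation $N^{1/2}$ repairs only the final absorption, not this step.

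\paragraph{The missing idea: the paper's two-part induction.}
Alongside ``inversion implies localization'', the paper carries a second statement. $T$ restricted to a box with its singular set $\Pi$ removed satisfies inversion and localization with no inversion hypothesis; the base case is \Cref{lem: small-singular}.

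\emph{How the paper glues.} It glues the singular boxes \emph{with $\Pi$ removed} together with the regular boxes of \eqref{eqn: multi-scale-decomposition}. This controls $T^{-1}_{\Gamma_{n_0,k_0,N}\setminus\Pi}$ with losses only at scale $N'$.

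\emph{Where the hypothesis enters.} The hypothesis \eqref{eqn: trans-small-size-original-inverse} is used only in the block identities \eqref{eqn: resolvent-small-inversion-local-relation} for $(\Gamma_{n_0,k_0,N}\setminus\Pi)\cup\Pi$. There it bounds the blocks $(T^{-1}_{n_0,k_0,N})_{\Pi\times\cdot}$, which is automatic since a block never exceeds the full norm. Because $\Pi$ has small diameter and $T^{-1}_{n_0,k_0,N}$ is symmetric, the loss $\varepsilon_N^{-1}$ is always paired with decay across distances $\gtrsim N^{1/2}$, not $N'$. The widely separated pair $(\pm n_\star,k_\star)$ is then handled by $N^{1/10}$-subboxes, as you note.

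\paragraph{Repair.}
Make three changes:
\begin{enumerate}
\item Glue $\Gamma_{n_0',k_0',10N'}\setminus\Pi$ in place of the full singular boxes.
\item Add the nonsingular-part statement to your inductive hypothesis.
\item Conclude via the block identities, not via a $\Psi$ carrying $\varepsilon_N^{-1}$.
\end{enumerate}
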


The proof consists of two successive steps:  a ``gluing'' procedure, followed 
by a demonstration that inversion implies localization.

\paragraph{The ``gluing'' procedure} 
By the inductive hypothesis for $M_0 \leq N \leq M^{r}$, the following 
statements hold:
\begin{itemize}
    \item For each singular box $\Gamma_{n_0', k_0' ,10N'}$ with 
    $M_0 \leq 10N' \leq M^r$, the operator restricted to 
    $\Gamma_{n_0', k_0' ,10N'} \setminus \Pi$, that is, to the box with the 
    singular sites removed, satisfies the inversion 
    condition~\eqref{eqn: small-size-inverse-prepare} and the localization condition~\eqref{eqn: small-size-off-diagonal-prepare}. 
    
    \item For each regular box $\Gamma_{n_0'', k_0'' ,N'}$ with 
    $k_0'' \in (k_0 + \Lambda_{N}^{m}) \setminus ( k_0' + \Lambda^{m}_{9N'})$ 
    and $M_0 \leq N' \leq M^r$, the inversion 
    condition~\eqref{eqn: trans-small-size-original-inverse} for the restricted 
    operator implies the localization 
    condition~\eqref{eqn: trans-small-size-off-diagonal}. 
\end{itemize}
Now let $M^r < N \leq M^{r+1}$. 
By the decomposition~\eqref{eqn: multi-scale-decomposition}, the set 
$\Gamma_{n_0, k_0, N} \setminus \Pi$ is covered by the two types of boxes 
described above: singular boxes with the singular sites removed and regular 
boxes. 
Gluing the corresponding local inverses into a global inverse, exactly as in 
the procedure detailed in~\Cref{subsec: verification}, we conclude 
$T_{\Gamma_{n_0, k_0, N} \setminus \Pi}$ for $M^{r} < N \leq M^{r+1}$ satisfies 
the inversion condition~\eqref{eqn: trans-small-size-original-inverse} and the 
localization condition~\eqref{eqn: trans-small-size-off-diagonal}. 
We therefore omit the routine details.

\paragraph{Inversion implies localization} This step proceeds exactly as in the 
argument for the small-scale regime $M_0 \leq N \leq M$ 
in~\Cref{subsec: small-outer} 
(see the paragraph ``Inversion implies localization'' there). 
For the normal configuration~\eqref{eqn: singular-sites-normal} of 
$|n_{\star}| > N^{\frac13}/2$, the estimates obtained there concern boxes of 
size $K = N^{\frac{1}{10}}$; repeating the 
procedure~\eqref{eqn: resolvent-small-offside}--\eqref{eqn: entries-phi-small-offside}, 
we complete the proof.

\subsection{Measure estimate for ``bad'' frequencies}
\label{subsec: measure-estimate}

The final stage of the multiscale analysis consists in quantifying the set of 
``bad''  frequencies that must be excluded in order for the inversion 
condition~\eqref{eqn: trans-small-size-original-inverse} to remain valid. 
In this section, we establish the rigorous measure estimates for these 
resonant sets. 
Recall the restricted operator $T_{n_0, k_0, N}$ admits the block 
decomposition 
\[
    T_{n_0, k_0, N} = 
    \begin{pmatrix} 
        T_{\Gamma_{n_0, k_0, N} \setminus \Pi} & \varepsilon P^*  \\ 
        \varepsilon P & T_{\Pi} 
    \end{pmatrix}.
\]
In terms of this block structure, the Schur complement of the block 
$T_{\Gamma_{n_0, k_0, N} \setminus \Pi} $ is
\begin{equation}
    \label{eqn: schur-complement}
    U = T_{\Pi} - \varepsilon^2 P T_{\Gamma_{n_0, k_0, N} \setminus \Pi}^{-1} P^*.
\end{equation}
From~\eqref{eqn: schur-complement}, we establish an upper bound for the 
$\ell_2$-norm of the inverse operator:
\[
    \| T_{n_0, k_0, N}^{-1} \|_2 \leq 4 \| T_{\Gamma_{n_0, k_0, N} \setminus \Pi}^{-1} \|_2^2 \cdot \| U^{-1} \|_2 + \| T_{\Gamma_{n_0, k_0, N} \setminus \Pi}^{-1} \|_2. 
\]
Since the bound $\| T_{\Gamma_{n_0, k_0, N} \setminus \Pi}^{-1} \|_2 \leq (20N'+1)^{1+m} e^{( \log 10N')^{15}}$ 
has already been established, we deduce the inclusion of resonant sets
\begin{equation}
    \label{eqn: inverse-relation-schur-complement}
    \left\{ \omega' \in \Omega ~\bigg|~ \| T_{n_0, k_0, N}^{-1} \|_2 > \frac{1}{2\varepsilon_N} \right\} \subseteq \left\{ \omega' \in \Omega ~\bigg|~ \| U^{-1} \|_2 > \frac{1}{\sqrt{\varepsilon_N}} \right\},
\end{equation}
so that it suffices to analyze the Schur complement $U$ for the purposes of the 
measure estimates. 
We therefore write~\eqref{eqn: schur-complement} more explicitly; two cases 
arise. 
For the tangential configuration~\eqref{eqn: singular-sites-tangent}
\begin{equation}
    \label{eqn: u-tangential}
    U = \mathrm{diag}\left( - \langle k_{\star}, \omega' \rangle - \omega'_j + \omega_j\right) + \varepsilon S_{\Pi} - \varepsilon^2 P T_{\Gamma_{n_0, k_0, N} \setminus \Pi}^{-1} P^*. 
\end{equation}
and for the normal configuration~\eqref{eqn: singular-sites-normal},
\begin{equation}
    \label{eqn: u-normal}
    U = \mathrm{diag}\left( - \langle k_{\star}, \omega' \rangle + n_{\star}^2 + \mu \right) + \varepsilon S_{\Pi} - \varepsilon^2 P T_{\Gamma_{n_0, k_0, N} \setminus \Pi}^{-1} P^*. 
\end{equation} 
Accordingly, for the tangential configuration we set
$\sigma_{\star}=-\langle k_{\star},\omega'\rangle$, whereas for the normal
configuration we set
$\sigma_{\star}=-\langle k_{\star},\omega'\rangle+n_{\star}^2+\mu$.
Then, in both cases, we write
\begin{equation}
    \label{eqn: u-bar-form}
    U = \sigma_{\star} \mathrm{Id} + \Upsilon.
\end{equation}
Since $\| T_{\Gamma_{n_0, k_0, N} \setminus \Pi}^{-1} \|_2 \leq (20N'+1)^{1+m} e^{( \log 10N')^{15}}$, 
both~\eqref{eqn: u-tangential} and~\eqref{eqn: u-normal} implies that 
$\| \Upsilon \|_2 \leq (20N'+1)^{2+m} e^{( \log 10N')^{15}}$. 
Combining~\eqref{eqn: inverse-relation-schur-complement} 
and~\eqref{eqn: u-bar-form}, we define the single-mode nearly-resonant set at 
the scale $N$ as:
\begin{equation}
    \label{eqn: single-mode-nearly-resonant-multiscale}
    G_{1,N} = \bigcup_{(n, k) \in \Lambda_{N^{10}}^{1+m}} \left\{ \omega' \in \Omega ~\bigg|~ \big| |\sigma_{\star}| - \|\Upsilon\|_2 \big| < \sqrt{\varepsilon_N} \right\},
\end{equation}
which ensures that the inversion 
condition~\eqref{eqn: trans-small-size-original-inverse} holds for all frequencies 
$\omega' \notin G_{1,N}$. 
To estimate the measure of this set, we utilize a simplified version of the 
Cartan estimate~\citep{levin1996lectures}.

\begin{lemma}[Theorem 4 of Lecture 11.3 in~\citet{levin1996lectures}]
    \label{lem: cartan}
    Let $f(z)$ be an analytic function in the complex disk 
    $\{ z \in \mathbb{C} \mid |z| \leq 2eR \}$ with $|f(0)| = 1$. 
    Let $M_f(r) = \max_{|z|=r} |f(z)|$, and let $\eta > 0$ be an arbitrarily 
    small positive number. 
    Then the estimate
    \[
        \log |f(z)| > - H(\eta) \log M_f(2eR), \qquad H(\eta) = \log \frac{15e^3}{\eta},
    \]
    holds everywhere in the disk $\{z \in \mathbb{C} \mid |z| \leq R\}$ except 
    for a family of disks $(C_j)$ with sum of radii $\sum r_j \leq \eta R$.
\end{lemma}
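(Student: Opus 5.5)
The statement is the simplified Cartan estimate (Theorem 4 of Lecture 11.3 in Levin), so the plan is the classical route: Jensen's formula to count zeros, a Blaschke-type factorization, and Cartan's lemma on lower bounds for polynomials.

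\emph{Step 1 (counting zeros).} Let $n$ be the number of zeros of $f$ in $|z|\le 2R$, counted with multiplicity. Since $|f(0)|=1$, Jensen's formula on the circle of radius $2eR$ gives
\[
n \le \int_{2R}^{2eR}\frac{n(t)}{t}\,dt \le \log M_f(2eR).
\]
Here we used $\log(2eR/2R)=1$.

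\emph{Step 2 (factorization).} Write $f=P\,g$ with $P(z)=\prod_{j=1}^n(z-z_j)$, the product over the zeros in $|z|\le 2R$. Then $g$ is zero-free on $|z|\le 2R$.

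The plan is to bound $\log|g|$ from below on $|z|\le R$ by Harnack/Borel--Carathéodory applied to the harmonic function $\log|g|$ on $|z|\le 2R$. The upper bound for $\log|g|$ on $|z|=2R$ comes from $\log|f|\le\log M_f(2eR)$. For $|P|$ on that circle we use $|z-z_j|\ge |z|-|z_j|$; the possibly small factors near $|z_j|\approx 2R$ are handled by working on the radius-$2eR$ circle instead, where $|z-z_j|\ge 2(e-1)R$. One also uses $\log|g(0)|=-\log|P(0)|$ together with $|P(0)|\le (2R)^n$.

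Combining these, the target is a bound of the form
\[
\log|g(z)| \ge -C\log M_f(2eR) - n\log(2R) \quad\text{for } |z|\le R,
\]
with an absolute constant $C$.

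\emph{Step 3 (Cartan's lemma).} Apply Cartan's lemma to the monic polynomial $P$ of degree $n$. For any $h>0$,
\[
|P(z)| > (h/e)^n
\]
outside at most $n$ disks with sum of radii $\le 2h$. Choose $h=\eta R/2$, so the exceptional radii sum to at most $\eta R$.

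Outside these disks,
\[
\log|f| = \log|P|+\log|g| \ge n\log\frac{\eta R}{2e} - n\log(2R) - C\log M_f(2eR).
\]
The $\log R$ terms cancel, leaving $-\bigl(n\log(4e/\eta)+C\log M_f(2eR)\bigr)$. Using $n\le\log M_f(2eR)$ from Step 1, this is $\ge -H(\eta)\log M_f(2eR)$, provided the constants are tracked so that $\log(4e/\eta)+C\le\log(15e^3/\eta)$.

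\emph{Main obstacle.} The delicate part is the Harnack step: obtaining the lower bound for the zero-free factor $g$ with explicit constants small enough to land exactly on $H(\eta)=\log(15e^3/\eta)$. If the naive constants overshoot, I would use the sharper representation $\log|g(z)|=\operatorname{Re}\bigl(\text{Schwarz integral}\bigr)$ on the disk of radius $2R$, evaluated at $|z|\le R$ where the kernel ratio is $3$. Alternatively, I would simply defer to Levin's precise bookkeeping, since the paper only invokes this result with unspecified constants.
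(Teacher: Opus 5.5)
Your proof is correct, and the ``main obstacle'' you anticipate does not arise: the naive constants already fit. The paper itself gives no proof of this lemma; it imports it from Levin. The standard proof there has the same skeleton as yours:
\begin{itemize}
\item Jensen's formula gives $n\le\log M_f(2eR)$;
\item the zeros in $|z|\le 2R$ are split off;
\item Harnack's inequality with ratio $\frac{2R+R}{2R-R}=3$ controls the zero-free factor;
\item the Boutroux--Cartan lemma controls the zero factor.
\end{itemize}

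The difference is the normalization. Levin divides by the Blaschke-type product $\varphi(z)=\frac{(-2R)^n}{z_1\cdots z_n}\prod_k\frac{2R(z-z_k)}{4R^2-\bar z_k z}$, which satisfies $\varphi(0)=1$ and $|\varphi|\ge 1$ on $|z|=2R$. Hence $\psi=f/\varphi$ is bounded by $M_f(2R)$ on that circle and $\log|\psi(0)|=0$ at no cost. The price is that Cartan's lower bound for $\prod_k|z-z_k|$ must be converted into one for $|\varphi|$ via $|4R^2-\bar z_k z|\le 6R^2$. You keep the monic $P$, so Cartan applies verbatim, and you pay instead with the maximum principle on the larger circle.

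That step should be stated explicitly. The function $g=f/P$ is analytic on $|z|\le 2eR$ but may vanish in $2R<|z|\le 2eR$, so $\log|g|$ is not harmonic there. The radius $2eR$ may therefore enter only through the maximum modulus principle, which gives
\[
\log|g|\le A:=\log M_f(2eR)-n\log\bigl(2(e-1)R\bigr)\quad\text{on all of } |z|\le 2R.
\]
Apply Harnack to the nonnegative harmonic function $A-\log|g|$ on $|z|<2R$, together with $\log|g(0)|=-\log|P(0)|\ge -n\log(2R)$. For $|z|\le R$ this yields
\[
\log|g(z)|\ \ge\ 3\log|g(0)|-2A\ \ge\ -2\log M_f(2eR)-n\log(2R)+2n\log(e-1),
\]
so $C=2$ works, with a favorable extra term.

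Cartan with $h=\eta R/2$ and $n\le\log M_f(2eR)$ then gives the estimate with $H=\log(4e^3/\eta)\le\log(15e^3/\eta)$. This uses $\log(4e/\eta)\ge 0$; for $\eta>4e$ the claim is vacuous, since one disk of radius $\eta R$ covers $|z|\le R$. No sharper Schwarz-integral bookkeeping is needed.
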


To complete the measure estimates, we analyze the magnitude of the residual 
term $\Upsilon$ in two distinct regimes. 
For small residuals, namely $ \| \Upsilon \|_2 < \varepsilon_N^{\frac{1}{4}}$, 
the single-mode nearly-resonant set is contained within
\begin{equation}
    \label{eqn: single-msa-small-residual}
    G_{1,N} \subseteq \bigcup_{(n, k) \in \Lambda_{N^{10}}^{1+m}} \left\{ \omega' \in \Omega ~\bigg|~ |\sigma_{\star}| < 2\varepsilon_N^{\frac{1}{4}} \right\},
\end{equation}
and we note that this case encompasses the small-scale regime, where the tighter 
condition $|\sigma_\star| < \varepsilon_N$ is in fact sufficient; the measure 
of this set is bounded by
\begin{equation}
    \label{eqn: mes-g-1-N-1}
    \mathrm{mes}(G_{1,N}) \leq 4\left[\text{diam}(\Omega)\right]^{m-1} (2N^{10}+1)^{1+m} \varepsilon_N^{\frac{1}{4}}. 
\end{equation}
When the residual term is large, i.e., 
$\| \Upsilon \|_2 \geq \varepsilon_N^\frac{1}{4}$, 
we define an analytic function:
\[
    f(\sigma_{\star}) = \frac{|\sigma_{\star}|}{\| \Upsilon \|_2 } - 1. 
\]
with the disk radius $R$ chosen so that $\| \Upsilon \|_2 = 2e(1+e)^{-1}R$, 
whence $|f(0)| = 1$ and $M_f(2eR) = e$; accordingly, the single-mode 
nearly-resonant set satisfies:
\begin{equation}
    \label{eqn: single-msa-large-residual}
    G_{1,N} \subseteq \bigcup_{(n, k) \in \Lambda_{N^{10}}^{1+m}} \left\{ \omega' \in \Omega \bigg | |f(\sigma_{\star})| < \varepsilon_N^{1/4} \right\},
\end{equation}
and taking $\eta = 15e^3 \varepsilon_N^{1/4}$ in~\Cref{lem: cartan} yields the measure estimate
\begin{align}
    \mathrm{mes}(G_{1,N}) 
    & \leq 8(1+e)e^2 \left[\text{diam}(\Omega)\right]^{m-1} (2N^{10}+1)^{1+m} (20N'+1)^{1+m} e^{(\log 10N')^{15}} \varepsilon_{N}^{\frac{1}{4}}  \nonumber  \\
    & \leq 8(1+e)e^2 \left[\text{diam}(\Omega)\right]^{m-1} (2N^{10}+1)^{2+2m} \varepsilon_{N}^{\frac{1}{5}}.   \label{eqn: mes-g-1-N-2}
\end{align}
In view of~\eqref{eqn: single-mode-nearly-resonant-multiscale}, the total 
single-mode resonant set is the union over all scales $N \geq M_0$ as 
\begin{equation}
    \label{eqn: final-msa-single-mode-nearly-resonant}
    G_1 = \bigcup_{N \geq M_{0}} G_{1,N}. 
\end{equation}
and combining~\eqref{eqn: mes-g-1-N-1} and~\eqref{eqn: mes-g-1-N-2},  we obtain
\[
    \mathrm{mes}(G_1) \leq \sum_{N \geq M_0} 8(1+e)e^2 \left[\text{diam}(\Omega)\right]^{m-1} (2N^{10}+1)^{2+2m} \varepsilon_{N}^{\frac{1}{5}} < \infty.
\]
Since $M_0 \rightarrow \infty$ as $\varepsilon \rightarrow 0$, the convergence 
of this series implies $\mathrm{mes}(G_1) \rightarrow 0$ as 
$\varepsilon \rightarrow 0$. 
The set $G = G_1 \cup G_2$ corresponds to the set defined 
in~\Cref{thm: off-side-box}, and consequently, $\text{mes}(G) \leq \delta$ with 
$\delta \to 0$ as $\varepsilon \to 0$, in agreement with the statement 
of~\Cref{thm: main}.

\section{Convergence of the numerical scheme}
\label{sec: iterative-scheme}

In this section, we establish the theoretical foundation for the convergence of 
the proposed alternating numerical 
scheme~\eqref{eqn: alternating-numerical}.~\Cref{subsec: induction} derives the 
iteration lemma, which serves as the technical cornerstone of our analysis. 
Subsequently, in~\Cref{subsec: proof-main}, we utilize this result to complete 
the formal proof of the main statement.

\subsection{The iteration lemma}
\label{subsec: induction}

Before proceeding to the inductive estimates, we need to characterize the 
derivative of the inverse operator $\partial L_N^{-1}$, specifically its norm 
bounds and off-diagonal decay properties. 
Throughout, the derivative is taken with respect to the frequency $\omega'$, 
which we suppress from the notation for brevity. 
Differentiating the identity $L_N^{-1} L_N = I$ yields:
\begin{equation}
    \label{eqn: t-n-derivative-inverse}
    \partial L_{N}^{-1} = - L_{N}^{-1}(\partial L_{N} ) L_{N}^{-1}.
\end{equation}
This identity allows us to establish the following lemma on the norm bound and 
localization.

\begin{lemma}
    \label{lem: inverse-derivative}
    Suppose that the restricted operator $L_{N}$ 
    satisfies~\Cref{cond: implementation}. 
    Then its inverse satisfies the norm bound:
    \begin{equation}
        \label{eqn: inverse-restricted-norm}
        \| \partial L_{N}^{-1} \|_2 \leq \frac{4\sqrt{m}N}{\varepsilon_N^2}. 
    \end{equation}
    Furthermore, for sufficiently separated lattice sites in $\Lambda_N^{1+m}$, 
    specifically when $|n-n'| + |k-k'| \geq N^{\frac34}$, the entries of the 
    derivative of the inverse exhibit Gevrey-type decay:
    \begin{equation}
        \label{eqn: inverse-restricted-entries}
        \left| \partial L_{N}^{-1}\left( (n, k), (n', k') \right) \right| \leq e^{ - \frac{ \left( |n-n'| + | k - k' |\right)^{s}}{4}} .
    \end{equation}
\end{lemma}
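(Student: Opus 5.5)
The plan is to work entirely from the identity \eqref{eqn: t-n-derivative-inverse}, $\partial L_N^{-1} = -L_N^{-1}(\partial L_N)L_N^{-1}$, where $\partial$ denotes a partial derivative $\partial_{\omega'_\ell}$. The first step is to describe $\partial L_N$. Since $L_N = D_N + \varepsilon(S+B)_N$, and $D_N(n,k) = -\langle k,\omega'\rangle + n^2+\mu$, the diagonal part contributes $\partial_{\omega'_\ell} D_N = -\mathrm{diag}(k_\ell)$. This is bounded in $\ell_2$-operator norm by $N$ on $\Lambda_N^{1+m}$; over all $\ell$ the total is at most $\sqrt{m}N$. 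The perturbative part $\varepsilon\,\partial(S+B)_N$ is handled by the derivative versions of \Cref{prop: convolution} and \Cref{prop: B-operator}, since $\hat u\in\mathcal K(s)$ controls $\partial_{\omega'}\hat u$ as well. It is therefore bounded by $\varepsilon\gamma \le \sqrt{\varepsilon}$ and has entries decaying like $\varepsilon\gamma e^{-(|n-n'|+|k-k'|)^s}$. Altogether $\|\partial L_N\|_2 \le 2\sqrt m N$, and $\partial L_N$ is diagonal up to an $\varepsilon$-small, Gevrey-localized remainder.

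The norm bound \eqref{eqn: inverse-restricted-norm} then follows immediately from the inversion condition \eqref{eqn: inverse-grow}:
\[
\|\partial L_N^{-1}\|_2 \le \|L_N^{-1}\|_2^2\,\|\partial L_N\|_2 \le 2\sqrt m N\varepsilon_N^{-2}.
\]
This is within the stated constant $4\sqrt m N\varepsilon_N^{-2}$, which leaves slack for the $\varepsilon$-term.

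For the localization \eqref{eqn: inverse-restricted-entries}, I will expand the entry as a double sum:
\[
\partial L_N^{-1}(x,y) = -\sum_{z,w} L_N^{-1}(x,z)\,\partial L_N(z,w)\,L_N^{-1}(w,y),
\]
with $x=(n,k)$, $y=(n',k')$ and $|x-y|\ge N^{3/4}$. I first treat the diagonal part, where $z=w$. By the triangle inequality, either $|x-z|\ge |x-y|/2$ or $|z-y|\ge |x-y|/2$; both are at least $N^{3/4}/2 \ge N^{1/2}$. On the far factor I apply the localization condition \eqref{eqn: off-diagonal}, which gives decay $e^{-(|x-y|/2)^s/2}\ge$ roughly $e^{-2^{-s}|x-y|^s/2}$. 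The other factor is bounded by $\varepsilon_N^{-1}$, the middle factor by $N$, and the number of terms by $(2N+1)^{1+m}$. The off-diagonal $\varepsilon$-part is treated the same way: among the three jumps $|x-z|$, $|z-w|$, $|w-y|$, at least one is at least $|x-y|/3$. The middle jump already carries decay $e^{-|z-w|^s}$. The outer jumps are handled as above, using the concavity bound $a^s+b^s\ge(a+b)^s$ when needed.

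The main obstacle is absorbing the polynomial-in-$N$ losses and the factor $\varepsilon_N^{-1}=e^{(\log N)^{15}}$ while keeping the exponent $1/4$. Since $2^{-s}/2 > 1/4$ for $s<1$, there is a margin of $(2^{-s}/2-1/4)|x-y|^s \ge c_s N^{3s/4}$. This margin dominates $(\log N)^{15} + (1+m)\log(2N+1)+\log N$ as soon as $N\ge M_0$ is large, that is, for $\varepsilon$ small. This is exactly why the lemma requires the separation $N^{3/4}$ rather than $N^{1/2}$: it gives room for the halved distance to still lie in the localization regime and for the subexponential gain to beat the logarithmic losses.
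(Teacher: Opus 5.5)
Your proposal is correct and follows essentially the paper's route. The norm bound comes from $\partial L_N^{-1}=-L_N^{-1}(\partial L_N)L_N^{-1}$, using $\|\partial L_N\|_2\lesssim\sqrt{m}N$ and the inversion condition. The localization comes from the entrywise double-sum expansion: the localization of $L_N^{-1}$ and the decay of $\partial L_N$ supply the Gevrey factor, and the separation $N^{3/4}$ absorbs $\varepsilon_N^{-1}$ and the polynomial counting factors.

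The only difference is bookkeeping. You isolate one long jump by pigeonhole, costing a factor $2^{-s}$ or $3^{-s}$ in the exponent, whereas the paper invokes \Cref{lem: app-3} and loses $2N^{1/2}$ in the distance. Note that, exactly as in the paper, the final absorption $c_sN^{3s/4}\gtrsim(\log N)^{15}$ is only justified for $s$ held fixed as $N\to\infty$. It is not justified for the $\varepsilon$-dependent $s$ with $M^s\le 96/95$ that is chosen later in the iteration lemma.
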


The norm bound~\eqref{eqn: inverse-restricted-norm} follows directly from the 
inversion condition~\eqref{eqn: inverse-grow} and the derivative 
identity~\eqref{eqn: t-n-derivative-inverse}. 
The localization estimate~\eqref{eqn: inverse-restricted-entries} relies on 
elementary inequalities associated with Gevrey regularity; the detailed proof 
is provided in~\Cref{subsec: derivative-app}. 
Given that~\Cref{cond: implementation} has been verified in the preceding 
sections,~\Cref{lem: inverse-derivative} implies the iteration lemma.

\begin{theorem}[The Iteration Lemma]
    \label{lem: iteration}
    Let $\omega' \in \Omega \setminus (\Omega_{M}^{\tau} \cup G)$, and let 
    $\varepsilon_0 > 0$ be the threshold provided in~\Cref{thm: main}. 
    For any $0 < \varepsilon \leq \varepsilon_0$, there exists a Gevrey exponent 
    $s(\varepsilon) > 0$ with the following property. 
    Suppose that at the $r$-th iteration, the approximate solution 
    $\hat{u}_p^{(r)}$ is supported in: 
    \begin{equation}
        \label{eqn: vector-support-r}
        \mathrm{supp} \; \hat{u}_p^{(r)} \subseteq  \Lambda^{1+m}_{N_r}, 
    \end{equation}
    that for any index $i \in \{ 0, 1, \ldots, r\}$ and any 
    $(n,k) \in \Lambda^{1+m}_{N_i} \setminus \Lambda^{1+m}_{N_{i-1}}$, 
    it satisfies the Gevrey decay estimate 
    \begin{equation}
        \label{eqn: vector-decay-r}   
        \| \hat{u}^{(r)}(n, k) \| \leq \sum_{\ell=i}^{r}e^{ - \frac32 N_{\ell}^{s} },
    \end{equation}
    and the non-resonant vector field $F$ satisfies 
    \begin{equation}
        \label{eqn: vector-field-decay-r}  
        \big\| F\big( \hat{u}^{(r)}_p, \;\omega^{(r+1)}; a \big) \big\| \leq  e^{-2N_{r}^{s} }.                                                                                                                                                                                                                                                                   
    \end{equation}
    Then the iterate $\hat{u}_p^{(r+1)}$ produced by the alternating numerical 
    scheme~\eqref{eqn: alternating-numerical} satisfies: 
    \begin{equation}
        \label{eqn: vector-support-r+1}
        \mathrm{supp} \; \hat{u}_p^{(r+1)} \subseteq \Lambda^{1+m}_{N_{r+1}}.
    \end{equation}
    For any $i \in \{ 0, 1, \ldots, r+1\}$ and any 
    $(n, k) \in \Lambda^{1+m}_{N_i} \setminus \Lambda^{1+m}_{N_{i-1}}$, the 
    following decay property holds: 
    \begin{equation}
        \label{eqn: vector-decay-r+1}   
        \| \hat{u}^{(r+1)}(n, k) \| \leq \sum_{\ell=i}^{r+1}e^{ - \frac32 N_{\ell}^s },
    \end{equation}
    and 
    \begin{equation}
        \label{eqn: vector-field-decay-r+1}  
        \big\| F \big( \hat{u}^{(r+1)},\omega^{(r+2)}; a \big) \big\| \leq e^{-2N_{r+1}^{s} }.                                                                                                                                
    \end{equation}
    In other words, properties~\eqref{eqn: vector-support-r}--\eqref{eqn: vector-field-decay-r} 
    are preserved under the alternating update~\eqref{eqn: alternating-numerical}.
\end{theorem}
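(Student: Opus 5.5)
The plan is the standard Newton-type induction. Set $\Delta^{(r+1)} := -(L_{N_{r+1}})^{-1} F(\hat{u}_p^{(r)},\omega^{(r+1)};a)$, where $L = T+\varepsilon B$ is evaluated at $(\hat{u}_p^{(r)},\omega^{(r+1)})$. Then $\hat{u}_p^{(r+1)} = \hat{u}_p^{(r)} + \Delta^{(r+1)}$. Since the projection $P_{N_{r+1}}$ is built into the restricted operator, $\Delta^{(r+1)}$ is supported in $\Lambda^{1+m}_{N_{r+1}}$. Because $N_r<N_{r+1}$, the support property \eqref{eqn: vector-support-r+1} follows from \eqref{eqn: vector-support-r} at once.

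\emph{Decay of the new iterate.} We split $F = P_{N_{r+1}}F + (I-P_{N_{r+1}})F$; only the first part enters. We first bound $\|\Delta^{(r+1)}\|$ in the norm \eqref{eqn: new-norm}. By \Cref{cond: implementation} (verified in \Cref{sec: multi-scale} and \Cref{subsec: verification} for $\omega'\notin\Omega^\tau_M\cup G$), $\|L_{N_{r+1}}^{-1}\|_2\le\varepsilon_{N_{r+1}}^{-1}$. For the frequency derivatives we write
\[
\partial\Delta = -(\partial L^{-1}_{N_{r+1}})F - L^{-1}_{N_{r+1}}\partial F
\]
and use \Cref{lem: inverse-derivative}, which gives $\|\partial L^{-1}_{N_{r+1}}\|_2\le 4\sqrt m N_{r+1}\varepsilon_{N_{r+1}}^{-2}$. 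Together with \eqref{eqn: vector-field-decay-r} this yields
\[
\|\Delta^{(r+1)}\|\lesssim N_{r+1}\,\varepsilon_{N_{r+1}}^{-2}e^{-2N_r^s}.
\]
Here $N_{r+1}=M N_r$, so $\varepsilon_{N_{r+1}}^{-2}=e^{2(\log N_{r+1})^{15}}$. Choosing $s(\varepsilon)$ small enough, but with $M^s$ still large (possible since $M\to\infty$), we get $(\log N_{r+1})^{15}\ll N_r^s$ and $\tfrac32N_{r+1}^s\le 2N_r^s-(\text{losses})$. This gives $\|\Delta^{(r+1)}\|\le e^{-\frac32N_{r+1}^s}$, so the bound \eqref{eqn: vector-decay-r+1} for $i\le r+1$ follows by adding this term to \eqref{eqn: vector-decay-r}; the range $i=r+1$ uses only the new term. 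The Gevrey bound guarantees that $\hat u^{(r+1)}\in\mathcal K(s)$, so \Cref{prop: vector-field}--\Cref{prop: B-operator} remain applicable.

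\emph{New residual.} Next we verify \eqref{eqn: vector-field-decay-r+1}. We Taylor expand
\[
F(\hat u_p^{(r+1)},\omega^{(r+2)}) = F(\hat u_p^{(r)},\omega^{(r+1)}) + \partial_{\hat u_p}F\,\Delta + \partial_{\omega'}F\,(\omega^{(r+2)}-\omega^{(r+1)}) + \text{(quadratic)}.
\]
By \eqref{eqn: alternating-frequency} and \Cref{prop: convolution},
\[
\omega^{(r+2)}-\omega^{(r+1)} = \varepsilon\,(\partial\hat X_q/\partial\hat u_p)\Delta\odot a^{-1} + O(\varepsilon\|\Delta\|^2).
\]
Since $\partial_{\omega'}F=-k\hat u_p$, the linear frequency term is exactly $\varepsilon B\Delta$; this is why $B$ was introduced in \eqref{eqn: operator-b}. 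Hence the linear part is $(T+\varepsilon B)\Delta + F$. On $\Lambda_{N_{r+1}}$ this combination cancels by the definition of $\Delta$, leaving the off-box contributions $(I-P_{N_{r+1}})F$ and $(I-P_{N_{r+1}})(T+\varepsilon B)P_{N_{r+1}}\Delta$.

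The quadratic remainders are bounded by \Cref{prop: tensor} by $\gamma(2N_{r+1}+1)^{(1+m)/2}\|\Delta\|^2$, together with analogous $\omega'$-derivative terms. This is at most $N_{r+1}^{C}\varepsilon_{N_{r+1}}^{-4}e^{-4N_r^s}\le e^{-2N_{r+1}^s}$, provided $2M^s<4$ up to small losses, which constrains $s$ from the other side.

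The off-box terms are the crux. The entries of $F(\hat u^{(r)})$ outside $\Lambda_{N_{r+1}}$ arise from cubic convolutions of vectors supported in $\Lambda_{N_r}$, so they vanish beyond $3N_r<N_{r+1}$. The off-box part of $T\Delta$ involves $S$ entries with separation at least $N_{r+1}-\text{dist}$. Here we use the localization \eqref{eqn: off-diagonal} and \eqref{eqn: inverse-restricted-entries}: $\Delta$ inherits the decay of $F$, which is concentrated near $\Lambda_{3N_r}$, so $\Delta$ is tiny near the boundary of $\Lambda_{N_{r+1}}$. Combining this with the Gevrey decay of $S$ from \eqref{eqn: tangent-comp} and of $B$ from \eqref{eqn: b-comp} gives a bound $e^{-\frac12(N_{r+1}-3N_r)^s}\cdot(\text{poly})\le e^{-2N_{r+1}^s}$ after shrinking constants. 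I expect this off-box estimate, together with balancing the two constraints on $s(\varepsilon)$ (large-scale losses $\varepsilon_N^{-2}$ versus quadratic convergence $2M^s<4$), to be the main obstacle. It may require tracking the $\varepsilon$ prefactor, $\varepsilon\gamma\le\sqrt{\varepsilon}\delta_0/2$, to absorb constant mismatches between the exponents $\tfrac12$, $\tfrac32$ and $2$.
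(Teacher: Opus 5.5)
Your architecture is the paper's: a Newton step with $L_{N_{r+1}}^{-1}$, $\ell_2$ and $\omega'$-derivative bounds via \Cref{lem: inverse-derivative}, a Taylor expansion in which $B$ cancels the linear frequency feedback, truncation errors controlled by localization, and the quadratic remainder handled by \Cref{prop: tensor}. Your remark that $B$ is exactly the linearized frequency term is correct; the paper leaves it implicit.

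The gap is your choice of $s(\varepsilon)$, which is self-contradictory as stated. The decay step needs $\tfrac32 N_{r+1}^s=\tfrac32 M^sN_r^s\le 2N_r^s-(\text{losses})$, which forces $M^s<4/3$. The quadratic step similarly forces $M^s<2$. So ``$M^s$ still large'' is not available. But once $M^s<4/3$, you have $N_r^s=(M^s)^{r+1}<(4/3)^{r+1}$. The loss, on the other hand, is $\log\bigl(N_{r+1}\varepsilon_{N_{r+1}}^{-2}\bigr)\ge 2\bigl((r+2)\log M\bigr)^{15}$, with $\log M=(\log\frac1\varepsilon)^{1/20}\to\infty$. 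Hence $(\log N_{r+1})^{15}\ll N_r^s$ fails for every $r$ below a threshold that grows with $\log M$. Already at $r=0$, hypothesis \eqref{eqn: vector-field-decay-r} is only the $O(1)$ bound $\|F\|\le e^{-2M^s}$, while the Newton correction is controlled only by $N_1\varepsilon_{N_1}^{-2}e^{-2M^s}$. No choice of $s$ meets both of your requirements. Consequently, neither $\|\Delta\|\le e^{-\frac32N_{r+1}^s}$ nor $N_{r+1}^C\varepsilon_{N_{r+1}}^{-4}e^{-4N_r^s}\le e^{-2N_{r+1}^s}$ follows from the stated hypotheses by your argument.

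Your off-box step contains a separate error. The term $(I-P_{N_{r+1}})(T+\varepsilon B)P_{N_{r+1}}\Delta$ is linear in $F$, so its bound must carry the factor $\|F\|\le e^{-2N_r^s}$. As written, the inequality $e^{-\frac12(N_{r+1}-3N_r)^s}\cdot(\mathrm{poly})\le e^{-2N_{r+1}^s}$ is false. No $\varepsilon$-prefactor can absorb a deficit of order $N_{r+1}^s$, since that deficit grows with $r$. Keeping $\|F\|$ gives $e^{-cN_{r+1}^s-2N_r^s}$ and the condition $(2-c)M^s\le 2$. This is how the paper arrives at $95M^s\le 96$, via the split \eqref{eqn: decomposition-t-tr} (with $F$ supported in $[-N_{r+1}/4,N_{r+1}/4]^{1+m}$) and the bounds \eqref{eqn: final-super-convergence-bound-1}--\eqref{eqn: final-super-convergence-bound-2}. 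It also imposes $6M^s\le 7$ for the decay step.

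Be aware, however, that the paper's proof has the same unresolved point as your decay step. It asserts that $\varepsilon_0$ can be chosen so that $\|\Delta^{(r)}\|\le e^{-7N_r^s/4}$, and only afterwards imposes $6M^s\le 7$. Under that constraint, $e^{N_r^s/4}\le e^{(7/6)^{r+1}/4}$ cannot absorb $4\sqrt m\,N_{r+1}\varepsilon_{N_{r+1}}^{-2}$, and shrinking $\varepsilon$ only makes this worse. So the obstacle you flagged is genuine and cannot be removed by tuning $s$. A correct proof has to strengthen the inductive hypothesis, for instance by carrying an explicit $\varepsilon$-dependent factor in the residual bound. Recall that $F(\hat u_p^{(0)},\omega^{(1)};a)=\varepsilon\hat X_p(0,a)$ and $\log\frac1\varepsilon=(\log M)^{20}$, so the early steps could then be paid for by powers of $\varepsilon$ rather than by $e^{-N_r^s}$.
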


\begin{proof}[Proof of~\Cref{lem: iteration}]
    For conciseness, we suppress the auxiliary parameters from the 
    dimension-enlarged Newton scheme~\eqref{eqn: alternating-coefficient} 
    and write the increment as:
    \begin{equation}
        \label{eqn: nash-moser-single-simplification}
        \Delta^{(r)} = \hat{u}^{(r+1)}_p - \hat{u}^{(r)}_p = - L_{N_{r+1}}^{-1} \big(  \hat{u}^{(r)}_p \big) F\big(  \hat{u}^{(r)}_p \big).
    \end{equation}
    The proof proceeds in three steps.

    \paragraph{Support enlargement} 
    The first step verifies the support inclusion~\eqref{eqn: vector-support-r+1}. 
    Since the perturbation $|u|^2u$ is cubic, the support 
    assumption~\eqref{eqn: vector-support-r} implies
    \begin{equation}
        \label{eqn: support-d-enlarge}
        \mathrm{supp} \; F\big( \hat{u}^{(r)}_p \big) \subseteq [-3N_{r}, 3N_{r}]^{1+m} \subseteq \left[ -\frac{N_{r+1}}{4}, \frac{N_{r+1}}{4} \right]^{1+m},
    \end{equation}
    which, together with the update rule~\eqref{eqn: alternating-numerical}, 
    establishes~\eqref{eqn: vector-support-r+1}

    \paragraph{Lattice vector decay} This step establishes the 
    bound~\eqref{eqn: vector-decay-r+1}. 
    By the inversion condition~\eqref{eqn: inverse-grow} and the residual 
    estimate~\eqref{eqn: vector-field-decay-r}, the iterative difference 
    satisfies
    \[
        \| \Delta^{(r)} \|_2 \leq \big \| L_{N_{r+1}}^{-1} \big( \hat{u}^{(r)}_p \big) \big\|_2 \big \| F\big( \hat{u}^{(r)}_p\big) \big\|_2 \leq \frac{1}{\varepsilon_{N_{r+1}}} e^{-2N_{r}^{s}}.
    \]
    Furthermore, applying~\Cref{lem: inverse-derivative} yields the derivative 
    estimate
    \begin{align*}
        \| \partial \Delta^{(r)} \|_2 
        & \leq \big\| \partial L_{N_{r+1}}^{-1} \big( \hat{u}^{(r)}_p \big) \big\|_2 \big\| F\big( \hat{u}^{(r)}_p\big) \big\|_2 + \big\| L_{N_{r+1}}^{-1} \big( \hat{u}^{(r)}_p \big) \big\| _2\big\| \partial F\big( \hat{u}^{(r)}_p\big) \big\|_2                                                            \nonumber \\
        & \leq \left( \frac{4\sqrt{m}N_{r+1} + \varepsilon_{N_{r+1}}}{\varepsilon_{N_{r+1}}^2}\right) e^{-2N_{r}^{s}}.       
    \end{align*} 
    Combining the two bounds above, there exists a threshold 
    $\varepsilon_0 > 0$ such that:
    \begin{equation}
        \label{eqn: difference-derivative-estimate}
        \| \Delta^{(r)} \| \leq \exp\left\{-\frac{7N_{r}^{s}}{4}\right\},
    \end{equation}
    for any $0 < \varepsilon \leq \varepsilon_0$. 
    We now set $s = s(\varepsilon)>0$ such that $6M^{s} \leq 7$. 
    Consequently, at the $(r+1)$-th iteration, for any index 
    $i \in \{0, 1, \ldots, r+1\}$ and any 
    $(n, k) \in \Lambda^{1+m}_{N_i} \setminus \Lambda^{1+m}_{N_{i-1}}$, the 
    following holds:
    \begin{align*}
        \| \hat{u}^{(r+1)}_p \| \leq \| \hat{u}^{(r)}_p \| + \|\Delta^{(r)}\| \leq \sum_{\ell=i}^{r+1} e^{ - \frac32 N_{\ell}^{s} },
    \end{align*}
    which confirms that the decay property~\eqref{eqn: vector-decay-r+1} is 
    preserved across successive iterations.

    \paragraph{Super-exponential convergence} To establish the super-convergence 
    property~\eqref{eqn: vector-field-decay-r+1}, we analyze the residual of the 
    vector field after the numerical update~\eqref{eqn: alternating-numerical}. 
    Applying a Taylor expansion to the vector field and the frequency, and 
    invoking~\Cref{prop: tensor}, the post-update residual is bounded by the 
    truncation error together with the quadratic terms of the iteration: 
    \begin{equation}
        \label{eqn: vector-estimate-taylor-r+1}
        \| F(\hat{u}_p^{(r+1)}, \omega^{(r+2)}) \|_2 \leq \| [ L - L_{N_{r+1}}] \Delta^{(r)} \|_2 + (2N_{r+1}+1)^\frac{1+m}{2} e^{- 3N_{r+1}^{s} }. 
    \end{equation}
    Similarly, the derivative of the residual satisfies:
    \begin{align}
        \| \partial F(\hat{u}_p^{(r+1)}, \omega^{(r+2)}) \|_2 
        \leq & \| \partial [ L - L_{N_{r+1}}] \Delta^{(r)} \|_2 + \| [ L - L_{N_{r+1}} ]\partial \Delta^{(r)} \|_2 \nonumber \\ 
        & + 3(2N_{r+1}+1)^\frac{1+m}{2}e^{- 3N_{r+1}^{s} }.     \label{eqn: vector-estimate-derivative-taylor-r+1}
    \end{align}
    To estimate the truncation terms in~\eqref{eqn: vector-estimate-taylor-r+1} 
    and~\eqref{eqn: vector-estimate-derivative-taylor-r+1}, we exploit the 
    off-diagonal decay of the operators $L $ and $L_{N_{r+1}}^{-1}$ established 
    in~\Cref{prop: convolution} and~\Cref{lem: inverse-derivative}, respectively. 
    Since the estimates for these terms are entirely analogous, we focus on 
    $[L- L_{N_{r+1}}] \Delta^{(r)}$ as a representative example. 
    We utilize the following decomposition:
    \begin{align}
        [L- L_{N_{r+1}}] \Delta^{(r)} 
        =  & \left[ (\mathrm{Id} - P_{N_{r+1}}) LP_{N_{r+1}} \right] \left[ (\mathrm{Id} - P_{N_{r+1}/3}) \Delta^{(r)} \right]  \nonumber \\
           & + \left[ (\mathrm{Id} - P_{N_{r+1}}) L P_{N_{r+1}/3} \right] \Delta^{(r)}.  \label{eqn: decomposition-t-tr}
    \end{align}
    Given the confirmed off-diagonal decay, there exists a threshold 
    $\varepsilon_0 > 0$ such that for any $0 < \varepsilon \leq \varepsilon_0$, 
    the following estimates hold:
    \begin{equation}
        \label{eqn: final-super-convergence-bound-1}
        \|(\mathrm{Id} - P_{N_{r+1}}) L P_{N_{r+1}/3} \| \leq \frac13 e^{ - \frac{7}{12} N_{r+1}^{s} },
    \end{equation}
    and
    \begin{equation}
        \label{eqn: final-super-convergence-bound-2}
        \|(\mathrm{Id} - P_{N_{r+1}/3}) L_{N_{r+1}}^{-1} P_{N_{r+1}/4} \| \leq \frac13 e^{ - \frac{1}{48} N_{r+1}^{s} }.
    \end{equation}
    Noting that 
    $\|(\mathrm{Id} - P_{N_{r+1}}) LP_{N_{r+1}} \| \leq \varepsilon \|S+B\| \leq 1$, 
    we combine the bounds~\eqref{eqn: final-super-convergence-bound-1} 
    and~\eqref{eqn: final-super-convergence-bound-2} with the 
    decomposition~\eqref{eqn: decomposition-t-tr} to control the truncation 
    errors in~\eqref{eqn: vector-estimate-taylor-r+1} 
    and~\eqref{eqn: vector-estimate-derivative-taylor-r+1}. 
    Hence, for a sufficiently small threshold $\varepsilon_0 > 0$, we obtain
    \begin{equation}
        \| F(\hat{u}_p^{(r+1)}, \omega^{(r+2)}) \| \leq \frac{2}{3} e^{ -\left( \frac{2}{M^{s}} + \frac{1}{48}\right) N_{r+1}^{s} } + 4(2N_{r+1}+1)^\frac{1+m}{2}e^{ - 3N_{r+1}^{s} }.   \label{eqn: final-super-convergence-1}
    \end{equation}
    By choosing $s = s(\varepsilon)>0$ such that $95 M^{s} \leq 96$, we arrive at 
    the desired super-convergence bound:
    \begin{equation}
        \label{eqn: final-super-convergence-2}
        \| F(\hat{u}_p^{(r+1)}, \omega^{(r+2)}) \| \leq e^{ - 2  N_{r+1}^{s} }, 
    \end{equation}
    which completes the proof of the Iteration Lemma~(\Cref{lem: iteration}). 
\end{proof}

The decay estimates in~\Cref{lem: iteration} show that, although individual 
terms may fluctuate from one step to the next, the cumulative decay remains 
strictly controlled. Indeed, for any multi-index 
$(n, k) \in \Lambda^{1+m}_{N_r} \setminus \Lambda^{1+m}_{N_{r-1}}$, the tail of 
the series is bounded by
\[
    \sum_{\ell = r}^{\infty} e^{-\frac{3N_{\ell}^{s}}{2} } \leq e^{-\frac{5}{4}N_{r}^{s} } \leq e^{ - \left(|n|+|k|\right)^{s} },
\]
where the last inequality follows from $|n|+|k| \leq N_r$. 
This guarantees that the sequence $\hat{u}^{(r)}$ remains uniformly bounded in 
the space $\mathcal{K}(s)$. 
Consequently, the approximate solutions retain their Gevrey regularity, or 
equivalently, the localization of their Fourier coefficients, throughout the 
iteration, and no loss of regularity occurs as the lattice size $N_r$ grows.

\subsection{Proof of the main theorem}
\label{subsec: proof-main}

Since the resonant component $\hat{u}_q^{(r)} = a$ is held fixed throughout the 
iteration, the estimate~\eqref{eqn: difference-derivative-estimate} remains 
valid with $\hat{u}_p$ replaced by $\hat{u}$. 
In particular, the displacement between successive iterates is bounded as:
\[
    \| \hat{u}^{(r+1)} - \hat{u}^{(r)} \| = \| \Delta^{(r)} \| \leq e^{ - \frac{7}{4} N_{r}^s } = e^{ - \frac{7}{4} (M^s)^{r+1} },
\]
so $\{ \hat{u}^{(r)} \}$ is a Cauchy sequence in $\mathcal{K}(s)$ and therefore 
converges to a limit $\hat{u}^{\star} \in \mathcal{K}(s)$. 
Summing the tail of the sequence yields the rate of convergence:
\[
    \| \hat{u}^{(r)} - \hat{u}^\star \| \leq \sum_{\ell = 0}^{\infty} \| \hat{u}^{(r+\ell +1)} - \hat{u}^{(r+\ell )} \| \leq \sum_{\ell = 0}^{\infty} e^{ - \frac{7}{4} (M^s)^{r+\ell +1} } \leq e^{ - \frac{3}{2} (M^s)^{r+1} },
\]
which establishes \eqref{eqn: u-hat-converge}. 
We now turn to the frequencies. 
By the frequency update~\eqref{eqn: alternating-frequency} and the uniform 
bound~\eqref{eqn: tangent-bound}, we have:
\begin{equation*}
    | \omega^{(r+2)} - \omega^{(r+1)} | \leq \varepsilon \gamma \| \hat{u}^{(r+1)} - \hat{u}^{(r)} \| \leq \varepsilon \gamma e^{ - \frac{7}{4} (M^s)^{r+1} }.
\end{equation*}
Thus, $\{ \omega^{(r)} \}$ converges to some 
$\omega^{\star} \in \Omega \setminus (\Omega_{M}^{\tau} \cup G)$, and, arguing 
as above,
\[
    | \omega^{(r)} - \omega^\star | \leq \sum_{\ell = 0}^{\infty} | \omega^{(r+\ell +1)} - \omega^{(r+\ell )} | \leq \sum_{\ell = 0}^{\infty} \varepsilon \gamma e^{ - \frac{7}{4} (M^s)^{r+\ell} } \leq e^{ - \frac{3}{2} (M^s)^{r+1} },
\]
which establishes~\eqref{eqn: frequency-converge}. 
It remains to prove the time-domain error bound~\eqref{eqn: pde-solution-error}. 
The limit $\hat{u}^{\star}$ is represented by its Fourier expansion as
\[
    u^\star(x, t) = \sum_{(n,k) \in \mathbb{Z}^{1+m}} \hat{u}^\star(n, k) e^{i \langle k, \omega^\star t + \theta_0 \rangle} e^{inx},
\]
while the $r$-th approximate solution $u^{(r)}(x,t)$ is given 
by~\eqref{eqn: approximate-pde-solution}. 
Based on the support property \eqref{eqn: vector-support-r}, the error can be 
split into two terms:
\begin{align*}
    \| u^{(r)}(x, t) - u^\star(x, t) \|_{L^2} 
    \leq & \left\| \sum_{(n,k) \in \mathbb{Z}^{1+m}} \left( \hat{u}^\star(n, k) - \hat{u}^{(r)}(n, k) \right) e^{i \left( \langle k, \omega^\star t + \theta_0 \rangle + nx \right)} \right\|_{L^2} \\
    & + \left\| \sum_{(n,k) \in \Lambda_{N_{r}}^{1+m}} \hat{u}^{(r)}(n, k) e^{inx} \left( e^{ i \langle k, \omega^\star t + \theta_0 \rangle } - e^{ i \langle k, \omega^{(r)} t + \theta_0 \rangle } \right) \right\|_{L^2}  
    \defi I_1 + I_2.
 \end{align*}
For the term $I_1$, combining the support property~\eqref{eqn: vector-support-r}, 
the Gevrey decay~\eqref{eqn: vector-decay-r}, and the Cauchy-Schwarz inequality, 
we obtain
\begin{align*}
    I_1 &\leq \sum_{(n, k) \in \Lambda_{N_{r}}^{1+m} } | \hat{u}^\star(n,k) - \hat{u}^{(r)}(n,k) | + \sum_{(n,k) \notin \Lambda_{N_{r}}^{1+m} } | \hat{u}^\star(n,k) |  \\
        &\leq \| \hat{u}^{(r)} - \hat{u}^\star \|_2 \cdot (2N_r + 1)^{(1+m)/2} + \sum_{(n, k) \notin \Lambda_{N_{r}}^{1+m} } e^{-\frac{5}{4} (|n|+|k|)^s} \leq \frac{1}{2} e^{ - N_r^s } = \frac{1}{2} e^{ - (M^s)^{r+1} }.
\end{align*}
where the last inequality follows from the bound~\eqref{eqn: u-hat-converge}. 
For the term $I_2$, by the uniform boundedness of $\| \hat{u}^{(r)} \|_2$ and the  Cauchy-Schwarz inequality once more, we have 
\begin{align*}
    I_2 &\leq \| \hat{u}^{(r)} \|_2  \left( \sum_{(n,k) \in \Lambda_{N_{r}}^{1+m}} \left| e^{i \langle k, \omega^\star t + \theta_0 \rangle } - e^{i \langle k, \omega^{(r)} t + \theta_0 \rangle } \right|^2 \right)^\frac{1}{2}   \\
        &\leq \left( \| \hat{u}^\star \|_2 + e^{ - \frac{3}{2} (M^s)^{r+1} } \right) \cdot | \omega^{(r)} - \omega^\star | \cdot \left( \sum_{(n,k) \in \Lambda_{N_{r}}^{1+m}} \|k t \|^2 \right)^\frac{1}{2} \leq \frac12 e^{-N_r^s} = \frac{1}{2} e^{- (M^s)^{r+1} },
\end{align*}
which holds uniformly for any $t \in [0, N_r] = [0, M^{r+1}]$. 
Combining the estimates for $I_1$ and $I_2$, we arrive at the final 
time-domain error bound \eqref{eqn: pde-solution-error}. 
This completes the proof of~\Cref{thm: main}.

\subsection{Generalization to the Dirichlet boundary condition}
\label{subsec: dirichlet-nls}

In this subsection, we extend the preceding results for the NLS 
equation~\eqref{eqn: nls} from the periodic setting~\eqref{eqn: per-bdy} to 
Dirichlet boundary conditions:
\begin{equation}
    \label{eqn: dirichlet-bdy}
    u(0, t) = u(\pi, t) = 0. 
\end{equation}
To make the Fourier expansion available, we extend $u$ oddly to 
$x \in [-\pi, 0]$ by setting
\[
    u(x, t) = - u(-x, t),
\]
so that the extended function is $2\pi$-periodic and odd in $x$. 
Consequently the spatial Fourier coefficients satisfy the symmetry relation
\[
    \hat{u}_{-n}(t) = -\hat{u}_n(t),
\]
for any $n \in \mathbb{Z}$, and in particular, $\hat{u}_0(t)=0$. 
The solution can therefore be written as
\begin{equation}
    \label{eqn: nls-dirichlet-sine}
    u(x,t) = 2i \sum_{n = 1}^{\infty} \hat{u}_n(t) \sin(nx).
\end{equation}
That is, under the Dirichlet condition the index set of spatial modes reduces 
from $\mathbb{Z}$ to $\mathbb{N}^+$, and the number of unknown coefficients is 
halved. 
Accordingly, the set of tangential modes~\eqref{eqn: tangent-spatial-mode} is 
now taken as
\[
    \mathcal{N} = \left\{ n_1 < n_2 < \cdots < n_m \right\} \subset \mathbb{N}^{+}.
\]
Since the nonlinearity $|u|^2 u$ maps odd functions to odd functions, the odd 
subspace is invariant under both the equation and the alternating 
scheme~\eqref{eqn: alternating-numerical}. 
Hence all the convergence results, and in particular the main 
statement~(\Cref{thm: main}) remain valid, provided that the Fourier 
coefficients are restricted to those satisfying the symmetry relation
\begin{equation}
    \label{eqn: nls-dirichlet-nk}
    \hat{u}(-n, k) = - \hat{u}(n, k)
\end{equation}

We next demonstrate the performance of the alternating numerical 
scheme~\eqref{eqn: alternating-numerical}, where we set the initial phase 
$\theta_0 = 0$ and the perturbation quantity $\varepsilon = 0.2$.

\paragraph{Periodic solutions ($m=1$)}
Different from~\eqref{eqn: lin-1} in the periodic setting, the linear periodic 
solution here reads 
\begin{equation}
    \label{eqn: nls-dirichlet-linear}
    u(x, t) = 2i a_1 e^{i \omega_1t} \sin (n_1x),
\end{equation}
which is in fact the superposition of two waves with wavenumbers $n_1$ and 
$-n_1$. Consequently, the support of the nonlinear term is no longer confined 
to a single mode, and the nonlinear solution takes the form
\begin{subequations}
    \label{eqn: nls-dirichlet-periodic-1}
    \begin{empheq}[left=\empheqlbrace]{align} 
        & \hat{u}_{n_1}(t) = a_1e^{i \omega_1' t} + \sum_{k \in \mathbb{Z} \setminus \{1\}} \hat{u}(n_1, k) e^{ik\omega_1' t},             \label{eqn: nls-dirichlet-periodic-1-1}                \\
        & \hat{u}_n(t) = \sum_{k \in \mathbb{Z}} \hat{u}(n, k) e^{ik\omega_1' t}, \quad n \in \mathbb{N}^{+} \setminus \{n_1\},          \label{eqn: nls-dirichlet-periodic-1-2}                
    \end{empheq}        
\end{subequations} 
For the numerical test we take $n_1=1$, $\mu=\sqrt{2}$, and $a_1 =1$, so that 
$\omega_1 = 1 + \sqrt{2}$; the results are shown in~\Cref{fig: nls-d-1torus-traj}. 
\begin{figure}[htb!]
    \centering
    \begin{subfigure}[t]{0.3\linewidth}
    \centering
    \includegraphics[scale=0.18]{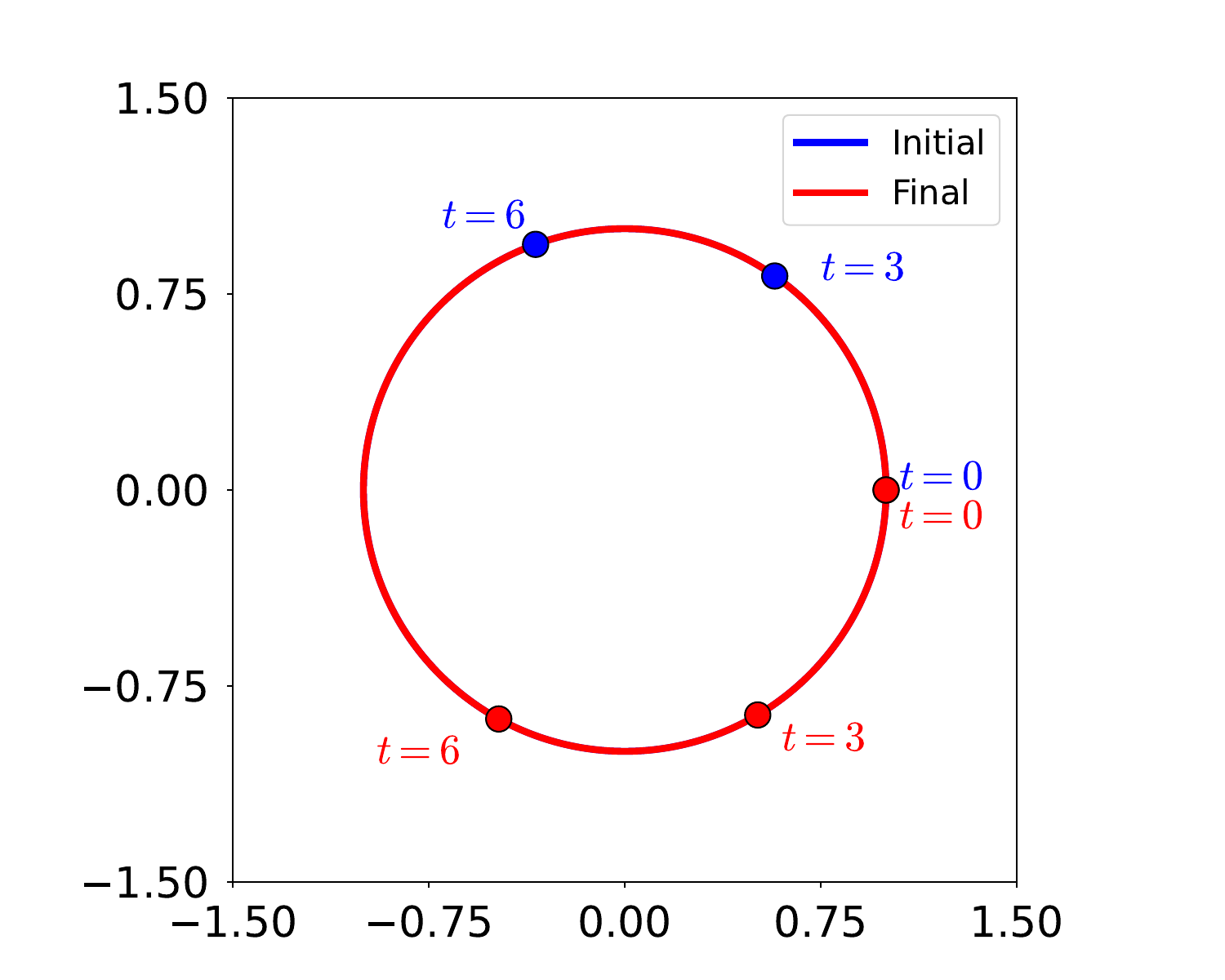}
    \caption{Tangential mode: $\hat{u}_1(t)$}
    \end{subfigure}
    \begin{subfigure}[t]{0.3\linewidth}
    \centering
    \includegraphics[scale=0.18]{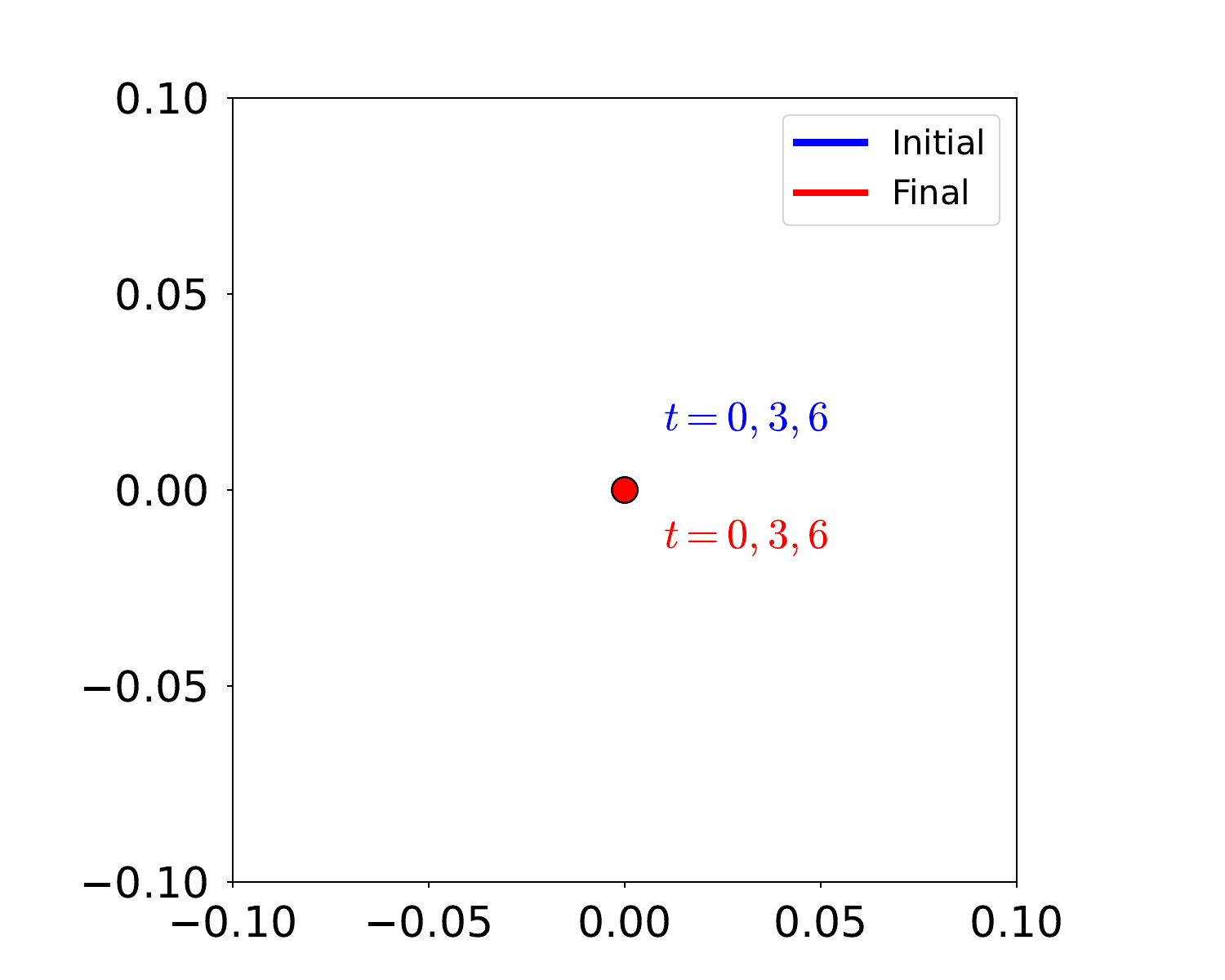}
    \caption{Normal mode: $\hat{u}_2(t)$}
    \end{subfigure}
    \begin{subfigure}[t]{0.3\linewidth}
    \centering
    \includegraphics[scale=0.18]{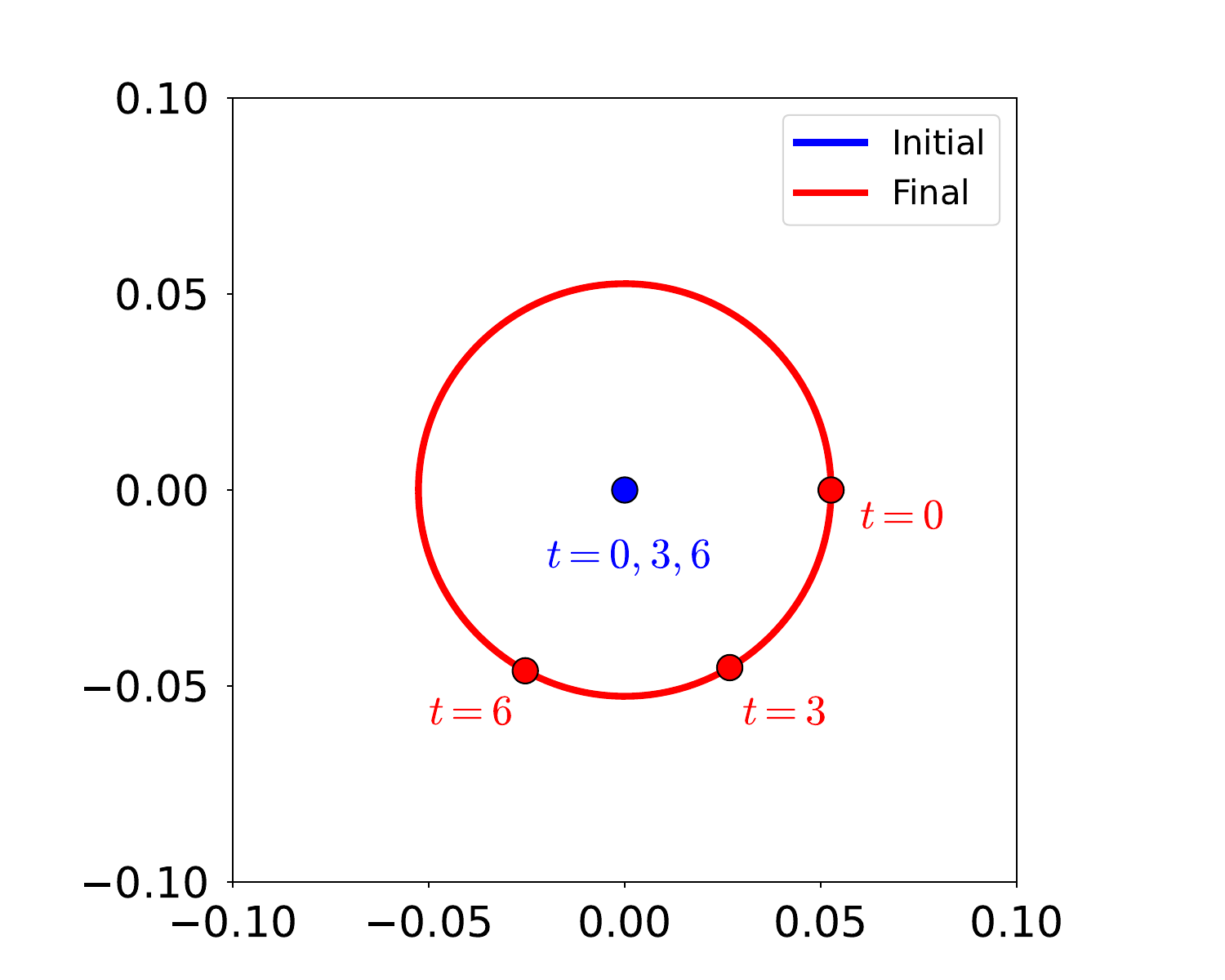}
    \caption{Normal mode: $\hat{u}_3(t)$}
    \end{subfigure}
    \caption{Phase space trajectories of the NLS equation with Dirichlet 
             boundary conditions~\eqref{eqn: dirichlet-bdy}. 
             ``Initial'' and ``Final'' denote the initial linear solution and 
             the computed nonlinear solution, respectively. 
             Markers indicate specific time points at $t = 0$, $3$, and $6$.} 
    \label{fig: nls-d-1torus-traj}
\end{figure}
The tangential mode $\hat{u}_1(t)$ retains an amplitude close to unity but 
exhibits noticeable frequency shifts relative to the linear solution. 
The normal modes, on the other hand, display two distinct types of behavior: 
the representative mode $\hat{u}_2(t)$ remains fixed at the origin, whereas 
$\hat{u}_3(t)$,  
\begin{figure}[htb!]
    \centering
    \begin{subfigure}[t]{0.3\linewidth}
    \centering
    \includegraphics[scale=0.18]{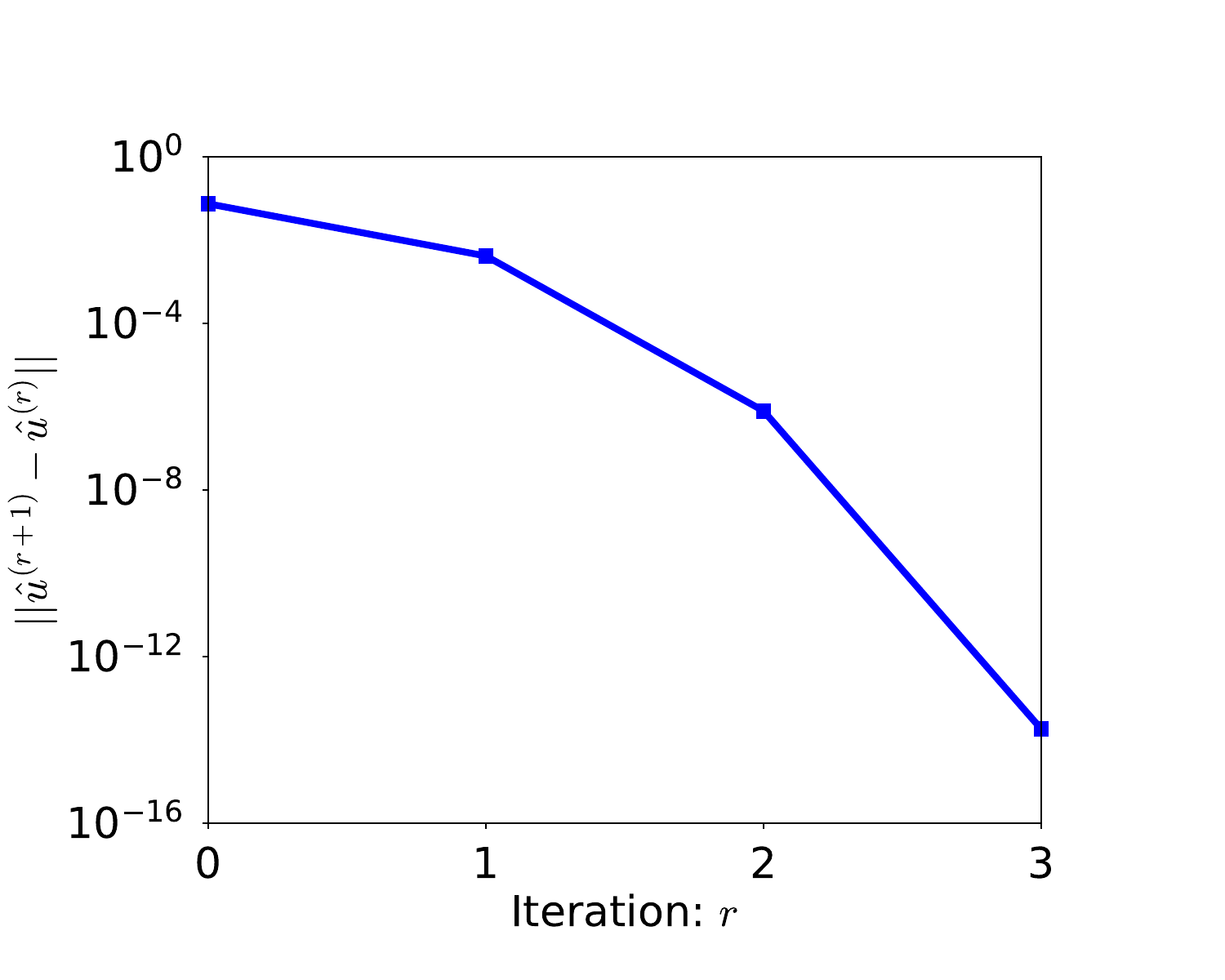}
    \caption{$\| \hat{u}_p^{(r+1)} - \hat{u}_p^{(r)} \|$}
    \end{subfigure}
    \begin{subfigure}[t]{0.3\linewidth}
    \centering
    \includegraphics[scale=0.18]{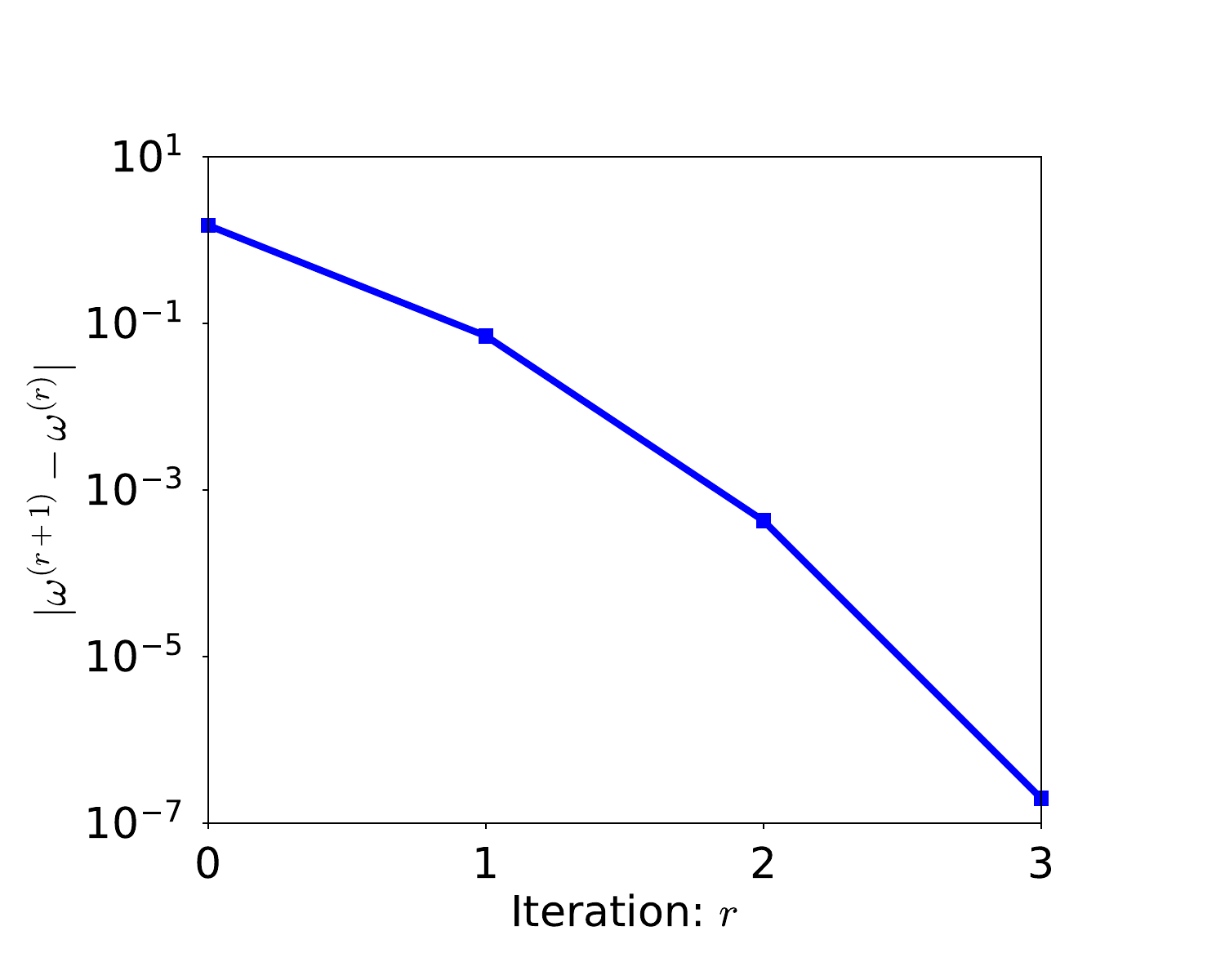}
    \caption{$| \omega^{(r+1)} - \omega^{(r)} |$}
    \end{subfigure}
    \begin{subfigure}[t]{0.3\linewidth}
    \centering
    \includegraphics[scale=0.18]{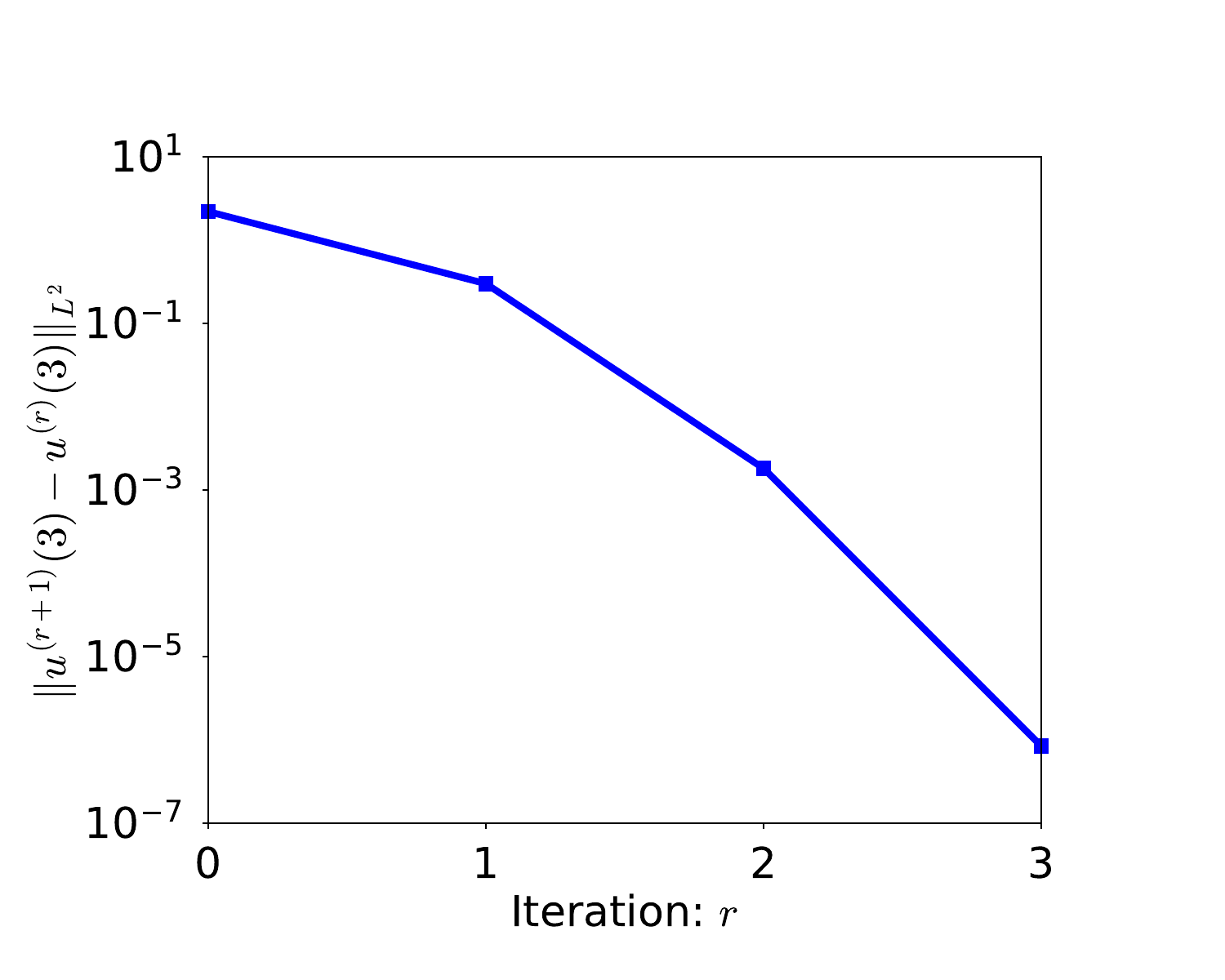}
    \caption{$\| u^{(r+1)}(x, 3) - u^{(r)}(x, 3) \|_{L^2}$}
    \end{subfigure}
    \caption{Convergence profiles across iterations for the NLS 
             equation~\eqref{eqn: nls} with Dirichlet boundary 
             conditions~\eqref{eqn: dirichlet-bdy}.} 
    \label{fig: nls-d-1torus-con}
\end{figure}
which is absent from the linear solution, develops small-amplitude oscillations 
driven by the nonlinear interaction. 
The convergence profile is shown in~\Cref{fig: nls-d-1torus-con}.

Turning to the spatiotemporal distributions, the squared modulus of the linear 
solution~\eqref{eqn: lin-square-nls-dirichlet} with the above parameters is
\begin{equation}
    \label{eqn: lin-square-nls-dirichlet}
    |u|^2 = 4\sin^2x, 
\end{equation}
which depends on $x$ alone and is independent of $t$. 
In other words, the linear solution is a stationary state: the time dependence 
enters only through the global phase factor $e^{i\omega_1 t}$, so the intensity 
profile is frozen in time. 
This is in sharp contrast to the traveling-wave pattern observed in the 
periodic setting. 
The nonlinear solution inherits this stationary character; the only visible 
change is a slight reduction in peak amplitude, accompanied by a broadening of 
the spatial profile. 
Both distributions are shown in~\Cref{fig: nls-d-1torus-heat-slice}.
\begin{figure}[htb!]
    \centering
    \begin{subfigure}[t]{0.42\linewidth}
    \centering
    \includegraphics[scale=0.24]{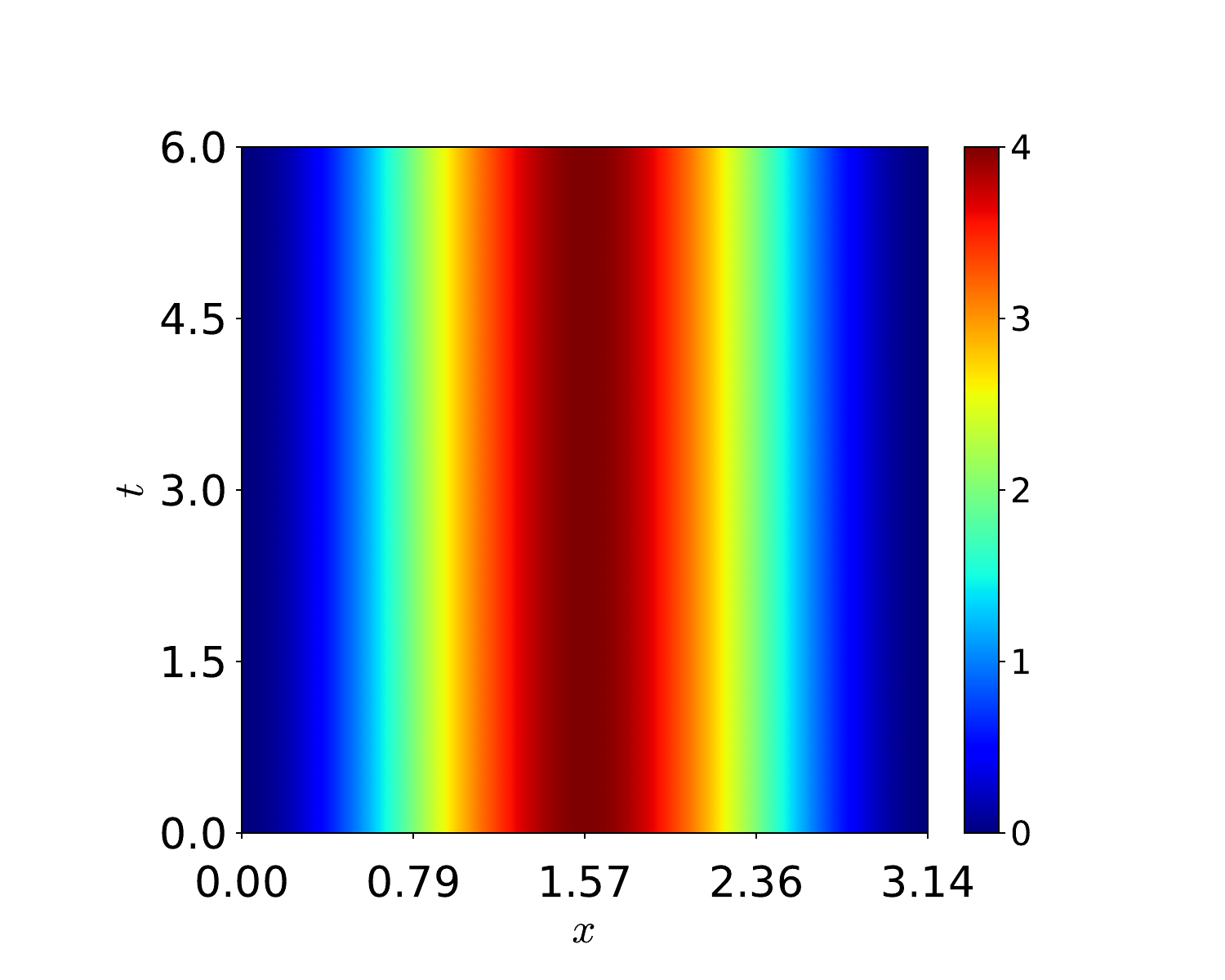}
    \caption{Linear solution}
    \end{subfigure}
    \begin{subfigure}[t]{0.42\linewidth}
    \centering
    \includegraphics[scale=0.24]{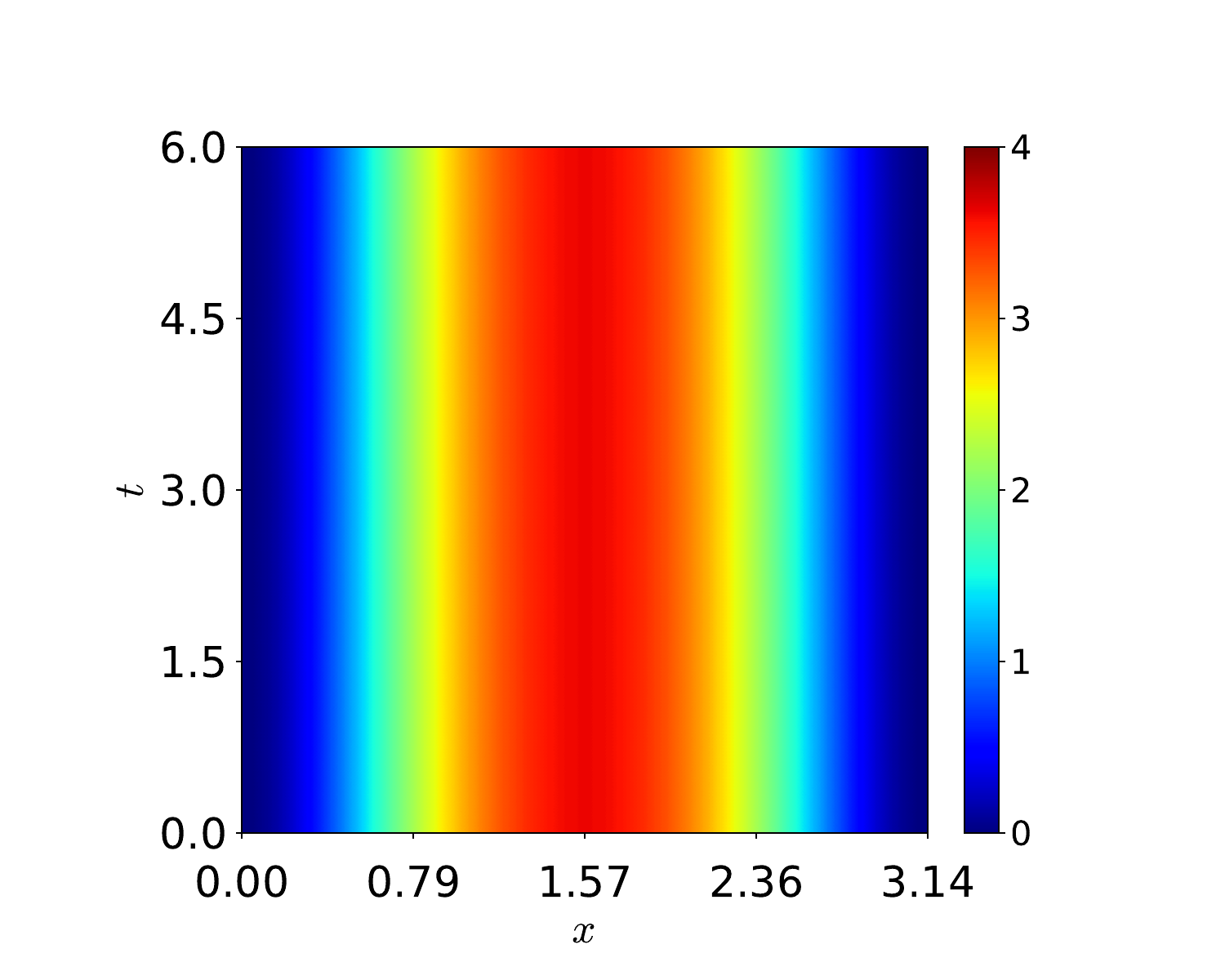}
    \caption{Nonlinear solution}
    \end{subfigure} 
    \caption{Spatiotemporal distributions of $|u(x,t)|^2$ for the NLS 
             equation~\eqref{eqn: nls} with Dirichlet boundary 
             conditions~\eqref{eqn: dirichlet-bdy}. 
             The linear solution is~\eqref{eqn: lin-square-nls-dirichlet}, 
             and the nonlinear solution is obtained via the alternating 
             numerical scheme~\eqref{eqn: alternating-numerical}.}
    \label{fig: nls-d-1torus-heat-slice}
\end{figure}

\paragraph{Quasi-periodic solutions ($m=2$)}
Similarly, under the Dirichlet boundary condition~\eqref{eqn: dirichlet-bdy}, 
the quasi-periodic solution reads 
\begin{equation}
    \label{eqn: nls-dirichlet-linear-quasi-periodic}
    u(x, t) = 2ia_1 e^{i\omega_1t}\sin(n_1 x) + 2ia_2 e^{i\omega_2t}\sin(n_2 x)
\end{equation}
which is in fact a superposition of four plane waves with wavenumbers 
$\pm n_1$ and $\pm n_2$. 
Accordingly, the nonlinear solution takes the form
\begin{subequations}
    \label{eqn: nls-dirichlet-quasi-periodic-2}
    \begin{empheq}[left=\empheqlbrace]{align} 
       & \hat{u}_{n_1}(t) = a_1e^{i \omega_1' t} + \sum_{k \in \mathbb{Z}^{2} \setminus\{(1, 0)\}} \hat{u}(n_1, k) e^{i \langle k, \omega'\rangle t},      \label{eqn: nls-dirichlet-quasi-periodic-2-1}          \\
       & \hat{u}_{n_2}(t) = a_2e^{i \omega_2' t} + \sum_{k \in \mathbb{Z}^2 \setminus\{(0, 1)\}} \hat{u}(n_2, k) e^{i \langle k, \omega' \rangle t},       \label{eqn: nls-dirichlet-quasi-periodic-2-2}         \\       
       & \hat{u}_n(t) = \sum_{k \in \mathbb{Z}^2 } \hat{u}(n, k) e^{i \langle k, \omega' \rangle t}, \quad n \in \mathbb{N}^{+}\setminus \{n_1, n_2\},                    \label{eqn: nls-dirichlet-quasi-periodic-2-3}                
    \end{empheq}        
\end{subequations} 
where $\omega' = (\omega_1', \omega_2')$ denotes the shifted frequency vector. 
For the numerical test we take $n_1=1$, $n_2=2$, $\mu=\sqrt{2}$, and 
$a_1 = a_2 =1$, so that $\omega_1 = 1 + \sqrt{2}$ and $\omega_2=4+\sqrt{2}$. 
The resulting phase-space trajectories are shown in~\Cref{fig: nls-d-2tori-traj}. 
Compared with~\Cref{fig: nls-d-1torus-traj} for the periodic solution, the 
frequency shifts are again clearly visible; more notably, the quasi-periodic 
character is now much more pronounced: the trajectories repeatedly return to 
the vicinity of earlier positions without ever closing on themselves, 
exhibiting recurrence rather than exact repetition. 
\begin{figure}[htb!]
    \centering
    \begin{subfigure}[t]{0.3\linewidth}
    \centering
    \includegraphics[scale=0.18]{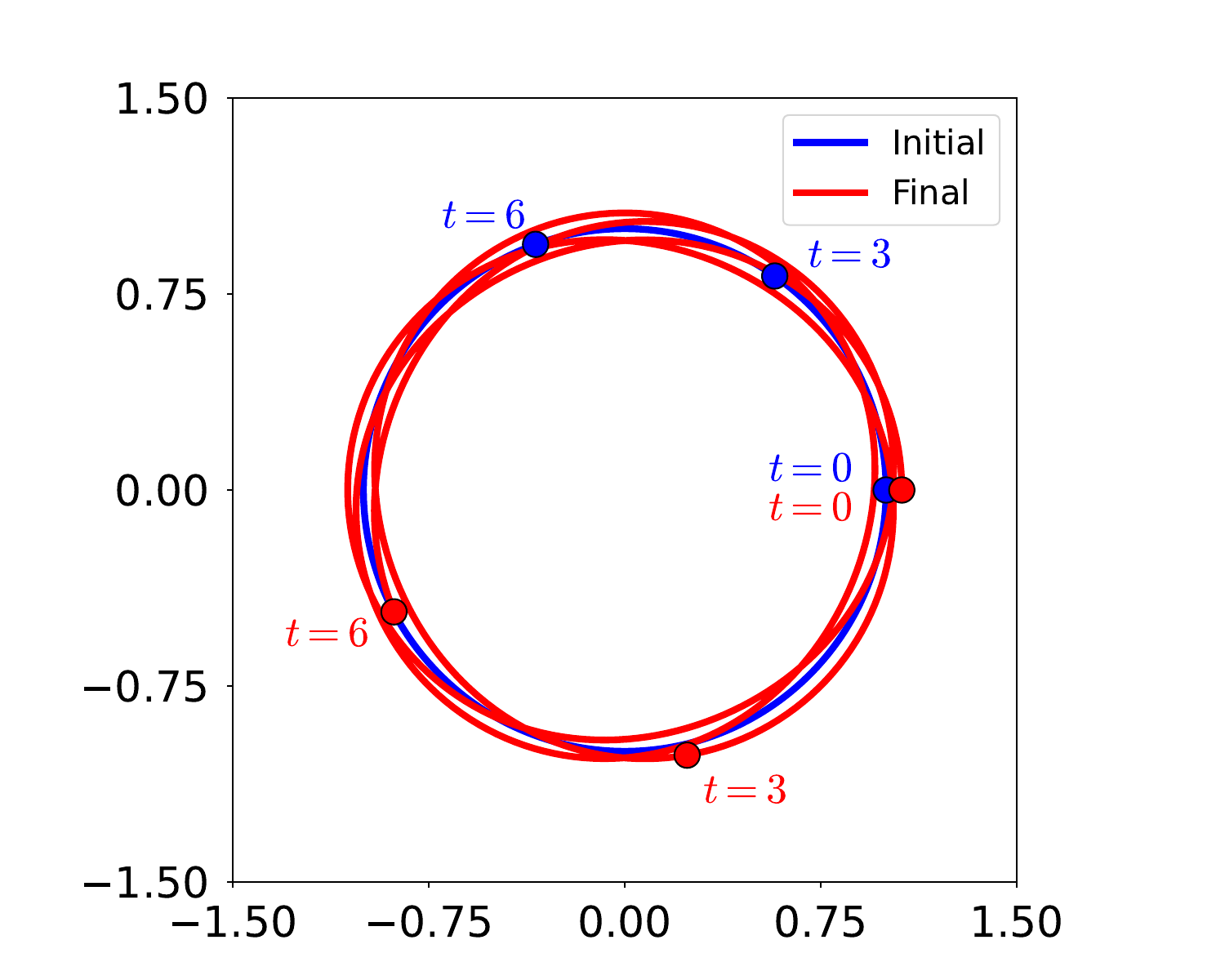}
    \caption{Tangential mode: $\hat{u}_1(t)$}
    \end{subfigure}
    \begin{subfigure}[t]{0.3\linewidth}
    \centering
    \includegraphics[scale=0.18]{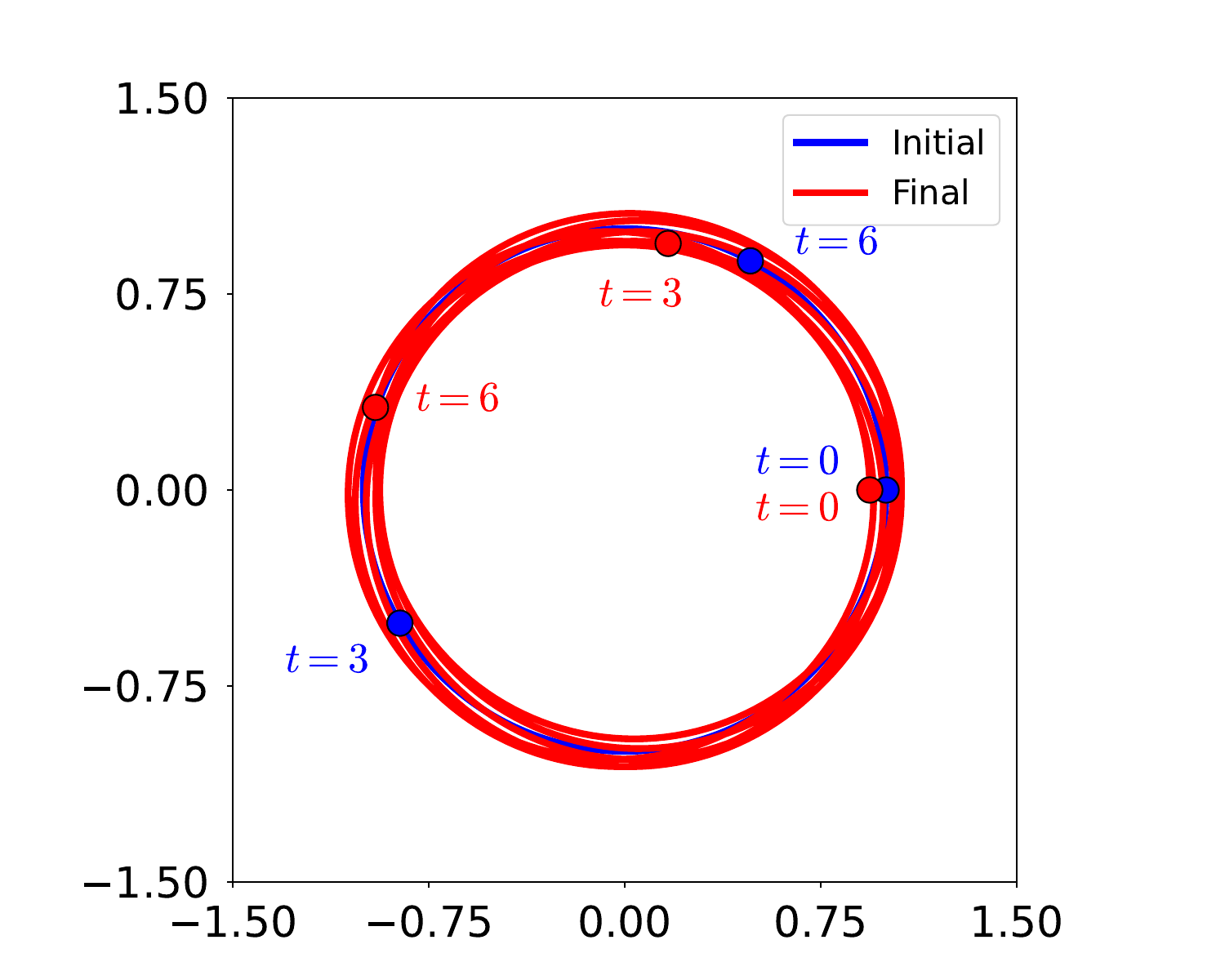}
    \caption{Tangential mode: $\hat{u}_2(t)$}
    \end{subfigure}
    \begin{subfigure}[t]{0.3\linewidth}
    \centering
    \includegraphics[scale=0.18]{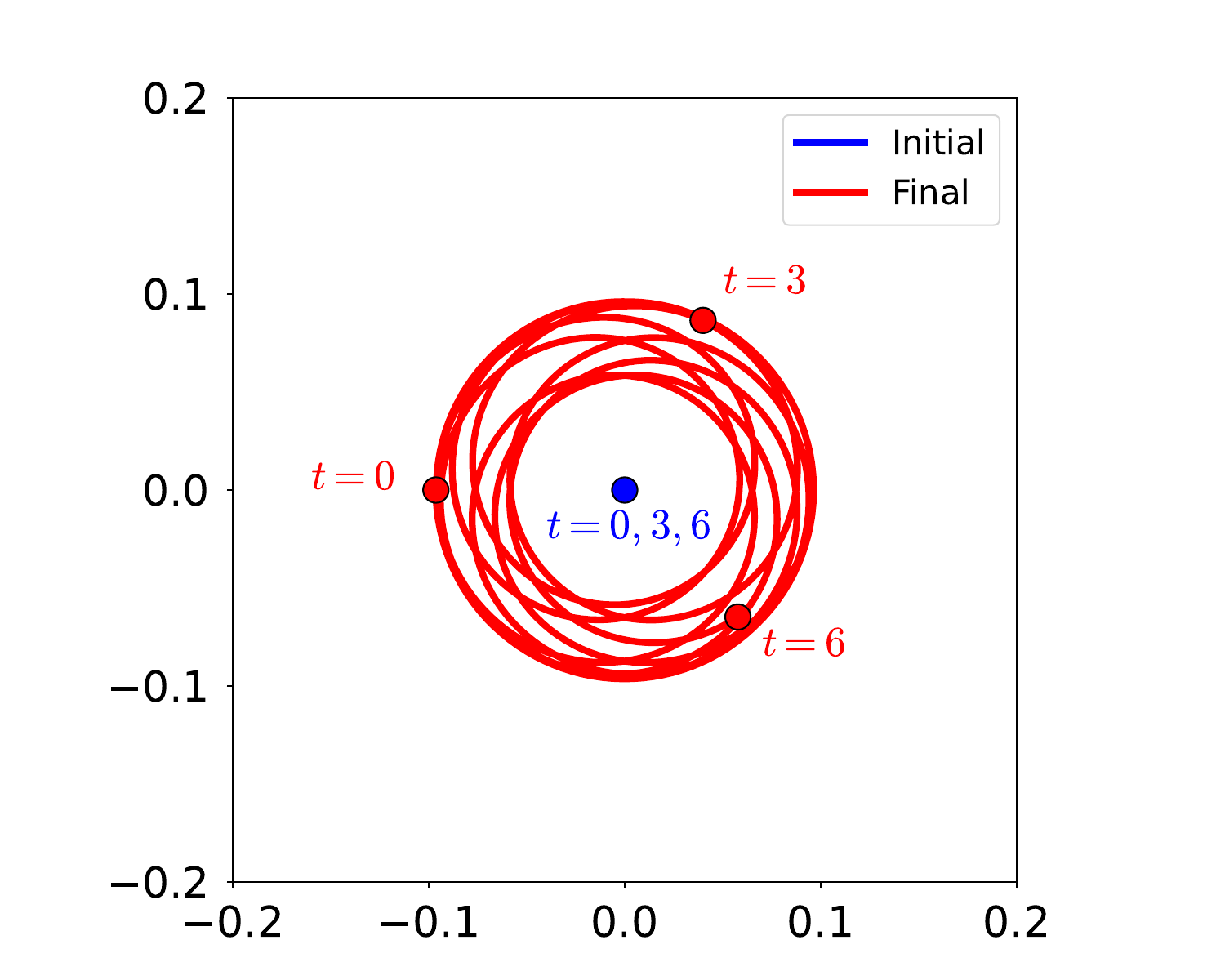}
    \caption{Normal mode: $\hat{u}_3(t)$}
    \end{subfigure}
    \caption{Phase space trajectories of the NLS equation with Dirichlet 
             boundary conditions~\eqref{eqn: dirichlet-bdy}.  
             ``Initial'' and ``Final'' denote the initial linear solution and 
             the computed nonlinear solution, respectively. 
             Markers indicate specific time points at $t = 0$, $3$, and $6$. } 
    \label{fig: nls-d-2tori-traj}
\end{figure}

Turning to the spatiotemporal distributions, the squared modulus of the linear 
solution~\eqref{eqn: nls-dirichlet-linear-quasi-periodic} with the above 
parameters is
\begin{equation}
    \label{eqn: lin-square-nls-dirichlet-quasi-periodic}
    |u|^2 = 4\sin^2(x) + 4\sin^2(2x) + 8 \cos(3t) \sin (x) \sin (2x)
\end{equation}
where $\omega_2 - \omega_1 =3$ is the beat frequency of the two modes. 
Although the intensity now depends on both $x$ and $t$, it is not a traveling 
wave but a superposition of two standing waves, $\sin(x)$ and $\sin(2x)$, 
oscillating in place through their interference term. 
\begin{figure}[htb!]
    \centering
    \begin{subfigure}[t]{0.42\linewidth}
    \centering
    \includegraphics[scale=0.24]{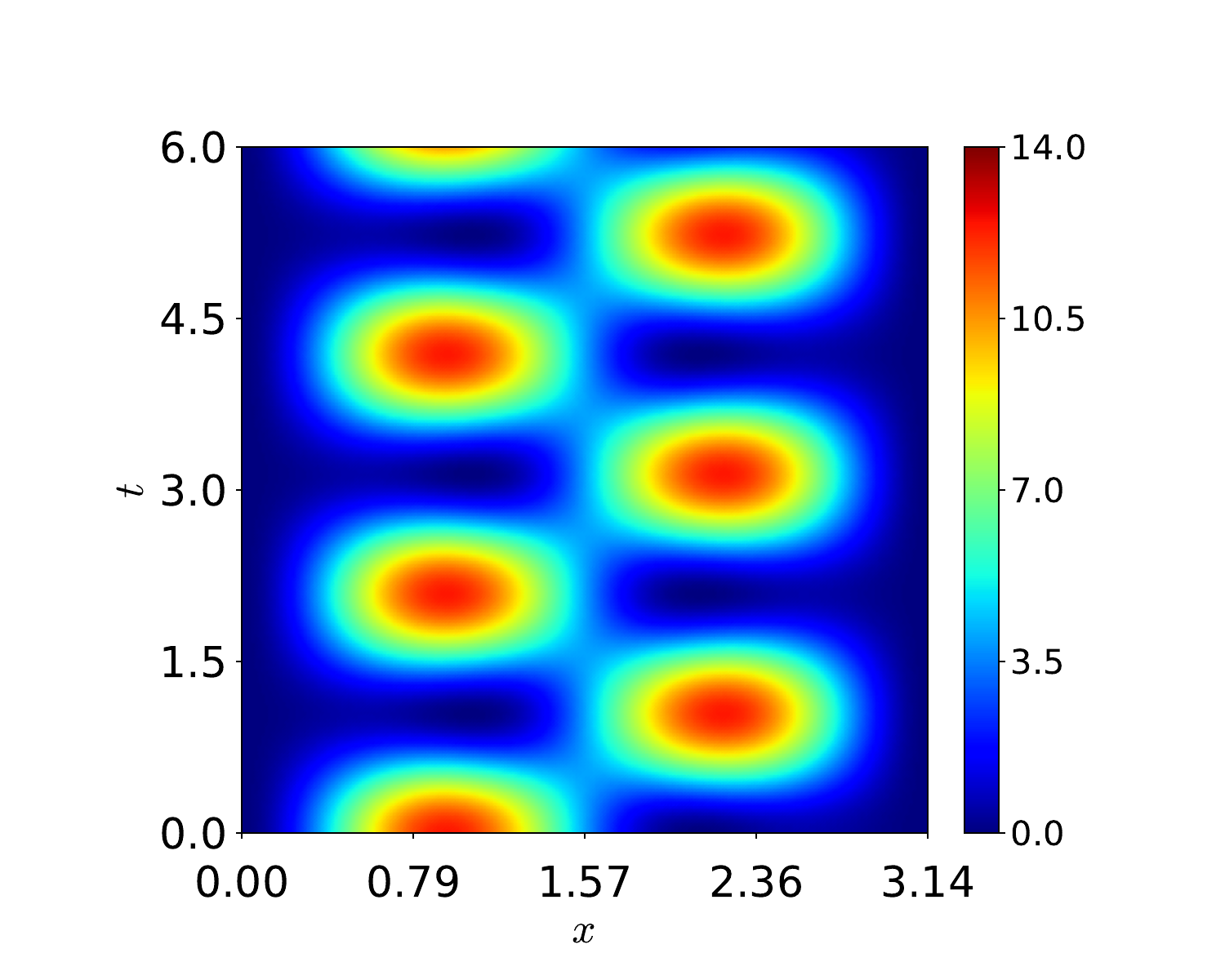}
    \caption{Linear solution}
    \end{subfigure}
    \begin{subfigure}[t]{0.42\linewidth}
    \centering
    \includegraphics[scale=0.24]{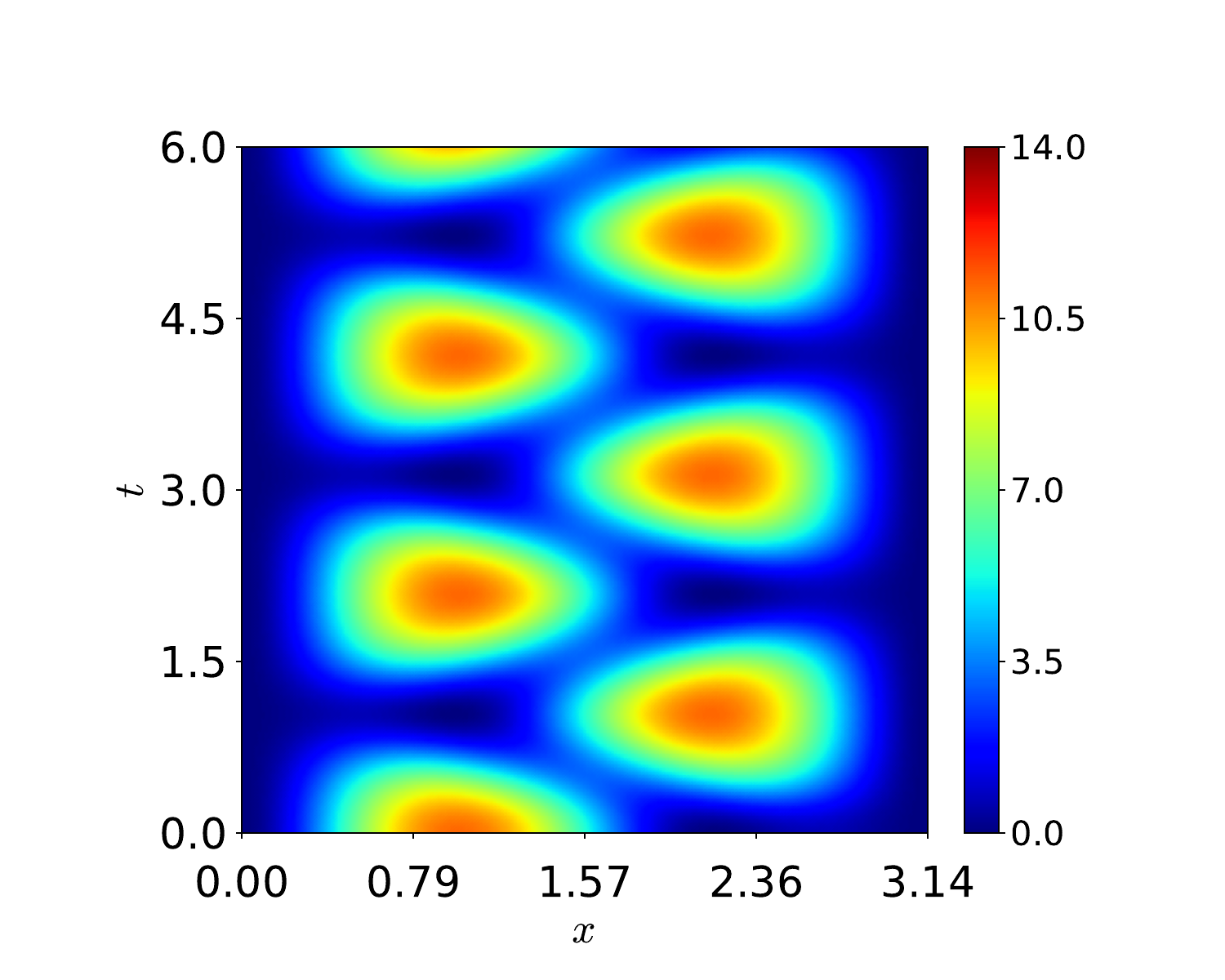}
    \caption{Nonlinear solution}    
    \end{subfigure} 
    \caption{Spatiotemporal distributions of $|u(x,t)|^2$ for the NLS equation~\eqref{eqn: nls} with Dirichlet boundary conditions~\eqref{eqn: dirichlet-bdy}. The linear solution is~\eqref{eqn: lin-square-nls-dirichlet-quasi-periodic}, and the nonlinear solution is obtained via the alternating numerical scheme~\eqref{eqn: alternating-numerical}.}
    \label{fig: nls-d-2tori-heat-slice}
\end{figure}
Once the nonlinear perturbation is introduced, the two standing waves are 
deformed and mixed together: the sharp interference pattern is smeared out, and 
the peaks are slightly lowered and broadened, as energy is redistributed to 
higher modes. 
Both distributions are shown in~\Cref{fig: nls-d-2tori-heat-slice}.

\section{Nonlinear wave equation}
\label{sec: nlw}

In this section, we extend to the one-dimensional nonlinear wave (NLW) equation 
the numerical construction of quasi-periodic solutions.
\begin{equation}
    \label{eqn: nlw}
    u_{tt} - u_{xx} + \mu u + \varepsilon u^3 = 0,
\end{equation}
subject to either the periodic boundary condition~\eqref{eqn: per-bdy} or the 
Dirichlet boundary condition~\eqref{eqn: dirichlet-bdy}. 
We first consider the case $\mu>0$, in which all linear frequencies 
$\varpi_n = \sqrt{n^2 + \mu}$ are positive, and focus on the periodic boundary 
condition~\eqref{eqn: per-bdy}, deferring the case $\mu \leq 0$ to the end 
of~\Cref{subsec: main-nlw}. 
For the NLW equation~\eqref{eqn: nlw}, the solution $u$ is real, so its spatial 
Fourier coefficients satisfy $\hat{u}_n(t) = \overline{\hat{u}_{-n}(t)}$ for 
any $n \in \mathbb{Z}$. 
Hence, the solution can be written as 
\begin{equation}
    \label{eqn: nlw-fourier-1}
    u(x, t) = \hat{u}_0(t) + 2\sum_{n=1}^{\infty} \mathrm{Re} \left( \hat{u}_n(t) e^{inx} \right). 
\end{equation}
For the space-time Fourier coefficients, the reality of $u$ translates 
into $\hat{u}(-n, -k) = \overline{\hat{u}(n,k)}$ for any 
$(n, k) \in \mathbb{Z}^{1+m}$. 
In what follows we look for solutions whose Fourier coefficients 
$\hat{u}(n, k)$ are all real. 
This restriction is consistent with~\eqref{eqn: nlw}, since if $u(x,t)$ is a 
solution, then $u(-x,-t)$ is also a solution. 
We therefore restrict the construction to the reversible class invariant under 
the reflection $(x,t)\mapsto(-x,-t)$.
For such solutions, we have $\hat{u}(n, k) = \hat{u}(-n, -k)$, and the 
expansion~\eqref{eqn: nlw-fourier-1} reduces to 
\begin{equation}
    \label{eqn: nlw-fourier-2}
    u(x, t) = \sum_{k \in \mathbb{Z}^m} \hat{u}(0, k) \cos(\langle k, \omega' t + \theta_0 \rangle) + 2 \sum_{n=1}^{\infty} \sum_{k \in \mathbb{Z}^m} \hat{u}(n, k) \cos\left( \langle k, \omega' t + \theta_0 \rangle + nx \right), 
\end{equation}
which is a superposition of real waves, in contrast to the complex-valued 
expansion~\eqref{eqn: nonlin-soln} for the NLS equation. 
Hence, for the NLW equation~\eqref{eqn: nlw}, it suffices to select the 
tangential modes among non-negative integers rather than the whole of 
$\mathbb{Z}$ as 
\begin{equation}
    \label{eqn: tangent-spatial-mode-nlw}
    \mathcal{N} = \left\{ 0 \leq n_1 < n_2 < \cdots < n_m \right\} \subset \mathbb{N}, 
\end{equation}
with $ n_{m} \leq M_{0}^{\frac13}/2 - 1$, where $M_0=M_0(\varepsilon)$ is a 
truncation parameter defined in~\eqref{eqn: M0}, which depends on the 
perturbation parameter $\varepsilon$. 
Accordingly, the linear solution takes the form
\begin{equation}
    \label{eqn: lin-soln-nlw}
    u(x, t) =  2 \sum_{j=1}^{m} a_j \cos\left( \omega_j t + \theta_{0,j} + n_j x \right),
\end{equation}
where $a_j \geq \delta_0$ and $\omega_j = \varpi_{n_j}$ for $j=1, \ldots, m$. 
For computational convenience, however, we still regard $\hat{u}(n, k)$ as 
defined on the whole lattice $\mathbb{Z}^{1+m}$, impose the symmetry relation 
$\hat{u}(n, k) = \hat{u}(-n, -k)$, and work with the full expansion
\begin{equation}
    \label{eqn: nlw-fourier-3}
    u(x, t) = \sum_{n \in \mathbb{Z}} \sum_{k \in \mathbb{Z}^m} \hat{u}(n, k) e^{ i \left( \langle k, \omega' t + \theta_0 \rangle+ nx \right) }. 
\end{equation}
In this formulation, the resonant set consists of the $2m$ lattice points and 
is given by
\begin{equation}
    \label{eqn: resonant-index-nlw}
    \mathcal{S} := \left\{ (n_j, e_j), (-n_j, -e_j) \in \mathbb{Z}^{1+m} ~\big|~ j = 1, \ldots,m \right\}. 
\end{equation}

\subsection{Numerical scheme}
\label{subsec: numerical-scheme-nlw}

We now propose a numerical scheme that iteratively deforms the linear 
solution~\eqref{eqn: lin-soln-nlw} into the nonlinear quasi-periodic solution 
of the form~\eqref{eqn: nlw-fourier-3}. 
Substituting the ansatz~\eqref{eqn: nlw-fourier-3} into the one-dimensional NLW 
equation~\eqref{eqn: nlw} and matching the coefficients 
$e^{ i \left( \langle k, \omega' t + \theta_0 \rangle+ nx \right) }$, we obtain 
the lattice equation 
\begin{equation}
    \label{eqn: nlw-lattice}
    \left( - \langle k, \omega' \rangle^2 + \varpi_{n}^2 \right) \hat{u}(n, k) + \varepsilon \widehat{u^3}(n,k) = 0,
\end{equation}
for each multi-index $(n, k) \in \mathbb{Z}^{1+m}$, where the nonlinear term is 
given by the convolution 
\begin{equation}
    \label{eqn: nonlinear-convolution-nlw}
    \widehat{u^3}(n, k) = \sum_{n_1-n_2+n_3 = n} \sum_{k_1 - k_2 + k_3 =k} \hat{u}(n_1, k_1) \hat{u}(-n_2, -k_2) \hat{u}(n_3, k_3).  
\end{equation}
Collecting the coefficients $\hat{u}(n, k)$ into a vector $\hat{u}$, we may 
write~\eqref{eqn: nlw-lattice} in vector form as
\begin{equation}
    \label{eqn: nlw-lattice-vector}
    -\langle k, \omega' \rangle^2 \hat{u} + \varpi^2 \odot \hat{u} + \varepsilon \hat{X}(\hat{u}) = 0,
\end{equation}
where $\varpi^2 = \left(\varpi_n^2 \right)_{n \in \mathbb{Z}}$ and 
$\hat{X}(\hat{u})$ denotes the nonlinear vector field. 
As in the NLS case, we decompose the Fourier vector and the vector field as 
$\hat{u} = (\hat{u}_q, \hat{u}_p)$ and $\hat{X} = (\hat{X}_q, \hat{X}_p)$ 
according to the resonant set $\mathcal{S}$ defined 
in~\eqref{eqn: resonant-index-nlw}. 
The resulting $Q$-equations and $P$-equations, however, take a different form 
from their NLS counterparts~\eqref{eqn: q-eqn}--\eqref{eqn: p-eqn}, since the 
linear multiplier in~\eqref{eqn: nlw-lattice-vector} is quadratic rather than 
linear in the frequency. 
Due to the symmetry $\hat{u}(n,k)=\hat{u}(-n,-k)$, the two $Q$-equations 
associated with $(n_j,e_j)$ and $(-n_j,-e_j)$ are identical. 
We therefore retain only one equation from each symmetric pair and regard 
$\hat{X}_q$ as the corresponding $m$-dimensional vector.
Thus, the $Q$-equations for the resonant indices read
\begin{equation}
    \label{eqn: q-eqn-nlw}
    (- \omega'^2 + \omega^2) \odot a + \varepsilon \hat{X}_q = 0, 
\end{equation}
and the $P$-equations for the non-resonant indices read
\begin{equation}
    \label{eqn: p-eqn-nlw}
    - \langle k, \omega'\rangle^2 \hat{u}_p + \varpi^2 \odot \hat{u}_p + \varepsilon \hat{X}_p = 0. 
\end{equation}

Before proposing the alternating numerical scheme, we first define the nonlinear 
operator $F$, acting on the non-resonant Fourier coefficients $\hat{u}_p$, as 
\begin{equation}
    \label{eqn: f-operator-nlw}
    F(\omega', \hat{u}_p; a) = - \langle k, \omega' \rangle^{2} \hat{u}_p + \varpi^2 \odot \hat{u}_p + \varepsilon \hat{X}_p(\hat{u}_p, a).   
\end{equation}
The associated tangent operator is $T = D + \varepsilon S$, where $D$ is 
diagonal over the non-resonant set $\mathbb{Z}^{1+m}\setminus \mathcal{S}$ with 
entries 
\begin{equation}
    \label{eqn: D-linear-nlw}
    D_{n, k} = - \langle k, \omega' \rangle^2 + n^2 + \mu, 
\end{equation}
and $S$ is the Jacobian of the nonlinear perturbation with respect to 
$\hat{u}_p$. 
Setting the truncation scale $N_r:=M^{r+1}$, we propose the following 
alternating numerical scheme, built from the $Q$-equations~\eqref{eqn: q-eqn-nlw} 
and the $P$-equations~\eqref{eqn: p-eqn-nlw}:
\begin{subequations}
    \label{eqn: alternating-numerical-nlw}
    \begin{empheq}[left=\empheqlbrace]{align} 
       &\omega^{(r+1)} = \sqrt{ \omega^2 + \varepsilon \hat{X}_q(\hat{u}_p^{(r)}; a) \odot a^{-1} },                                                                                                                                                                       \label{eqn: alternating-nlw-frequency}                \\
       &\hat{u}_p^{(r+1)} = \hat{u}_p^{(r)} - \left[ (T + \varepsilon B)_{N_{r+1}}^{-1}\left( \hat{u}_p^{(r)}, \omega^{(r+1)}; a \right) \right] F\left( \hat{u}_p^{(r)}, \omega^{(r+1)}; a \right).         \label{eqn: alternating-nlw-coefficient} 
    \end{empheq}        
\end{subequations} 
where $\sqrt{ \cdot }$ is taken componentwise. 
The correction operator $B$, which accounts for the frequency 
update~\eqref{eqn: alternating-nlw-frequency}, is introduced to ensure the convergence and is given by
\begin{equation}
    \label{eqn: operator-b-nlw}
    B = - \langle k, \omega' \rangle \cdot \frac{\partial \langle k, \hat{X}_q \odot a^{-1} \odot \omega'^{-1} \rangle}{\partial \hat{u}_p} \hat{u}_p^{\top}.
\end{equation}
Moreover, we fix the threshold $\varepsilon_0 >0$ such that 
$n_1^2 + \mu - \sqrt{\varepsilon} \geq \mu/2$ for any 
$0 < \varepsilon \leq \varepsilon_0$, which ensures that the square root 
in~\eqref{eqn: alternating-nlw-frequency} is real and well defined. 
In other words, in the present setting the frequency domain $\Omega$ is not 
only bounded from above but also bounded away from the origin: it excludes a 
neighbourhood of the origin.

\subsection{The main theorem: a priori bound}
\label{subsec: main-nlw}

In this section, we establish convergence and a priori bounds for the 
alternating numerical procedure~\eqref{eqn: alternating-numerical-nlw}. 
In this section, we establish convergence and a priori bounds for the 
alternating numerical procedure~\eqref{eqn: alternating-numerical-nlw}. 
As in the NLS case, the linear frequencies
$\omega_j=\sqrt{n_j^2+\mu}$ remain fixed, while the updated frequency
$\omega'$ is determined at each iteration by the $Q$-equation
\[
    Q(\omega', a) = (-\omega'^2+\omega^2)\odot a +\varepsilon\hat{X}_q(\omega',a)=0.
\]
For a fixed iteration, suppressing the iteration index, this equation defines
an implicit relation between the amplitude $a$ and the updated frequency
$\omega'$. For sufficiently small $\varepsilon$ and
$\min_{1\leq j\leq m}a_j\geq\delta_0>0$, the frequency map
$a\mapsto\omega'(a)$ is a local diffeomorphism.

Before stating the main convergence theorem, we first point out the key 
difference between the NLW equation~\eqref{eqn: nlw} and the NLS 
equation~\eqref{eqn: nls}, which lies in the exclusion of the ``bad'' 
frequencies. 
For the small scale $M_0 \leq N \leq M$, the tangential nearly-resonant set 
$\Omega_{0,M}^{\tau}$ to be excluded is the same 
as~\eqref{eqn: tangential-nearly-resonant}. 
Since the single nearly-resonant set is associated with the central box, we 
have the inclusion
\begin{align*}
    & \left\{ \omega' \in \Omega \bigg| \left| \langle k, \omega' \rangle^2 - \varpi_n^2 \right| < \frac{1}{(1+|k|)^{2\tau}} \right\}  \\
    &\subseteq \left\{ \omega' \in \Omega \bigg| \left| \langle k, \omega' \rangle  + \varpi_n \right| < \frac{1}{(1+|k|)^{\tau}} \right\} \bigcup \left\{ \omega' \in \Omega \bigg| \left| \langle k, \omega' \rangle  - \varpi_n \right| < \frac{1}{(1+|k|)^{\tau}} \right\},
\end{align*}
and hence, by the symmetry $k \mapsto -k$ of $\Lambda_{2M}^m$, the single-mode 
nearly-resonant set for the NLW~\eqref{eqn: nlw} equation is again the same 
as~\eqref{eqn: single-nearly-resonant}, namely
\begin{equation}
    \label{eqn: single-nearly-resonant-nlw}
    \Omega_{1,M}^{\tau} :=
    \bigcup_{k \in \Lambda_{2M}^m} \bigcup_{n \in \mathbb{Z} \setminus \pm \mathcal{N}}
    \left\{ \omega' \in \Omega \bigg| \left| \langle k,\omega' \rangle - \varpi_{n} \right| < \frac{1}{(1+|k|)^\tau} \right\}.
\end{equation}
The difference nearly-resonant set, on the other hand, is associated with the 
outer box. 
For $n_1, n_2 \in \mathbb{Z} \setminus \mathcal{N}$ with $|n_1| \neq |n_2|$, we 
have the inclusion 
\begin{align*}
    & \left\{ \omega' \in \Omega \bigg| \left| \langle k_1,\omega'\rangle^2 - \varpi_{n_1}^2 \right| < \frac{1}{4(2+|k_1|)^{2\tau}}~\mathrm{and}~\left| \langle k_2,\omega'\rangle^2 - \varpi_{n_2}^2 \right| < \frac{1}{4(2+|k_2|)^{2\tau}} \right\} \\
    & \subseteq \left( \left\{ \omega' \in \Omega \bigg|  \left| \langle k_1,\omega' \rangle + \varpi_{n_1} \right| < \frac{1}{2(2+|k_1|)^{\tau}}\right\} \bigcup \left\{ \omega' \in \Omega \bigg| \left| \langle k_1,\omega' \rangle - \varpi_{n_1} \right| < \frac{1}{2(2+|k_1|)^{\tau}}\right\} \right)\\
    & \mathrel{\phantom{sul}} \bigcap \left( \left\{ \omega' \in \Omega \bigg| \left| \langle k_2,\omega' \rangle + \varpi_{n_2} \right| < \frac{1}{2(2+|k_2|)^{\tau}}\right\} \bigcup \left\{ \omega' \in \Omega \bigg| \left| \langle k_2,\omega' \rangle - \varpi_{n_2} \right| < \frac{1}{2(2+|k_2|)^{\tau}}\right\} \right) \\
    & \subseteq \left\{ \omega' \in \Omega \bigg| \left| \langle k_1 - k_2,\omega'\rangle + \varpi_{n_1} - \varpi_{n_2} \right| < \frac{1}{(2+|k_1 - k_2|)^{\tau}}\right\} \\
    & \mathrel{\phantom{sul}} \bigcup \left\{ \omega' \in \Omega \bigg| \left| \langle k_1 - k_2,\omega'\rangle + \varpi_{n_1} + \varpi_{n_2} \right| < \frac{1}{(2+|k_1 - k_2|)^{\tau}}\right\} 
\end{align*}
Writing $k := k_1 - k_2 \in \Lambda_{2M}^{m} $ and using again the symmetry 
$k \mapsto -k$ of $\Lambda_{2M}^m$, we conclude that the difference 
nearly-resonant set must contain not only the 
set~\eqref{eqn: difference-nearly-resonant}, namely
\begin{equation}
    \label{eqn: nlw-difference-nearly-resonant}
    \Omega_{2,M}^{\tau,+} :=
    \bigcup_{k \in \Lambda_{2M}^m} \bigcup_{\substack{n_1, n_2 \in \mathbb{Z} \setminus \pm \mathcal{N}  \\  |n_1| \neq |n_2|}}
    \left\{ \omega' \in \Omega \bigg| \left| \langle k, \omega' \rangle - \varpi_{n_1}+ \varpi_{n_2} \right| < \frac{1}{(2+|k|)^\tau} \right\},
\end{equation}
but also the additional set
\begin{equation}
    \label{eqn: nlw-sum-nearly-resonant}
    \Omega_{2,M}^{\tau,-} :=
    \bigcup_{k \in \Lambda_{2M}^m} \bigcup_{n_1, n_2 \in \mathbb{Z} \setminus \pm \mathcal{N}}
    \left\{ \omega' \in \Omega \bigg| \left| \langle k, \omega' \rangle - \varpi_{n_1}- \varpi_{n_2} \right| < \frac{1}{(2+|k|)^\tau} \right\}.
\end{equation}

For the nearly-resonant sets, $\Omega_{0,M}^{\tau}$, $\Omega_{1,M}^{\tau}$, and 
$\Omega_{2,M}^{\tau,-}$, the heart of the measure estimate is that, for each 
fixed $k$, the number of spatial modes $n$ that can contribute is at most of 
order $\|k\|_2$. 
This is exactly the mechanism used in the proof 
of~\Cref{lem: near-measure-initial} and the details are shown 
in~\Cref{sec: near-measure-initial}. 
The essential new difficulty arises from the set $\Omega_{2,M}^{\tau,+}$ 
defined in~\eqref{eqn: nlw-difference-nearly-resonant}: the frequency 
difference $\varpi_{n_1} - \varpi_{n_2}$ can remain bounded even when $n_1$ and 
$n_2$ both tend to infinity, so that infinitely many pairs $(n_1, n_2)$ may 
compete for the same $k$. 
By the symmetry $\varpi_{-n} = \varpi_{n}$ and the symmetry $k \mapsto -k$ of 
$\Lambda_{2M}^m$, we may assume $0 \leq n_1 < n_2$. The key observation is the 
elementary identity $\varpi_n - n = \mu / (\varpi_n + n)$ for $n \geq 0$, which 
shows that each $\omega'$ in the set 
in~\eqref{eqn: nlw-difference-nearly-resonant} satisfies
\begin{equation}
    \label{eqn: difference-resonant-nlw}
    \left| \langle k, \omega' \rangle - n_1 + n_2\right| < \frac{1}{(2+|k|)^\tau} + \frac{\mu}{\sqrt{n_1^2 + \mu} + n_1} + \frac{\mu}{\sqrt{n_2^2 + \mu} + n_2}. 
\end{equation}
Guided by~\eqref{eqn: difference-resonant-nlw}, we count the admissible pairs 
$(n_1, n_2)$ according to whether $n_1 \geq (2+|k|)^\frac{\tau}{2}$ or 
$n_1 < (2+|k|)^\frac{\tau}{2}$. 
In the first case, both correction terms 
in~\eqref{eqn: difference-resonant-nlw} are of order 
$(2+|k|)^{-\frac{\tau}{2}}$, so that only $O(\|k\|_2)$ values of the 
integer difference $n_2 - n_1$ are admissible, each contributing a set of 
measure of order $(2+|k|)^{-\frac{\tau}{2}} \|k\|_2^{-1}$. 
In the second case, there are at most $(2+|k|)^{\frac{\tau}{2}}$ choices of 
$n_1$, and for each fixed $n_1$, at most of order $\|k\|_2$ values of $n_2$ are 
admissible, each contributing a set of measure of order 
$(2+|k|)^{-\tau} \|k\|_2^{-1}$. 
In both cases, the total measure excluded for a fixed $k \in \Lambda_{2M}^{m}$ 
is of order $(2+|k|)^{-\frac{\tau}{2}}$. 
Summing over $k \in \Lambda_{2M}^{m}$, the resulting series converges provided 
$\tau/2 > m$, that is, $\tau > 2m$.

In the large-scale regime $N > M$, the difference nearly-resonant set $G_2$ 
defined in~\eqref{defn: second-melnikov} is excluded by the same mechanism as 
above, while the single-mode nearly-resonant set $G_1$ defined 
in~\eqref{eqn: final-msa-single-mode-nearly-resonant} is treated exactly as 
before, since replacing $\sigma_{\star}$ by $\sigma_{\star}^2$ does not affect 
the analytic properties involved. 
With these key differences in place, we now state the main theorem, which 
establishes the convergence of the alternating numerical 
procedure~\eqref{eqn: alternating-numerical-nlw} and provides a priori error 
bounds for its iterates.

\begin{theorem}[Convergence and a priori bound]
    \label{thm: main-nlw}
    Fix $\delta_0 \in (0, 1)$ and $\tau > 2m$, and let 
    $\mathscr{A} = [\delta_0, 1]^{m}$. 
    There exists a threshold $\varepsilon_0 > 0$ such that, for any 
    $0 < \varepsilon \leq \varepsilon_0$ the following statements hold: 
    \begin{itemize}
        \item[1.] \textbf{Measure of resonance}: 
                  There exist constants $\kappa = \kappa(m, \mu, \tau) >0$ and 
                  $\delta = \delta(\varepsilon) > 0$, with $\delta \rightarrow 0$ 
                  as $\varepsilon \rightarrow 0$, such that the excluded set 
                  $\mathscr{A}^{\star} \subset \mathscr{A}$ of initial amplitudes 
                  satisfies $\mathrm{mes}(\mathscr{A}^{\star}) \leq \kappa + \delta$. 

        \item[2.] \textbf{Iterative sequence}: 
                  For any $ a \in \mathscr{A} \setminus \mathscr{A}^{\star}$, 
                  the alternating numerical 
                  scheme~\eqref{eqn: alternating-numerical-nlw}, initialized at 
                  $(\omega^{(0)}, \hat{u}_p^{(0)} ) = (\omega, 0)$, generates a 
                  well-defined sequence of iterates 
                  $\{ (\omega^{(r)}, \hat{u}_p^{(r)}) \}_{r=0}^{\infty}$. 
                  Moreover, for every $r\geq0$, the updated frequency map
                  $a\mapsto\omega^{(r+1)}(a)$ is a local diffeomorphism.

        \item[3.] \textbf{Convergence}: There exists a Gevrey exponent 
                  $s = s(\varepsilon) \in (0, 1)$ such that this sequence 
                  converges to an exact solution pair 
                  $(\omega^\star, \hat{u}_p^\star)$. 
                  The limiting frequency map $a\mapsto\omega^\star(a)$ is also 
                  a local diffeomorphism, and $\hat{u}_p^\star\in\mathcal{K}(s)$.

        \item[4.] \textbf{A priori error bounds}: For any $r \geq 0$, the 
                  iterative sequence 
                  $\{ \omega^{(r)}, \hat{u}_p^{(r)} \}_{r=0}^{\infty}$ satisfies 
                  the following error bounds:
                  \begin{subequations}
                      \label{eqn: convergence-vector-frequency-nlw}
                      \begin{empheq}[left=\empheqlbrace]{align} 
                          \|\hat{u}^{(r)}-\hat{u}^{\star}\| &\leq \exp\left\{ -\frac{3}{2}(M^s)^{r+1} \right\},                    \label{eqn: u-hat-converge-nlw}               \\
                          |\omega^{(r)}-\omega^{\star}| &\leq \exp\left\{ -\frac{3}{2}(M^s)^{r+1} \right\}.                   \label{eqn: frequency-converge-nlw}
                      \end{empheq}
                  \end{subequations}
    \end{itemize}
    Furthermore, define the $r$-th approximate solution by 
    \begin{equation}
        \label{eqn: approximate-pde-solution-nlw}
        u^{(r)}(x,t) =  \sum_{ (n, k) \in \mathbb{Z}^{1+m} } \hat{u}^{(r)}(n, k) e^{i \langle k, \; \omega^{(r)} t + \theta_0 \rangle } e^{inx}. 
    \end{equation}
    Then, for any  $t \in [0, M^r]$, the following time-domain error bound holds:
    \begin{equation}
        \label{eqn: pde-solution-error-nlw}
        \|u^{(r)}(x,t)-u^{\star}(x,t)\|_{L^2} \leq \exp \left\{ -(M^s)^{r+1} \right\},
    \end{equation}
    where $u=u^{\star}(x, t)$ is the exact quasi-periodic solution of the NLW 
    equation~\eqref{eqn: nlw} with periodic boundary 
    conditions~\eqref{eqn: per-bdy} corresponding to 
    $(\omega^{\star}, \hat{u}_p^{\star})$. 
    Consequently, for any $t\geq0$, $u^{(r)}(x,t)$ converges in $L^2$ to 
    $u^\star(x,t)$ as $r \rightarrow \infty$.
\end{theorem}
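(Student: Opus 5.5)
The plan is to transplant the proof of \Cref{thm: main} to the NLW setting, checking each ingredient against the new features. These are the quadratic dispersion $-\langle k,\omega'\rangle^2+\varpi_n^2$, the square-root frequency update~\eqref{eqn: alternating-nlw-frequency}, the modified correction operator~\eqref{eqn: operator-b-nlw}, and the doubled resonant set~\eqref{eqn: resonant-index-nlw} under the reversibility symmetry $\hat u(n,k)=\hat u(-n,-k)$.

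The first step is to confirm that the symmetric class is invariant under the scheme: if $\hat u^{(r)}$ is real and symmetric, then so is $\hat X$, hence so are $F$ and $L$. The iteration therefore never leaves it.

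Next I re-establish the Gevrey estimates of \Cref{sec: basic-setting}. \Cref{prop: vector-field,prop: convolution,prop: tensor} concern only the cubic convolution~\eqref{eqn: nonlinear-convolution-nlw}, so they carry over verbatim. For \Cref{prop: B-operator}, the extra factor $\langle k,\omega'\rangle\,\omega'^{-1}$ in~\eqref{eqn: operator-b-nlw} grows at most linearly in $|k|$, since $\Omega$ is bounded away from the origin by the choice of $\varepsilon_0$. This growth is absorbed into the Gevrey weights at the cost of a harmless change of constants.

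The frequency step is an analogue of \Cref{prop: frequency-drift} for $Q(\omega',a)=(-\omega'^2+\omega^2)\odot a+\varepsilon\hat X_q$. Here $\partial Q/\partial\omega'=-2\,\mathrm{diag}(\omega'\odot a)+O(\varepsilon)$. This matrix is invertible because $a_j\ge\delta_0$ and $\omega'_j\ge\sqrt{\mu/2}$. Repeating the implicit-function argument then gives both the drift bound $|\omega'-\omega|\le\sqrt\varepsilon$ and the local diffeomorphism statement in item 2.

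For item 1, I take $\mathscr A^\star$ to be the preimage, under the local diffeomorphism $a\mapsto\omega'(a)$, of $\Omega_{0,M}^\tau\cup\Omega_{1,M}^\tau\cup\Omega_{2,M}^{\tau,+}\cup\Omega_{2,M}^{\tau,-}\cup G$. Since the Jacobian is uniformly bounded, measures transfer between $\Omega$ and $\mathscr A$ up to constants. The sets $\Omega_{0,M}^\tau$, $\Omega_{1,M}^\tau$ and $\Omega_{2,M}^{\tau,-}$ are handled as in \Cref{lem: near-measure-initial}, because $\varpi_n\sim|n|$ limits the admissible $n$ to $O(\|k\|_2)$ for each $k$.

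The main obstacle is $\Omega_{2,M}^{\tau,+}$. There I follow the splitting suggested by~\eqref{eqn: difference-resonant-nlw}:
\begin{itemize}
\item for $n_1\ge(2+|k|)^{\tau/2}$, I count the integer values of $n_2-n_1$;
\item for smaller $n_1$, I count the pairs directly.
\end{itemize}
In both cases this gives measure $O((2+|k|)^{-\tau/2})$ for each $k$, which is summable precisely when $\tau>2m$. This is the reason for the strengthened hypothesis.

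In the large-scale regime I must check that $G_2$ keeps measure $\to0$ with $\nu$-perturbed eigenvalues. I would use the same near-integer reduction: the eigenvalue difference $\sigma$ satisfies $|\partial\sigma/\partial\omega'|\gtrsim\|k\|_2$, via the factorization $\langle k,\omega'\rangle^2-\varpi_n^2=(\langle k,\omega'\rangle-\varpi_n)(\langle k,\omega'\rangle+\varpi_n)$, where the second factor is bounded below on $\Omega$. The set $G_1$ is treated by \Cref{lem: cartan} with $\sigma_\star$ replaced by $\sigma_\star^2$, which is still analytic.

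With these inputs, \Cref{thm: small-size}, \Cref{prop: small-eig}, and \Cref{lem: small-singular,lem: small-inverse-localization,lem: multi-scale-singular} go through unchanged. One point needs care in \Cref{prop: small-eig}. Singular sites now come as $\{(n_\star,k_\star),(-n_\star,k_\star)\}$ together with their symmetric images $(-n,-k)$. These images lie outside any outer box $\Gamma_{n_0,k_0,N}$ with $k_0\notin\Lambda^m_{2N}$, so the pairwise configuration persists. The normal case with $|n_\star|>N^{1/3}/2$ is again separated by $K=N^{1/10}$ subboxes and glued via the resolvent identity. This yields \Cref{thm: off-side-box}, and the gluing of \Cref{subsec: verification} then verifies \Cref{cond: implementation-proved}.

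Finally, \Cref{lem: inverse-derivative} and \Cref{lem: iteration} apply verbatim. Support enlargement again uses the cubic nonlinearity, and the Taylor step uses \Cref{prop: tensor}. The only new term is the second derivative of the square root in~\eqref{eqn: alternating-nlw-frequency}, which is bounded on $\Omega$ and is accounted for exactly by $B$ at first order. The telescoping estimates of \Cref{subsec: proof-main} then give items 3 and 4 and the time-domain bound~\eqref{eqn: pde-solution-error-nlw}.
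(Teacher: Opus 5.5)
Your outline follows the paper's own route. You carry over the NLS machinery, reduce the single-mode set to the NLS one by factoring $\langle k,\omega'\rangle^2-\varpi_n^2$, and add the sum set $\Omega_{2,M}^{\tau,-}$. You control $\Omega_{2,M}^{\tau,+}$ via $\varpi_n-n=\mu/(\varpi_n+n)$ with the split at $n_1\sim(2+|k|)^{\tau/2}$, which is exactly where $\tau>2m$ enters, and you handle $G_1$ by the Cartan estimate with $\sigma_\star^2$. Your extra checks are consistent with that route: invariance of the reversible class, the extra factor in $B$, and the images $(-n,-k)$ falling outside outer boxes.

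The step that fails as written is the passage between frequencies and amplitudes; the paper's sketch does not supply it either.
\begin{itemize}
\item From $\omega'^2=\omega^2+\varepsilon\hat X_q\odot a^{-1}$ you have $\partial\omega'/\partial a=O(\varepsilon)$. An upper bound on the Jacobian is therefore automatic, but it points the wrong way. To bound $\mathrm{mes}(\mathscr A^\star)$ by the measure of the excluded frequencies, you need a \emph{lower} bound on $|\det\partial\omega'/\partial a|$.
\item For the same reason, invertibility of $\partial Q/\partial\omega'$ only makes $\omega'$ a $C^1$ function of $a$. Local invertibility of $a\mapsto\omega'(a)$ needs $\partial Q/\partial a$ to be nonsingular. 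On solutions of the $Q$-equation, $\partial Q/\partial a=\varepsilon\,\mathrm{diag}(a)\,\partial(\hat X_q\odot a^{-1})/\partial a$.
\item That nonsingularity is a twist condition, and it must be computed from the cubic term. At leading order $\omega_j'^2-\omega_j^2=\varepsilon\big(3a_j^2+6\sum_{i\neq j}a_i^2\big)$, which is nondegenerate.
\item Even with the twist, $|\det\partial\omega'/\partial a|$ is only of order $\varepsilon^m$, and $\Omega$ itself has measure $O(\varepsilon^m)$. The exclusions use $\varepsilon$-independent thresholds such as $(1+|k|)^{-\tau}$ and are bounded in frequency space only by $\kappa+\delta(\varepsilon)$. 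Such bounds cannot show that $\mathscr A\setminus\mathscr A^\star$ is nonempty.
\end{itemize}
To close the gap, the thresholds have to be rescaled with $\varepsilon$. The excluded measure must then be compared with $\mathrm{mes}(\Omega)$ before pulling back to $\mathscr A$.
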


\begin{remark}
\label{rem: nlw}
The assumption $\mu > 0$ can be relaxed, subject to suitable nondegeneracy 
conditions on the tangential and non-tangential modes. 
In particular, when $\mu \leq 0$ and $|\mu| \leq M^2$, the alternating numerical 
procedure~\eqref{eqn: alternating-numerical-nlw} can be adapted to construct 
quasi-periodic solutions. 
The measure estimates for nearly-resonant frequencies concern the elliptic 
modes, for which $n^2 + \mu \geq 0$. 
For a hyperbolic mode with $n^2+\mu<0$, the corresponding diagonal divisor is 
$-\langle k,\omega'\rangle^2+n^2+\mu$, and therefore cannot vanish,  
so these modes require no frequency-exclusion estimate. 
The borderline case $n^2+\mu=0$ is already ruled out by the tangential 
nearly-resonant condition.
\end{remark}

\subsection{Numerical demonstration}
\label{subsec: numerical-periodic-nlw}

We demonstrate the performance of the alternating numerical 
scheme~\eqref{eqn: alternating-numerical-nlw} for the NLW 
equation~\eqref{eqn: nlw} with periodic boundary 
conditions~\eqref{eqn: per-bdy}. 
In our numerical tests, we set the initial phase $\theta_0 = 0$, and exclude 
the spatial mode $n = 0$ from the set of the tangential modes.

\paragraph{Periodic solutions ($m=1$)}
Unlike the linear solution~\eqref{eqn: lin-1} for the NLS equation, the linear 
solution here is a real wave and given by
\begin{equation}
    \label{eqn: lin-1-nlw}
    u(x, t) = 2a_1\cos(\omega_1 t + n_1 x)
\end{equation}
which is because the two nonzero space-time Fourier coefficients satisfy $\hat{u}(n_1,1) = \hat{u}(-n_1,-1)$. Consequently, the support of the nonlinear term is no longer confined to a single mode, and the nonlinear solution takes the form
\begin{subequations}
    \label{eqn: nlw-periodic-1}
    \begin{empheq}[left=\empheqlbrace]{align} 
       & \hat{u}_{n_1}(t) = \overline{\hat{u}_{-n_1}(t)} = a_1 e^{i\omega_1't} + \sum_{k \in \mathbb{Z}\setminus \{1\}} \hat{u}(n_1, k) e^{ik\omega_1' t},                  \label{eqn: nlw-periodic-1-1}                \\
       & \hat{u}_{n}(t)   = \sum_{k \in \mathbb{Z}} \hat{u}(n, k) e^{ik\omega_1' t}, \mathrel{\phantom{===============}} n \in \mathbb{Z} \setminus \{-n_1, n_1\},          \label{eqn: nlw-periodic-1-2}                
    \end{empheq}        
\end{subequations} 
We remark that the canonical variables used in the phase-space plots differ 
from those of the NLS case. 
For the NLS equation~\eqref{eqn: nls}, the canonical pair associated with each 
spatial mode is $(\mathrm{Re} (\hat{u}(t)), \mathrm{Im} (\hat{u}(t))) $, 
whereas for the NLW equation~\eqref{eqn: nlw}, which is second order in time, 
we use 
\[
    \left(\mathrm{Re} \left(\hat{u}_n(t)\right),~ \frac{\mathrm{Re} \left(\dot{\hat{u}}_{n}(t) \right)}{\sqrt{n^2 + \mu}} \right), 
\]
so that the linear solution traces a circle in each excited mode. 
For the numerical test, we take $n_1 = 1$, $\mu = \sqrt{2}$, and $a_1 = 1$, so 
that $\omega_1 = \sqrt{1 + \sqrt{2}}$; we set the perturbation quantity 
$\varepsilon = 0.5$, and the results are shown in~\Cref{fig: nlw-p-1torus-traj}. 
\begin{figure}[htb!]
    \centering
    \begin{subfigure}[t]{0.3\linewidth}
    \centering
    \includegraphics[scale=0.18]{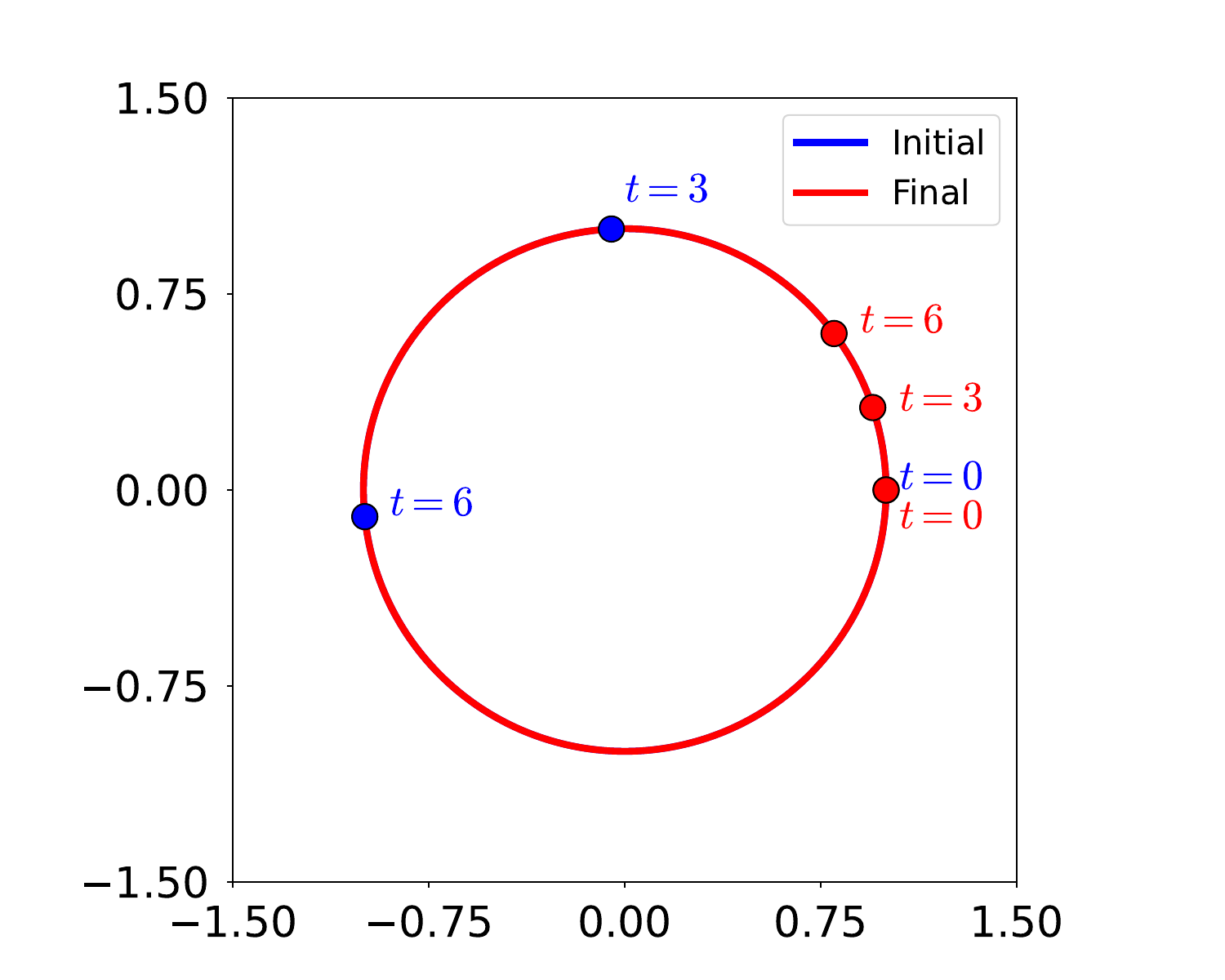}
    \caption{Tangential mode: $\hat{u}_{1}(t)$}
    \end{subfigure}
    \begin{subfigure}[t]{0.3\linewidth}
    \centering
    \includegraphics[scale=0.18]{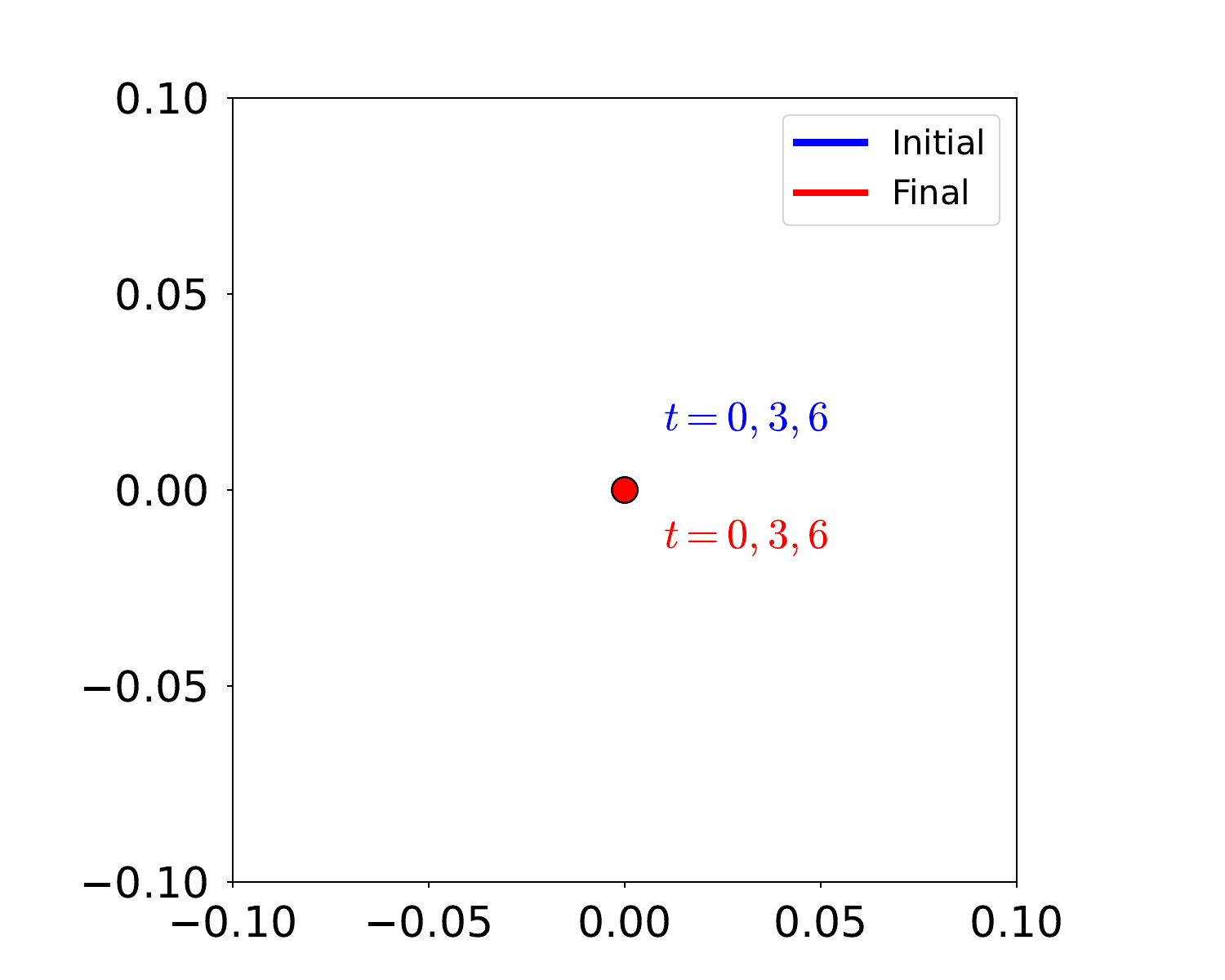}
    \caption{Normal mode: $\hat{u}_{2}(t)$}
    \end{subfigure}
    \begin{subfigure}[t]{0.3\linewidth}
    \centering
    \includegraphics[scale=0.18]{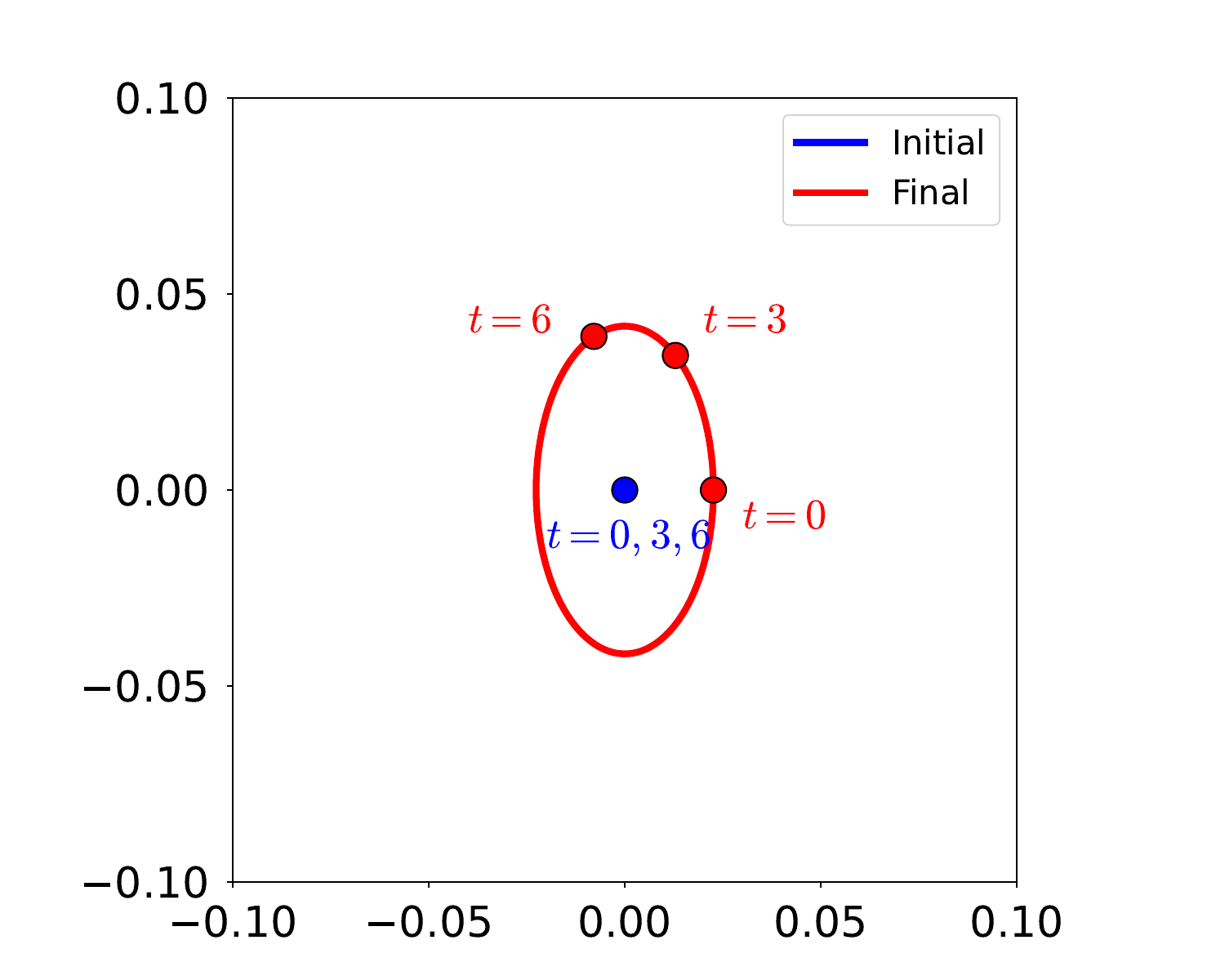}
    \caption{Normal mode: $\hat{u}_{3}(t)$}
    \end{subfigure}
    \caption{Phase space trajectories of the NLW equation~\eqref{eqn: nlw} with 
             periodic boundary conditions~\eqref{eqn: per-bdy}. 
             ``Initial'' and ``Final'' denote the initial linear solution and 
             the computed nonlinear solution, respectively. 
             Markers indicate specific time points at $t = 0$, $3$, and $6$.} 
    \label{fig: nlw-p-1torus-traj}
\end{figure}
The tangential mode $\hat{u}_{1}(t)$ retains an amplitude close to unity but 
exhibits noticeable frequency shifts relative to the linear solution. 
The normal modes, on the other hand, display two distinct types of behavior: 
the representative mode $\hat{u}_{2}(t)$ remains fixed at the origin, 
whereas $\hat{u}_{3}(t)$, which is absent from the linear solution, develops 
small-amplitude oscillations driven by the nonlinear interaction. 
Its phase-space trajectory is not circular but forms a narrow loop elongated in 
the vertical direction. 
The convergence profile is shown in~\Cref{fig: nlw-p-1torus-con}.

\begin{figure}[htb!]
    \centering
    \begin{subfigure}[t]{0.3\linewidth}
    \centering
    \includegraphics[scale=0.18]{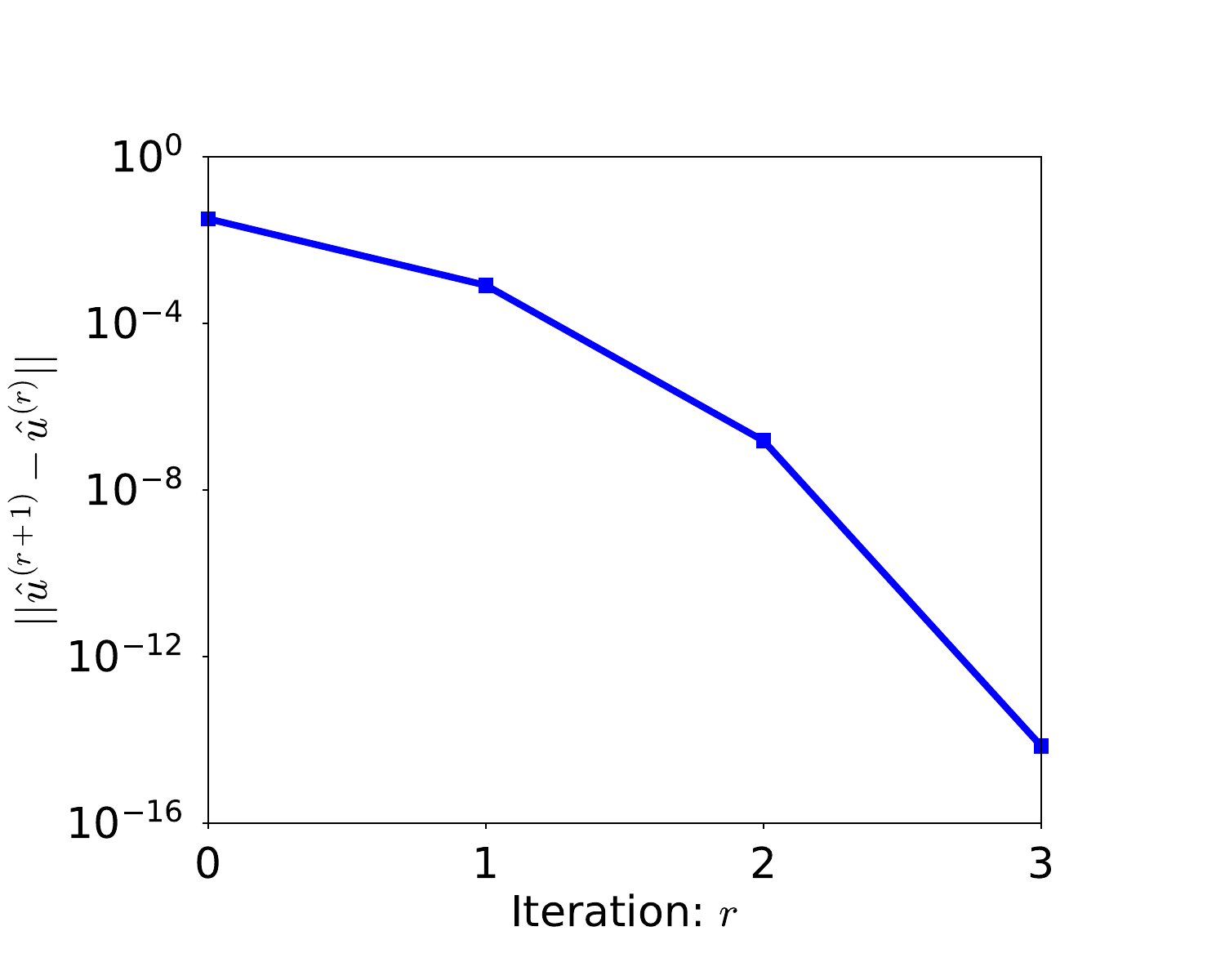}
    \caption{$\|\hat{u}_{p}^{(r+1)} - \hat{u}_{p}^{(r)}\|$}
    \end{subfigure}
    \begin{subfigure}[t]{0.3\linewidth}
    \centering
    \includegraphics[scale=0.18]{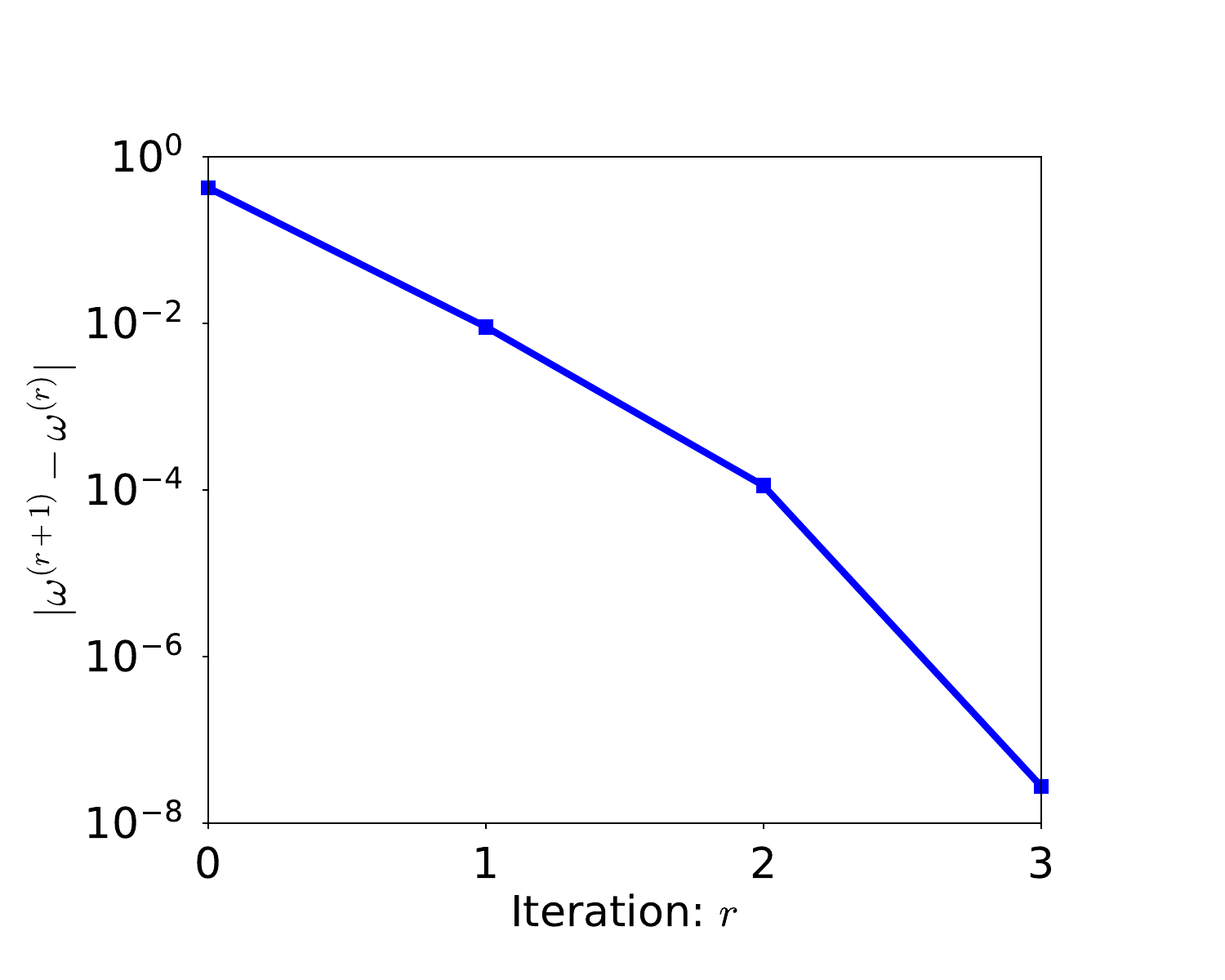}
    \caption{$|\omega^{(r+1)} - \omega^{(r)}|$}
    \end{subfigure}
    \begin{subfigure}[t]{0.3\linewidth}
    \centering
    \includegraphics[scale=0.18]{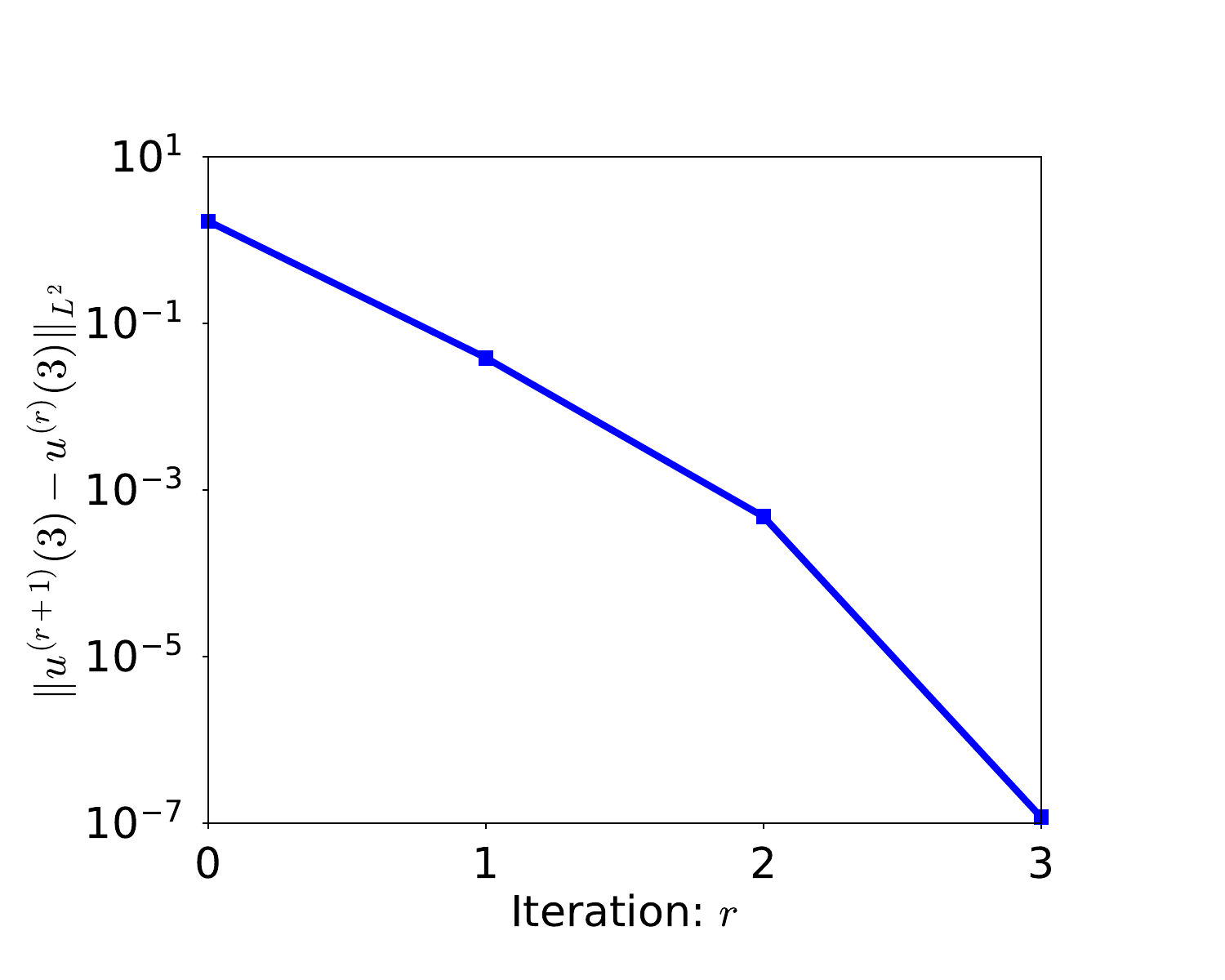}
    \caption{$\| u^{(r+1)}(x, 3) - u^{(r)}(x, 3)  \|_{L^2}$}
    \end{subfigure}
    \caption{Convergence profiles across iterations for the NLW 
             equation~\eqref{eqn: nlw} with the periodic boundary 
             condition~\eqref{eqn: per-bdy}. } 
    \label{fig: nlw-p-1torus-con}
\end{figure}

We next examine the spatiotemporal distributions of $|u(x, t)|^2$. 
Taking the squared modulus of the linear solution~\eqref{eqn: lin-1-nlw} yields
\[
    |u|^2 = 4\cos^2\left( \sqrt{1+\sqrt{2}}t + x\right).
\]
Because this expression depends on the combined phase $\sqrt{1+\sqrt{2}}t + x$, it 
forms a traveling-wave pattern moving in the negative $x$-direction, as shown 
in~\Cref{fig: nlw-p-1torus-heat-slice-lin}. 
With the nonlinear term included, the wave crests become slightly higher, and 
their trajectories indicate faster propagation in the same direction. 
At $t=3.0$ and $x=0$, the linear solution is near its second trough, whereas 
the nonlinear solution is near its third crest. 
This accumulated phase difference is consistent with an upward frequency drift 
in the nonlinear solution, as illustrated 
in~\Cref{fig: nlw-p-1torus-heat-slice-nonlin}. 
\begin{figure}[htb!]
    \centering
    \begin{subfigure}[t]{0.42\linewidth}
    \centering
    \includegraphics[scale=0.24]{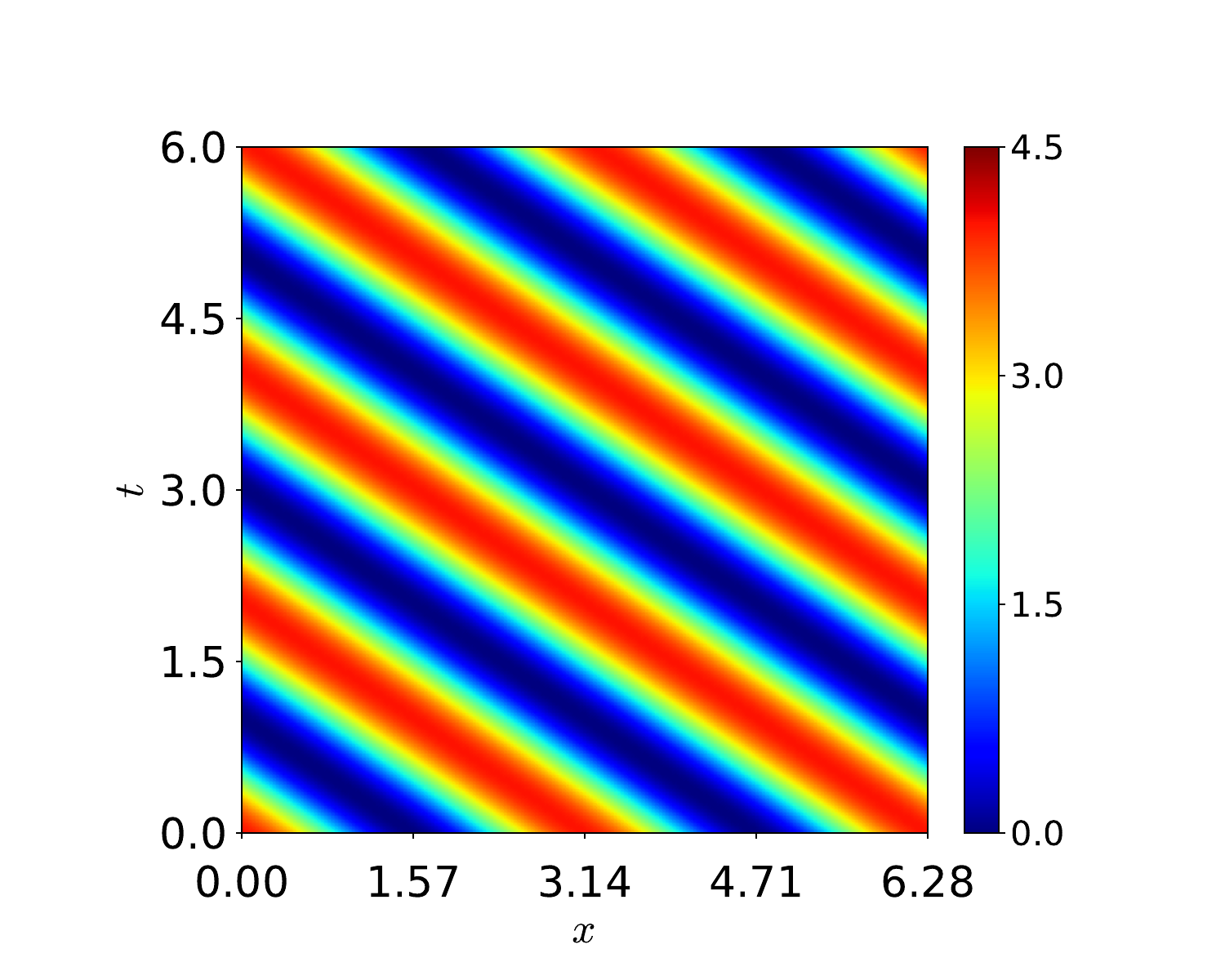}
    \caption{Linear solution}
    \label{fig: nlw-p-1torus-heat-slice-lin}
    \end{subfigure}
    \begin{subfigure}[t]{0.42\linewidth}
    \centering
    \includegraphics[scale=0.24]{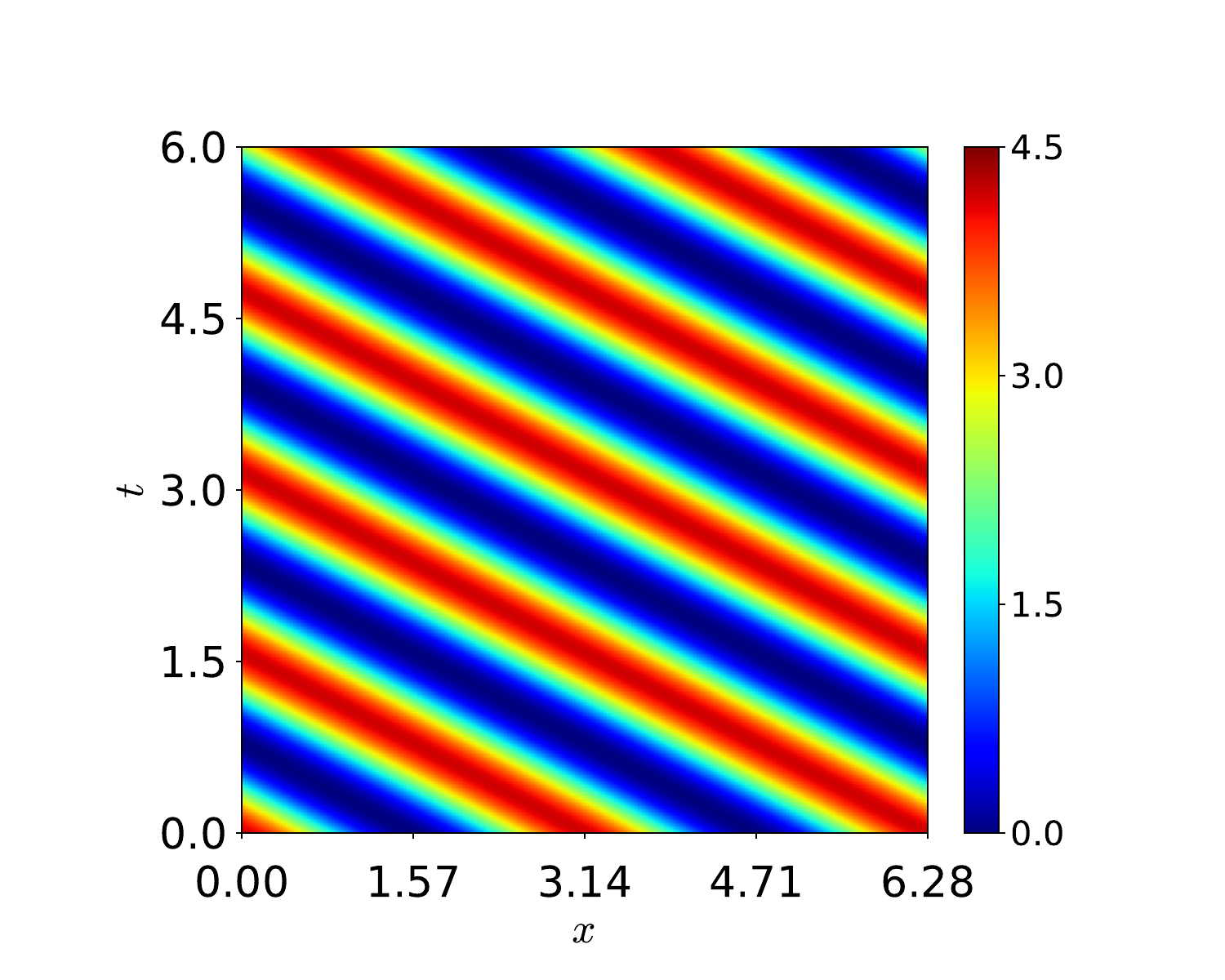}
    \caption{Nonlinear solution}
    \label{fig: nlw-p-1torus-heat-slice-nonlin}
    \end{subfigure} 
    \caption{Spatiotemporal distributions of $|u(x,t)|^2$ for the NLW 
             equation~\eqref{eqn: nlw} with the periodic boundary 
             conditions~\eqref{eqn: per-bdy}. 
             The linear solution is~\eqref{eqn: lin-1-nlw}, and the 
             nonlinear solution is obtained via the alternating numerical 
             scheme~\eqref{eqn: alternating-numerical-nlw}.}
    \label{fig: nlw-p-1torus-heat-slice}
\end{figure}

\paragraph{Quasi-periodic solutions ($m=2$)}
For $m=2$, the quasi-periodic solution for the NLW equation~\eqref{eqn: nlw} is 
a superposition of two real waves, which reads 
\begin{equation}
    \label{eqn: lin-2-nlw}
    u(x, t) = 2a_1\cos(\omega_1 t + n_1 x) + 2 a_2\cos(\omega_2 t + n_2 x).  
\end{equation}
The corresponding nonlinear solution can be written in terms of its spatial Fourier coefficients as
\begin{subequations}
    \label{eqn: nlw-periodic-2}
    \begin{empheq}[left=\empheqlbrace]{align} 
       & \hat{u}_{n_1}(t) = \overline{\hat{u}_{-n_1}(t)} = a_1 e^{i\omega_1't} + \sum_{k \in \mathbb{Z}^2\setminus \{(1, 0)\}} \hat{u}(n_1, k) e^{i\langle k, \omega' \rangle t},     \label{eqn: nlw-periodic-2-1}                \\
       & \hat{u}_{n_2}(t) = \overline{\hat{u}_{-n_2}(t)} = a_2 e^{i\omega_2't} + \sum_{k \in \mathbb{Z}^2\setminus \{(0, 1)\}} \hat{u}(n_2, k) e^{i\langle k, \omega' \rangle t},     \label{eqn: nlw-periodic-2-2}                \\
       & \hat{u}_{n}(t)   = \sum_{k \in \mathbb{Z}^2} \hat{u}(n, k) e^{i \langle k, \omega' \rangle t}, \mathrel{\phantom{===============}} n \in \mathbb{Z} \setminus \{\pm n_1, \pm n_2\},          \label{eqn: nlw-periodic-2-3}                
    \end{empheq}
\end{subequations}   
where $\omega' = (\omega_1', \omega_2')$ denotes the shifted frequency vector. 
For the numerical test, we take $n_1 = 1$, $n_2 =2$, $\mu=\sqrt{2}$, and 
$a_1=a_2 =1$, so that $\omega_1=\sqrt{1+\sqrt{2}}$ and 
$\omega_2 = \sqrt{4+\sqrt{2}}$ and the perturbation quantity $\varepsilon = 0.2$.~\Cref{fig: nlw-p-2tori-traj} compares the 
phase-space trajectories of the initial linear and computed nonlinear solutions. 
As in the periodic-solution case shown in~\Cref{fig: nlw-p-1torus-traj}, the frequency 
shifts produce visible differences between the trajectories. 
Quasi-periodic behavior is discernible, though not particularly pronounced over 
the time interval shown: the trajectories return close to previously visited 
positions without exactly retracing their paths.
\begin{figure}[htb!]
    \centering
    \begin{subfigure}[t]{0.3\linewidth}
    \centering
    \includegraphics[scale=0.18]{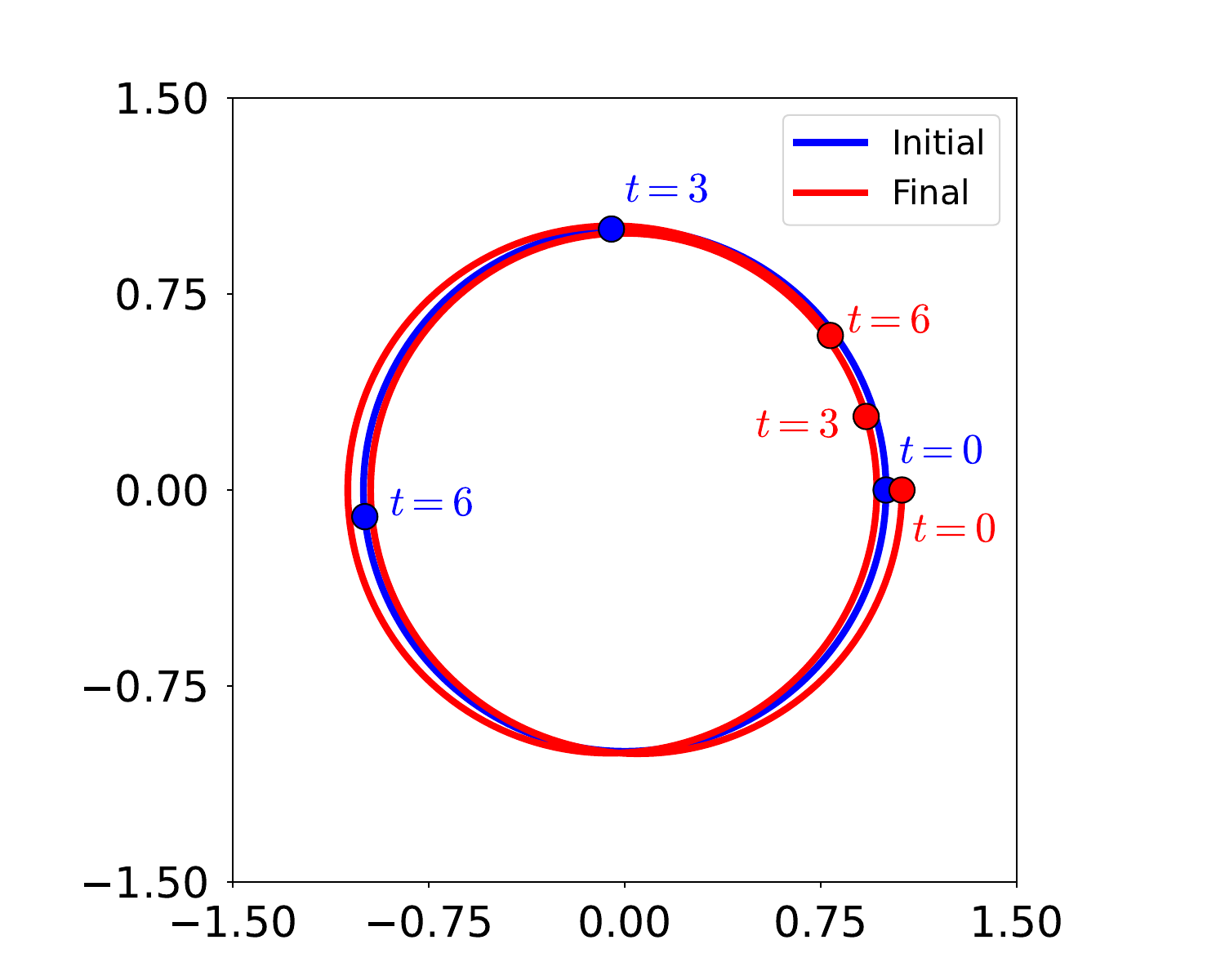}
    \caption{Tangential mode: $\hat{u}_1(t)$}
    \end{subfigure}
    \begin{subfigure}[t]{0.3\linewidth}
    \centering
    \includegraphics[scale=0.18]{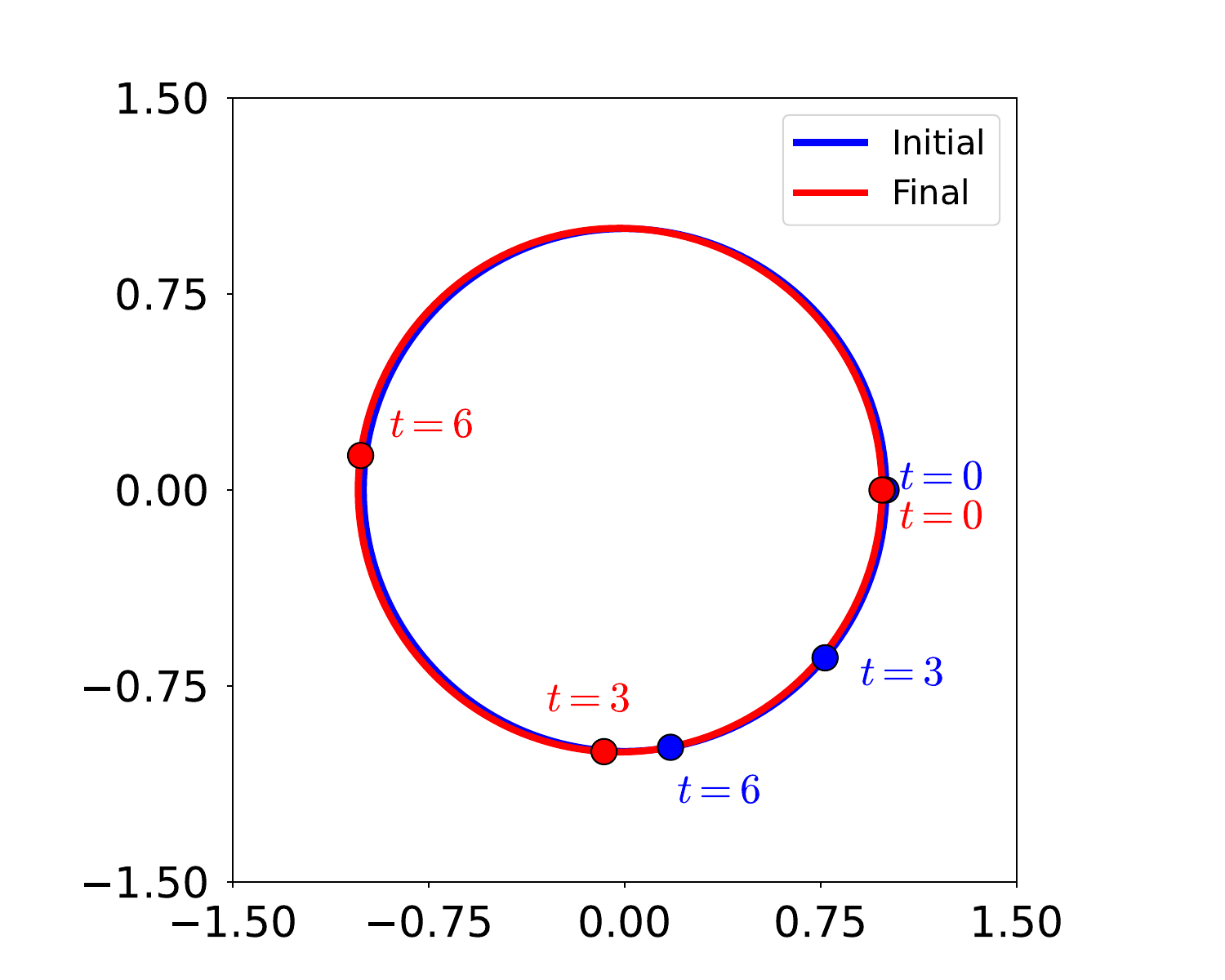}
    \caption{Tangential mode: $\hat{u}_2(t)$}
    \end{subfigure}
    \begin{subfigure}[t]{0.3\linewidth}
    \centering
    \includegraphics[scale=0.18]{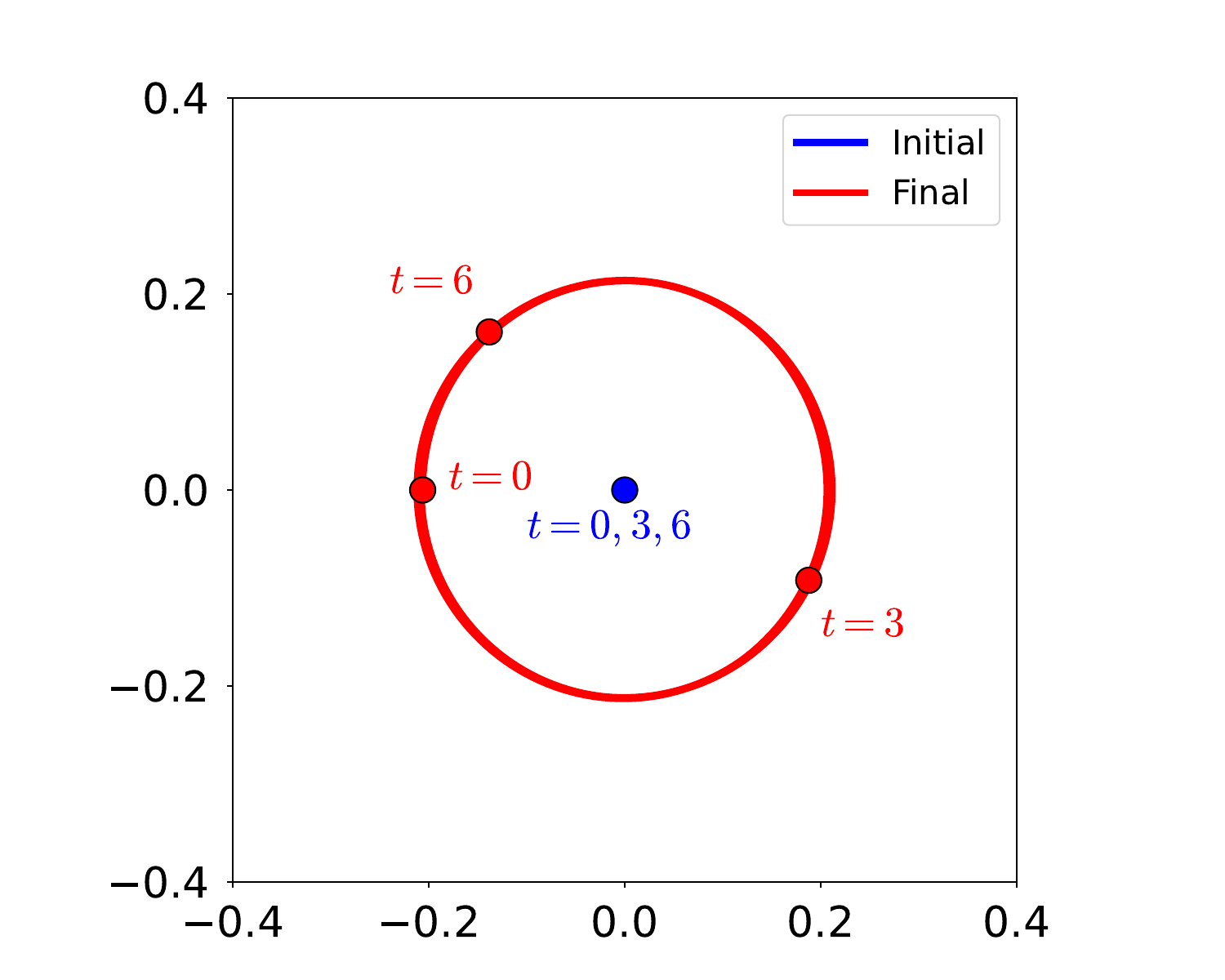}
    \caption{Normal mode: $\hat{u}_3(t)$}
    \end{subfigure}
    \caption{Phase space trajectories of the NLW equation~\eqref{eqn: nlw} with 
             periodic boundary conditions~\eqref{eqn: per-bdy}. 
             ``Initial'' and ``Final'' denote the initial linear solution and 
             the computed nonlinear solution, respectively. 
             Markers indicate specific time points at $t = 0$, $3$, and $6$.} 
    \label{fig: nlw-p-2tori-traj}
\end{figure}

Turning to the spatiotemporal distributions, the squared modulus of the linear 
solution~\eqref{eqn: lin-2-nlw}, with the parameters specified above, is
\begin{align*}
    |u|^2 = & 4\cos^2 \left( \sqrt{1+\sqrt{2}}t + x \right) + 4\cos^2 \left( \sqrt{4+\sqrt{2}}t + 2x \right) \\
            & + 8\cos \left( \sqrt{1+\sqrt{2}}t + x \right) \cos \left( \sqrt{4+\sqrt{2}}t + 2x \right).
\end{align*}
Expanding the trigonometric products shows that, apart from a constant term, 
$|u|^2$ contains four traveling-wave components with distinct phase speeds. 
Their superposition produces the diagonal bands shown 
in~\Cref{fig: nlw-p-2tori-heat-slice-lin}.  
\begin{figure}[htb!]
    \centering
    \begin{subfigure}[t]{0.42\linewidth}
    \centering
    \includegraphics[scale=0.24]{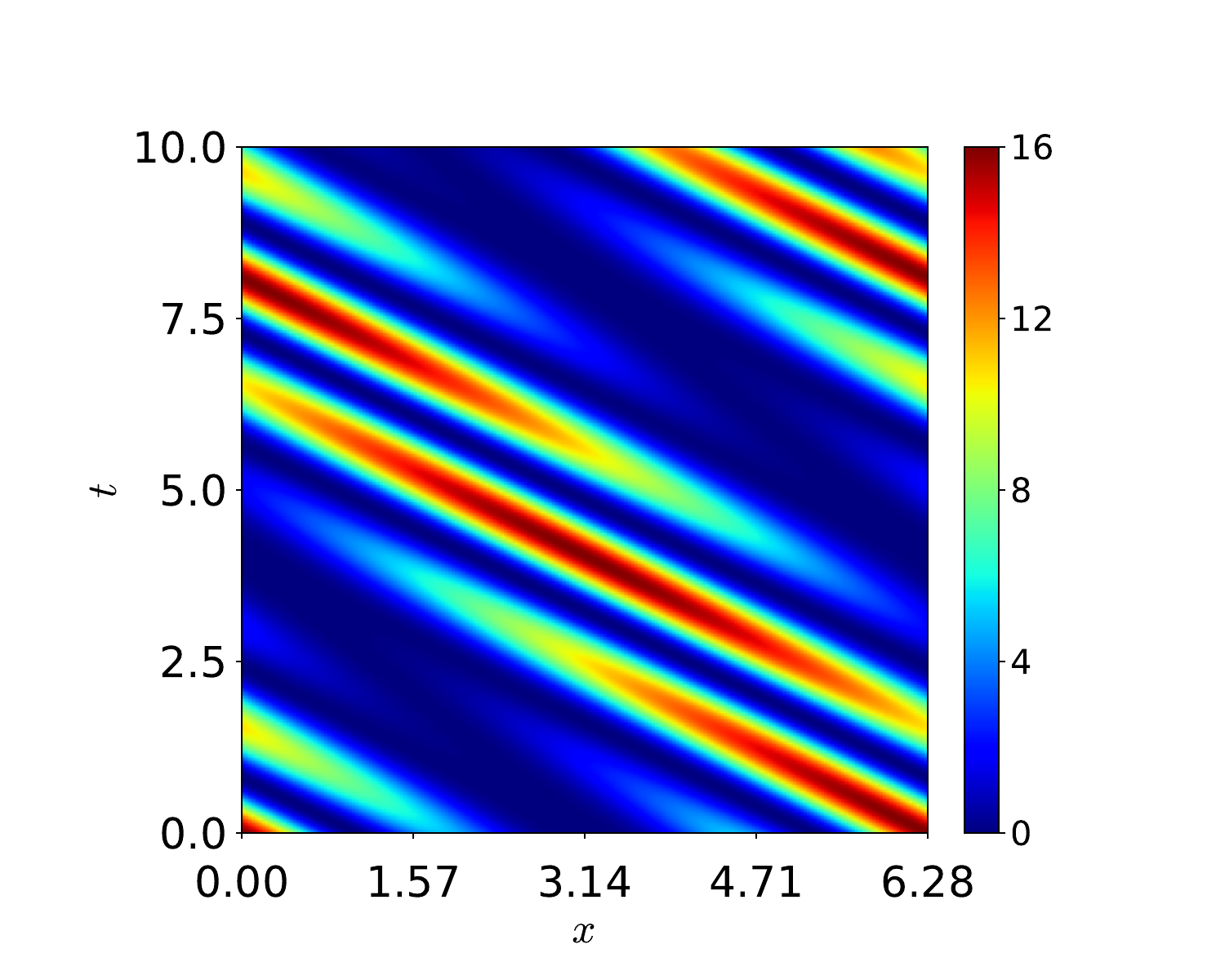}
    \caption{Linear solution}
    \label{fig: nlw-p-2tori-heat-slice-lin}    
    \end{subfigure}
    \begin{subfigure}[t]{0.42\linewidth}
    \centering
    \includegraphics[scale=0.24]{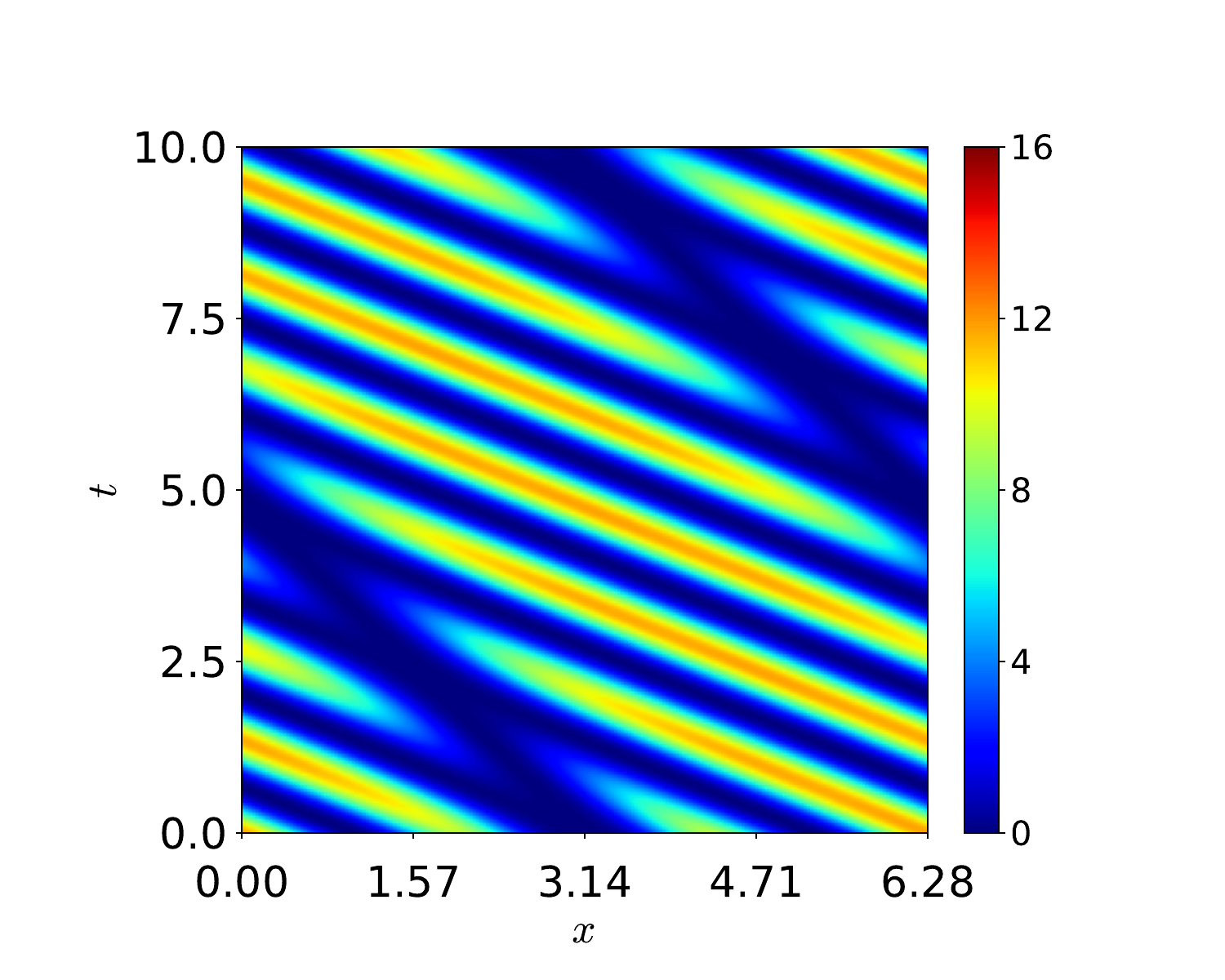}
    \caption{Nonlinear solution}
    \label{fig: nlw-p-2tori-heat-slice-nonlin}
    \end{subfigure} 
    \caption{Spatiotemporal distributions of $|u(x,t)|^2$ for the NLW 
             equation~\eqref{eqn: nlw} with the periodic boundary 
             conditions~\eqref{eqn: per-bdy}. 
             The linear solution is~\eqref{eqn: lin-2-nlw}, and the 
             nonlinear solution is obtained via the alternating numerical 
             scheme~\eqref{eqn: alternating-numerical-nlw}.}
    \label{fig: nlw-p-2tori-heat-slice}
\end{figure}
When the nonlinear perturbation is introduced, the bands remain visible, but 
their spacing and intensity patterns change. 
In~\Cref{fig: nlw-p-2tori-heat-slice-nonlin}, the bright bands are more 
closely spaced, and the high-intensity regions are rearranged. 
These changes are consistent with nonlinear frequency shifts and contributions 
from additional modes excited through nonlinear coupling.

\subsection{Generalization to the Dirichlet boundary condition}
\label{subsec: nlw-dirichlet}

We now extend the construction for the NLW equation~\eqref{eqn: nlw} from the 
periodic setting~\eqref{eqn: per-bdy} to the Dirichlet boundary 
condition~\eqref{eqn: dirichlet-bdy}.
For $x \in [-\pi, 0]$, define the odd extension by $u(x, t) = -u(-x, t)$, and 
extend the resulting function $2\pi$-periodic and odd in $x$. 
In the complex exponential expansion~\eqref{eqn: nlw-fourier-3}, spatial 
oddness implies the Fourier coefficient symmetry 
$\hat{u}(n, k) = - \hat{u}(-n, k)$. 
We select the set of tangential modes from the positive integers:
\begin{equation}
    \label{eqn: tangent-spatial-mode-nlw-dirichlet}
    \mathcal{N} = \left\{ n_1 < n_2 < \ldots < n_m \right\} \subset \mathbb{N}^+.
\end{equation}
The additional coefficient symmetry~$\hat{u}(n, k) = - \hat{u}(-n, k)$ along with
condition $\hat{u}(n,k) = \hat{u}(-n,-k)$ allows both signs of the temporal 
index to be paired. 
Consequently, each tangential mode contributes four signed combinations to the 
resonant set, rather than the two in the periodic construction: 
\begin{equation}
    \label{eqn: resonant-index-nlw-dirichlet}
    \mathcal{S} := \left\{ (\pm n_j, \pm e_j)\in \mathbb{Z}^{1+m} ~\big|~ j = 1, \ldots, m \right\}, 
\end{equation}
where the two signs are chosen independently. 
Pairing the corresponding terms in the cosine 
representation~\eqref{eqn: nlw-fourier-2}, we obtain 
\begin{equation}
    \label{eqn: nlw-fourier-dirichlet-2}
    u(x, t) = - 2 \sum_{n=1}^{\infty} \sum_{ k \in \mathbb{Z}^m} \hat{u}(n, k) \sin\left( \langle k, \omega' t + \theta_0 \rangle \right) \sin\left( nx \right), 
\end{equation}
and the corresponding linear solution is given by
\begin{equation}
    \label{eqn: nlw-fourier-dirichlet-linear}
    u(x, t) = - 4 \sum_{j=1}^{m} a_j \sin \left(\omega_j t + \theta_{0,j}\right) \sin(n_jx)
\end{equation}

We next demonstrate the performance of the alternating numerical 
scheme~\eqref{eqn: alternating-numerical-nlw}, where for simplicity we set the 
initial phase $\theta_0 = 0$.

\paragraph{Periodic solutions ($m=1$)} 
In contrast to the periodic setting, the Dirichlet boundary 
conditions~\eqref{eqn: dirichlet-bdy} impose the additional symmetry 
$\hat{u}(n, k) = - \hat{u}(-n, k)$ on the space-time Fourier coefficients, so 
that the linear solution~\eqref{eqn: lin-1-nlw} separates into a product of two 
sine functions,
\begin{equation}
    \label{eqn: lin-1-nlw-dirichlet}
    u(x, t) = -4 a_1 \sin \left(\omega_1 t \right) \sin(n_1x).
\end{equation}
Expanding the solution in a sine series in $x$, it is straightforward to show 
that the nonlinear solution takes the form
\begin{subequations}
    \label{eqn: nlw-dirichlet-1}
    \begin{empheq}[left=\empheqlbrace]{align} 
       & \hat{u}_{n_1}(t) = -4a_1 \sin(\omega_1't) - 4 \sum_{k = 2}^{\infty} \hat{u}(n_1,k) \sin(k\omega_1't),                                \label{eqn: nlw-dirichlet-1-1}                \\
       & \hat{u}_{n}(t)   = -4 \sum_{k=1}^{\infty} \hat{u}(n,k) \sin(k\omega_1't), \mathrel{\phantom{==========}} n \in \mathbb{N}^+ \setminus \{ n_1 \}.           \label{eqn: nlw-dirichlet-1-2}                
    \end{empheq}        
\end{subequations} 
Since the coefficients $\hat{u}_n(t)$ are now real, the phase-space trajectories 
are plotted in the canonical variables
\[
    \left(\hat{u}_n(t),~ \frac{\dot{\hat{u}}_{n}(t) }{\sqrt{n^2 + \mu}} \right),
\]
in which a mode oscillating at its linear frequency traces a circle. 
For the numerical test, we take $n_1 = 1$, $\mu = \sqrt{2}$, and $a_1 = 1$, so 
that $\omega_1 = \sqrt{1 + \sqrt{2}}$; we set the perterbation quantity 
$\varepsilon=0.5$; the results are shown 
in~\Cref{fig: nlw-d-1torus-traj}.
\begin{figure}[htb!]
    \centering
    \begin{subfigure}[t]{0.3\linewidth}
    \centering
    \includegraphics[scale=0.18]{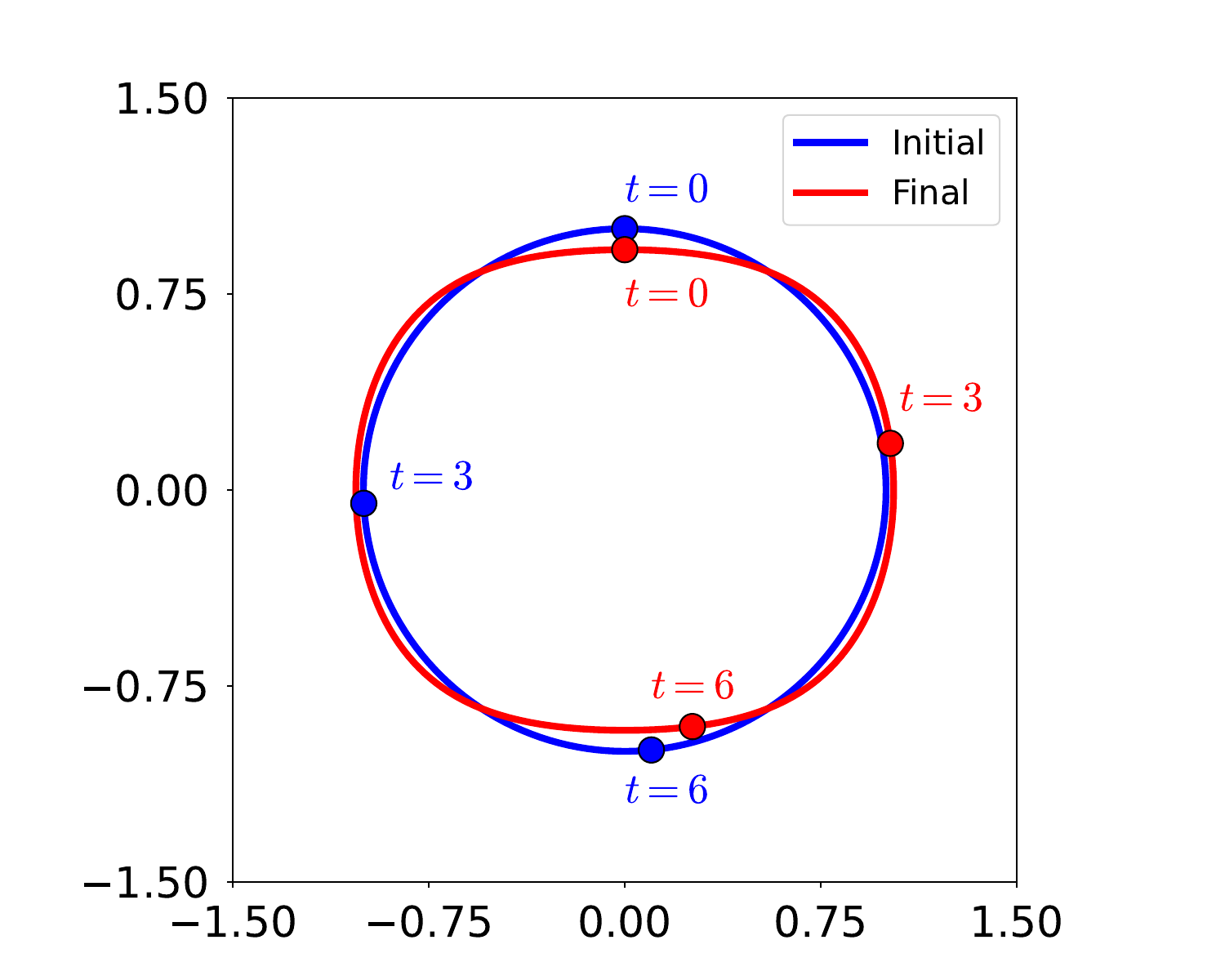}
    \caption{Tangential mode: $\hat{u}_{1}(t)$}
    \end{subfigure}
    \begin{subfigure}[t]{0.3\linewidth}
    \centering
    \includegraphics[scale=0.18]{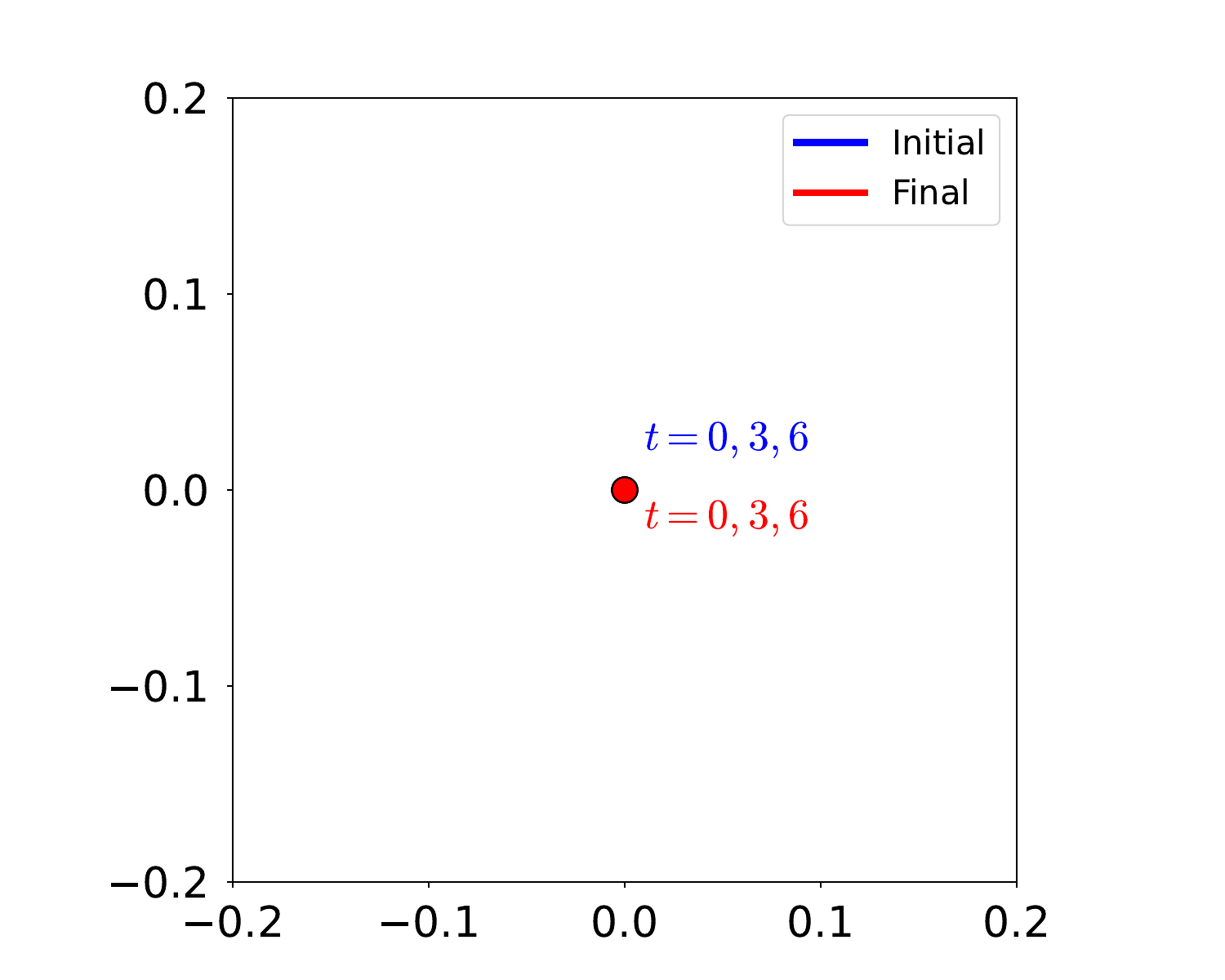}
    \caption{Normal mode: $\hat{u}_{2}(t)$}
    \end{subfigure}
    \begin{subfigure}[t]{0.3\linewidth}
    \centering
    \includegraphics[scale=0.18]{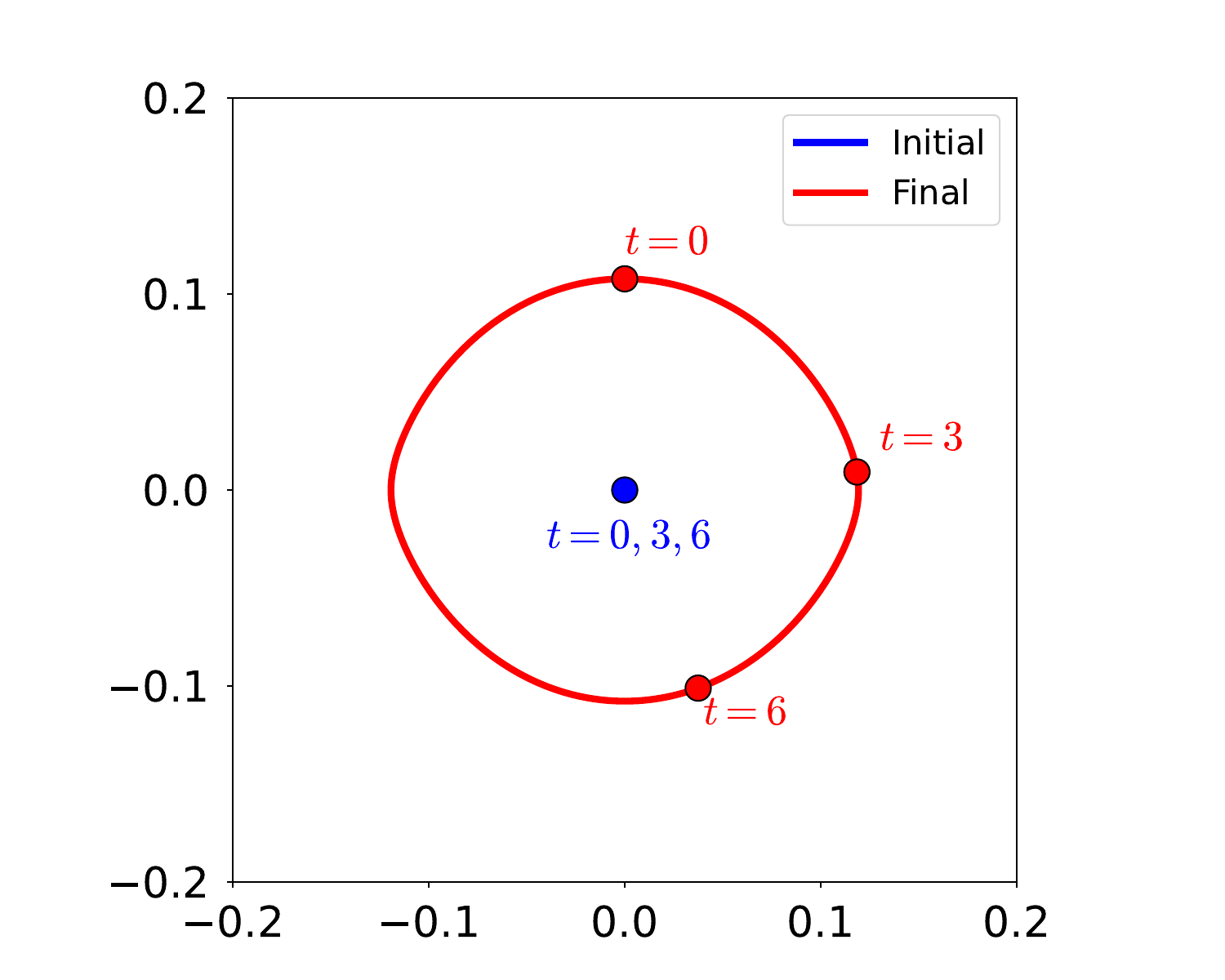}
    \caption{Normal mode: $\hat{u}_{3}(t)$}
    \end{subfigure}
    \caption{Phase space trajectories of the NLW equation~\eqref{eqn: nlw} with 
             the Dirichlet boundary conditions~\eqref{eqn: dirichlet-bdy}. 
             ``Initial'' and ``Final'' denote the initial linear solution and 
             the computed nonlinear solution, respectively. 
             Markers indicate specific time points at $t = 0$, $3$, and $6$. } 
    \label{fig: nlw-d-1torus-traj}
\end{figure}
Compared with the periodic case in~\Cref{fig: nlw-p-1torus-traj}, the 
tangential mode $\hat{u}_{1}(t)$ is slightly deformed: the trajectory of the 
nonlinear solution is a mildly flattened closed curve rather than the circle 
traced by the linear solution. 
The representative normal mode $\hat{u}_{2}(t)$ behaves exactly as in the 
periodic case and remains fixed at the origin, since the cubic nonlinearity 
couples the fundamental mode only to odd modes. 
The other representative normal mode $\hat{u}_{3}(t)$ vanishes identically 
in the linear solution, so that its ``initial" trajectory collapses to the 
origin, but it is excited by the nonlinear interaction. 
Its nonlinear trajectory is a small closed loop that is visibly flattened in 
the vertical direction, in contrast to the shape observed in the periodic case. 
The convergence profile is shown in~\Cref{fig: nlw-d-1torus-con}. 
\begin{figure}[htb!]
    \centering
    \begin{subfigure}[t]{0.3\linewidth}
    \centering
    \includegraphics[scale=0.18]{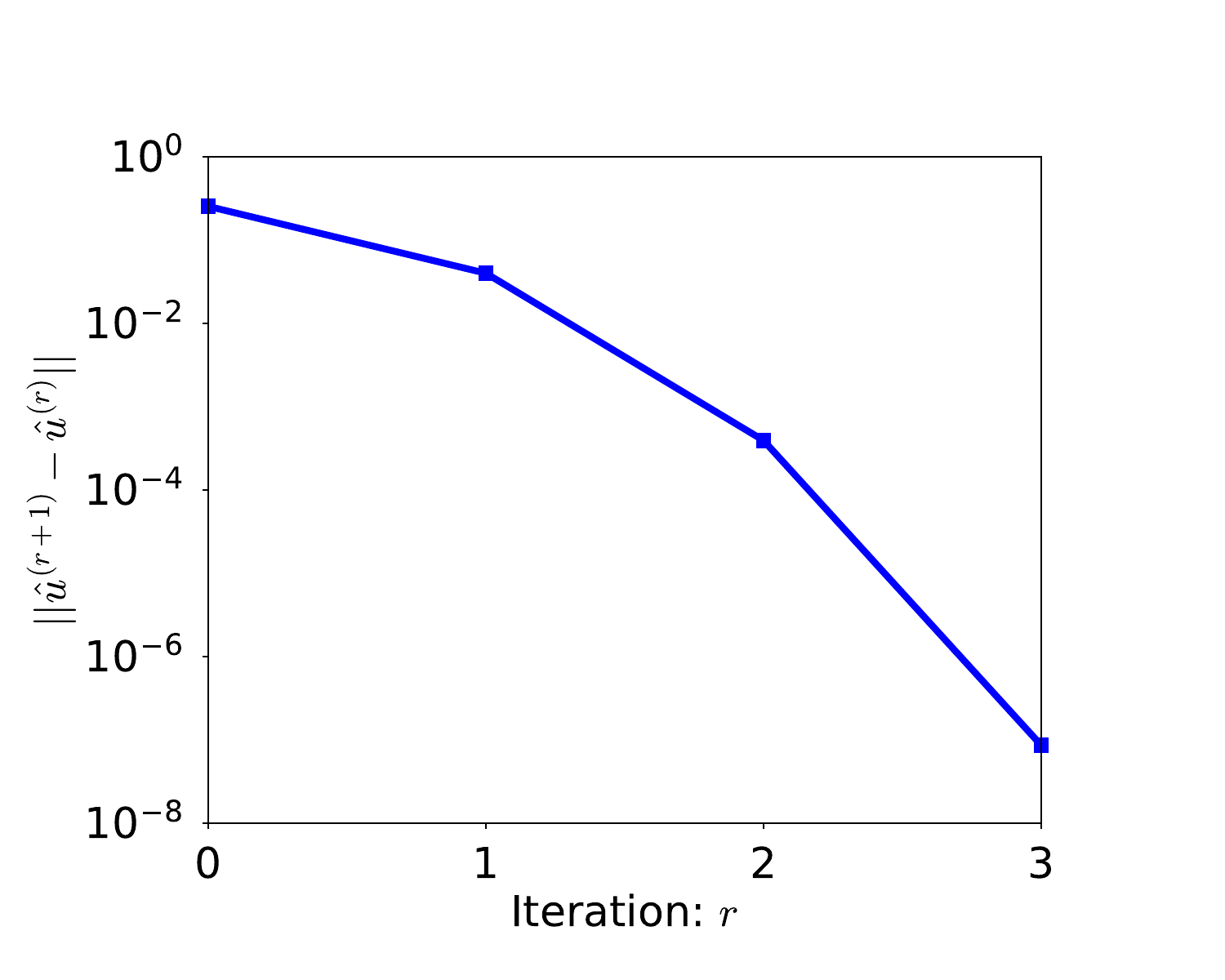}
    \caption{$\| \hat{u}_p^{(r+1)} - \hat{u}_p^{(r)} \|$}
    \end{subfigure}
    \begin{subfigure}[t]{0.3\linewidth}
    \centering
    \includegraphics[scale=0.18]{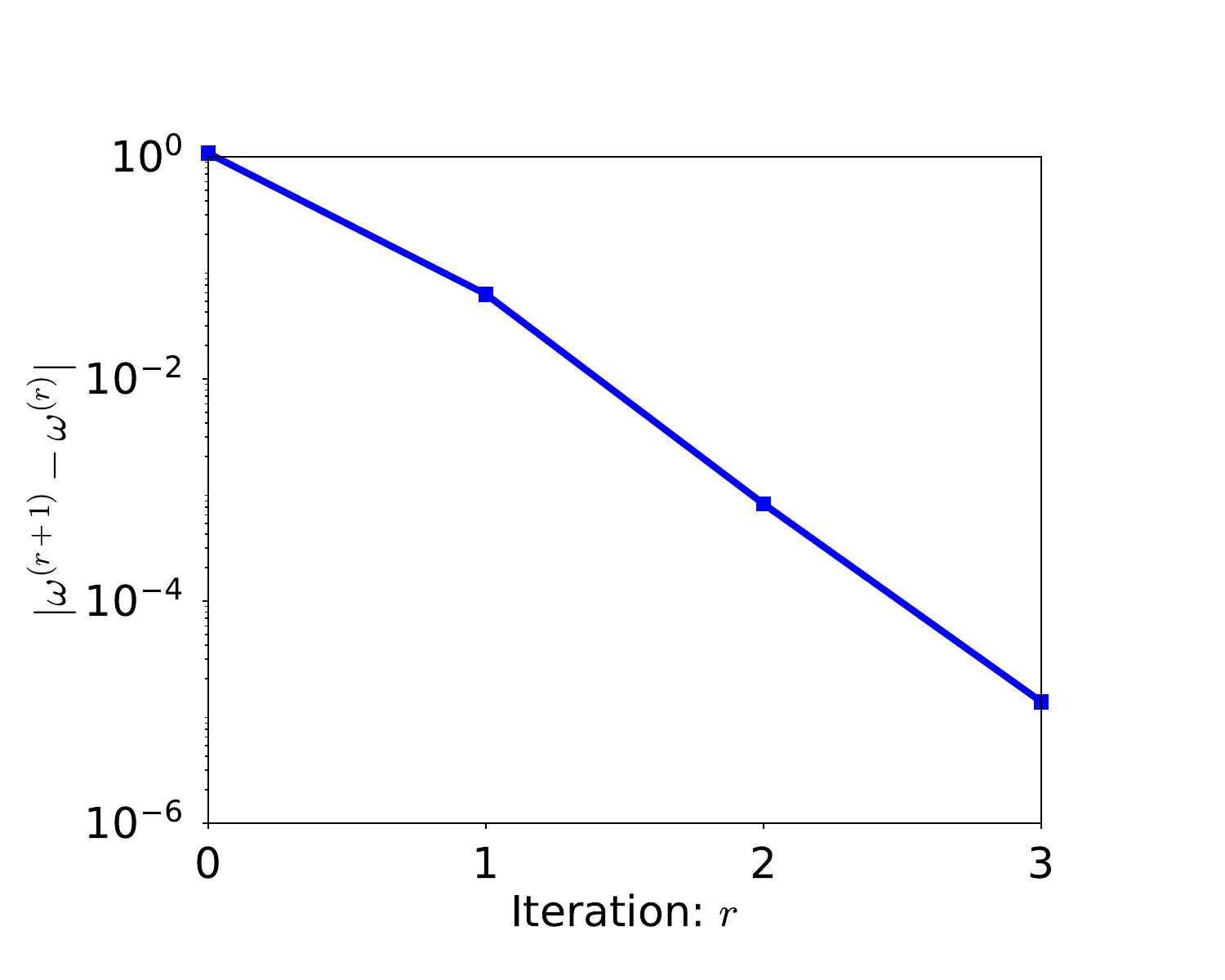}
    \caption{$| \omega^{(r+1)} - \omega^{(r)} |$}
    \end{subfigure}
    \begin{subfigure}[t]{0.3\linewidth}
    \centering
    \includegraphics[scale=0.18]{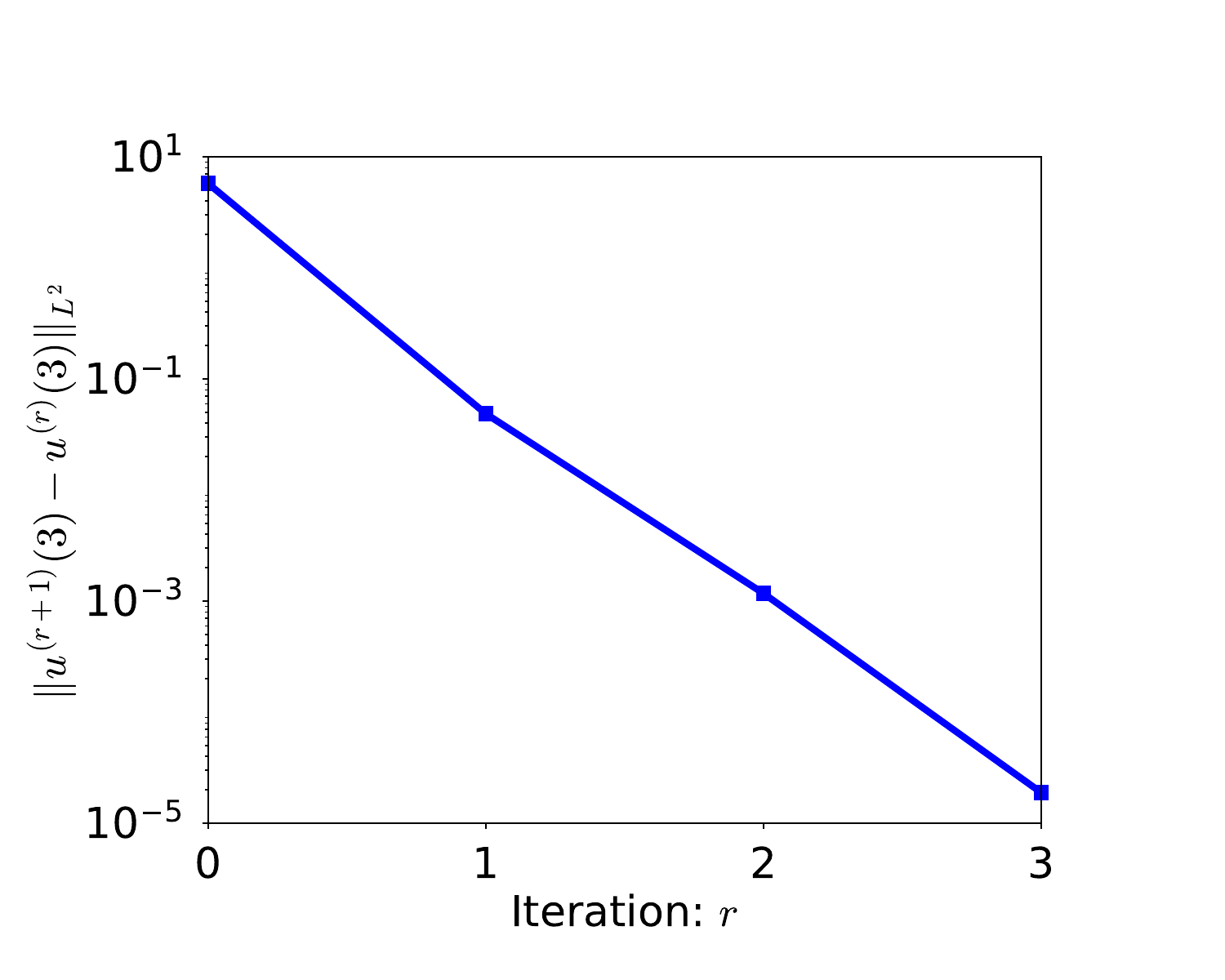}
    \caption{$\| u^{(r+1)}(x, 3) - u^{(r)}(x, 3)  \|_{L^2}$}
    \end{subfigure}
    \caption{Convergence profiles across iterations for the NLW 
             equation~\eqref{eqn: nlw} with the Dirichlet boundary 
             condition~\eqref{eqn: dirichlet-bdy}. } 
    \label{fig: nlw-d-1torus-con}
\end{figure}

We next examine the spatiotemporal distributions of $|u(x, t)|^2$. 
Taking the squared modulus of the linear 
solution~\eqref{eqn: lin-1-nlw-dirichlet} yields
\[
    |u|^2 = 16\sin^2 \left( \sqrt{1 + \sqrt{2}}t \right) \sin^2 x, 
\]
which is a standard standing wave: the nodes are fixed at $x=0$ and $x=\pi$, 
the antinode is fixed at $x=\pi/2$, as shown 
in~\Cref{fig: nlw-d-1torus-heat-slice-lin}.
\begin{figure}[htb!]
    \centering
    \begin{subfigure}[t]{0.42\linewidth}
    \centering
    \includegraphics[scale=0.24]{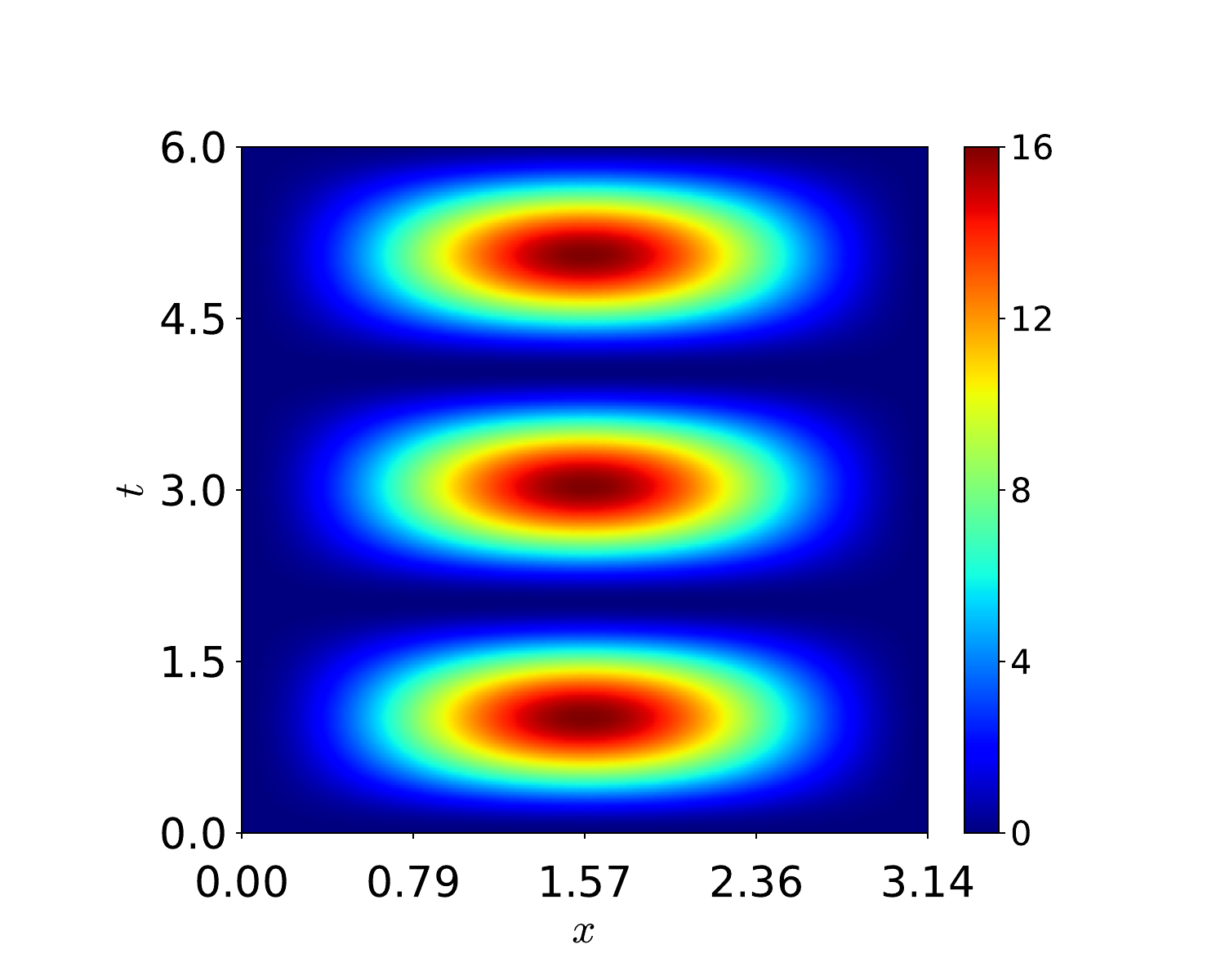}
    \caption{Linear solution}
    \label{fig: nlw-d-1torus-heat-slice-lin}
    \end{subfigure}
    \begin{subfigure}[t]{0.42\linewidth}
    \centering
    \includegraphics[scale=0.24]{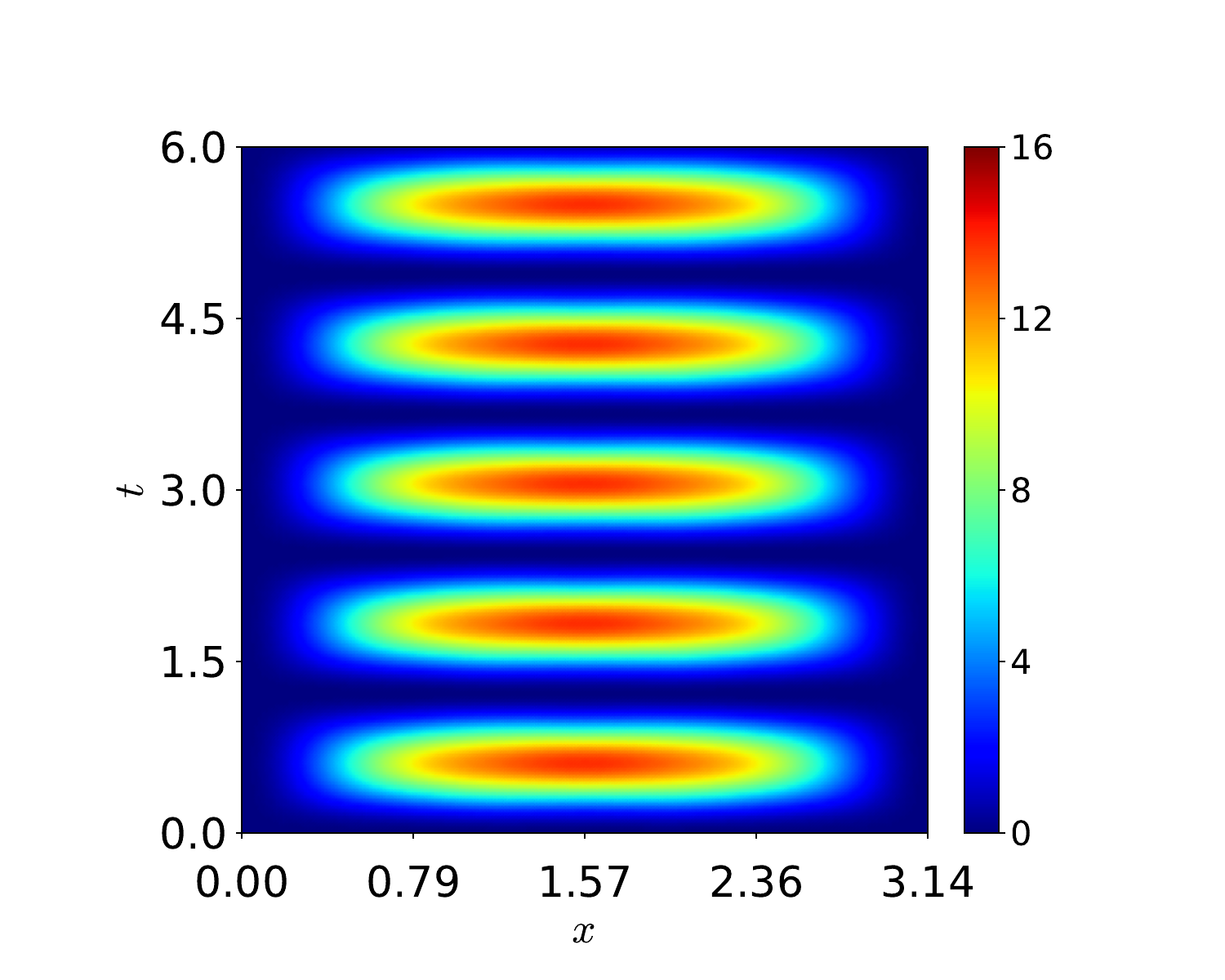}
   \caption{Nonlinear solution}
   \label{fig: nlw-d-1torus-heat-slice-nonlin}
    \end{subfigure}
    \caption{Spatiotemporal distributions of $|u(x,t)|^2$ for the NLW 
             equation~\eqref{eqn: nlw} with the Dirichlet boundary 
             conditions~\eqref{eqn: dirichlet-bdy}. 
             The linear solution is~\eqref{eqn: lin-1-nlw-dirichlet}, and the 
             nonlinear solution is obtained via the alternating numerical 
             scheme~\eqref{eqn: alternating-numerical-nlw}.}
    \label{fig: nlw-d-1torus-heat-slice}
\end{figure}
Once the nonlinear term is included, the pattern is modified in two respects, 
as shown in~\Cref{fig: nlw-d-1torus-heat-slice-nonlin}. 
In space, the profile becomes broader and its peak value is lowered, since the 
higher spatial harmonics generated by the cubic nonlinearity redistribute 
energy away from the fundamental mode $\sin x$. 
In time, the nonlinear solution completes more oscillations than the linear one 
over the same time window, reflecting the upward shift of the frequency. 
The solution remains a standing wave and no net propagation occurs; the 
nonlinearity alters the shape and the period of the pattern, but not its 
character.

\paragraph{Quasi-periodic solutions ($m=2$)} 
For $m=2$, the linear quasi-periodic solution under the Dirichlet boundary 
condition~\eqref{eqn: dirichlet-bdy} is a superposition of two standing waves, 
which reads
\begin{equation}
    \label{eqn: lin-2-nlw-dirichlet}
    u(x, t) = -4a_1 \sin(\omega_1t) \sin(n_1 x) -4a_2 \sin(\omega_2t) \sin(n_2 x). 
\end{equation}
The corresponding nonlinear solution can be written in terms of its spatial 
Fourier coefficients as
\begin{subequations}
    \label{eqn: nlw-dirichlet-2}
    \begin{empheq}[left=\empheqlbrace]{align} 
       & \hat{u}_{n_1}(t) = -4a_1 \sin(\omega_1't) - 4 \sum_{k \in \mathbb{Z}^2_+ \setminus \{ (1, 0) \}} \hat{u}(n_1,k) \sin(\langle k, \omega' \rangle t),              \label{eqn: nlw-dirichlet-2-1}       \\
       & \hat{u}_{n_2}(t) = -4a_2 \sin(\omega_2't) - 4 \sum_{k \in \mathbb{Z}^2_+ \setminus \{ (0, 1) \}} \hat{u}(n_2,k) \sin(\langle k, \omega' \rangle t),              \label{eqn: nlw-dirichlet-2-2}       \\       
       & \hat{u}_{n}(t)   = -4 \sum_{k \in \mathbb{Z}^2_+} \hat{u}(n,k) \sin(\langle k, \omega' \rangle t), \mathrel{\phantom{==========}} n \in \mathbb{N}^+ \setminus \{ n_1, n_2\}.          \label{eqn: nlw-dirichlet-2-3}                
    \end{empheq}        
\end{subequations} 
where $\omega' = (\omega_1', \omega_2')$ denotes the shifted frequency vector. 
For the numerical test, we take $n_1 = 1$, $n_2 =2$, $\mu=\sqrt{2}$, and 
$a_1=a_2 =1$, so that $\omega_1=\sqrt{1+\sqrt{2}}$ and 
$\omega_2 = \sqrt{4+\sqrt{2}}$ and the perturbation quantity $\varepsilon=0.2$.~\Cref{fig: nlw-d-2tori-traj} compares the 
phase-space trajectories of the initial linear and computed nonlinear solutions. 
\begin{figure}[htb!]
    \centering
    \begin{subfigure}[t]{0.3\linewidth}
    \centering
    \includegraphics[scale=0.18]{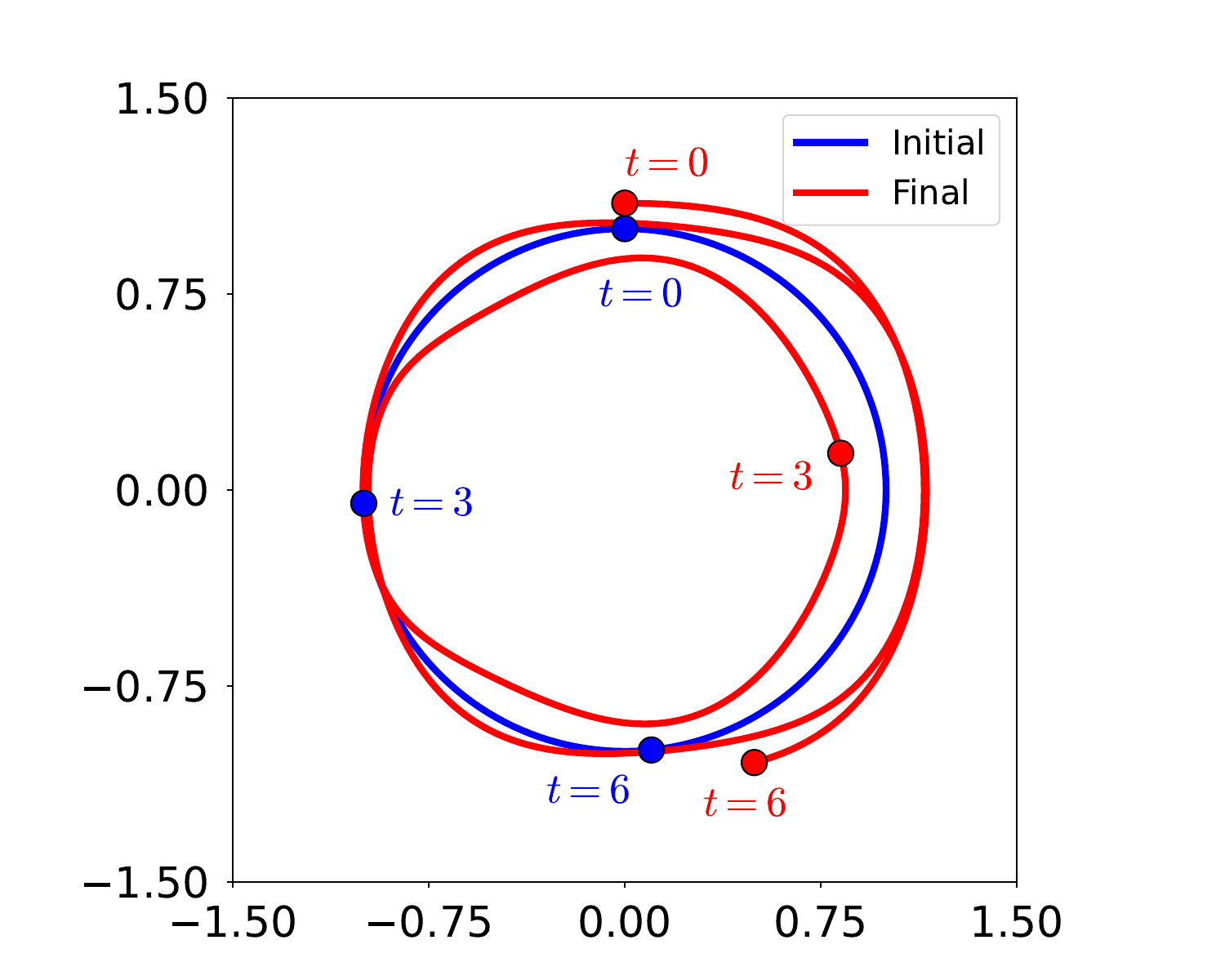}
    \caption{Tangential mode: $\hat{u}_{1}(t)$}
    \end{subfigure}
    \begin{subfigure}[t]{0.3\linewidth}
    \centering
    \includegraphics[scale=0.18]{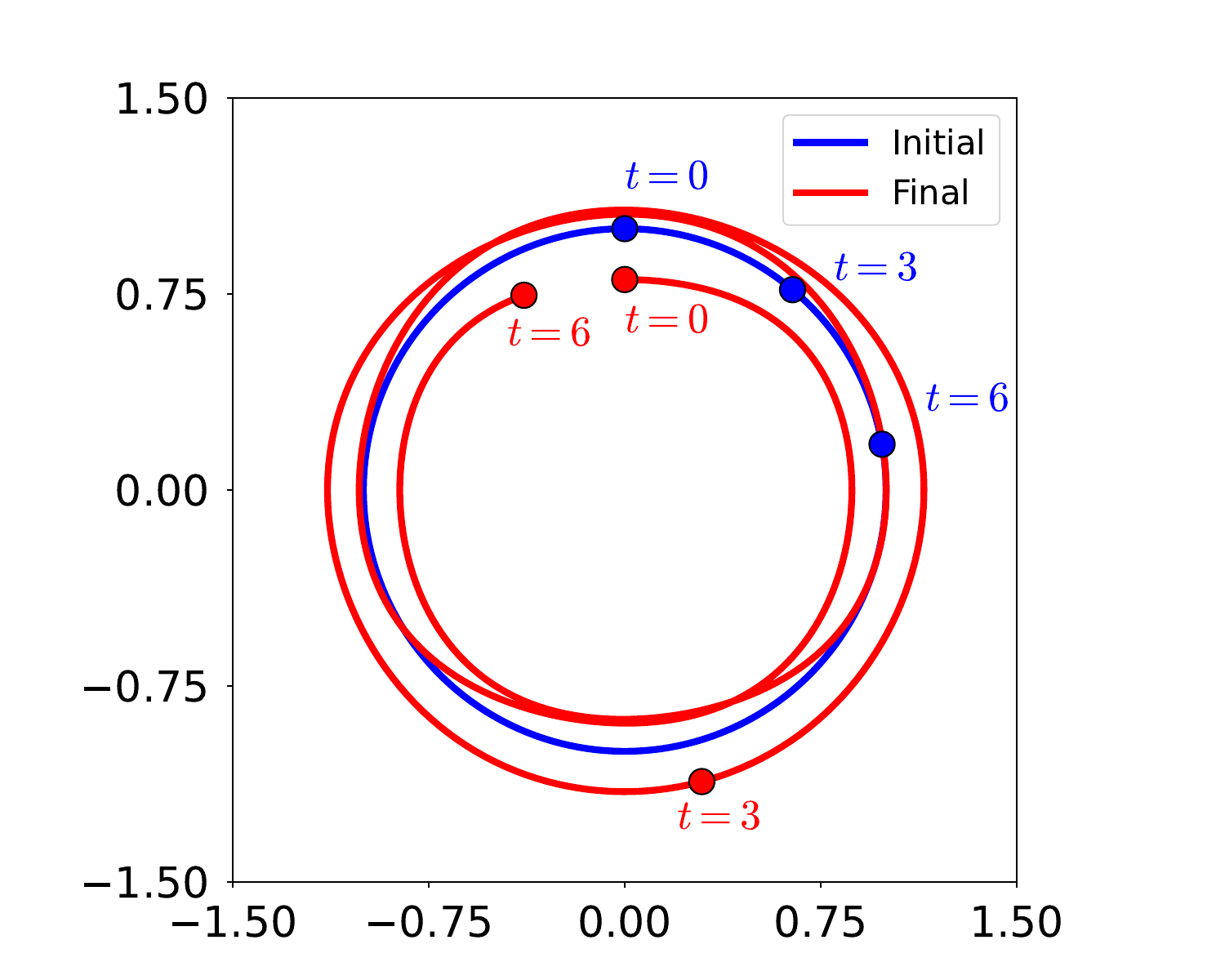}
    \caption{Tangential mode: $\hat{u}_{2}(t)$}
    \end{subfigure}
    \begin{subfigure}[t]{0.3\linewidth}
    \centering
    \includegraphics[scale=0.18]{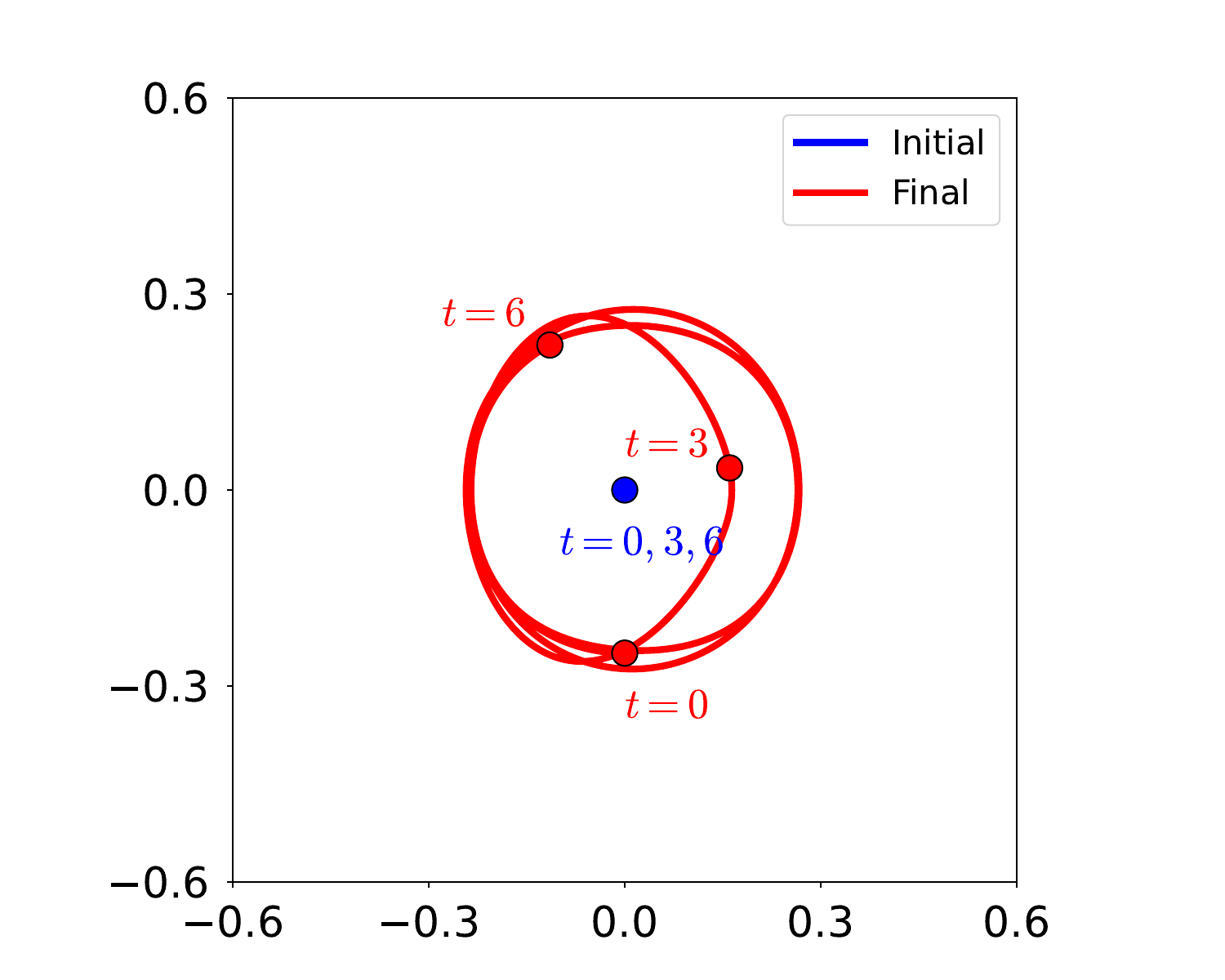}
    \caption{Normal mode: $\hat{u}_{3}(t)$}
    \end{subfigure}
    \caption{Phase space trajectories of the NLW equation~\eqref{eqn: nlw} with the Dirichlet boundary conditions~\eqref{eqn: dirichlet-bdy}. ``Initial'' and ``Final'' denote the initial linear solution and the computed nonlinear solution, respectively. Markers indicate specific time points at $t = 0$, $3$, and $6$.  } 
    \label{fig: nlw-d-2tori-traj}
\end{figure}
As in the periodic-solution case shown in~\Cref{fig: nlw-d-1torus-traj}, the frequency 
shifts produce visible differences between the two sets of trajectories. 
In addition, the quasi-periodic nature of the nonlinear solution is now 
evident: the trajectories return close to previously visited positions without 
exactly retracing their paths. 
\begin{figure}[htb!]
    \centering
    \begin{subfigure}[t]{0.42\linewidth}
    \centering
    \includegraphics[scale=0.24]{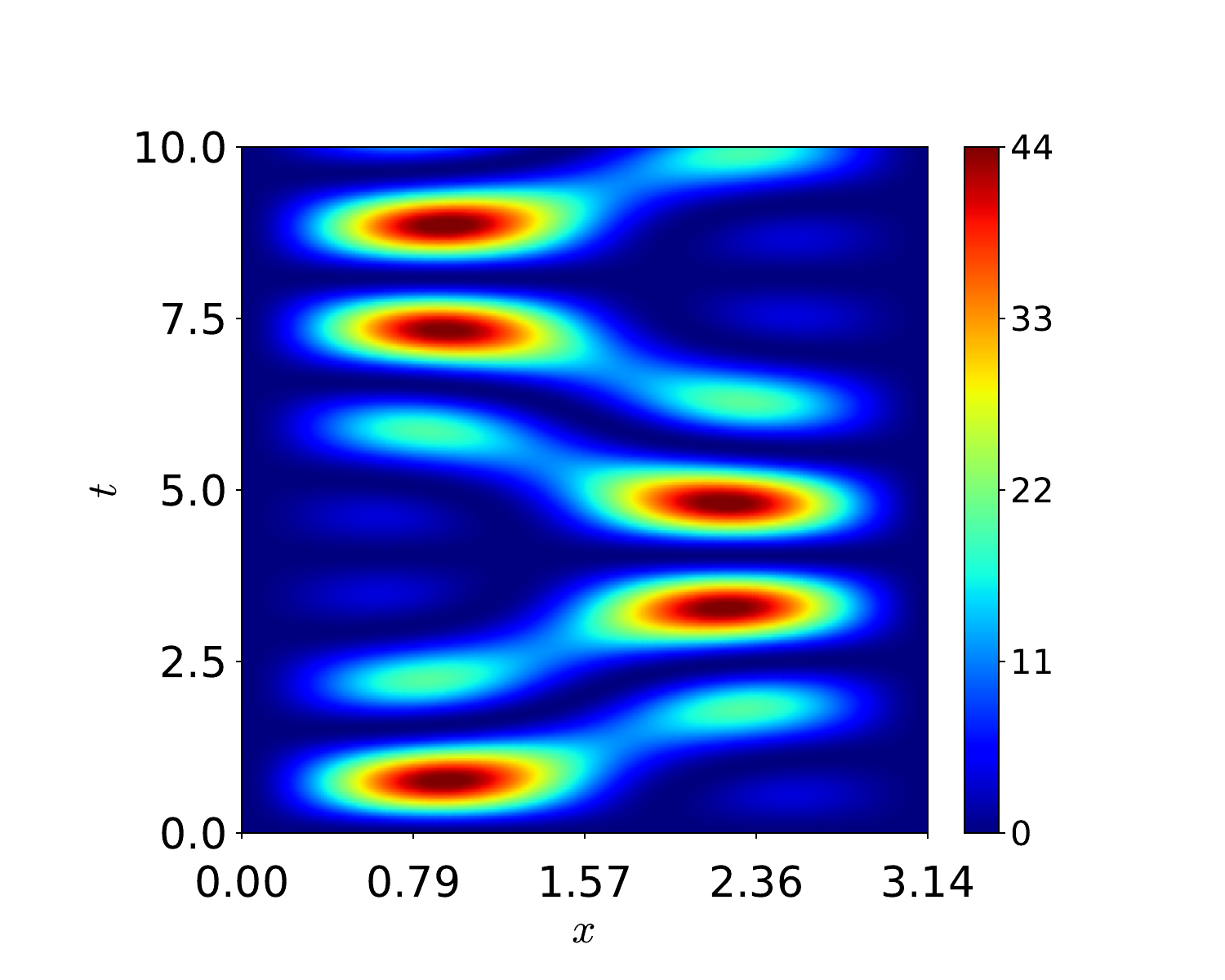}
     \caption{Linear solution}
    \label{fig: nlw-d-2tori-heat-slice-lin}     
    \end{subfigure}
    \begin{subfigure}[t]{0.42\linewidth}
    \centering
    \includegraphics[scale=0.24]{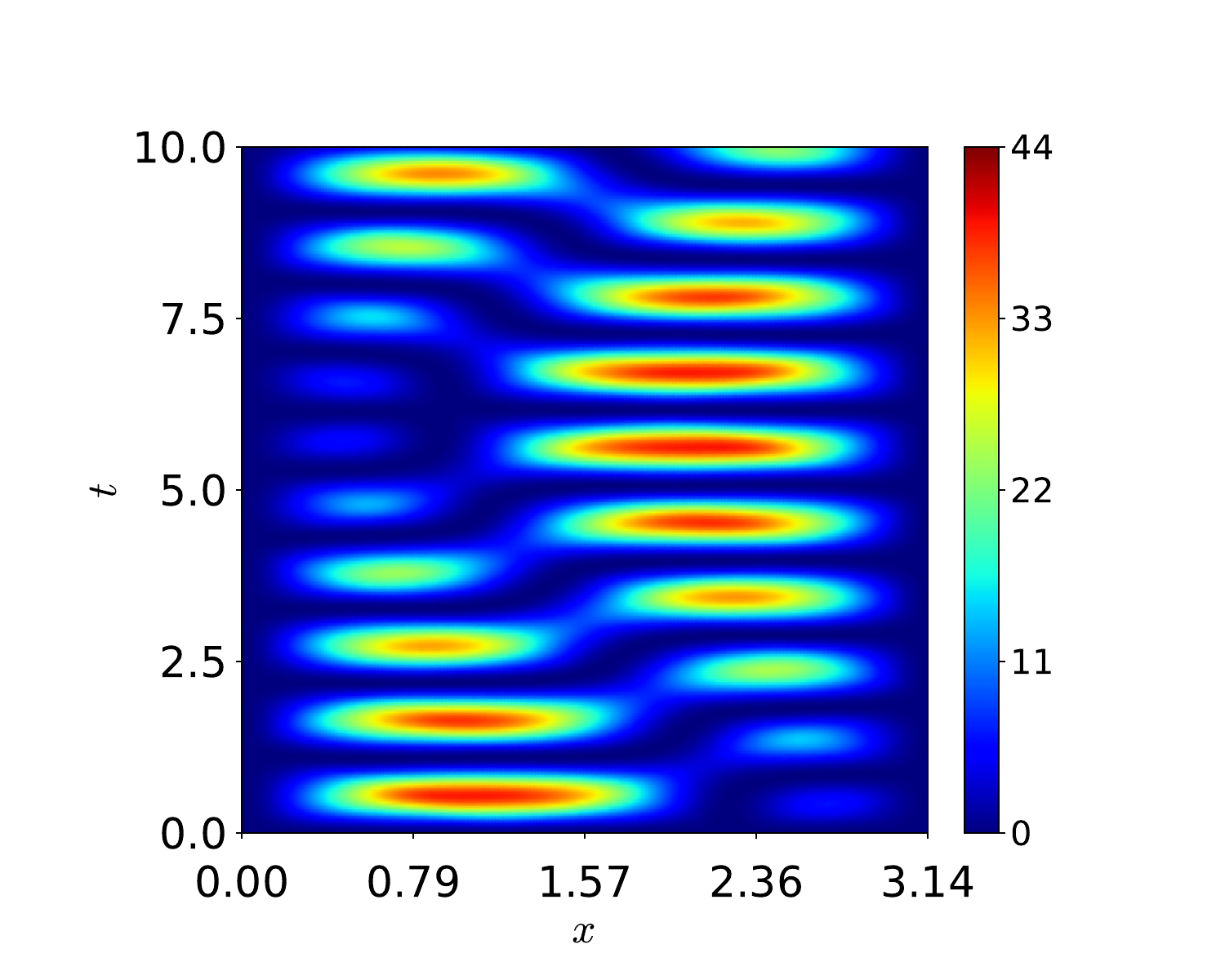}
    \caption{Nonlinear solution}
    \label{fig: nlw-d-2tori-heat-slice-nonlin}         
    \end{subfigure}
    \caption{Spatiotemporal distributions of $|u(x,t)|^2$ for the NLW 
             equation~\eqref{eqn: nlw} with the Dirichlet boundary 
             conditions~\eqref{eqn: dirichlet-bdy}. 
             The linear solution is~\eqref{eqn: lin-2-nlw-dirichlet}, and the 
             nonlinear solution is obtained via the alternating numerical 
             scheme~\eqref{eqn: alternating-numerical-nlw}.}
    \label{fig: nlw-d-2tori-heat-slice}
\end{figure}

Turning to the spatiotemporal distributions, the squared modulus of the linear 
solution~\eqref{eqn: lin-2-nlw-dirichlet} with the parameters specified above 
reads
\begin{align}
    |u|^2 = &16 \sin^2\left( \sqrt{1 + \sqrt{2}} t \right) \sin^2(x) + 16 \sin^2\left(\sqrt{4+\sqrt{2}} t \right) \sin^2(2 x)    \nonumber \\
            & + 32 \sin \left( \sqrt{1 + \sqrt{2}} t \right) \sin\left(\sqrt{4+\sqrt{2}} t \right)\sin(x) \sin(2 x),             \label{eqn: nlw-dirichlet-2-modulus}
\end{align}
where the first two terms are the intensities of the individual standing waves 
and the cross term couples them, producing a pattern shown 
in~\Cref{fig: nlw-d-2tori-heat-slice-lin}, which is considerably richer than 
in the periodic case. 
Since the cross term in~\eqref{eqn: nlw-dirichlet-2-modulus} changes sign at 
$\pi/2$, the intensity is no longer symmetric about the center: the intensity 
sways between the left and right halves of the domain, 
appearing in~\Cref{fig: nlw-d-2tori-heat-slice-lin} as bright spots (maxima) 
alternating. 
The alternation is governed by the slow difference frequency, while the fast 
modulation within each group is the sum frequency. 
Owing to the rational independence of frequencies, the supremum is approached 
but never attained. 
The nonlinear solution in~\Cref{fig: nlw-d-2tori-heat-slice-nonlin} preserves 
this qualitative structure but with clear quantitative differences: the bright 
spots are more closely spaced in time, reflecting the upward frequency shifts, 
and the left-half maxima move toward the center of the domain due to the energy 
transferred to higher spatial harmonics through the normal 
modes~\eqref{eqn: nlw-dirichlet-2-3}. 
As in the periodic-solution case, no net propagation occurs; the solution remains a 
quasi-periodic superposition of standing waves.

\section{Conclusion and further work}
\label{sec: conclu}

In the classical KAM theory for 
PDEs~\citep{kuksin1993nearly, kuksin1996invariant}, an arbitrary set of $m$ 
spatial modes can be taken as the tangential modes. 
To overcome the small-divisor problem, one excludes a set of nearly-resonant 
parameters of small total measure, and all resonances must be excluded 
simultaneously. 
In contrast,~\citet{bourgain1998quasi} takes the $m$ low-frequency spatial modes, 
that is, the large-scale modes in the physical sense, as the tangential modes, 
and uses the CWB method to construct the quasi-periodic solution by an inductive 
multiscale argument. 
The choice of low-frequency tangential modes is essential here: if a 
high-frequency spatial mode were included among them, the structure of the 
problem would no longer be preserved from one scale to the next, and the 
argument would cease to be inductive. 
Precisely for this reason,~\citet{bourgain1998quasi} is able to treat the 
nearly-resonant set locally rather than globally: within each box of scale $N$, 
only the nearly-resonant sites inside that box need to be handled. 
This local excision of the nearly-resonant set, combined with the separation of 
the spatial modes, makes it possible to handle the multiplicity of the 
eigenvalues $|n|^2$. 
For the two-dimensional torus,~\citet{bourgain1998quasi} showed that the 
eigenvalues $|n|^2$ form well-separated clusters, based on the key observation 
that the number of lattice points on a circle grows more slowly than any power 
of the radius. 
In~\citet{bourgain2005green}, the well-separation property was established in 
arbitrary dimension, thereby extending the construction of quasi-periodic 
solutions to the higher-dimensional case.

The CWB approach is also the more meaningful one from a physical point of view. 
Physical observations are concerned mainly with large-scale behavior, which 
corresponds precisely to the low-frequency spatial modes. 
Moreover, the existence of quasi-periodic solutions describes a persistent 
failure of energy to accumulate from small scales to large scales, analogous to 
the arrest of the inverse cascade in geostrophic 
turbulence~\citep{rhines1979geostrophic}. 
As regards efficient computation, in the present infinite-dimensional setting, 
as well as in the finite-dimensional one treated 
in~\citet{Fu2026numerical-a, Fu2026numerical-b}, the storage requirement for 
the Jacobian matrix in the Newton iteration becomes very large, since its 
dimension is enlarged at each step. Most of its entries, however, are very 
small, and it is natural to ask whether they can be set to zero so that the 
matrix is stored in sparse form. 
Exploiting such sparsity is a very interesting direction for further work.

\section*{Acknowledgements}
This work was partially supported by the NSFC (Grant No. 12241105) and by SIMIS (startup fund and cross-disciplinary research projects).

\bibliographystyle{abbrvnat}
\bibliography{sigproc}

\appendix
\section{Proofs of results in~\Cref{sec: basic-setting}}
\label{sec: priori-error-app}

In this appendix, we provide detailed proofs for the results 
in~\Cref{sec: basic-setting}, especially 
for~\Cref{prop: vector-field} ---~\Cref{prop: B-operator} 
(see~\Cref{subsec: vector-field} ---~\Cref{subsec: b-operator}). 
To facilitate these proofs, we first establish an elementary inequality and show 
that the Gevrey decay property is preserved under convolution. 
These preliminary results are rigorously stated in the following lemmas.

\subsection{Preliminary lemmas}
\label{subsec: preliminary}

\begin{lemma}
    \label{lem: app-1}
    Let $a, b \ge 0$ and $0 < s < 1$. Then the following inequality holds:
    \begin{equation}
        \label{eqn: ineq_power}
        a^s + b^s - (a+b)^s \ge (2-2^s) \min\{a,b\}^s. 
    \end{equation}
\end{lemma}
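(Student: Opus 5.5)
Both sides of \eqref{eqn: ineq_power} are symmetric in $(a,b)$ and homogeneous of degree $s$, so the plan is to normalize and then reduce to a one-variable monotonicity statement. By symmetry, assume $0 \le a \le b$, so that $\min\{a,b\} = a$. If $a = 0$, both sides vanish and there is nothing to prove. Otherwise $b>0$, and dividing by $a^s$ and writing $t = b/a \ge 1$ reduces the claim to
\[
    g(t) := 1 + t^s - (1+t)^s \ge 2 - 2^s, \qquad t \ge 1.
\]

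The key step is to show that $g$ is nondecreasing on $[1,\infty)$. Differentiating gives
\[
    g'(t) = s\left( t^{s-1} - (1+t)^{s-1} \right).
\]
Since $0<s<1$, the exponent $s-1$ is negative, so $x \mapsto x^{s-1}$ is strictly decreasing on $(0,\infty)$. Hence $t^{s-1} > (1+t)^{s-1}$ for every $t>0$, which gives $g'(t) > 0$. It follows that $g(t) \ge g(1) = 2 - 2^s$ for all $t \ge 1$.

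Undoing the normalization, that is, multiplying back by $a^s$, yields
\[
    a^s + b^s - (a+b)^s \ge (2-2^s)\, a^s = (2-2^s) \min\{a,b\}^s,
\]
which is \eqref{eqn: ineq_power}. There is no genuine obstacle in this argument. The only points requiring care are disposing of the degenerate case $a=0$ before dividing, and observing that the monotonicity of $g$ uses only that $s-1<0$.
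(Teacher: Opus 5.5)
Your proposal is correct and follows the paper's proof essentially verbatim: the paper likewise orders the variables, disposes of the zero case, normalizes by the smaller one to get $f(x)=x^s+1-(1+x)^s$ on $[1,\infty)$, shows $f'(x)=s\bigl(x^{s-1}-(x+1)^{s-1}\bigr)>0$, and multiplies $f(x)\ge f(1)=2-2^s$ back by $\min\{a,b\}^s$.
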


\begin{proof}
    Without loss of generality, we assume that $a \ge b$. 
    The case $b=0$ is immediate. 
    For $b > 0$, set $x = a/b \geq 1$ and define $f(x) = x^s + 1 - (1+x)^s$. 
    Since $0 < s <1$, its derivative satisfies $f'(x) = s(x^{s-1} - (x+1)^{s-1}) > 0$ 
    for all $x \geq 1$. 
    Thus, $f$ is strictly increasing on $[1, \infty)$, and hence 
    $f(x) \geq f(1)= 2 - 2^{s}$. 
    Multiplying this inequality by $b^s$ yields~\eqref{eqn: ineq_power}, so the 
    proof is complete. 
\end{proof}

We next use~\Cref{lem: app-1} to show that convolution preserves the Gevrey 
decay up to a multiplicative constant.

\begin{lemma}
    \label{lem: app-2}
    Let $\mathcal{K}(s)$ be the Gevrey decay set defined 
    in~\eqref{eqn: gevrey-decay-set}, with $0 < s < 1$. 
    There exists a constant $C_2(m, s) > 0$ such that, for any 
    $\hat{a}, \hat{b} \in \mathcal{K}(s)$, their convolution satisfies
    \begin{equation}    
        \label{eqn: convolution}   
        \sup_{(n, k) \in \mathbb{Z}^{1+m}} \left( |(\hat{a}*\hat{b})(n, k)| e^{ (|n| + |k|)^{s} } \right) \leq C_2(m,s).
    \end{equation}
    More generally, for any integer $\ell \geq 2$ and 
    $\hat{a}_j \in \mathcal{K}(s)$ for $j=1, \ldots, \ell$, the $\ell$-fold 
    convolution satisfies: 
    \begin{equation}    
        \label{eqn: p-convolution}   
        \sup_{(n, k) \in \mathbb{Z}^{1+m}} \left( \big| \big(\hat{a}_1 * \hat{a}_2 * \cdots * \hat{a}_{\ell} \big)(n, k) \big| e^{ (|n| + |k|)^{s} } \right) \leq C_{\ell}(m,s),
    \end{equation}   
     where $C_{\ell}(m, s) > 0$ depends only on $\ell$, $m$, and $s$.  
\end{lemma}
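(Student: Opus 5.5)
The plan is to bound the convolution pointwise, splitting the decay weight between the two factors by means of \Cref{lem: app-1}, and then to handle the $\ell$-fold case by induction on $\ell$.

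For $\hat a,\hat b\in\mathcal{K}(s)$ we have $|\hat a(n,k)|\le \|\hat a(n,k)\|\le e^{-(|n|+|k|)^s}$, and likewise for $\hat b$. Fix $(n,k)$ and write each summand index as $(n_1,k_1)$ and $(n_2,k_2)=(n-n_1,k-k_1)$. Set $x=|n_1|+|k_1|$ and $y=|n_2|+|k_2|$. The triangle inequality gives $x+y\ge |n|+|k|$, so by monotonicity $(x+y)^s\ge(|n|+|k|)^s$. Combining this with \Cref{lem: app-1}, each summand is bounded by
\[
|\hat a(n_1,k_1)\hat b(n_2,k_2)|\le e^{-x^s-y^s}\le e^{-(|n|+|k|)^s}\, e^{-(2-2^s)\min\{x,y\}^s}.
\]
After multiplying by $e^{(|n|+|k|)^s}$, it therefore remains to bound $\sum_{(n_1,k_1)} e^{-(2-2^s)\min\{x,y\}^s}$ uniformly in $(n,k)$.

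This sum is the main point, since $\min\{x,y\}$ can stay small along a large set. We split the sum according to which of $x,y$ realizes the minimum. On the part where $x\le y$, the summand is $e^{-c x^s}$ with $c=2-2^s>0$. Summing over all $(n_1,k_1)\in\mathbb{Z}^{1+m}$ gives at most $\sum_{j\ge0}\#\{x=j\}e^{-cj^s}\le\sum_j C_m (j+1)^{m}e^{-cj^s}<\infty$. The part where $y<x$ is identical after the change of variables $(n_1,k_1)\mapsto(n_2,k_2)$, which is a bijection of $\mathbb{Z}^{1+m}$. Hence the sum is at most $C_2(m,s):=2\sum_{j\ge0}C_m(j+1)^m e^{-(2-2^s)j^s}$, which depends only on $m$ and $s$. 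This proves \eqref{eqn: convolution}.

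For the $\ell$-fold convolution we argue by induction on $\ell$. Assume $|(\hat a_1*\cdots*\hat a_{\ell-1})(n,k)|\le C_{\ell-1}e^{-(|n|+|k|)^s}$. Then $C_{\ell-1}^{-1}(\hat a_1*\cdots*\hat a_{\ell-1})$ satisfies the same pointwise decay bound that the argument above actually uses; it is not required to lie in $\mathcal{K}(s)$ itself, since only the bound $|\cdot|\le e^{-(|\cdot|)^s}$ entered. Applying the two-factor estimate to it and $\hat a_\ell$ gives $C_\ell=C_{\ell-1}C_2$, that is, $C_\ell=C_2^{\ell-1}$, which depends only on $\ell,m,s$.
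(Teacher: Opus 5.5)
Your proof is correct and follows the paper's argument. The triangle inequality $x+y\ge |n|+|k|$ together with \Cref{lem: app-1} extracts the factor $e^{-(|n|+|k|)^s}$, leaving the uniformly bounded series $\sum e^{-(2-2^s)\min\{x,y\}^s}$, and the $\ell$-fold case then follows by induction. Two points the paper passes over quickly are made explicit in your version: you split that series according to which of $x,y$ realizes the minimum, and you note that the normalized $(\ell-1)$-fold convolution only needs to satisfy the pointwise bound rather than lie in $\mathcal{K}(s)$.
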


\begin{proof}
    Since $\hat{a}, \hat{b} \in \mathcal{K}(s)$, their convolution satisfies
    \begin{align*}
        |(\hat{a}*\hat{b})(n, k)| & \leq \sum_{(n', k') \in \mathbb{Z}^{1+m}} |\hat{a}(n-n', k-k')| |\hat{b}(n', k')|                    \\
        & \leq \sum_{(n', k') \in \mathbb{Z}^{1+m}} e^{- (|n-n'| + |k-k'|)^{s}  } e^{- (|n'| + |k'|)^{s} }                               \\
        & \leq \sum_{(n',k') \in \mathbb{Z}^{1+m}} e^{- (| n | + |k|)^{s} } e^{ - ( |n-n'| + |k-k'|)^{s}  -  (|n'| + |k'|)^{s} + ( |n-n'| + |n'| + |k-k'| + |k'| )^{s} },                                                                                                                                                                   
    \end{align*}
    where the last inequality follows from the triangle inequality. 
    Applying~\Cref{lem: app-1} to bound the sum involving the remaining 
    exponential terms, we obtain
    \[
        |(\hat{a}*\hat{b})(n, k)| \leq e^{- (|n| + |k|)^{s} } \left( \sum_{(n', k') \in \mathbb{Z}^{1+m}} e^{ - (2 - 2^{s}) \min\left\{ (|n-n'| + |k-k'|)^{s}, \; (|n'| + |k'|)^{s} \right\} } \right).     
    \]
    Since the sum of the Gevrey decay series converges in a $(1+m)$-dimensional 
    lattice $\mathbb{Z}^{1+m}$, we establish the uniform 
    bound~\eqref{eqn: convolution}. 
    By induction, this same argument extends to any finite $\ell$-fold 
    convolutional product, thereby establishing the 
    inequality~\eqref{eqn: p-convolution}.
\end{proof}

\subsection{Proof of~\Cref{prop: vector-field}}
\label{subsec: vector-field}

Since $\hat{u} \in \mathcal{K}(s)$, applying~\Cref{lem: app-2} to the 
convolution representation of the cubic perturbation $|u|^2u$ 
yields uniform Gevrey decay estimates for $\hat{X}(n, k)$. 
By the Leibniz rule, each frequency derivative 
$\partial_{\omega_{\ell}} \hat{X}(n, k)$, $\ell = 1, \ldots, m$, is a sum of 
convolutions involving the Fourier coefficients $\hat{u}(n, k)$ and their 
frequency derivatives $\partial_{\omega'_{\ell}} \hat{u}(n, k)$. 
Applying the same convolution estimate as in~\Cref{lem: app-2} yields the same Gevrey decay 
estimates for these derivatives, establishing~\eqref{eqn: vector-field-comp}. 
Finally, summing the squares of the coefficientwise bounds over 
$(n, k) \in \mathbb{Z}^{1+m}$ gives the uniform norm 
bound~\eqref{eqn: vector-field-bound}, since the corresponding Gevrey decay 
series converges. 
This completes the proof.

\subsection{Proof of~\Cref{prop: convolution}}
\label{subsec: convolution}

Let $\mathcal{H} = \partial \hat{X} / \partial \hat{u}$ denote the full tangent 
operator. 
For any $(n, k), (n', k') \in \mathbb{Z}^{1+m}$, its entries are given by
\[   
    \mathcal{H} (n - n', k- k') = \frac{4}{(2\pi)^{1+m}} \iint_{\mathbb{T}^{1+m}} |u|^2 \cos \left[ \langle k - k', \theta \rangle + (n - n') x \right] d\theta dx. 
\]
Since the kernel is even, $\mathcal{H}$ is symmetric, and so is its restriction 
$S$ to the non-resonant indices. 
As in the proof of~\Cref{prop: vector-field},~\Cref{lem: app-2} and the Leibniz 
rule yield uniform Gevrey decay estimates for the kernel and its frequency 
derivatives over the full lattice $\mathbb{Z}^{1+m}$. 
Restricting these estimates to the non-resonant indices 
gives~\eqref{eqn: tangent-comp}.

For any $\hat{u} \in \mathcal{K}(s)$, the convolution representation of 
$\mathcal{H}$ and the Cauchy–Schwarz inequality yield
\begin{align}
    \| \mathcal{H} \hat{u} \|^2 &= \sum_{(n,k) \in \mathbb{Z}^{1+m}} \bigg\| \sum_{(n',k') \in \mathbb{Z}^{1+m}}\mathcal{H}(n', k') \hat{u}(n-n', k-k') \bigg\|^2                                                                  \nonumber \\
    & \leq \sum_{(n,k) \in \mathbb{Z}^{1+m}} \left( \sum_{(n',k') \in \mathbb{Z}^{1+m}} \left\| \mathcal{H}(n',k') \right\|^{\frac12} \cdot \left\| \mathcal{H}(n', k') \right\|^{\frac12} \|\hat{u}(n-n', k-k')\| \right)^2       \nonumber \\     
    & \leq \left( \sum_{(n',k') \in \mathbb{Z}^{1+m}} \left\| \mathcal{H}(n',k') \right\| \right) \cdot \left( \sum_{(n,k) \in \mathbb{Z}^{1+m}} \sum_{(n',k') \in \mathbb{Z}^{1+m}} \left\| \mathcal{H}(n', k') \right\| \|\hat{u}(n - n', k-k')\|^2 \right)                                                                                                                     \nonumber  \\
    & = \bigg[ \sum_{ (n, k) \in \mathbb{Z}^{1+m}} \|\mathcal{H}(n, k)\| \bigg]^2 \| \hat{u} \|^2,       \label{eqn: h1-bound}
\end{align}
where all sums are taken over $\mathbb{Z}^{1+m}$, and the last equality follows 
from translation invariance of the lattice sums. 
The Gevrey decay estimate ensures that the kernel is absolutely summable, 
uniformly for $\hat{u} \in \mathcal{K}(s)$. 
Thus,~\eqref{eqn: h1-bound} yields a uniform bound for the operator norm of 
$\mathcal{H}$. 
Finally, $\partial \hat{X}_q / \partial \hat{u}_p$, $S$, and 
$\partial \hat{X} / \partial \hat{u}_p$ are obtained from $\mathcal{H}$ by 
restricting its input and/or output indices to the corresponding resonant or 
non-resonant sets. 
Since these restrictions are implemented by orthogonal projections, their 
operator norms do not exceed that of $\mathcal{H}$. 
This establishes~\eqref{eqn: tangent-bound} and completes the proof.

\subsection{Proof of~\Cref{prop: tensor}}
\label{subsec: tensor}

Following the approach used for the vector field and its tangent operator, we 
consider the second derivative $\partial^2 \hat{X} / \partial \hat{u}^{2}$ as a 
bilinear operator. 
For any $(n, k), (n', k'), (n'', k'') \in \mathbb{Z}^{1+m}$, its entries can be 
expressed as 
\begin{align*}
    \frac{\partial^2 \hat{X}(n, k)}{ \partial \hat{u}(n', k') \partial \hat{u}(n'', k'')} 
     & = \mathcal{T}(n - n' - n'', k - k' - k'') + \mathcal{T}(n - n' + n'', k - k' + k'')   \\ 
     & = \frac{4}{(2\pi)^{1+m}} \iint_{\mathbb{T}^{1+m}} \mathrm{Re} \left( u e^{-i (\langle k - k' - k'', \theta \rangle + (n - n' - n'')x) } \right)d\theta dx \\
     & \mathrel{\phantom{=}} + \frac{4}{(2\pi)^{1+m}} \iint_{\mathbb{T}^{1+m}} \mathrm{Re} \left( \overline{u} e^{-i (\langle k - k' + k'', \theta \rangle + (n - n' + n'')x) }  \right) d\theta dx.   
\end{align*}
Since $\hat{u} \in \mathcal{K}(s)$, the kernel $\mathcal{T}$ and its frequency 
derivatives satisfy a uniform Gevrey decay estimate. 
In particular, there exists a constant $\gamma_9 = \gamma_9(m, s, a, \hat{u}_p) > 0$ 
such that 
\[
    \sup_{(n, k), (n', k'), (n'', k'') \in \mathbb{Z}^{1+m}}\bigg( \| \mathcal{T}(n - n' \pm n'', k - k' \pm k'') \| e^{(|n - n' \pm n''| + | k - k' \pm k'' |)^s} \bigg) \leq \gamma_9.
\] 
Thus, each kernel contribution to the second derivative satisfies a uniform 
Gevrey decay estimate in its corresponding combined index.

To estimate the associated bilinear operators, let $\hat{u}_1$ and $\hat{u}_2$ 
be supported within $\Lambda_N^{1+m}$. 
Taking $\mathcal{T}(n - n' - n'', k - k' - k'')$ as a representative example, 
we apply the Cauchy-Schwarz inequality to derive the following bound as
\begin{align*}
    & \| \mathcal{T}( \hat{u}_1, \hat{u}_2 ) \|^2 \\
    & = \sum_{(n,k) \in \mathbb{Z}^{1+m}} \bigg\| \sum_{(n'', k'') \in \Lambda_N^{1+m}} \sum_{(n', k') \in \Lambda_N^{1+m}} \mathcal{T}(n - n' - n'', k - k' - k'') \hat{u}_1(n', k') \hat{u}_2(n'', k'' ) \bigg\|^2 \\
    & \leq \sum_{(n, k) \in \mathbb{Z}^{1+m}} \left[ \sum_{(n'', k'') \in \Lambda_N^{1+m}} \sum_{(n', k') \in \Lambda_N^{1+m}} \left\| \mathcal{T}(n - n' - n'', k - k' - k'') \right\| \right] \\
    & \mathrel{\phantom{\leq\leq=}} \cdot \left[ \sum_{(n'', k'') \in \Lambda_N^{1+m}} \sum_{(n', k') \in \Lambda_N^{1+m}}\left\| \mathcal{T}(n - n' - n'', k - k' - k'') \right\| \|\hat{u}_1(n',k')\|^2 \|\hat{u}_2(n'',k'')\|^2 \right]  \\
    & \leq \left[ \sum_{(n'', k'') \in \Lambda_N^{1+m}} \sum_{(n', k') \in \Lambda_N^{1+m}} \gamma_9 e^{-( |n - n' - n''| + |k - k' - k''|)^s} \right]  \\
    & \mathrel{\phantom{\leq\leq=}} \cdot \left[ \sum_{(n, k) \in \mathbb{Z}^{1+m}}  \sum_{(n'', k'') \in \Lambda_N^{1+m}} \sum_{(n', k') \in \Lambda_N^{1+m}} \gamma_9 e^{-( |n - n' - n''| + |k - k' - k''|)^s} \|\hat{u}_1(n', k')\|^2 \|\hat{u}_2(n'', k'')\|^2 \right] \\
    & = 2\gamma_9^2 (2N+1)^{1 + m} \left[ \sum_{(n, k) \in \mathbb{Z}^{1+m}} e^{-(|n|+|k|)^s} \right] \cdot \|\hat{u}_1\|^2 \|\hat{u}_2\|^2.                                                               
\end{align*}
The remaining kernel $\mathcal{T}(n - n' + n'', k - k' + k'')$ satisfies the 
same bound. 
Combining the bounds for the kernel contributions, we obtain
\[
    \left\| \frac{\partial^2 \hat{X}}{ \partial \hat{u}^2} \right\| \leq 2 \sqrt{2} \gamma_9 (2N+1)^{\frac{1+m}{2}} \left[ \sum_{(n, k) \in \mathbb{Z}^{1+m}} e^{-(|n|+|k|)^s} \right]^{\frac12} < \infty.  
\]
Finally, these operators, $\partial^2 \hat{X}_q / \partial \hat{u}_p^2$, 
$\partial S/\partial \hat{u}_p$, and $\partial^2 \hat{X} / \partial \hat{u}_p^2$, 
are obtained from $\partial^2 \hat{X} / \partial \hat{u}^{2}$ by restricting 
its input and/or output indices to the corresponding resonant or non-resonant 
sets. 
Since these restrictions are implemented by orthogonal projections, their 
operator norms do not exceed that of $\partial^2 \hat{X} / \partial \hat{u}^{2}$. 
This establishes~\eqref{eqn: tensor} and completes the proof.

\subsection{Proof of~\Cref{prop: B-operator}}
\label{subsec: b-operator}

Recalling the definition of $B$ in~\eqref{eqn: operator-b}, we obtain the 
following estimate for any $(n,k), (n', k') \notin \mathcal{S}$:
\[
    \left\| B \left( (n, k), (n', k') \right) \right\| \leq \frac{1}{\delta_0^2} \left\| \frac{\partial \langle k, \hat{X}_q \rangle}{\partial \hat{u}_p} \hat{u}_{p}^{\top} \right\|. 
\]
The Gevrey decay of $\hat{u} \in \mathcal{K}(s)$, together with the estimates 
for the tangent operator established in~\Cref{prop: convolution}, therefore 
yields the Gevrey bound in~\eqref{eqn: b-comp}. 
Using the rank-one structure of $B$ and applying the Cauchy-Schwarz inequality, 
we further obtain
\[
    \left\| B \left( (n, k), (n', k') \right) \right\| \leq \frac{1}{\delta_0^2} \| k \hat{u}_p(n, k) \| \cdot \left\| \frac{\partial \hat{X}_q}{\partial \hat{u}_p(n', k') } \right\|.
\] 
By utilizing the Gevrey decay of the tangent operators established 
in~\Cref{subsec: convolution} and extending the sums over the lattice 
$\mathbb{Z}^{1+m} \times \mathbb{Z}^{1+m}$, we obtain
\[
    \| B \| \leq \sum_{(n, k) \in \mathbb{Z}^{1+m}} \left( \frac{\| k \hat{u}_p(n, k) \|}{\delta_0^2} \right) \cdot \sum_{(n', k') \in \mathbb{Z}^{1+m}} \left\| \frac{\partial \hat{X}_q}{\partial \hat{u}_p(n', k') } \right\|.
\]
Finally, since $\hat{u} \in \mathcal{K}(s)$,~\Cref{prop: vector-field} 
and~\Cref{prop: convolution} ensure that these lattice sums converge and are 
uniformly bounded, thereby establishing~\eqref{eqn: b-bound}.

\section{Proofs of results in~\Cref{sec: implement}}
\label{sec: implement-app}

In this appendix, we provide the detailed proofs 
of~\Cref{lem: near-measure-initial} and~\Cref{thm: small-size} stated 
in~\Cref{sec: implement}.

\subsection{Proof of~\Cref{lem: near-measure-initial}}
\label{sec: near-measure-initial}

Recall that the frequency set $\Omega \subset \mathbb{R}^m$ is bounded, and set 
$d = \mathrm{diam}(\Omega) = \sup_{\omega'_1, \omega'_2 \in \Omega} \|\omega'_1 - \omega'_2\|_2$ 
and $R = \sup_{\omega' \in \Omega} \|\omega' \|_2$. 
For each non-zero temporal index $k \in \mathbb{Z}^m \setminus \{0\}$, the 
corresponding nearly-resonant strip lies between two parallel hyperplanes 
orthogonal to $k$. 
Estimating the measure of its intersection with $\Omega$ and summing over all 
nonzero $k$, we obtain the following bound for the tangential nearly-resonant 
set defined in~\eqref{eqn: tangential-nearly-resonant}: 
\begin{equation}
    \label{eqn: tangent-measure-estimate}
    \mathrm{mes}\left( \Omega_{0,M}^{\tau} \right) \leq \sum_{k \in \mathbb{Z}^m \setminus \{0\}} \frac{2 d^{m-1}}{\|k\|_2^{\tau+1}}. 
\end{equation}

We next estimate the measure of the single-mode nearly-resonant set 
$\Omega_{1,M}^{\tau}$ defined in~\eqref{eqn: single-nearly-resonant}. 
If the nearly-resonance condition holds for some $\omega' \in \Omega$, then 
$\left|\langle k, \omega' \rangle - n^2 - \mu \right| < 1$, which implies 
$n^2 < 1 + |\mu| + R\|k\|_2 $. 
Hence, for each fixed $k$, the number of admissible spatial indices $n$ is 
bounded by $2 \sqrt{ 1 + |\mu| + R\|k\|_2 } + 1 \leq 3 \sqrt{ 1 + |\mu| + R\|k\|_2 }$. 
Combining this counting estimate with the strip-measure estimate yields
\begin{equation}
    \label{eqn: single-measure-estimate}
    \mathrm{mes}\left( \Omega_{1,M}^{\tau} \right) 
    \leq \sum_{k \in \mathbb{Z}^m} \frac{6 d^{m-1} \sqrt{1 + |\mu| + R \|k\|_2}}{(1 + \|k\|_2)^{\tau+1}} 
    \leq \sum_{k \in \mathbb{Z}^m} \frac{6 d^{m-1} \max\big\{ \sqrt{1 + |\mu|}, \sqrt{R} \big\}}{(1 + \|k\|_2)^{\tau+\frac12}} .
\end{equation}

Finally, consider the difference nearly-resonant set $\Omega_{2,M}^{\tau}$ 
defined in~\eqref{eqn: difference-nearly-resonant}. 
The nearly-resonance condition implies $\left|\langle k, \omega' \rangle - n_1^2 + n_2^2 \right| < 1$, 
and hence $\left|n_1^2 - n_2^2 \right| < 1 + R\|k\|_2 $. 
For each fixed $k \in \mathbb{Z}^m$, the corresponding strip depends on $n_1$ 
and $n_2$ only through the integer difference $n_1^2 - n_2^2$. 
It therefore suffices to count the distinct admissible values of this 
difference rather than the pairs $(n_1, n_2)$. 
Combining this count with the strip-measure estimate gives 
\begin{equation}
    \label{eqn: diff-measure-est}
    \mathrm{mes}\left( \Omega_{2,M}^{\tau} \right) 
    \leq \sum_{k \in \mathbb{Z}^m} \frac{ 4d^{m-1} \left( 1 + R\|k\|_2 \right) }{(2 + \|k\|_2)^{\tau+1}} \leq \sum_{k \in \mathbb{Z}^m} \frac{2d^{m-1} \max\{1, 2R\}}{(2 + \|k\|_2)^{\tau}}.
\end{equation}

Since $\tau > m$, the lattice sums 
in~\eqref{eqn: tangent-measure-estimate}--\eqref{eqn: diff-measure-est} converge. 
Combining these estimates with the union 
representation~\eqref{eqn: nls-total-nearly-resonant}, we obtain the desired 
bound~\eqref{eqn: near-measure-initial}. 
This completes the proof.

\subsection{Proof of~\Cref{thm: small-size}}
\label{sec: small-size-original}

Before proving~\Cref{thm: small-size}, we establish an auxiliary lemma showing 
that Gevrey decay of matrix entries is preserved under operator multiplication. 
For a fixed exponent $0<s<1$, we say that an operator 
$A: \ell_2(\mathbb{Z}^{1+m}) \to \ell_2(\mathbb{Z}^{1+m})$ has 
\textit{proper decay} if its entries satisfy: 
\[
    \sup_{ (n, k), (n', k') \in \mathbb{Z}^{1+m}} \left( \left| A\left( (n, k), (n', k') \right) \right| e^{ (|n-n'| + |k-k'|)^s } \right) \leq 1.
\]

\begin{lemma}
    \label{lem: app-3}
    Let $A_1$ and $A_2$ be two operators with proper decay. 
    Then there exists a constant $C'_2(m, s) > 0$ such that their product 
    $\mathcal{A} = A_1 A_2$ satisfies
    \begin{equation}    
        \label{eqn: convolution-mix}   
        \sup_{(n, k), (n', k') \in \mathbb{Z}^{1+m}} \left( \left| \mathcal{A}\left( (n, k), (n', k') \right) \right| e^{ (|n-n'| + |k-k'|)^s } \right) \leq C'_2(m, s).
    \end{equation}
    More generally, for any finite collection of operators 
    $\{ A_j \}_{j=1}^{\ell}$ with proper decay, there exists a constant 
    $C'_{\ell}(m, s) > 0$ such that the $\ell$-fold product 
    $\mathcal{A} = \prod_{j=1}^{\ell} A_j$ satisfies: 
    \begin{equation}    
        \label{eqn: p-convolution-mix}   
        \sup_{(n, k), (n', k') \in \mathbb{Z}^{1+m}} \left( \left| \mathcal{A}\left( (n, k), (n', k') \right) \right| e^{ (|n-n'| + |k-k'|)^s } \right) \leq C'_{\ell}(m, s).
    \end{equation}   
\end{lemma}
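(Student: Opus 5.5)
The plan mirrors the proof of \Cref{lem: app-2}, with matrix multiplication in place of convolution. First fix $(n,k),(n',k') \in \mathbb{Z}^{1+m}$ and write the product entry as
\[
    \mathcal{A}\left( (n,k),(n',k') \right) = \sum_{(n'',k'') \in \mathbb{Z}^{1+m}} A_1\left( (n,k),(n'',k'') \right) A_2\left( (n'',k''),(n',k') \right).
\]
Next bound each factor using proper decay. Set $a = |n-n''|+|k-k''|$ and $b = |n''-n'|+|k''-k'|$. Then each summand is at most $e^{-a^s - b^s}$.

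The triangle inequality gives $|n-n'|+|k-k'| \le a+b$. Hence $(|n-n'|+|k-k'|)^s \le (a+b)^s$, and
\[
    e^{-a^s-b^s} \le e^{-(|n-n'|+|k-k'|)^s}\, e^{-\left(a^s+b^s-(a+b)^s\right)}.
\]
Applying \Cref{lem: app-1} bounds the last factor by $e^{-(2-2^s)\min\{a,b\}^s}$.

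It remains to show that
\[
    \sum_{(n'',k'')} e^{-(2-2^s)\min\{a,b\}^s}
\]
is bounded uniformly in $(n,k),(n',k')$. This is the step that needs care, because the minimum could in principle leave a large set of intermediate sites undamped. To handle it, split the sum according to whether $a \le b$ or $a > b$. In the first region the summand is $e^{-(2-2^s)a^s}$. Summing over all $(n'',k'')$ and translating by $(n,k)$ gives at most
\[
    \sum_{(j,l)\in\mathbb{Z}^{1+m}} e^{-(2-2^s)(|j|+|l|)^s}.
\]
This series converges for $0<s<1$: the number of lattice points with $|j|+|l| = R$ grows only polynomially in $R$, while the weight decays like a stretched exponential. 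The second region is symmetric, with translation by $(n',k')$ instead. So $C'_2(m,s)$ can be taken as twice this lattice sum, which depends only on $m$ and $s$.

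For the $\ell$-fold product, argue by induction. If $A_1\cdots A_{\ell-1}$ has entries bounded by $C'_{\ell-1}e^{-(\cdot)^s}$, then $C'_{\ell-1}{}^{-1}A_1\cdots A_{\ell-1}$ has proper decay. Applying the two-factor case to it and $A_\ell$ yields $C'_\ell = C'_{\ell-1}C'_2$.
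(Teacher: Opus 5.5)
Your proposal is correct and follows the paper's proof essentially step for step. You expand the product entry over intermediate sites, bound both factors by proper decay, use the triangle inequality together with \Cref{lem: app-1} to extract $e^{-(|n-n'|+|k-k'|)^s}$, and then induct on $\ell$. Your split into the regions $a\le b$ and $a>b$ also makes explicit why the residual series $\sum e^{-(2-2^s)\min\{a,b\}^s}$ is bounded uniformly in $(n,k),(n',k')$, which the paper only asserts.
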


\begin{proof}
    Given that both $A_1$ and $A_2$ have proper decay, the entries of their 
    product satisfy
    \begin{align*}
       \left |\mathcal{A}\left( (n, k), (n', k') \right) \right | 
        &\leq \sum_{(n'', k'') \in \mathbb{Z}^{1+m}} \left| A_1\left( (n,k), (n'',k'') \right) \right| \left| A_2 \left((n'',k''), (n',k') \right) \right| \\
        &\leq \sum_{(n'', k'') \in \mathbb{Z}^{1+m}} e^{ - (|n-n''| + |k-k''|)^s - (|n''-n'| + |k''-k'|)^s }  \\
        &\leq e^{ - (|n-n'| + |k-k'|)^s } \sum_{(n'', k'') \in \mathbb{Z}^{1+m}} e^{ - (2 - 2^s) \min\left\{ (|n-n''| + |k-k''|)^s, \; (|n''-n'| + |k''-k'|)^s \right\} },
    \end{align*}
    where the last inequality follows from~\Cref{lem: app-1}. 
    The final series converges, and its sum is bounded by a constant depending 
    only on $m$ and $s$; this establishes~\eqref{eqn: convolution-mix}. 
    Applying the same estimate successively yields~\eqref{eqn: p-convolution-mix} 
    by induction on $\ell$.
\end{proof}

We now complete the proof of~\Cref{thm: small-size}. 
The uniform lower bound~\eqref{eqn: nls-uniform-lower-original-small} ensures 
that the diagonal operator $D_{n_0, N}$ is invertible, so the restricted 
operator $L_{n_0, N}$ admits the factorization 
\[
    L_{n_0, N} = D_{n_0, N} + \varepsilon (S_{n_0, N} + B_{n_0, N}) = D_{n_0, N} \left[ I + \varepsilon D_{n_0, N}^{-1}(S_{n_0, N} + B_{n_0, N}) \right].
\]
By~\eqref{eqn: nls-uniform-lower-original-small} together 
with~\Cref{prop: convolution} and~\Cref{prop: B-operator}, we have 
\[
    \varepsilon \| D_{n_0, N}^{-1}(S_{n_0, N} + B_{n_0, N}) \|_2 \leq \frac12.
\]
Consequently, $L_{n_0, N}$ is invertible and its inverse is given by the 
Neumann series
\begin{align}
    L_{n_0, N}^{-1} & = \left[ I + \varepsilon D_{n_0,N}^{-1}(S_{n_0,N}+B_{n_0,N}) \right]^{-1} D_{n_0,N}^{-1}  \nonumber \\
                    & = \left\{ \sum_{\ell=0}^{\infty} \left[ - \varepsilon D_{n_0,N}^{-1}(S_{n_0,N}+B_{n_0,N}) \right]^{\ell} \right\} D_{n_0, N}^{-1},     \label{eqn: T-nu-expansion}
\end{align}
which converges in operator norm. 
It follows that $\sum_{\ell =0}^{\infty} \| \varepsilon D_{n_0, N}^{-1}(S_{n_0, N} + B_{n_0,  N})\|_2^{\ell} \leq 2$, 
and hence $\| L_{n_0, N}^{-1}  \|_2 \leq 2\|D_{n_0, N}^{-1} \|_2 $. 
Together with~\eqref{eqn: nls-uniform-lower-original-small}, this yields the 
norm estimate~\eqref{eqn: small-size-inverse}.

To establish the off-diagonal decay, we separate the $\ell=0$ term from the 
Neumann series~\eqref{eqn: T-nu-expansion}: 
\[
    L_{n_0, N}^{-1} = D_{n_0, N}^{-1} - \left[ \sum_{\ell=0}^{\infty} \left( - \varepsilon D_{n_0, N}^{-1}(S_{n_0, N} + B_{n_0, N}) \right)^{\ell} \right] \left(\varepsilon D_{n_0, N}^{-1}(S_{n_0, N}+B_{n_0, N}) D_{n_0, N}^{-1}\right).
\]
The Gevrey decay of $S$ and $B$ established in~\Cref{prop: convolution} 
and~\Cref{prop: B-operator}, combined 
with~\eqref{eqn: nls-uniform-lower-original-small} and~\Cref{lem: app-3}, imply 
that, upon further reducing $\varepsilon$ if necessary, each summand satisfies 
\[
    \left| \left( - \varepsilon D_{n_0, N}^{-1} (S_{n_0, N}+B_{n_0, N}) \right)^{\ell} \left( (n, k), (n', k') \right) \right| \leq \left(\frac{1}{2}\right)^{\ell+1} e^{ - (|n-n'|+|k-k'|)^s },
\]
for any $\ell \geq 0$ and any $(n, k), (n', k') \in (n_0, 0) + \Lambda_N^{1+m}$. 
Since $D_{n_0, N}^{-1}$ is diagonal with the 
bound~\eqref{eqn: nls-uniform-lower-original-small}, summing these bounds over 
$\ell \geq 0$ yields the off-diagonal decay 
estimate~\eqref{eqn: small-size-localization}. 
This completes the proof of~\Cref{thm: small-size}.

\section{Proofs of results in~\Cref{sec: iterative-scheme}}
\label{sec: completion-app}

In this appendix, we provide the detailed proof 
for~\Cref{lem: inverse-derivative}.

\subsection{Proof of~\Cref{lem: inverse-derivative}}
\label{subsec: derivative-app}

From~\Cref{prop: convolution} and~\Cref{prop: B-operator}, it follows that for 
any $(n, k) \neq (n', k')$, the off-diagonal entries of $\partial L_N$ satisfy
\[
    |\partial L_N \left( (n,k), (n', k') \right)| \leq e^{ - \left( |n-n'| + |k-k'| \right)^{s} }.
\]
In view of the operator identity in~\eqref{eqn: t-n-derivative-inverse}, each 
entry of $\partial L_{N}^{-1}(k, k')$ can be expanded as the double sum 
\begin{multline*}
    \partial L_{N}^{-1}\left( (n,k), (n', k') \right) \\ 
    = - \sum_{(n_1, k_1) \in \Lambda_N^{1+m}} \sum_{(n_2,k_2) \in \Lambda_N^{1+m}}L_{N}^{-1}\left( (n,k), (n_1, k_1) \right) \partial L_N\left( (n_1,k_1), (n_2, k_2) \right) L_{N}^{-1}\left( (n_2,k_2), (n', k') \right).
\end{multline*}
Applying the triangle inequality and inserting the bounds of~\Cref{lem: app-3}, 
we obtain: 
\begin{align*}
    & \left| \partial L_{N}^{-1}\left( (n,k), (n', k') \right) \right| \\ 
    & \leq \sum_{(n_1, k_1) \in \Lambda_N^{1+m}} \sum_{(n_2,k_2) \in \Lambda_N^{1+m}} |L_{N}^{-1}\left( (n,k), (n_1, k_1) \right)| \cdot | \partial L_N\left( (n_1,k_1), (n_2, k_2) \right)| \cdot |L_{N}^{-1}\left( (n_2,k_2), (n', k') \right)| \\
    & \leq \frac{4N^2 (2N+1)^{2m+2}}{\varepsilon_N^2} e^{ - \frac{\left( |n-n'| + |k - k'| - 2N^{\frac12} \right)^{s}}{2} } 
\end{align*}
Since the lattice sites are separated by $|n-n'| + |k-k'| \geq N^{\frac34}$, 
the prefactor is absorbed by the exponential, and the off-diagonal 
entries of $\partial L_{N}^{-1}$ satisfy the Gevrey 
bound~\eqref{eqn: inverse-restricted-entries}. 
This thus completes the proof.

\end{document}